\documentclass[10.5pt,amsfonts]{amsart}

\usepackage{amsmath,amssymb,amsthm}
\usepackage{enumerate}
\usepackage[all]{xy}
\usepackage{graphicx}
\usepackage{xcolor}
\usepackage[colorlinks=true]{hyperref}
\usepackage{comment}
\usepackage[T1]{fontenc}
\usepackage{enumitem}
\usepackage{mathrsfs}


\topmargin 0.0in
\textheight 8.8in
\textwidth 6.0in
\oddsidemargin .25in
\evensidemargin .25in
\parskip .02in


\SelectTips{eu}{12}


\newtheorem{thm}{Theorem}[section]
\newtheorem{prop}[thm]{Proposition}
\newtheorem{lem}[thm]{Lemma}
\newtheorem{cor}[thm]{Corollary}

\newtheorem{setting}[thm]{Setting}
\newtheorem{mthm}{Theorem}

\theoremstyle{definition}
\newtheorem{defn}[thm]{Definition}
\newtheorem{definition}[thm]{Definition}
\newtheorem{exa}[thm]{Example}

\theoremstyle{remark}
\newtheorem{rem}[thm]{Remark}

\numberwithin{equation}{section}


\newcommand{\RR}{\mathbb{R}}
\newcommand{\ZZ}{\mathbb{Z}}
\newcommand{\NN}{\mathbb{N}}
\newcommand{\CC}{\mathbb{C}}

\DeclareMathOperator{\Ker}{\mathrm{Ker}}

\newcommand{\tg}{\widetilde{h}}

\newcommand{\Mm}{M}
\newcommand{\Nm}{N}
\newcommand{\Xm}{X}
\newcommand{\Qm}{Q}

\newcommand{\xp}{x}

\newcommand{\Gg}{G}
\newcommand{\Ng}{N}
\newcommand{\cGg}{\check{G}}
\newcommand{\cNg}{\check{N}}
\newcommand{\cGam}{\check{\Gamma}}
\newcommand{\Pg}{P}
\newcommand{\cPg}{\check{P}}
\newcommand{\Fg}{F}
\newcommand{\Mg}{M}

\newcommand{\Zg}{Z}
\newcommand{\Gamg}{\Gamma}
\newcommand{\cGamg}{\check{\Gamma}}

\newcommand{\al}{a}

\newcommand{\xl}{x}
\newcommand{\yl}{y}
\newcommand{\zl}{z}
\newcommand{\vl}{v}
\newcommand{\wl}{w}
\newcommand{\gl}{g}
\newcommand{\fl}{f}
\newcommand{\rl}{r}

\newcommand{\gaml}{\gamma}
\newcommand{\laml}{\lambda}

\newcommand{\nuf}{\nu}
\newcommand{\muf}{\mu}
\newcommand{\hmuf}{\hat{\mu}}
\newcommand{\cmuf}{\check{\mu}}
\newcommand{\phf}{\phi}

\newcommand{\hf}{h}
\newcommand{\ff}{f}

\newcommand{\mushf}{\mu_{\rm Sh}}
\newcommand{\mupyf}{\mu_{\rm Py}}
\newcommand{\mubrf}{\mu_{\rm Br}}

\newcommand{\bb}{\mathfrak{b}}
\newcommand{\hbb}{\hat{\mathfrak{b}}}
\newcommand{\cbb}{\check{\mathfrak{b}}}
\newcommand{\Bb}{\mathcal{B}}
\newcommand{\BbG}{\mathcal{B}_{\Gg}}
\newcommand{\bBb}{\bar{\mathcal{B}}}
\newcommand{\bBbG}{\bar{\mathcal{B}}_{\Gg}}
\newcommand{\BbSymp}{\mathcal{B}_{\widetilde{\mathrm{Symp}_0^c}(\Mm,\omega)}}
\newcommand{\bBbSymp}{\bar{\mathcal{B}}_{\widetilde{\mathrm{Symp}_0^c}(\Mm,\omega)}}

\newcommand{\BbcG}{\mathcal{B}_{\cGg}}
\newcommand{\im}{i}
\newcommand{\jm}{j}
\newcommand{\ppm}{p}
\newcommand{\cppm}{\check{p}}

\newcommand{\pim}{\pi}
\newcommand{\qqm}{q}

\newcommand{\mom}{\Phi} 

\newcommand{\vv}{v}
\newcommand{\wv}{w}
\newcommand{\Vv}{V}
\newcommand{\Wv}{W}
\newcommand{\Qv}{Q}

\newcommand{\AAA}{\mathcal{A}}
\newcommand{\Rel}{R}
\newcommand{\VV}{\mathrm{V}}

\newcommand{\Rdim}{\dim_{\mathbb{R}}}
\newcommand{\DD}{D}

\newcommand{\ee}{\epsilon}
\newcommand{\ioP}{\iota}
\newcommand{\iPN}{\iota}
\newcommand{\nn}{l}

\newcommand{\Sympc}{\mathrm{Symp}^c}
\newcommand{\Hamc}{\mathrm{Ham}^c}
\newcommand{\tHamc}{\widetilde{\mathrm{Ham}^c}}
\newcommand{\tSympc}{\widetilde{\mathrm{Symp}_0^c}}
\newcommand{\flux}{\mathrm{Flux}}
\newcommand{\tflux}{\widetilde{\mathrm{Flux}}}
\newcommand{\cal}{\mathrm{Cal}}

\newcommand{\QQQ}{\mathrm{Q}}

\newcommand{\HHH}{\mathrm{H}}

\newcommand{\Sp}{\mathrm{Sp}}
\newcommand{\UU}{\mathrm{U}}

\newcommand{\id}{\mathrm{id}}

\newcommand{\vol}{\mathrm{Vol}}

\newcommand{\qd}{\mathscr{A}}
\newcommand{\XX}{X} 
\newcommand{\fk}{f_1^{(k)}}
\newcommand{\gk}{f_2^+}
\newcommand{\hk}{f_2^-}
\newcommand{\fkp}{\bar{f}_1^{(k)}}
\newcommand{\gkp}{\bar{f}_2^{+}}
\newcommand{\hkp}{\bar{f}_2^{-}}

\makeatletter
\newsavebox{\@brx}
\newcommand{\llangle}[1][]{\savebox{\@brx}{\(\m@th{#1\langle}\)}%
  \mathopen{\copy\@brx\kern-0.5\wd\@brx\usebox{\@brx}}}
\newcommand{\rrangle}[1][]{\savebox{\@brx}{\(\m@th{#1\rangle}\)}%
  \mathclose{\copy\@brx\kern-0.5\wd\@brx\usebox{\@brx}}}
\makeatother

\allowdisplaybreaks[2]

\makeatletter
\@namedef{subjclassname@2020}{%
  \textup{2020} Mathematics Subject Classification}
\makeatother

\subjclass[2020]{Primary 53D05, 53D22, Secondary 20J05, 37E35, 55R40, 20F65}

\begin{document}

\title[Flux homomorphism and bilinear form]{Flux homomorphism and bilinear form constructed from Shelukhin's quasimorphism}

\author[M. Kawasaki]{Morimichi Kawasaki}
\address[Morimichi Kawasaki]{Department of Mathematics, Faculty of Science, Hokkaido University, North 10, West 8, Kita-ku, Sapporo, Hokkaido 060-0810, Japan}
\email{kawasaki@math.sci.hokudai.ac.jp}

\author[M.~Kimura]{Mitsuaki Kimura}
\address[Mitsuaki Kimura]{Department of Mathematics, Osaka Dental University, 8-1 Kuzuha Hanazono-cho, Hirakata, Osaka 573-1121, Japan}
\email{kimura-m@cc.osaka-dent.ac.jp}

\author[S. Maruyama]{Shuhei Maruyama}
\address[Shuhei Maruyama]{School of Mathematics and Physics, College of Science and Engineering, Kanazawa University, Kakuma-machi, Kanazawa, Ishikawa, 920-1192, Japan}
\email{smaruyama@se.kanazawa-u.ac.jp}

\author[T. Matsushita]{Takahiro Matsushita}
\address[Takahiro Matsushita]{Department of Mathematical Sciences, Faculty of Science, Shinshu University, Matsumoto, Nagano, 390-8621, Japan}
\email{matsushita@shinshu-u.ac.jp}

\author[M. Mimura]{Masato Mimura}
\address[Masato Mimura]{Mathematical Institute, Tohoku University, 6-3, Aramaki Aza-Aoba, Aoba-ku, Sendai 980-8578, Japan}
\email{m.masato.mimura.m@tohoku.ac.jp}

\begin{abstract}
Given a closed connected symplectic manifold $(M,\omega)$, we construct an alternating $\mathbb{R}$-bilinear form $\mathfrak{b}=\mathfrak{b}_{\mu_{\mathrm{Sh}}}$ on the real first cohomology of $M$ from Shelukhin's quasimorphism $\mu_{\mathrm{Sh}}$. Here $\mu_{\mathrm{Sh}}$ is defined on the universal cover of the group of Hamiltonian diffeomorphisms on $(M,\omega)$. This bilinear form is invariant under the symplectic mapping class group action, and $\mathfrak{b}$ yields a constraint on the fluxes of commuting two elements in the group of symplectomorphisms on $(M,\omega)$. These results might be seen as an analog of Rousseau's result for an open connected symplectic manifold, where he recovered the symplectic pairing from the Calabi homomorphism. Furthermore, $\mathfrak{b}$ controls the extendability of Shelukhin's quasimorphisms, as well as the triviality of a characteristic class of Reznikov. To construct $\mathfrak{b}$, we build general machinery for a group $G$ of producing a real-valued $\mathbb{Z}$-bilinear form $\mathfrak{b}_{\mu}$
from a $G$-invariant quasimorphism $\mu$ on the commutator subgroup of $G$.
\end{abstract}

\maketitle

\tableofcontents

\section{Introduction}

\subsection{Motivation of our results: constructing bilinear forms out of invariant quasimorphisms}\label{subsec=RousseauPy}

 First, we recall some basic notions in symplectic geometry.
For more precise definitions, see Section \ref{sec=prelim}.
Let $(\Mm, \omega)$ be a connected symplectic manifold,  and $\Sympc(\Mm, \omega)$ the group of symplectomorphisms with compact supports on $\Mm$.
Let $\Sympc_0(\Mm, \omega)$ denote the identity component of $\Sympc(\Mm, \omega)$, and $\tSympc(\Mm, \omega)$ its universal covering (here, $\Sympc(\Mm, \omega)$ is endowed with the $C^\infty$-topology). 
Let $\tHamc(\Mm,\omega)$ denote the universal covering of the group $\Hamc(\Mm,\omega)$ of Hamiltonian diffeomorphisms.
Then, we can regard $\tHamc(\Mm,\omega)$ as a normal subgroup of $\tSympc(\Mm,\omega)$ (see Proposition \ref{survey on flux}).
The {\it flux homomorphism} $\tflux_\omega \colon \tSympc(\Mm, \omega) \to \HHH_c^1(\Mm; \RR)$ is defined by
\[\tflux_\omega([\{\psi^t\}_{t\in[0,1]}])=\int_0^1[\iota_{X_t}\omega]dt,\]
where  $X_t$ is the vector field generating the symplectic isotopy $\{\psi^t\}$. It is known that the flux homomorphism provides the following short exact sequence
\[
1 \longrightarrow \tHamc(\Mm,\omega) \longrightarrow \tSympc(\Mm,\omega) \xrightarrow{\tflux_{\omega}} \HHH^1_c(\Mm;\RR) \longrightarrow 1.
\]
In particular, we have $\tHamc(\Mm,\omega)\geqslant [\tSympc(\Mm,\omega),\tSympc(\Mm,\omega)]$. Here, our convention of the group commutator is: $[\gl_1,\gl_2]=\gl_1\gl_2\gl_1^{-1}\gl_2^{-1}$.

Our main results are motivated by a result of Rousseau: in \cite{Rou}, Rousseau  made a correspondence between the Calabi homomorphism $\cal\colon \tHamc(\Mm,\omega)\to \RR$ (see Subsection \ref{subsec=symp}) and a certain alternating $\RR$-bilinear form on $\HHH_c^1(\Mm;\RR)$ when $(\Mm,\omega)$ is an \emph{open} connected symplectic manifold. The bilinear form appearing in this correspondence is the \textit{symplectic pairing} $\bb_\omega$ (see \cite{Cala}, see also \cite[Definition 4.2.10]{Ban97}), which is the bilinear form on $\HHH_c^1(\Mm;\RR)$ defined by
\begin{equation}\label{eq=symplecticpairing}
\bb_\omega([\alpha],[\beta])=\int_{\Mm} \alpha \wedge \beta \wedge\omega^{n-1} \in\RR,
\end{equation}
where $2n$ is the dimension of $\Mm$ and we identify $\HHH_c^1(\Mm;\RR)$ with the first de Rham cohomology with compact support of $\Mm$. 

\begin{thm}[Rousseau {\cite[Proposition 4.1]{Rou}}] \label{thm=rousseau}
Let $(M,\omega)$ be a $2n$-dimensional open connected symplectic manifold.
Then, for all $\fl,\gl\in \tSympc(\Mm,\omega)$,  we have
\[
\cal([\fl,\gl]) = n \cdot \bb_\omega(\tflux_{\omega}(\fl), \tflux_{\omega}(\gl)).
\]
\end{thm}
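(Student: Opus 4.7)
\emph{Proof plan.} The plan is to split the proof into two parts: a group-theoretic reduction and an infinitesimal computation. First, I would show that $(\fl,\gl)\mapsto\cal([\fl,\gl])$ factors through $\tflux_\om\times\tflux_\om$ to give an alternating $\RR$-bilinear form $B$ on $\HHH^1_c(\Mm;\RR)$. Second, I would identify $B$ with $n\bb_\om$ by computing $\cal([\fl_s,\gl_t])$ for a two-parameter family of small symplectic flows and reading off the coefficient of $st$.

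For the first step, I would exploit that $\cal$ is invariant under $\tSympc$-conjugation on $\tHamc$: if $\hf\in\tHamc(\Mm,\om)$ is generated by a compactly supported Hamiltonian $H_t$ and $\vphl\in\tSympc(\Mm,\om)$, then $\vphl\hf\vphl^{-1}$ is generated by $H_t\circ\vphl^{-1}$, and $(\vphl^{-1})^*\om^n=\om^n$ gives $\cal(\vphl\hf\vphl^{-1})=\cal(\hf)$. Writing $[\hf,\gl]=\hf\cdot(\gl\hf^{-1}\gl^{-1})$ then yields $\cal([\hf,\gl])=0$ whenever $\hf\in\tHamc$. Combined with the commutator identity $[\fl_1\fl_2,\gl]=(\fl_1[\fl_2,\gl]\fl_1^{-1})\cdot[\fl_1,\gl]$, this shows that $(\fl,\gl)\mapsto\cal([\fl,\gl])$ is bi-additive and descends along $\tflux_\om$; antisymmetry is immediate from $[\fl,\gl]=[\gl,\fl]^{-1}$.

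For the second step, I would use the nondegeneracy of $\om$ to reduce to checking the identity on classes $[\iota_X\om]$ with $X$ a compactly supported symplectic vector field: the map $X\mapsto\iota_X\om$ is a bijection between such $X$ and closed compactly supported $1$-forms. Given $X,Y$ of this form with flows $\phi,\psi$, set $\fl_s:=[\{\phi^{us}\}_{u\in[0,1]}]$ and $\gl_t:=[\{\psi^{ut}\}_{u\in[0,1]}]$, so $\tflux_\om(\fl_s)=s[\iota_X\om]$ and $\tflux_\om(\gl_t)=t[\iota_Y\om]$. Bilinearity from Step~1 then gives $\cal([\fl_s,\gl_t])=st\cdot B([\iota_X\om],[\iota_Y\om])$. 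A Taylor expansion of the commutator shows that $[\fl_s,\gl_t]$ agrees to leading order in $(s,t)$ with the time-$st$ flow of $-[X,Y]$; by $\iota_{[X,Y]}\om=-d\om(X,Y)$, this is the Hamiltonian vector field of $\om(X,Y)$, so Calabi evaluates to $st\int_\Mm\om(X,Y)\,\om^n+O((s,t)^3)$. The algebraic identity
\[
\om(X,Y)\cdot\om^n\;=\;n\cdot(\iota_X\om)\wedge(\iota_Y\om)\wedge\om^{n-1},
\]
following from $\iota_Y(\iota_X\om\wedge\om^n)=0$ on the $2n$-manifold $\Mm$, then rewrites this as $\int_\Mm\om(X,Y)\,\om^n=n\bb_\om([\iota_X\om],[\iota_Y\om])$, and reading off the coefficient of $st$ yields $B=n\bb_\om$ on the chosen classes.

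The hard part will be the second step: the isotopy $\phi^{us}\psi^{ut}\phi^{-us}\psi^{-ut}$ is only symplectic, not necessarily Hamiltonian, so the formula $\cal=\int\!\!\int\! H\,\om^n$ cannot be applied to it directly. The resolution is that $[\fl_s,\gl_t]$ lies in $\tHamc$ and is therefore represented by some genuine Hamiltonian isotopy. The bilinearity from Step~1 is what makes the calculation tractable: it collapses the Taylor expansion in $(s,t)$ to its quadratic coefficient, so only the Lie bracket $[X,Y]$ and the induced Hamiltonian $\om(X,Y)$ contribute, and sign conventions for $\cal$ and for Hamiltonian vector fields can be tracked unambiguously.
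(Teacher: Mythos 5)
Your Step~1 is correct, and it is worth noting that it is exactly the paper's general machinery specialized to defect zero: since $\DD(\cal)=0$, conjugation-invariance of $\cal$ plus the identity $[\fl_1\fl_2,\gl]=\fl_1[\fl_2,\gl]\fl_1^{-1}\cdot[\fl_1,\gl]$ give exact bi-additivity and descent along $\tflux_\om$, which is Theorem~\ref{thm=bilinear_main} with $\sim_0$ in place of $\sim_{\DD(\muf)}$. (One small repair: this yields $\ZZ$-, hence $\QQ$-, bilinearity, so the identity $\cal([\fl_s,\gl_t])=st\,B$ is only licensed for rational $s,t$; that suffices, since you only need the limit $s,t\to0$ along rationals.) Keep in mind, however, that the paper does not prove Theorem~\ref{thm=rousseau} at all --- it quotes it from Rousseau --- so all of the actual content of the statement sits in your Step~2, and that is where the problem lies.

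The genuine gap is the claim $\cal([\fl_s,\gl_t])=st\int_\Mm\om(X,Y)\,\om^n+O(|(s,t)|^3)$. Knowing that $[\fl_s,\gl_t]$ agrees with the time-$st$ flow of $-[X,Y]$ ``to leading order as diffeomorphisms'' (or even as paths) gives no control whatsoever on its Calabi invariant: $\cal$ is evaluated through a representing Hamiltonian isotopy, and your proposed resolution --- that the element lies in $\tHamc(\Mm,\om)$ and is therefore represented by \emph{some} Hamiltonian isotopy --- produces nothing computable. Bilinearity does not rescue this: it reduces the problem to computing $\lim_{s,t\to 0}\cal([\fl_s,\gl_t])/(st)$, which is the original problem again, so the argument is circular at exactly the point you flag as hard. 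The missing idea (and the substance of Rousseau's and Banyaga's proofs) is that the pointwise commutator path $u\mapsto \phi^{us}\psi^{ut}\phi^{-us}\psi^{-ut}$ is \emph{itself} a Hamiltonian isotopy with an explicitly computable Hamiltonian: its generating vector field $Z_u$ satisfies that $\iota_{Z_u}\om$ is a sum of terms of the form $\gamma-\rho^\ast\gamma$ (with $\gamma$ closed and compactly supported, $\rho$ a compactly supported symplectomorphism symplectically isotopic to the identity), and each such term is exact with the explicit compactly supported primitive $\int_0^1\rho_v^\ast(\iota_{W_v}\gamma)\,dv$, where $\{\rho_v\}$ is an isotopy from the identity to $\rho$ with generator $W_v$. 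Integrating these primitives against $\om^n$ computes $\cal([\fl_s,\gl_t])$ in closed form and yields the pairing identity \emph{exactly}, for all $s,t$ --- no Taylor expansion, no $O(|(s,t)|^3)$ error term, and no appeal to any continuity of $\cal$. Without this device (or an equivalent one), Step~2 does not go through.

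Separately, your Step~2 contains a sign inconsistency. Under the paper's convention $\om(X_H,\cdot)=-dH$, your (correct) identity $\iota_{[X,Y]}\om=-d\,\om(X,Y)$ says $[X,Y]=X_{\om(X,Y)}$; hence the time-$st$ flow of $-[X,Y]$ is generated by the Hamiltonian $-st\,\om(X,Y)$, and your own chain of claims evaluates Calabi to $-st\int_\Mm\om(X,Y)\,\om^n$, not $+st\int_\Mm\om(X,Y)\,\om^n$. Whether the final sign agrees with the theorem as stated then depends on the composition convention for diffeomorphisms (which fixes the sign in ``commutator of flows $\approx$ flow of $\pm[X,Y]$''), so ``tracked unambiguously'' is optimistic: you must pin that convention down explicitly, and as written your three displayed claims are jointly inconsistent by a sign.
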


Our main goal in the present paper is to obtain an analog of Rousseau's correspondence in Theorem~\ref{thm=rousseau} for a \emph{closed} symplectic manifold. More precisely, we regard Theorem~\ref{thm=rousseau} as a construction of an alternating $\RR$-bilinear form $\bb_{\omega}$ out of the Calabi homomorphism $\cal$. However, for a closed connected symplectic manifold $(\Mm,\omega)$, the group $\tHamc(\Mm,\omega)$ does \emph{not} admit a non-zero homomorphism to $\RR$ since $\tHamc(\Mm,\omega)$ is perfect (see Banyaga \cite{Ban}). Our idea is to employ a homogeneous \emph{quasimorphism} on $\tHamc(\Mm,\omega)$ that is invariant under the adjoint action by $\tSympc(\Mm,\omega)$. Here, for a group $\Ng$, a function $\muf\colon \Ng\to \RR$ is called a \emph{quasimorphism} if the defect 
\[
\DD(\muf)=\sup\{|\muf(\xl_1\xl_2)-\muf(\xl_1)-\muf(\xl_2)|\;|\;\xl_1,\xl_2\in \Ng\}
\]
of $\muf$ is finite. A quasimorphism $\muf\colon \Ng\to \RR$ is said to be \emph{homogeneous} if $\muf(\xl^k)=k\muf(\xl)$ for every $\xl\in \Ng$ and $k\in \ZZ$. If $\Ng$ is a normal subgroup of a group $\Gg$, then the concept of $\Gg$-invariance of $\muf$ is defined as the invariance under the adjoint $\Gg$-action on $\Ng$. We will recall these definitions in Subsection~\ref{subsec=qm} in more detail. For an open connected symplectic manifold $(\Mm,\omega)$, the Calabi homomorphism $\cal\colon \tHamc(\Mm,\omega)\to \RR$ can be regarded as a $\tSympc(\Mm,\omega)$-invariant homogeneous \emph{quasimorphism with defect $0$}.

For  a closed connected symplectic surface $(\Mm,\omega)$ of genus at least two, Py \cite{Py06} constructed a $\tSympc(\Mm,\omega)$-invariant homogeneous quasimorphism $\mupyf$ on $\tHamc(\Mm,\omega)$, which is called \emph{Py's Calabi quasimorphism.} 
In this setting, we obtain the following result. Here, for $a,b\in \RR$ and $C\in\RR_{\geq 0}$, we write $a\sim_C b$ if $|a-b|\leq C$. 
\begin{thm}[constructing $\bb_{\omega}$ out of $\mupyf$]\label{thm=KKMM}
Let $(\Mm,\omega)$ be a closed connected surface of genus at least two equipped with a symplectic form. Let $\mupyf$ be Py's Calabi quasimorphism on $\tHamc(\Mm,\omega)$. Then, for all $\fl,\gl\in \tSympc(\Mm,\omega)$, we have
\begin{equation}\label{eq=Py_form}
\mupyf([\fl,\gl])\sim_{\DD(\mupyf)} \bb_{\omega}(\tflux_{\omega}(\fl),\tflux_{\omega}(\gl)).
\end{equation}
\end{thm}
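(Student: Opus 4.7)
My plan is to reduce Theorem~\ref{thm=KKMM} to the general construction of a bilinear form from an invariant quasimorphism, followed by a normalization computation. First, since $\tflux_{\omega}$ is a homomorphism into the abelian group $\HHH^1_c(\Mm;\RR)$, every commutator $[\fl,\gl]$ lies in $\tHamc(\Mm,\omega)=\ker(\tflux_{\omega})$, so $\mupyf([\fl,\gl])$ is defined. Applying the general machinery announced in the abstract to $\Gg=\tSympc(\Mm,\omega)$ and the $\Gg$-invariant homogeneous quasimorphism $\mupyf$ (using Banyaga's theorem that $\tHamc(\Mm,\omega)$ is perfect, which combined with the flux exact sequence identifies $\Gg^{\mathrm{ab}}$ with $\HHH^1_c(\Mm;\RR)$) should yield an alternating $\RR$-bilinear form $\bb_{\mupyf}$ on $\HHH^1_c(\Mm;\RR)$ together with the estimate
\[
\bigl|\mupyf([\fl,\gl]) - \bb_{\mupyf}(\tflux_{\omega}(\fl),\tflux_{\omega}(\gl))\bigr|\leq\DD(\mupyf).
\]
The core of this step is the commutator identity $[\al\bl,\gl]=\al[\bl,\gl]\al^{-1}\cdot[\al,\gl]$, which combined with the $\Gg$-invariance of $\mupyf$ yields $\ZZ$-bilinearity of $\bb_{\mupyf}$ modulo $\DD(\mupyf)$, and homogeneity extends this to $\RR$-bilinearity.

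Next, I would identify $\bb_{\mupyf}$ with the symplectic pairing $\bb_{\omega}$ by a symmetry argument. Because $\mupyf$ is equivariant under the symplectic mapping class group $\Mod(\Mm)$, the form $\bb_{\mupyf}$ is $\Mod(\Mm)$-invariant; through the natural surjection $\Mod(\Mm)\twoheadrightarrow\Sp(2\genus,\ZZ)$ on $\HHH^1(\Mm;\RR)\cong\RR^{2\genus}$ this gives $\Sp(2\genus,\ZZ)$-invariance, and by Borel density the invariance extends to $\Sp(2\genus,\RR)$. The space of $\Sp(2\genus,\RR)$-invariant alternating $\RR$-bilinear forms on $\RR^{2\genus}$ is one-dimensional (Schur's lemma applied to the standard irreducible representation), spanned by $\bb_{\omega}$. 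Hence $\bb_{\mupyf}=c\cdot\bb_{\omega}$ for some constant $c\in\RR$.

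Finally, to pin down $c=1$, I would test both sides on a concrete pair. Take closed $1$-forms $\alpha,\beta$ supported in tubular neighborhoods of two transverse simple closed curves meeting once, so $\int_{\Mm}\alpha\wedge\beta=1$, and let $\fl,\gl$ be the time-$1$ flows of the symplectic vector fields $X_\alpha,X_\beta$ defined by $\iota_{X_\alpha}\omega=\alpha$, $\iota_{X_\beta}\omega=\beta$. A direct computation gives $(\iota_{X_\alpha}\omega)\wedge(\iota_{X_\beta}\omega)=\omega(X_\alpha,X_\beta)\,\omega$ on the surface, so $[X_\alpha,X_\beta]$ is Hamiltonian with Hamiltonian $\omega(X_\alpha,X_\beta)$ supported in a small disk $\Um$ around the intersection point, and Rousseau's Theorem~\ref{thm=rousseau} applied locally with $n=1$ should deliver the desired value. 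The \textbf{main obstacle} is precisely this final normalization: rigorously localizing $\mupyf$ to the disk $\Um$ and identifying it there (up to its defect) with the Calabi homomorphism $\cal$ requires unpacking Py's original construction of $\mupyf$ as a fiberwise average over $\Mm$ of a quasimorphism on the universal cover of $\Mm$, and carefully tracking all normalization constants through that definition.
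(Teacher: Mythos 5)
Your first step (running the general machinery to get an alternating $\ZZ$-bilinear form $\bb_{\mupyf}$ with the defect estimate \eqref{eq=bilinear_main}) is exactly the paper's Theorem~\ref{thm=bilinear_main}, and is fine. The identification $\bb_{\mupyf}=\bb_{\omega}$ is where your route diverges from the paper's, and it has genuine gaps. The most basic one: your claim that ``homogeneity extends this to $\RR$-bilinearity'' is false. Homogeneity is already what gives $\ZZ$-bilinearity, and $\ZZ$-bilinearity automatically upgrades only to $\QQ$-bilinearity; a $\QQ$-bilinear form on the $\RR$-vector space $\HHH^1_c(\Mm;\RR)$ need not be $\RR$-bilinear (Hamel-basis constructions). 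The paper treats $\RR$-bilinearity as a substantive issue: for Calabi quasimorphisms it is Proposition~\ref{prop=RRbilinear_Calabi}, proved by subtracting Brandenbursky's extendable quasimorphism and invoking the Entov--Polterovich--Py $C^0$-continuity criterion to verify the hypotheses of Corollary~\ref{cor=RRbilinear}. This gap is fatal specifically for your Borel-density argument: the stabilizer of a discontinuous ($\QQ$-bilinear but not $\RR$-bilinear) form is not Zariski closed, so $\Sp(2\genus,\ZZ)$-invariance does not propagate to $\Sp(2\genus,\RR)$-invariance, and Schur's lemma then pins down $\bb_{\mupyf}$ at best on rational cohomology classes, whereas \eqref{eq=Py_form} concerns arbitrary $\fl,\gl\in\tSympc(\Mm,\omega)$ with arbitrary real fluxes. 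A further unproved assertion is the $\Mod(\Mm)$-invariance of $\bb_{\mupyf}$ itself: via Proposition~\ref{prop=inv} this requires $\mupyf$ to be invariant under conjugation by all of $\Sympc(\Mm,\omega)$, not just $\tSympc(\Mm,\omega)$; the paper states such invariance only for $\mushf$ (Proposition~\ref{prop=Sympc-inv}), not for $\mupyf$.

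The normalization $c=1$, which you flag as the main obstacle, is precisely where the paper's proof does its real work, and your sketch of it contains an error: the group commutator $[\fl,\gl]$ of the time-one flows is \emph{not} the time-one flow of the Lie bracket $[X_\alpha,X_\beta]$, and it is not supported in a small disk around the intersection point. Writing $[\fl,\gl]=(\fl\gl\fl^{-1})\gl^{-1}$, its support lies in $\fl(\mathrm{supp}(\gl))\cup\mathrm{supp}(\gl)$, i.e.\ in a neighborhood of the annulus around the second curve (together with part of the first annulus). Annuli around essential simple closed curves on a genus $\geq 2$ surface are not displaceable, so no ``Calabi property for disks'' can localize $\mupyf$ there; what is needed is Py's Calabi property for \emph{annuli}. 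That is exactly the content of the computation the paper imports from \cite{KKMM2} as Proposition~\ref{prop=KKMM}: explicit elements $\fl_1^{(k)},\fl_2^{\pm}$ with $\mupyf([\fl_1^{(k)},\fl_2^+][\fl_1^{(k)},(\fl_2^-)^{-1}]^{-1})\sim_{\DD(\mupyf)}\frac{k}{m}\,\bb_{\omega}(\gaml_1,\gaml_2)$, which, combined with the refined limit formula \eqref{eq=refined_limit_formula_left}, yields $\bb_{\mupyf}=\bb_{\omega}$ directly on \emph{all} real classes (Theorem~\ref{thm=Py}). Note that this route needs neither $\RR$-bilinearity nor any symmetry argument, so the paper's proof of Theorem~\ref{thm=KKMM} sidesteps both of the issues above; your proposal, as it stands, resolves neither.
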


Theorem~\ref{thm=KKMM} may be seen as an analog of the construction of $\bb_{\omega}$ out of $\cal$ in Rousseau's theorem (Theorem~\ref{thm=rousseau}). Indeed, we note that $\DD(\cal)=0$ for the case where $(\Mm,\omega)$ is open. We also remark that \eqref{eq=Py_form}  yields the following \emph{limit formula}:
\begin{equation}\label{eq=Py_form_limit}
\bb_{\omega}(\tflux_{\omega}(\fl),\tflux_{\omega}(\gl))=\lim_{k\to \infty}\frac{\mupyf([\fl^k,\gl])}{k}=\lim_{l\to \infty}\frac{\mupyf([\fl,\gl^l])}{l};
\end{equation}
we will argue in a more general setting in Subsection~\ref{subsec=limit_formula}.

\begin{rem}\label{rem=pi1trivial}
In the setting of Theorem~\ref{thm=KKMM}, it is known that $\pi_1(\Sympc_0(\Mm,\omega))$ and $\pi_1(\Hamc(\Mm,\omega))$ are both trivial (see Proposition~\ref{survey on flux}~(6)), so that $\tSympc(\Mm,\omega)=\Sympc_0(\Mm,\omega)$ and $\tHamc(\Mm,\omega)=\Hamc(\Mm,\omega)$. 
\end{rem}

Limit formula \eqref{eq=Py_form_limit}, together with Remark~\ref{rem=pi1trivial}, immediately implies the following result  (see also Remark~\ref{rem=intersection}). This was the main theorem in the previous work \cite{KKMM2} of some of the authors, and  at the moment they were unaware of the theory behind it of  constructing $\bb_{\omega}$ out of $\mupyf$.

\begin{cor}[{\cite[Theorem~1.1]{KKMM2}}]\label{cor=KKMM}
Let $(\Mm,\omega)$ be a closed connected surface of genus at least two equipped with a symplectic form. 
For all $\fl,\gl\in \Sympc_0(\Mm,\omega)$ satisfying $\fl\gl=\gl\fl$, we have
\[
\flux_{\omega}(\fl)\smile \flux_{\omega}(\gl)=0,
\]
where $\smile$ denotes the cup product on the  cohomology ring $\HHH^{\bullet}(\Mm;\RR)$ of $\Mm$.
\end{cor}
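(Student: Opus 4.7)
The plan is to derive the corollary directly from Theorem~\ref{thm=KKMM}, or rather its limit reformulation~\eqref{eq=Py_form_limit}, and then translate the vanishing of the symplectic pairing into that of the cup product.

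First I would invoke Remark~\ref{rem=pi1trivial}: for a closed connected surface of genus at least two, $\pi_1(\Sympc_0(\Mm,\omega))$ is trivial, so $\tSympc(\Mm,\omega)=\Sympc_0(\Mm,\omega)$ and under this identification $\tflux_\omega$ coincides with $\flux_\omega$. Hence $\fl$ and $\gl$ lift canonically to $\tSympc(\Mm,\omega)$, and Theorem~\ref{thm=KKMM} applies to this lifted pair with $\tflux_\omega$ replaced by $\flux_\omega$.

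Second, the commutativity hypothesis $\fl\gl=\gl\fl$ gives $\fl^k\gl=\gl\fl^k$ for every $k\in\ZZ$, so the group commutator $[\fl^k,\gl]$ is the identity element; since $\mupyf$ is a homogeneous quasimorphism it vanishes at the identity, whence $\mupyf([\fl^k,\gl])=0$ for all $k$. Substituting into limit formula~\eqref{eq=Py_form_limit} yields
\[
\bb_\omega\bigl(\flux_\omega(\fl),\flux_\omega(\gl)\bigr)=\lim_{k\to\infty}\frac{\mupyf([\fl^k,\gl])}{k}=0.
\]
Equivalently, without invoking the limit formula, Theorem~\ref{thm=KKMM} directly gives $|k\cdot\bb_\omega(\flux_\omega(\fl),\flux_\omega(\gl))|=|\bb_\omega(\flux_\omega(\fl^k),\flux_\omega(\gl))|\leq\DD(\mupyf)$ for every $k\in\ZZ$, forcing $\bb_\omega(\flux_\omega(\fl),\flux_\omega(\gl))=0$.

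Finally, I would identify the symplectic pairing on $\HHH^1(\Mm;\RR)$ with the cup-product pairing. Since $\dim\Mm=2$, in~\eqref{eq=symplecticpairing} we have $n=1$ and $\omega^{n-1}=1$, so $\bb_\omega([\alpha],[\beta])=\int_\Mm\alpha\wedge\beta$. Under the de~Rham isomorphism, $[\alpha]\smile[\beta]\in\HHH^2(\Mm;\RR)$ is represented by $\alpha\wedge\beta$, and the integration map $\HHH^2(\Mm;\RR)\to\RR$ is an isomorphism on the closed connected orientable surface $\Mm$. Thus $\bb_\omega(\flux_\omega(\fl),\flux_\omega(\gl))=0$ is equivalent to $\flux_\omega(\fl)\smile\flux_\omega(\gl)=0$ in $\HHH^2(\Mm;\RR)$, completing the proof. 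There is essentially no genuine obstacle once Theorem~\ref{thm=KKMM} is granted; the only bookkeeping needed is the passage between $\tflux_\omega$ and $\flux_\omega$ (handled by Remark~\ref{rem=pi1trivial}) and the identification of $\bb_\omega$ with the cup-product pairing in the two-dimensional case.
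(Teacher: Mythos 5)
Your proof is correct and matches the paper's own argument: the paper likewise deduces the corollary immediately from limit formula~\eqref{eq=Py_form_limit} together with Remark~\ref{rem=pi1trivial} (commutativity makes $[\fl^k,\gl]$ trivial, so the pairing vanishes), and identifies $\bb_{\omega}$ with the cup-product pairing exactly as you do via Remark~\ref{rem=intersection}. No gaps; your alternative derivation from the bounded estimate in Theorem~\ref{thm=KKMM} is just the limit formula in disguise.
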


%

Our main result employs \emph{Shelukhin's quasimorphism} to construct an alternating $\RR$-bilinear form $\bb_{\mushf}$ on $\HHH^1_c(\Mm;\RR)$ for a general closed connected symplectic manifold $(\Mm,\omega)$. We will go into greater detail in the next subsection. We summarize the results of constructing bilinear forms $\bb_{\muf}$ out of invariant quasimorphisms $\muf$ in Table~\ref{table=bilinear}.

\begin{table}[h]
\caption{constructing bilinear forms out of invariant quasimorphisms}
\label{table=bilinear}
\begin{tabular}{c||c|c|c}
 \begin{tabular}{c}
$\Mm^{2n}$ \\ (symplectic manifold)
\end{tabular}
& open & 
\begin{tabular}{c}closed surface \\ of genus $\geq 2$ 
\end{tabular}

& closed \\
 \hline 
 \begin{tabular}{c}
$\muf$ \\ ($\tSympc(\Mm)$-invariant \\ quasimorphism  \\ on $\tHamc(\Mm)$)
\end{tabular} & 
\begin{tabular}{c}
$\cal$\medskip  \\ Calabi \\ homomorphsm 
\end{tabular}
& 
\begin{tabular}{c}
$\mupyf$ \medskip \\ Py's Calabi \\ quasimomorphsm \cite{Py06}
\end{tabular}
&  
\begin{tabular}{c}
$\mushf$ \medskip \\  Shelukhin's \\ quasimomorphsm \cite{Shelukhin}
\end{tabular} \\ \hline
\begin{tabular}{c}
$\bb_{\muf}$ \\ (bilinear form on $\HHH^1_c(\Mm)$ \\ constructed out of $\muf$)
\end{tabular}
&
\begin{tabular}{c}
$n\cdot \bb_{\omega}$ \\ Rousseau \cite{Rou} \\ {[Theorem~\ref{thm=rousseau}]}
\end{tabular}
&
\begin{tabular}{c}
$ \bb_{\omega}$ \\ {[Theorem~\ref{thm=KKMM}]}
\end{tabular}
&
\begin{tabular}{c}
$\Sympc(\Mm)$-invariant \\ $\RR$-bilinear form $\bb_{\mushf}$ \\ {[Theorem~\ref{mthm=RRbilinear}]} \\ \hline
$\bb_{\mushf}$ is determined  \\ fully or partially  \\ for certain $\Mm$ \\ {[Theorem~\ref{mthm=explicitShelukhin}]}

\end{tabular}

\end{tabular}
\end{table}

\subsection{Main result~1: constructing an $\RR$-bilinear form $\bb_{\mushf}$ from Shelukhin's quasimorphism $\mushf$} \label{subsec=Shelukhin1}

Our main results develop the theory of constructing bilinear forms on $\HHH^1_c(\Mm;\RR)$ out of $\tSympc(\Mm,\omega)$-invariant homogeneous quasimorphisms on $\tHamc(\Mm,\omega)$ for a \emph{closed} connected symplectic manifold $(\Mm,\omega)$. The invariant quasimorphism we mainly treat in the present paper is \emph{Shelukhin's quasimorphism} $\mushf$, which was constructed by Shelukhin in \cite{Shelukhin}. We note that $\mushf$ was written as $\mathfrak{S}_{\Mm}$ in \cite{Shelukhin}. We will briefly recall the construction of $\mushf$ in Subsection~\ref{subsec:sh_qm}.

The group  $\tSympc(\Mm,\omega)$  admits an action of $\Sympc(\Mm,\omega)$ by conjugation: for $\theta\in \Sympc(\Mm,\omega)$ and for $[\{\psi^t\}_{t\in [0,1]}]\in \tSympc(\Mm,\omega)$, set
\begin{equation}\label{eq=Sympc-action}
\theta\cdot [\{\psi^t\}_{t\in [0,1]}]=[\{\theta\psi^t\theta^{-1}\}_{t\in [0,1]}].
\end{equation}
This action is by group automorphisms, and it leaves $\tHamc(\Mm,\omega)$  invariant (setwise). 
It induces a $\Sympc(\Mm,\omega)$-action on $\HHH^1_c(\Mm;\RR)=\tSympc(\Mm,\omega)/\tHamc(\Mm,\omega)$. In this manner, we can define the notion of $\Sympc(\Mm,\omega)$-invariance for a bilinear form on $\HHH^1_c(\Mm;\RR)$ (we provide the precise definition of this in a general setting in Subsection~\ref{subsec=action}). 

Our first main result states that a \emph{$\Sympc(\Mm,\omega)$-invariant} alternating \emph{$\RR$-bilinear} form on $\HHH^1_c(\Mm;\RR)$ is constructed out of $\mushf$. 
\begin{mthm}[constructing $\bb_{\mushf}$ out of $\mushf$]\label{mthm=RRbilinear}Let $(\Mm,\omega)$ be a closed connected symplectic manifold. Let $\mushf$ be Shelukhin's quasimorphism on $\tHamc(\Mm,\omega)$. Then, there exists a $\Sympc(\Mm,\omega)$-invariant  alternating $\RR$-bilinear form $\bb_{\mushf}\colon \HHH^1_c(\Mm;\RR)\times \HHH^1_c(\Mm;\RR)\to \RR$ such that for all $\fl,\gl\in \tSympc(\Mm,\omega)$, 
\begin{equation}\label{eq=Shelukhin_form}
\mushf([\fl,\gl])\sim_{\DD(\mushf)} \bb_{\mushf}(\tflux_{\omega}(\fl),\tflux_{\omega}(\gl)).
\end{equation}
\end{mthm}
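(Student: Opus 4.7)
The plan is to deduce Theorem~\ref{mthm=RRbilinear} by specializing the general bilinearization machinery announced in the abstract to the flux short exact sequence and then promoting the resulting $\ZZ$-bilinear form to an $\RR$-bilinear one. Concretely, I take $\Gg=\tSympc(\Mm,\omega)$, $\Ng=\tHamc(\Mm,\omega)$, and $\muf=\mushf$; the flux exact sequence identifies $\Gg/\Ng\cong\HHH^1_c(\Mm;\RR)$ and places $[\Gg,\Gg]\leqslant\Ng$, so that $\mushf([\fl,\gl])$ is defined for all $\fl,\gl\in\Gg$.

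For the $\ZZ$-bilinear construction, a key algebraic input is the estimate $|\mushf([\hf,\gl])|\leq\DD(\mushf)$ for $\hf\in\Ng$ and $\gl\in\Gg$, which follows from the $\Gg$-invariance and homogeneity of $\mushf$ via
\[
\mushf([\hf,\gl])\sim_{\DD(\mushf)}\mushf(\hf)+\mushf(\gl\hf^{-1}\gl^{-1})=\mushf(\hf)+\mushf(\hf^{-1})=0.
\]
Combined with the commutator identity $[\fl_1\fl_2,\gl]=(\fl_1[\fl_2,\gl]\fl_1^{-1})\cdot[\fl_1,\gl]$ and $\Gg$-invariance again, this shows that $(\fl,\gl)\mapsto\mushf([\fl,\gl])$ is bi-additive up to an error bounded by a constant multiple of $\DD(\mushf)$, and depends on $\fl,\gl$ only through their flux classes up to the same kind of error. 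Standard homogenization---for instance the limit $\bb_{\mushf}(\al,\bl):=\lim_{\kf\to\infty}\mushf([\fl^{\kf},\gl])/\kf$ for any lifts $\fl,\gl$ of $\al,\bl$, which converges by a Fekete-type argument---then produces an exact $\ZZ$-bilinear form on $\HHH^1_c(\Mm;\RR)$ satisfying $\mushf([\fl,\gl])\sim_{\DD(\mushf)}\bb_{\mushf}(\tflux_\omega(\fl),\tflux_\omega(\gl))$. The alternating property follows from $[\gl,\fl]=[\fl,\gl]^{-1}$ and homogeneity of $\mushf$, while the $\Sympc(\Mm,\omega)$-invariance follows from the $\Sympc(\Mm,\omega)$-invariance of $\mushf$ under \eqref{eq=Sympc-action} together with $\theta[\fl,\gl]\theta^{-1}=[\theta\fl\theta^{-1},\theta\gl\theta^{-1}]$, which holds because the $\Sympc$-action is by group automorphisms.

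The main obstacle is the final upgrade from $\ZZ$-bilinearity to $\RR$-bilinearity. $\ZZ$-bilinearity automatically yields $\QQ$-bilinearity, since $\HHH^1_c(\Mm;\RR)$ is a $\QQ$-vector space, but extending further to $\RR$-bilinearity requires a continuity input that the general machinery does not supply. My plan is to use that, for fixed $\al,\bl\in\HHH^1_c(\Mm;\RR)$ and any $\rl\in\RR$, the surjectivity of $\tflux_\omega$ produces a smooth one-parameter family $\{\fl_{\rl}\}_{\rl\in\RR}$ of lifts of $\rl\al$, along which $\rl\mapsto\mushf([\fl_{\rl},\gl])$ should be continuous by the definition of Shelukhin's quasimorphism recalled in Subsection~\ref{subsec:sh_qm}. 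Since $\bb_{\mushf}(\rl\al,\bl)$ is $\QQ$-linear in $\rl$ and stays within $\DD(\mushf)$ of this continuous function, it is bounded on compact subsets of $\RR$; and any $\QQ$-linear function $\RR\to\RR$ bounded on a neighborhood of $0$ is automatically $\RR$-linear. Establishing the required continuity of $\mushf$ along smooth families in $\tSympc(\Mm,\omega)$, and checking that it cooperates with the homogenization limit defining $\bb_{\mushf}$, is where I expect the principal technical difficulty to lie.
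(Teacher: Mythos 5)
Your algebraic skeleton is sound, and for the $\ZZ$-bilinear part you take a genuinely more direct route than the paper. The paper (Section~\ref{sec=bilinear}) lifts $\Gg=\tSympc(\Mm,\omega)$ to a free group $\Fg\twoheadrightarrow\Gg$, splits the pullback of $\mushf$ to $[\Fg,\Fg]$ as (invariant homomorphism) $+$ (restriction of a quasimorphism on $\Fg$) using $\HHH^2(\Fg)=0$ (Proposition~\ref{prop=decomposition}), and defines $\bb_{\mushf}$ from the resulting ``core'' homomorphism, so exact bilinearity is automatic and the limit formula (Theorem~\ref{thm=limit_formula}) is derived only afterwards. Your direct definition $\bb_{\mushf}(\al,\bl)=\lim_{k\to\infty}\mushf([\fl^k,\gl])/k$ also works: your bound $|\mushf([\hf,\gl])|\le\DD(\mushf)$ for $\hf\in\Ng$, $\gl\in\Gg$, the identity $[\fl_1\fl_2,\gl]=\fl_1[\fl_2,\gl]\fl_1^{-1}\cdot[\fl_1,\gl]$, and $\Gg$-invariance give quasi-additivity of $k\mapsto\mushf([\fl^k,\gl])$ with error $\DD(\mushf)$, hence convergence, independence of the choice of lifts, exact bi-additivity after dividing by $k$, and \eqref{eq=Shelukhin_form} with the sharp constant $\DD(\mushf)$ (the alternating property is cleanest via $\bb_{\mushf}(\al,\al)=0$ plus bi-additivity, rather than directly from $[\gl,\fl]=[\fl,\gl]^{-1}$, since your limit is asymmetric in the two slots). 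What the free-group route buys, and yours does not, is the identification $\Ker(\BbG)=\im^{\ast}\QQQ(\Gg)$ (Propositions~\ref{prop=ore} and \ref{prop=V}), needed for the later extendability theorems but not for Theorem~\ref{mthm=RRbilinear} itself. Your invariance argument matches the paper's (Proposition~\ref{prop=Sympc-inv} combined with Proposition~\ref{prop=inv}).

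The genuine gap is the step you yourself flag: the continuity input for $\RR$-bilinearity. Your reduction is correct ($\QQ$-linearity plus boundedness on a compact interval forces $\RR$-linearity), and the one-parameter subgroups $c_{\al}(t)=[\{\psi^s\}_{s\in[0,t]}]$ obtained from flows of symplectic vector fields are exactly the families the paper uses (condition (2) of Proposition~\ref{prop=RRbilinear}). But the continuity you invoke---of $\mushf$ itself along such families---is the wrong target and is never established in the paper: $\mushf$ is a homogenization, and homogenization does not preserve continuity. What the paper actually proves (Proposition~\ref{prop=Shelukhin_conti}) is that the \emph{pre-homogenized} quasimorphism $\nuf_J$ of \eqref{before_homogenize} is continuous via smooth isotopy, by splitting $\nuf_J$ along an isotopy into the disk term $\int_{D(s)}\Omega$ and the moment-map term and controlling each using smoothness of $(t,x)\mapsto (h_t\cdot J)_x$ and compactness of supports; only \emph{boundedness} is then transferred to $\mushf$ via $\|\mushf-\nuf_J\|_{\infty}\le\DD(\nuf_J)$ (Lemma~\ref{lem=homogenization}~(2), Remark~\ref{rem=unhomog}). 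One must also check that $t\mapsto[c_{\al}(t),\gl]$ in $\tHamc(\Mm,\omega)$ is a lift of a smooth isotopy from identity in $\Hamc(\Mm,\omega)$, as done in Proposition~\ref{prop=RRbilinear_nonhomog}. Without this analytic ingredient your proof establishes only $\QQ$-bilinearity of $\bb_{\mushf}$; with it, and with the target corrected from $\mushf$ to $\nuf_J$, your argument closes and coincides in substance with the paper's Propositions~\ref{prop=RRbilinear_nonhomog} and \ref{prop=RRbilinear_Shelukhin}.
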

Similar to deducing \eqref{eq=Py_form_limit} from \eqref{eq=Py_form}, we can obtain from \eqref{eq=Shelukhin_form} the following \emph{limit formula}:
\begin{equation}\label{eq=Shelukhin_form_limit}
\bb_{\mushf}(\tflux_{\omega}(\fl),\tflux_{\omega}(\gl))=\lim_{k\to \infty}\frac{\mushf([\fl^k,\gl])}{k}=\lim_{l\to \infty}\frac{\mushf([\fl,\gl^l])}{l},
\end{equation}
which determines the $\RR$-bilinear form $\bb_{\mushf}$.

\begin{rem}\label{rem=Symp/Symp_0}
On the $\Sympc(\Mm,\omega)$-action on $\HHH^1_c(\Mm;\RR)$, the action by the subgroup $\Sympc_0(\Mm,\omega)$ is trivial.
Hence, this $\Sympc(\Mm,\omega)$-action induces the action of the symplectic mapping class group $\Sympc(\Mm,\omega)/\Sympc_0(\Mm,\omega)$ on $\HHH^1_c(\Mm;\RR)$.
\end{rem}

The $\RR$-bilinear form $\bb_{\mushf}$ may be  of special interest in symplectic geometry, as we will see in the next two subsections. We will also discuss several applications of $\bb_{\mushf}$ in Section~\ref{sec=application}.

\subsection{Main result~2: $\bb_{\mushf}$ controls the extendability of $\mushf$}\label{subsec=Shelukhin2}

In this subsection and the next subsection, we present properties of the $\RR$-bilinear form  $\bb_{\mushf}$, obtained in Theorem~\ref{mthm=RRbilinear}. The first property asserts that $\bb_{\mushf}$  completely controls the  extendability of $\mushf$ in the following sense.

\begin{mthm}[extendability of  $\mushf$]\label{mthm=Shelukhin_extendable}
Let $(\Mm,\omega)$ be a closed connected symplectic manifold. Let $\mushf$ be Shelukhin's quasimorphism on $\tHamc(\Mm,\omega)$. Let $\Pg$ be a subgroup of $\tSympc(\Mm,\omega)$ with $\Pg\geqslant \tHamc(\Mm,\omega)$. Then the following are equivalent.
\begin{enumerate}[label=\textup{(\roman*)}]
  \item The invariant quasimorphism $\mushf$ is extendable to $\Pg$, that is, there exists a homogeneous quasimorphism $\phf$ on $\Pg$ such that $\phf|_{\tHamc(\Mm,\omega)}=\mushf$;
  \item the restriction of $\bb_{\mushf}$ to $\tflux_{\omega}(\Pg)\times \tflux_{\omega}(\Pg)$ equals the zero-form.
\end{enumerate}
\end{mthm}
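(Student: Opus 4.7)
I would first dispatch (i) $\Rightarrow$ (ii) using the limit formula \eqref{eq=Shelukhin_form_limit}. Assume that $\phf$ is a homogeneous quasimorphism on $\Pg$ extending $\mushf$. Since any homogeneous quasimorphism is conjugation invariant, the standard bound $|\phf([\xl,\yl])| \leq \DD(\phf)$ holds for all $\xl,\yl \in \Pg$. Applied to $\xl = \fl^k$, $\yl = \gl$ for $\fl,\gl \in \Pg$, and noting that $[\fl^k,\gl] \in \tHamc(\Mm,\omega) = \ker \tflux_{\omega}$, this gives $|\mushf([\fl^k,\gl])| = |\phf([\fl^k,\gl])| \leq \DD(\phf)$ for every $k$. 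Dividing by $k$ and passing to the limit, \eqref{eq=Shelukhin_form_limit} yields $\bb_{\mushf}(\tflux_{\omega}(\fl),\tflux_{\omega}(\gl)) = 0$, as desired.

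For (ii) $\Rightarrow$ (i), the plan is to construct an extension by hand, using a set-theoretic section. Set $A := \tflux_{\omega}(\Pg)$ (an abelian group), choose any section $\sigma \colon A \to \Pg$ of $\tflux_{\omega}$, and for $\fl \in \Pg$ write $a_{\fl} := \tflux_{\omega}(\fl)$ and $h_{\fl} := \fl \cdot \sigma(a_{\fl})^{-1} \in \tHamc(\Mm,\omega)$. Define $\tilde\phf \colon \Pg \to \RR$ by $\tilde\phf(\fl) := \mushf(h_{\fl})$; this extends $\mushf$. A direct expansion of $\fl\gl$ via the section, together with the $\tSympc(\Mm,\omega)$-invariance of $\mushf$ (which absorbs conjugation by $\sigma(a_{\fl})$), leads to
\[
\tilde\phf(\fl\gl) - \tilde\phf(\fl) - \tilde\phf(\gl) \sim_{2\DD(\mushf)} C(a_{\fl}, a_{\gl}),
\]
where $C \colon A \times A \to \RR$ is defined by $C(a,b) := \mushf\bigl(\sigma(a)\sigma(b)\sigma(a+b)^{-1}\bigr)$. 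Hence $\tilde\phf$ is a quasimorphism on $\Pg$ if and only if $C$ differs from a coboundary by a bounded function.

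Using the identity $\sigma(a)\sigma(b)\sigma(a+b)^{-1} \cdot \bigl(\sigma(b)\sigma(a)\sigma(a+b)^{-1}\bigr)^{-1} = [\sigma(a),\sigma(b)]$ and Theorem~\ref{mthm=RRbilinear}, I would then deduce
\[
C(a,b) - C(b,a) \sim_{\DD(\mushf)} \mushf\bigl([\sigma(a),\sigma(b)]\bigr) \sim_{\DD(\mushf)} \bb_{\mushf}(a,b).
\]
By hypothesis (ii) the right-hand side vanishes, so $C$ is a \emph{symmetric} quasi-$2$-cocycle on the abelian group $A$. At this point I would invoke the paper's general machinery, which should supply a function $\lambda \colon A \to \RR$ with $C - \delta\lambda$ bounded; then $\phf_0 := \tilde\phf - \lambda \circ \tflux_{\omega}$ is a quasimorphism on $\Pg$ extending $\mushf$, and its homogenization is a homogeneous quasimorphism extending $\mushf$ (homogeneity of $\mushf$ is preserved under homogenization).

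The main obstacle will be the final step: showing that a symmetric quasi-$2$-cocycle on an abelian group is a coboundary modulo bounded. Heuristically, this blends the amenability of $A$ (forcing $\HHH^2_b(A;\RR)$ to vanish) with the injectivity of $\RR$ as an abelian group (forcing $\mathrm{Ext}^1(A,\RR) = 0$, so that symmetric cocycles are exact coboundaries). Converting this heuristic into a precise statement for \emph{quasi}-cocycles, in the generality required, is the technical heart of the argument and is presumably packaged into the paper's general framework for producing $\bb_{\muf}$ from invariant quasimorphisms.
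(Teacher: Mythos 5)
Your proposal is correct in substance, but it takes a genuinely different route from the paper, and one step of it is left unproven. The paper's own proof is a two-line reduction: Theorem~\ref{mthm=Shelukhin_extendable} is the special case of Proposition~\ref{prop=extendability}, whose proof combines the extendability criterion of Proposition~\ref{prop=abelianiroiro}~(2) (for $\Pg/\Ng$ abelian, an invariant homogeneous quasimorphism $\muf$ extends to $\Pg$ if and only if $\sup_{\fl,\gl\in\Pg}|\muf([\fl,\gl])|<\infty$; this is quoted from earlier work of the authors, not reproved here) with the approximation \eqref{eq=bilinear_main} and the observation that a $\ZZ$-bilinear form that is bounded on a subgroup must vanish on it. Your direction (i)$\Rightarrow$(ii) is this same argument in limit-formula clothing and is fine. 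For (ii)$\Rightarrow$(i), where the paper simply cites the criterion, you rebuild the extension by hand: a set-theoretic section $\sigma$, the correction cochain $C(a,b)=\mushf\bigl(\sigma(a)\sigma(b)\sigma(a+b)^{-1}\bigr)$, symmetry of $C$ up to $2\DD(\mushf)$ forced by (ii), and then a trivialization of $C$ modulo bounded functions. What your route buys is self-containedness: it amounts to reproving, inside this paper, the hard direction of the cited criterion. What it costs is the technical lemma you did not prove.

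Two concrete points about that lemma. First, your deferral of it to ``the paper's general machinery'' is misplaced: no statement of the form ``a symmetric quasi-$2$-cocycle on an abelian group is a coboundary plus a bounded function'' appears in Section~\ref{sec=bilinear}; the machinery there supplies instead the commutator-boundedness criterion, which, if you are willing to invoke it, makes your entire construction unnecessary (hypothesis (ii) together with \eqref{eq=Shelukhin_form} immediately gives $\sup_{\fl,\gl\in\Pg}|\mushf([\fl,\gl])|\leq\DD(\mushf)$). So, as written, your (ii)$\Rightarrow$(i) has a genuine gap. Second, the gap is fillable exactly along the lines you sketch: $\delta C$ is bounded (use the factor-set identity for $\sigma$ and the $\Gg$-invariance of $\mushf$), so amenability of $A$ and $\HHH^3_b(A;\RR)=0$ give a bounded $2$-cochain $B$ with $\delta C=\delta B$; then $Z=C-B$ is a genuine cocycle whose antisymmetrization $Z(a,b)-Z(b,a)$ is both bounded and $\ZZ$-bilinear, hence identically zero; a symmetric real-valued $2$-cocycle classifies an abelian extension of $A$ by $\RR$, which splits because $\RR$ is divisible, so $Z=\delta\lambda$ and $C=\delta\lambda+B$. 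You should also normalize $\sigma(0)=e$ so that $\tilde\phf$ actually restricts to $\mushf$, and note that homogenizing $\phf_0=\tilde\phf-\lambda\circ\tflux_{\omega}$ does not disturb the restriction to $\tHamc(\Mm,\omega)$, since there $\phf_0$ differs from the homogeneous $\mushf$ only by the constant $\lambda(0)$.
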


\subsection{Main result~3: $\bb_{\mushf}$ controls the triviality of the Reznikov class}\label{subsec=Reznikov}

The second property of $\bb_{\mushf}$ asserts that $\bb_{\mushf}$   controls the triviality of the restriction of a characteristic class  $R\in \HHH^2(\Sympc(\Mm,\omega))$ constructed by Reznikov \cite{Reznikov} to a subgroup of $\Sympc_0(\Mm,\omega)$. Here, we consider group cohomology with trivial real coefficients. Hereafter, we call this class $R$ the Reznikov class; we will briefly recall the definition of it in Subsection~\ref{subsec:re_class}. The precise statement goes as follows. Here, the definition of the homomorphism $I_{c_1}\colon \pi_1(\Hamc(\Mm,\omega))\to \RR$ will be given by \eqref{eq=I_{c_1}} in Subsection~\ref{subsec:sh_qm}.

\begin{mthm}[characterization of the triviality of the Reznikov class]\label{mthm=Reznikov}
Let $(\Mm,\omega)$ be a closed connected symplectic manifold. Let $\Pg$ be a subgroup of $\tSympc(\Mm,\omega)$ with $\Pg\geqslant \tHamc(\Mm,\omega)$. Let $\overline{\Pg}$ be the image of $\Pg$ by the universal covering map $\tHamc(\Mm,\omega)\twoheadrightarrow \Hamc(\Mm,\omega)$. Then, the Reznikov class $R|_{\overline{\Pg}}\in \HHH^2(\overline{\Pg})$ is trivial on $\overline{\Pg}$ if and only if both of the following two conditions are satisfied.
\begin{enumerate}[label=\textup{(\arabic*)}]
 \item The homomorphism $I_{c_1}\colon \pi_1(\Hamc(\Mm,\omega))\to \RR$ is the zero map.
 \item The restriction of $\bb_{\mushf}$ to $\tflux_{\omega}(\Pg)\times \tflux_{\omega}(\Pg)$ equals the zero-form.
\end{enumerate}
\end{mthm}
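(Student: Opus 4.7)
The plan is to unpack $R|_{\overline{\Pg}}$ via the group extension $1 \to \Hamc(\Mm,\omega) \to \overline{\Pg} \to V \to 1$, with $V := \overline{\Pg}/\Hamc(\Mm,\omega)$, and to apply Theorem~\ref{mthm=Shelukhin_extendable} to convert cohomological triviality of $R|_{\overline{\Pg}}$ into extendability of $\mushf$.

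The first step, which is also the main technical obstacle, is to fix the cochain-level bridge between Shelukhin's construction and Reznikov's class: namely, a bounded $2$-cocycle $c_R$ representing $R|_{\Hamc(\Mm,\omega)}$ whose pullback along the universal covering $\widetilde{p}\colon \tHamc(\Mm,\omega)\to \Hamc(\Mm,\omega)$ equals $\delta \mushf$, and such that $\mushf|_{\pi_1(\Hamc(\Mm,\omega))}=I_{c_1}$. Using the concrete description of $\mushf$ via Hermitian connections on the prequantization bundle (to be recalled in Subsection~\ref{subsec:sh_qm}), this identification is a direct cochain-level computation. Once it is in place, the five-term exact sequence of the central extension $1 \to \pi_1(\Hamc(\Mm,\omega)) \to \tHamc(\Mm,\omega) \to \Hamc(\Mm,\omega) \to 1$, combined with the perfectness of both groups (Banyaga), yields an injection $\mathrm{Hom}(\pi_1(\Hamc(\Mm,\omega)),\RR)\hookrightarrow\HHH^2(\Hamc(\Mm,\omega);\RR)$ sending $I_{c_1}$ to $R|_{\Hamc(\Mm,\omega)}$; in particular $R|_{\Hamc(\Mm,\omega)}=0$ iff $I_{c_1}=0$, and in that case $\mushf$ descends to a homogeneous quasimorphism on $\Hamc(\Mm,\omega)$.

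For the ``$\Leftarrow$'' direction, assume (1) and (2). The first step shows that (1) makes $\mushf$ descend to $\Hamc(\Mm,\omega)$, and Theorem~\ref{mthm=Shelukhin_extendable} applied to (2) produces a homogeneous quasimorphism $\widetilde\phi$ on $\Pg$ extending $\mushf$. Since $\pi_1(\Hamc(\Mm,\omega))$ is central in $\Pg$, the restriction $\widetilde\phi|_{\pi_1(\Hamc(\Mm,\omega))}$ is a homomorphism equal to $\mushf|_{\pi_1}=I_{c_1}=0$, so $\widetilde\phi$ descends to a homogeneous quasimorphism $\phi$ on $\overline{\Pg}$. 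The bounded cocycle $\delta\phi$ on $\overline{\Pg}$ restricts to $c_R$ on $\Hamc(\Mm,\omega)$, and by the five-term sequence for $1\to \Hamc(\Mm,\omega)\to \overline{\Pg}\to V\to 1$ the remaining discrepancy between $[\delta\phi]$ and $R|_{\overline{\Pg}}$ lies in $\HHH^2(V;\RR)$ and is controlled by $\bb_{\mushf}|_{\tflux_\omega(\Pg)\times \tflux_\omega(\Pg)}$ via the general machinery underpinning Theorem~\ref{mthm=RRbilinear}; this vanishes by (2), giving $R|_{\overline{\Pg}}=[\delta\phi]=0$.

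For the ``$\Rightarrow$'' direction, assume $R|_{\overline{\Pg}}=0$. Restriction along $\Hamc(\Mm,\omega)\hookrightarrow\overline{\Pg}$ gives $R|_{\Hamc(\Mm,\omega)}=0$, hence (1) by the cochain bridge above, and $\mushf$ descends to $\Hamc(\Mm,\omega)$. Triviality of $R|_{\overline{\Pg}}$ now furnishes a $1$-cochain $\alpha$ on $\overline{\Pg}$ with $\delta\alpha=c_R|_{\overline{\Pg}}$; since $c_R$ is bounded, $\alpha$ is a quasimorphism, and its homogenization $\widetilde\alpha$ is a homogeneous quasimorphism whose restriction to $\Hamc(\Mm,\omega)$ differs from the descended $\mushf$ by a homomorphism $\Hamc(\Mm,\omega)\to\RR$; perfectness of $\Hamc(\Mm,\omega)$ forces this homomorphism to vanish, so $\widetilde\alpha$ extends $\mushf$ from $\Hamc(\Mm,\omega)$ to $\overline{\Pg}$. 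Pulling back along $\Pg\twoheadrightarrow\overline{\Pg}$ yields an extension of $\mushf$ to $\Pg$, and Theorem~\ref{mthm=Shelukhin_extendable} now gives (2).
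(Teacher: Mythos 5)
Your overall architecture (trade the triviality of $R$ for extendability of $\mushf$, then invoke Theorem~\ref{mthm=Shelukhin_extendable}) matches the spirit of the paper, and your ``$\Rightarrow$'' direction, including the transgression bridge $\mathrm{Hom}(\pi_1(\Hamc(\Mm,\omega)),\RR)\hookrightarrow\HHH^2(\Hamc(\Mm,\omega))$ sending $I_{c_1}$ to $R|_{\Hamc(\Mm,\omega)}$, is essentially sound (it rests on Shelukhin's cocycle identity $\delta\nu_J=\qqm^{\ast}b_J$ and Proposition~\ref{prop=ShelukhinHam}, both quoted in the paper). The ``$\Leftarrow$'' direction, however, has two genuine gaps.

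First, the descent of $\widetilde\phi$ from $\Pg$ to $\overline{\Pg}$ is not justified. The kernel of $\qqm|_{\Pg}\colon\Pg\twoheadrightarrow\overline{\Pg}$ is $\Pg\cap\pi_1(\Sympc_0(\Mm,\omega))$, not $\pi_1(\Hamc(\Mm,\omega))=\tHamc(\Mm,\omega)\cap\pi_1(\Sympc_0(\Mm,\omega))$, and these differ whenever $\Pg$ contains loops of symplectomorphisms with nonzero flux (e.g.\ $\Pg=\tSympc(\Mm,\omega)$ for a torus, where the flux group $\Gamma_\omega$ is nontrivial). A homogeneous quasimorphism descends through a central quotient only if it vanishes on the \emph{whole} kernel, and $\widetilde\phi$, which is only pinned down on $\tHamc(\Mm,\omega)$, need not vanish on $\Pg\cap\pi_1(\Sympc_0(\Mm,\omega))$. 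This is repairable (the restriction $\widetilde\phi|_{\Pg\cap\pi_1(\Sympc_0)}$ is a homomorphism killing $\pi_1(\Hamc)$, so it factors through a subgroup of $\Pg/\tHamc(\Mm,\omega)$ and extends to all of $\Pg/\tHamc(\Mm,\omega)$ by divisibility of $\RR$; subtract this homomorphism from $\widetilde\phi$), but the step is missing. The paper sidesteps it entirely: it never descends an extension, but instead descends $\mushf$ itself to $\Hamc(\Mm,\omega)$ (Lemma~\ref{lem=descend}, using condition (1) and Proposition~\ref{prop=ShelukhinHam}) and applies the extendability criterion of Proposition~\ref{prop=extendability} (boundedness on commutators, via Proposition~\ref{prop=abelianiroiro}) directly inside $\overline{\Pg}$; this is exactly Proposition~\ref{prop=center}.

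Second, and more seriously, the concluding step ``$R|_{\overline{\Pg}}=[\delta\phi]=0$'' is asserted, not proved. Once you have $\phi\in\QQQ(\overline{\Pg})$ extending the descended $\mushf$, you must still show that the bounded cocycle $b_J|_{\overline{\Pg}}-\delta\phi$ is a coboundary on $\overline{\Pg}$. Its restriction to $\Hamc(\Mm,\omega)$ is the coboundary of a bounded cochain, but the five-term exact sequence only says that the class of $b_J|_{\overline{\Pg}}-\delta\phi$ is inflated from $\HHH^2(V)$ (here one uses $\HHH^1(\Hamc(\Mm,\omega))=0$); it gives no reason for that inflated class to vanish. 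Your appeal to ``control by $\bb_{\mushf}$'' cannot close this: at this stage $\delta\phi$ is already exact, so the remaining question is whether $R|_{\overline{\Pg}}$ equals $[\delta\phi]$ as classes, and the bilinear form (already spent in producing $\phi$) plays no role. What is actually needed is, for instance, amenability of the abelian quotient $V$ (so that the restriction $\HHH^2_b(\overline{\Pg})\to\HHH^2_b(\Hamc(\Mm,\omega))$ in bounded cohomology is injective and the bounded class dies), or the identification $\HHH^2(V;\RR)\cong\mathrm{Hom}(\wedge^2V,\RR)$ together with a computation of the antisymmetrized cocycle. This missing implication is precisely the content of Proposition~\ref{prop=KKMMMReznikov} (\cite[Lemma 5.2]{KKMMMReznikov}), which the paper cites rather than reproves; your plan implicitly assumes it.
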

We note that the validity of condition (1) above only depends on $(\Mm,\omega)$; it does not depend on the choice of $\Pg$. Hence, for $(\Mm,\omega)$ fulfilling condition (1), the triviality of the Reznikov class (restricted to a subgroup of $\Sympc_0(\Mm,\omega)$ containing $\Hamc(\Mm,\omega)$) is completely controlled by the $\RR$-bilinear form $\bb_{\mushf}$.

\subsection{Main result~4: the explicit expression of $\bb_{\mushf}$}\label{subsec=explicitexp}

Theorem~\ref{mthm=Shelukhin_extendable} and Theorem~\ref{mthm=Reznikov} might suggest the importance of gaining the explicit expression of $\bb_{\mushf}$. For certain $(\Mm, \omega)$, we succeed in obtaining the explicit expression of $\bb_{\mushf}$ fully or partially. Here, `partially' means that we have a certain $\RR$-linear subspace $\Vv$ of $\HHH^1_c(\Mm;\RR)$ and that we have the explicit expression of $\bb_{\mushf}|_{V\times V}$. Our results are summarized as follows. Here, we equip the product of symplectic manifolds with the standard product symplectic structure, as is explained in Subsection~\ref{subsec=notation} below.

\begin{mthm}[explicit expressions of $\bb_{\mushf}$]\label{mthm=explicitShelukhin}

For the following symplectic manifold $(M,\omega)$, we obtain the explicit expressions of $\bb_{\mushf}$, fully or partially, as follows:

  \begin{enumerate}[label=\textup{(\arabic*)}]
    \item If $(M,\omega)$ is the direct product $(S,\omega_S)\times (N,\omega_N)$ of a closed connected surface $(S,\omega_S)$ whose genus $l$ is at least one equipped with a symplectic form $\omega_S$, and a closed connected symplectic manifold $(N,\omega_N)$, 
   then
\[\bb_{\mushf}|_{\HHH^1_c(S;\RR)\times \HHH^1_c(S;\RR)} \equiv \vol(M,\omega)\frac{2-2l}{\mathrm{Area}(S,\omega_S)^2} \cdot \bb_{\omega_S}.\]
  Here, we regard $\HHH^1_c(S;\RR)$ as a subspace of $\HHH^1_c(M;\RR) \cong \HHH^1_c(S;\RR) \oplus \HHH^1_c(N;\RR)$.
   \item Let $(S_1,\omega_1), \dots, (S_n,\omega_n)$ be closed conncected surfaces of genus $l_1,\dots, l_n$ equipped with symplectic forms, respectively. Assume that $l_i \geq 1$ for every $i=1,\cdots,n$.
   If $(M,\omega)$ is the direct product $(S_1,\omega_1)\times\cdots\times(S_n,\omega_n)$ of the surfaces, then   
\[\bb_{\mushf}(v,w) = \vol(M,\omega) \sum_{i=1}^n  \frac{2-2l_i}{{\rm Area}(S_i,\omega_i)^2} \bb_{\omega_{S_i}}(v_i,w_i)\]
   for every
   $v=(v_1,\ldots,v_n)$ and $w=(w_1,\ldots,w_n)$ in $\HHH^1_c(M;\RR) \cong \HHH^1_c(S_1;\RR) \oplus \cdots \oplus \HHH^1_c(S_n;\RR)$.
   Here, for every $i = 1,\ldots, n$, we regard $v_i$ and $w_i$ as elements in $\HHH^1_c(S_i;\RR)$.
   \item Let $n\geq2$ and $\rho,r ,r_1,\cdots,r_n$ be real numbers with $0<\rho<r < r_1<r_2<\ldots<r_n$.
   If $(M,\omega)$ is the blow-up $(\hat{M}_r,\omega_\rho)$ of the torus $(T^{2n}(r_1,\ldots,r_n),\omega_0)$ with respect to $\iota(r)$, $J_0$ and $\rho$ $($see Subsection $\ref{subsec:blowup}$ for precise definitions$)$,
   then  
\[\bb_{\mushf}(v,w) = \vol(T^{2n}(r_1,\ldots,r_n),\omega_0) A(M, \omega) \sum_{i=1}^n \frac{1}{{\rm Area}(T^2(r_i),\omega_0)} \bb_{\omega_0}(v_i,w_i)  \]
   for every $v=(v_1,\ldots,v_n)$ and $w=(w_1,\ldots,w_n)$ in 
   \begin{align*} 
    &\HHH^1_c(M;\RR)\cong \HHH^1_c(T^{2n}(r_1,\ldots,r_n);\RR) \\ 
   \cong{}  &\HHH^1_c(T^2(r_1);\RR) \oplus \HHH^1_c(T^2(r_2);\RR) \oplus \cdots \oplus \HHH^1_c(T^2(r_n);\RR),
   \end{align*}   
   where we regard $v_i$ and $w_i$ as elements in $\HHH^1_c(T^2(r_i);\RR)$ for every $i = 1,\ldots, n$.
   Here, $A(M,\omega)$ denotes the average Hermitian scalar curvature of $(M;\omega)$ $($see Definition $\ref{defn=aHsc})$.
  \end{enumerate}
 
\end{mthm}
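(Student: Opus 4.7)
The plan is to evaluate $\bb_{\mushf}$ on pairs $(\tflux_{\omega}(\fl), \tflux_{\omega}(\gl))$ via the limit formula \eqref{eq=Shelukhin_form_limit}, choosing commuting or asymptotically commuting representatives $\fl, \gl \in \tSympc(\Mm, \omega)$ whose supports sit in a single factor, and then computing Shelukhin's quasimorphism on their commutators by unfolding its definition as a regularized integral against the Hermitian scalar curvature form.

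First, I would reduce (2) to (1). Writing $(\Mm, \omega) = (S_i, \omega_i) \times (\Mm_i, \omega_{\Mm_i})$, with $(\Mm_i, \omega_{\Mm_i})$ the product of the remaining factors, statement (1) applied with this decomposition produces the claimed formula on each diagonal block $\bb_{\mushf}|_{\HHH^1_c(S_i; \RR) \times \HHH^1_c(S_i; \RR)}$. For off-diagonal blocks $\bb_{\mushf}(v_i, w_j)$ with $i \neq j$, I would represent $v_i = \tflux_{\omega}(\fl)$ and $w_j = \tflux_{\omega}(\gl)$ by $\fl, \gl$ supported on disjoint factors, so that $[\fl, \gl] = \id$; then \eqref{eq=Shelukhin_form} forces $|\bb_{\mushf}(v_i, w_j)| \leq \DD(\mushf)$, and $\RR$-bilinearity upgrades this uniform bound to outright vanishing via the limit formula.

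For (1), the central computation is the behavior of $\mushf$ under pull-back by the first projection $\mathrm{pr}_1 \colon \Mm \to S$. Given a time-dependent Hamiltonian $H_t$ on $S$, its pull-back $\mathrm{pr}_1^{*} H_t$ generates $\tilde{\phi}_{H_t} \times \id$ on $\Mm$; using that the Hermitian scalar curvature of a K\"ahler product splits as a sum of pull-backs, that the volume form splits as a product, and that Gauss--Bonnet gives $\int_S s_{\omega_S}\, \omega_S = 4\pi(2-2l)$, one should derive a scaling identity of the form
\[
\mushf^{\Mm}\bigl(\tilde{\phi}_{H_t} \times \id\bigr) \;=\; c(\Mm, S) \cdot \mushf^{S}\bigl(\tilde{\phi}_{H_t}\bigr),
\]
with an explicit constant $c(\Mm, S)$ expressed in terms of $\vol(\Mm, \omega)$, $\mathrm{Area}(S, \omega_S)$, and the average Hermitian scalar curvatures $A(\Mm, \omega)$ and $A(S, \omega_S)$. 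This reduces matters to computing $\bb_{\mushf^S}$ on the surface alone. For $l \geq 2$, this follows from the known identification $\mushf^S = \mathrm{const.} \cdot \mupyf$ on $\tHamc(S, \omega_S)$ (using uniqueness of the $\tSympc$-invariant homogeneous quasimorphism up to a scalar) combined with Theorem~\ref{thm=KKMM}; for $l = 1$, the flat torus has $s_{\omega_S} \equiv 0$, and the factor $2 - 2l = 0$ matches the vanishing of both sides.

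For (3), the approach is parallel. The blow-up $(\hat{\Mm}_r, \omega_\rho)$ differs from $(T^{2n}(r_1, \dots, r_n), \omega_0)$ only in a small neighborhood of the blow-up locus $\iota(r)$, and any $\HHH^1_c(T^2(r_i); \RR)$-class admits a flux representative supported in the complement of this neighborhood. I would pick $\fl, \gl$ with prescribed fluxes in $T^2(r_i)$- and $T^2(r_j)$-factors supported away from $\iota(r)$; off-diagonal vanishing ($i \neq j$) follows from the commuting-disjoint-support argument of the second paragraph, and for $i = j$ the integrand defining $\mushf^{\hat{\Mm}_r}([\fl, \gl])$ is supported away from $\iota(r)$, so it reduces to a torus integral, weighted by global constants involving $\vol(T^{2n}(r_1, \dots, r_n), \omega_0)$, $A(\Mm, \omega)$, and $\mathrm{Area}(T^2(r_i), \omega_0)$. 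The main obstacle I anticipate is the precise bookkeeping of these normalization constants: although the Hamiltonian supports avoid $\iota(r)$, the regularization constant in $\mushf$ depends globally on the modified K\"ahler form, and it is this global dependence that introduces the factor $A(\Mm, \omega)$ in place of the vanishing scalar curvature of the flat torus. Carefully combining the local (torus-like) integrand with the global normalization should yield the stated closed-form expression.
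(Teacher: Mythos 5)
Your proposal has a genuine gap at its computational core, and it sits exactly where the theorem's content lies. For cases (1)--(2) you invoke ``the known identification $\mushf^S=\mathrm{const.}\cdot\mupyf$ \ldots using uniqueness of the $\tSympc$-invariant homogeneous quasimorphism up to a scalar.'' No such uniqueness holds: $\QQQ(\tHamc(S,\omega_S))^{\tSympc(S,\omega_S)}$ is infinite-dimensional (already the $\RR$-span of Calabi quasimorphisms is infinite-dimensional, which is why Theorem~\ref{thm=findimCalabi} is stated only modulo extendable quasimorphisms), and in fact the paper shows that $\mushf-\frac{2-2l}{\mathrm{Area}(S,\omega_S)}\mupyf$ is a \emph{non-zero} element of $i^{\ast}\QQQ(\tSympc(S,\omega_S))$ (Theorem~\ref{thm=ShelukhinPy} together with Remark~\ref{rem=local_Calabi}), so $\mushf^S$ is not proportional to $\mupyf$. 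What you actually need is the weaker statement $[\mushf^S]=\frac{2-2l}{\mathrm{Area}(S,\omega_S)}[\mupyf]$ in $\VV(\tSympc,\tHamc)$; but in the paper this \emph{is} Theorem~\ref{thm=ShelukhinPy}, which is deduced from Corollary~\ref{cor=bsurface}, i.e.\ from the very theorem you are proving, so invoking it would be circular. Invariance can salvage part of the claim: by Theorem~\ref{mthm=RRbilinear} and Remark~\ref{rem=Symp/Symp_0}, $\bb_{\mushf^S}$ is a mapping-class-group-invariant alternating $\RR$-bilinear form on $\HHH^1(S;\RR)$, hence a scalar multiple of the intersection form. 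But that scalar --- the whole point of the theorem --- must then be computed by evaluating $\mushf$ on at least one explicit commutator whose fluxes pair non-trivially. The paper obtains it from Proposition~\ref{prop:flux} (imported from \cite{KKMMMReznikov}), which rests on Shelukhin's embedding formula $\mushf^{\Mm}\circ\iota_{\ast}=\mushf^{U}-A(\Mm,\omega)\cal_U$ applied to the annulus symplectomorphisms $\sigma_{\qd},\tau_{\qd}$ of \cite{KKMM2} and a Calabi/Rousseau-type computation; nothing in your proposal replaces this input. The same omission is what your sketch of case (3) defers as ``the main obstacle'': $\mushf$ is not an integral of a local density over the support of the isotopy, so ``reducing to a torus integral'' is exactly the step where the Calabi term $-A(\Mm,\omega)\cal_U$ of the embedding formula must be produced and evaluated, and your proposal does not do this.

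Several ingredients of your plan are sound, and one is genuinely different from the paper. The off-diagonal vanishing is correct, provided you phrase it via \emph{commuting} representatives rather than ``disjoint supports'' (elements acting on different factors have overlapping supports but commute; then Theorem~\ref{thm=commutingtSymp}, or the limit formula directly, gives exact vanishing with no need for the defect bound). The reduction of (2) to (1) is fine. Most interestingly, your scaling identity is true and provable from the definition: for a product almost complex structure $J=J_S\oplus J_N$, the product slice of $\mathcal{J}(\Mm)$ is contractible and totally geodesic, and on it both $\Omega$ and the moment-map term of \eqref{before_homogenize} scale by $n\vol(N,\omega_N^{n-1})=\vol(\Mm,\omega)/\mathrm{Area}(S,\omega_S)$ (the cross term vanishes by normalization of the Hamiltonian), whence $\mushf^{\Mm}(\tilde{h}\times\id)=\frac{\vol(\Mm,\omega)}{\mathrm{Area}(S,\omega_S)}\,\mushf^{S}(\tilde{h})$. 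This is a cleaner reduction to the surface case than the paper's route, which instead runs everything through embedded $P_{\epsilon}\times N$ pieces and the axiomatized Theorem~\ref{thm=XQ}. But since the surface case itself is precisely where your argument has no valid input, the proposal as a whole does not close.
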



In the next section, before proceeding to the preliminaries, we exhibit several applications of the bilinear form $\bb_{\mushf}$.



\subsection{Strategy of the proofs of Theorem~\ref{mthm=RRbilinear} and Theorem~\ref{mthm=explicitShelukhin}}\label{subsec=outline}

Here, we describe our strategy for proving Theorem~\ref{mthm=RRbilinear}. 
First, in this paper we construct an $\RR$-valued alternating $\ZZ$-bilinear form (we discuss it in Section~\ref{sec=bilinear}) of the abelianization from an invariant quasimorphism in the following general setting: let $\Gg$ be a group, $\Ng$ the commutator subgroup of $\Gg$, and $\Gamg$ the abelianization of $\Gg$. Let $\QQQ(\Ng)^\Gg$ be the linear space consisting of the homogeneous quasimorphisms on $\Ng$ that are invariant under the adjoint action of $\Gg$. We write $i^* \QQQ(\Gg)$ to indicate the linear subspace of $\QQQ(\Ng)^\Gg$ which is extendable to $\Gg$. We refer the reader to Subsection~\ref{subsec=qm} for the more precise description. In Section~\ref{sec=bilinear}, we provide a general recipe for constructing an alternating $\ZZ$-bilinear form on $\Gamma$ from an element in $\QQQ(\Ng)^\Gg/ i^* \QQQ(\Gg)$; this gives rise to an injective $\RR$-linear map
\[
\bBbG\colon \QQQ(\Ng)^\Gg/ i^* \QQQ(\Gg)\hookrightarrow \AAA(\Gamg);\quad [\muf]\mapsto \bb_{\muf},
\]
as we will formulate in Definition~\ref{defn=bilinear_form_map}. Here, $\AAA(\Gamg)$ denotes the $\RR$-linear space of $\RR$-valued alternating $\ZZ$-bilinear forms on $\Gamg$.

Now, we return to the case that $\Gg = \tSympc(\Mm, \omega)$ and $\Ng = \tHamc(\Mm,\omega)$ for a closed connected symplectic manifold $(\Mm,\omega)$. Then the abelianization of $\Gg$ is $\HHH^1_c(\Mm ; \RR)$.  In this case, since the abelianization is now an $\RR$-linear space, it is natural to expect that the $\ZZ$-bilinear form created above is indeed $\RR$-bilinear when appropriate conditions are satisfied. Actually, we provide criteria to show that the above $\ZZ$-bilinear forms are $\RR$-bilinear in terms of topological groups (see Proposition~\ref{prop=RRbilinear} and Corollary~\ref{cor=RRbilinear}).
Using these criteria, we show that the $\ZZ$-bilinear forms constructed from Shelukhin's quasimorphisms are actually $\RR$-bilinear. This is an outline of the proof of Theorem~\ref{mthm=RRbilinear}; the actual proof will be provided in Section~\ref{sec=RRbilinear_Shelukhin}.

For the proof of Theorem~\ref{mthm=explicitShelukhin}, we will first establish an axiomatized theorem (Theorem~\ref{thm=XQ} in Subsection~\ref{subsec=axiom}) and then deduce Theorem~\ref{mthm=explicitShelukhin} as examples of it. 

\subsection{Notation and conventions}\label{subsec=notation}

\begin{enumerate}[label=\textup{(\arabic*)}]
\item We always assume symplectic manifolds to be connected. 
\item We always consider group cohomology with trivial real coefficients; thus we omit the coefficient $\RR$ in group cohomology. We in principle consider the real singular cohomology with compact support, but we use the symbol $\HHH^{\bullet}_c(\cdot;\RR)$, not $\HHH^{\bullet}_c(\cdot)$, in order to make a distinction from group cohomology at a glance.

\item For a smooth manifold $\Mm$  and for $s \ge 0$, a map $\gamma\colon [0, s] \to \mathrm{Diff}(\Mm)$ is called a \emph{smooth isotopy} if the map $M\times[0, s] \to M$ defined by $(x,t) \mapsto \gamma(t)(x)$ is a smooth map. In particular, for a symplectic manifold $(\Mm,\omega)$, the notion of smooth isotopy $\gamma \colon [0, s] \to \Hamc_0(\Mm,\omega)$ and $\gamma \colon [0,s] \to \Sympc_0(\Mm, \omega)$ (see Subsection~\ref{subsec=symp} for the definitions of $\Hamc(\Mm, \omega)$ and $\Sympc_0(\Mm, \omega)$) are defined in the same way as above. We say that $\gamma$ is a \emph{smooth isotopy from identity} if $\gamma$ is a smooth isotopy such that $\gamma(0)=\mathrm{id}_M$.

Let $G$ be  $\Hamc(M,\omega)$ or $\Sympc_0(M,\omega)$.
Then, we define the universal covering $\tilde{G}$ of $G$ by
\[\tilde{G}=\{[\gamma] \mid \gamma\colon [0,1] \to G \text{ is a smooth isotopy  from identity}\}.\]
 Here, $[\gamma]$ is the smooth homotopy class of $\gamma$ in $G$ relative to fixed ends.

 Let $s \ge 0$ and let $\gamma \colon [0,s] \to G$ be a smooth isotopy. Define the smooth isotopy $\hat{\gamma} \colon [0,1] \to G$ by $\hat{\gamma}(t) = \gamma(st)$. Then for a smooth isotopy $\gamma$ from identity, the symbol $\left[\{\gamma(t)\}_{t\in[0,s]}\right]$ denotes the element 
$[\hat{\gamma}]$ of $\tilde{G}$.

\item For a symplectic manifold $(\Mm,\omega)$, we use symbols such as $\tSympc(\Mm,\omega)$, $\tHamc(\Mm,\omega)$, $\Sympc_0(\Mm,\omega)$, $\Hamc(\Mm,\omega)$, $\Sympc(\Mm,\omega)$ and $\HHH^1_c(\Mm;\RR)$ to indicate that we consider these notions with compact support. If $\Mm$ is compact, then these symbols coincide with those without `$c$': $\widetilde{\mathrm{Symp}}(\Mm,\omega)$, $\widetilde{\mathrm{Ham}}(\Mm,\omega)$, $\mathrm{Symp}_0(\Mm,\omega)$, $\mathrm{Ham}(\Mm,\omega)$, $\mathrm{Symp}(\Mm,\omega)$ and $\HHH^1(\Mm;\RR)$, respectively. Nevertheless, we keep to use the symbols with `$c$.' This is because in Sections~\ref{sec=kari1} and \ref{sec=kari2}, we will embed 
an open symplectic manifold
into a closed symplectic manifold; there, the formulation of these notions with compact support is the right one.
\item Given a symplectic manifold $(\Mm,\omega)$ of finite volume, $\mushf^{\Mm}$ denotes Shelukhin's quasimorphism for this $(\Mm,\omega)$. When there is no risk of confusion, we abbreviate $\mushf^{\Mm}$ as $\mushf$. Thus, $\mushf$ means Shelukhin's quasimorpshism $\mushf^{\Mm}$ \emph{for the given symplectic manifold $(\Mm,\omega)$.}

\item Let $\NN$ denote the set of strictly positive integers: $\NN=\{1,2,3,\ldots\}$.
\item As we have already used in Subsection~\ref{subsec=RousseauPy}, for $a,b\in \RR$ and for $C\in \RR_{\geq 0}$, we use the convention `$a\sim_C b$' to mean $|a-b|\leq C$. For a group $\Gg$ and $\gl_1,\gl_2\in \Gg$, our convention of the group commutator is: $[\gl_1,\gl_2]=\gl_1\gl_2\gl_1^{-1}\gl_2^{-1}$.
\item In the present paper, we in principle use the symbol $M$ for a (symplectic) manifold and $N$ for a group. However, in Section~\ref{sec=bilinear}, we sometimes use the symbol $M$ for a group; in Sections~\ref{sec=kari1} and \ref{sec=kari2}, we use the symbol $N$ for a manifold. We add a description at the beginning of Section~\ref{sec=bilinear}.
\item For two symplectic manifolds $(M_1,\omega_1)$ and $(M_2,\omega_2)$, we define their product $(M_1,\omega_1) \times (M_2,\omega_2)$ as the symplectic manifold $(M_1\times M_2,\mathrm{pr}_1^\ast\omega_1+\mathrm{pr}_2^\ast\omega_2)$, where $\mathrm{pr}_1\colon M_1\times M_2\to M_1$ and $\mathrm{pr}_2\colon M_1\times M_2\to M_2$ are the first and second projections, respectively. We often write $\omega_{M_1\times M_2}$ for the symplectic form $\mathrm{pr}_1^\ast\omega_1+\mathrm{pr}_2^\ast\omega_2$.
 \item
Let $(P,\omega_P)$, $(N,\omega_N)$  and $(M,\omega)$ be symplectic manifolds and $\iPN \colon P\times N\to M$ be an open symplectic embedding of  $(P\times N,\omega_{P\times N})$ to $(M,\omega)$. Assume that $N$ is a closed manifold.
Then,  we define a homomorphism $\ioP^{P,M}_\ast\colon \tHamc \left(P,\omega_{P}\right) \to \tHamc (M,\omega)$ as follows.
 For $\tg = \left[\{h^t\}_{t\in[0,1]}\right] \in \tHamc \left(P,\omega_{P}\right)$, we define $\ioP^{P,M}_\ast(\tg) \in \tHamc (M,\omega)$ by
  \[\ioP^{P,M}_\ast(\tg) = \left[\{\iPN_\ast\left(h^t\times \id_N\right)\}_{t\in[0,1]}\right].\]
  Here, $\iPN_\ast\colon \Hamc \left(P\times N,\omega_{P\times N}\right) \to \Hamc (M,\omega)$ is the map induced from the open symplectic embedding $\ioP\colon P\times N\to M$.  We also define $\ioP^{P,M}_\ast\colon \tSympc \left(P,\omega_{P}\right) \to \tSympc (M,\omega)$ similarly.
  \label{item:emb}
\end{enumerate}

\section{Applications of the $\RR$-bilinear form $\bb_{\mushf}$  and organization of the present paper}\label{sec=application}

In Subsections~\ref{subsec=Shelukhin2} and \ref{subsec=Reznikov}, we present two properties of the $\RR$-bilinear form $\bb_{\mushf}$ constructed out of Shelukhin's quasimorphism $\mushf$. In this section, we discuss more applications of $\bb_{\mushf}$.
At the end of this section, we present the organization of the present paper.

\subsection{Applications of the $\RR$-bilinearity of $\bb_{\mushf}$}\label{subsec=application_RR}
As is mentioned in Subsection~\ref{subsec=outline}, the $\RR$-bilinearity (not merely the $\ZZ$-bilinearity) of $\bb_{\mushf}$ is of importance. This $\RR$-bilinearlity, together with Theorem~\ref{mthm=Shelukhin_extendable} and Theorem~\ref{mthm=Reznikov}, yields the following two theorems.

\begin{thm}\label{thm=extendabilityRR}
Let $(\Mm,\omega)$ be a closed symplectic manifold. Let $\Pg$ be a subgroup of $\tSympc(\Mm,\omega)$ with $\Pg\geqslant \tHamc(\Mm,\omega)$. Set $\Vv_{\Pg}$ and $\Gg_{\Pg}^{\RR}$ by 
\begin{equation}\label{eq=VL}
\Vv_{\Pg}=\RR\textrm{-}\mathrm{span}(\tflux_{\omega}(\Pg)) \quad \textrm{and}\quad \Gg_{\Pg}^{\RR}=\tflux_{\omega}^{-1}(\Vv_{\Pg}).
\end{equation}
Then, the following are equivalent.
\begin{enumerate}[label=\textup{(\roman*)}]
 \item $\mushf$ is extendable to $\Pg$;
 \item $\mushf$ is extendable to $\Gg_{\Pg}^{\RR}$.
\end{enumerate}
\end{thm}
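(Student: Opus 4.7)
The plan is to combine Theorem~\ref{mthm=Shelukhin_extendable} with the $\RR$-bilinearity of $\bb_{\mushf}$ established in Theorem~\ref{mthm=RRbilinear}; apart from that, the argument is essentially bookkeeping. A preliminary observation I would record first is that $\Gg_{\Pg}^{\RR} = \tflux_{\omega}^{-1}(\Vv_{\Pg})$ is itself a subgroup of $\tSympc(\Mm,\omega)$ containing $\tHamc(\Mm,\omega) = \ker \tflux_{\omega}$, and that $\tflux_{\omega}(\Gg_{\Pg}^{\RR}) = \Vv_{\Pg}$ by surjectivity of $\tflux_{\omega}$. Thus Theorem~\ref{mthm=Shelukhin_extendable} applies to $\Gg_{\Pg}^{\RR}$ on the same footing as to $\Pg$.

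The implication (ii) $\Rightarrow$ (i) is immediate, since $\Pg \subseteq \Gg_{\Pg}^{\RR}$ by construction (as $\tflux_{\omega}(\Pg) \subseteq \Vv_{\Pg}$), and the restriction to $\Pg$ of any homogeneous quasimorphism on $\Gg_{\Pg}^{\RR}$ that extends $\mushf$ is itself an extension of $\mushf$ to $\Pg$.

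For (i) $\Rightarrow$ (ii), I would apply Theorem~\ref{mthm=Shelukhin_extendable} in both directions: extendability of $\mushf$ to $\Pg$ is equivalent to the vanishing of $\bb_{\mushf}$ on $\tflux_{\omega}(\Pg) \times \tflux_{\omega}(\Pg)$, while extendability to $\Gg_{\Pg}^{\RR}$ is equivalent to its vanishing on $\Vv_{\Pg} \times \Vv_{\Pg}$. The key step is then to propagate the former vanishing to the latter, and this is exactly where the $\RR$-bilinearity (as opposed to mere $\ZZ$-bilinearity) enters: if an $\RR$-bilinear form on $\HHH^1_c(\Mm;\RR)$ vanishes on $S \times S$ for some subset $S$, then it vanishes on the $\RR$-linear span of $S$ paired with itself; taking $S = \tflux_{\omega}(\Pg)$ yields vanishing on $\Vv_{\Pg} \times \Vv_{\Pg}$, as required.

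There is no serious obstacle to overcome; the statement is essentially a one-line corollary of Theorems~\ref{mthm=Shelukhin_extendable} and \ref{mthm=RRbilinear}. What I would want to highlight in the write-up is that the result showcases the relevance of promoting $\ZZ$-bilinearity to $\RR$-bilinearity: it lets one enlarge $\Pg$ to the possibly strictly larger group $\Gg_{\Pg}^{\RR}$ without changing whether $\mushf$ extends, consistently with the discussion in Subsection~\ref{subsec=outline}.
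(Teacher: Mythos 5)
Your proposal is correct and follows essentially the same route as the paper: the paper likewise characterizes both (i) and (ii) via Theorem~\ref{mthm=Shelukhin_extendable} as vanishing of $\bb_{\mushf}$ on $\tflux_{\omega}(\Pg)\times\tflux_{\omega}(\Pg)$ and on $\Vv_{\Pg}\times\Vv_{\Pg}$ respectively, and uses $\RR$-bilinearity to pass between the two. Your extra bookkeeping (that $\tflux_{\omega}(\Gg_{\Pg}^{\RR})=\Vv_{\Pg}$ by surjectivity of the flux, and the direct restriction argument for (ii)~$\Rightarrow$~(i)) is sound and only makes explicit what the paper leaves implicit.
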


\begin{thm}\label{thm=ReznikovRR}
Let $(\Mm,\omega)$ be a closed symplectic manifold. Let $\Pg$ be a subgroup of $\tSympc(\Mm,\omega)$ with $\Pg\geqslant \tHamc(\Mm,\omega)$. Set $\Gg_{\Pg}^{\RR}$ by \eqref{eq=VL}. Let $\overline{\Pg}$ and $\overline{\Gg_{\Pg}^{\RR}}$ be the images of $\Pg$ and $\Gg_{\Pg}^{\RR}$ by the universal covering map $\tHamc(\Mm,\omega)\twoheadrightarrow \Hamc(\Mm,\omega)$, respectively. Then, the following are equivalent.
\begin{enumerate}[label=\textup{(\roman*)}]
 \item The Reznikov class $R$ is trivial on $\overline{\Pg}$, namely $R|_{\overline{\Pg}}=0$ in $\HHH^2(\overline{\Pg})$;
 \item the Reznikov class $R$ is trivial on $\overline{\Gg_{\Pg}^{\RR}}$.
\end{enumerate}
\end{thm}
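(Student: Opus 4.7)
The plan is to reduce Theorem~\ref{thm=ReznikovRR} to Theorem~\ref{mthm=Reznikov} and then exploit the $\RR$-bilinearity of $\bb_{\mushf}$ supplied by Theorem~\ref{mthm=RRbilinear}. Applied to any subgroup $\Qm$ with $\tHamc(\Mm,\omega)\leqslant \Qm\leqslant \tSympc(\Mm,\omega)$, Theorem~\ref{mthm=Reznikov} says that $R|_{\overline{\Qm}}=0$ if and only if both \textup{(1)} $I_{c_1}=0$ and \textup{(2)} $\bb_{\mushf}|_{\tflux_\omega(\Qm)\times\tflux_\omega(\Qm)}\equiv 0$ hold. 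Since condition \textup{(1)} is a property of $(\Mm,\omega)$ alone, it is simultaneously valid or invalid for $\Pg$ and for $\Gg_{\Pg}^{\RR}$. Hence the whole argument reduces to comparing condition \textup{(2)} for the two subgroups.

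I would first verify that $\Gg_{\Pg}^{\RR}$ is an admissible subgroup on which to invoke Theorem~\ref{mthm=Reznikov}. Since $\tHamc(\Mm,\omega)=\ker(\tflux_\omega)$ and $0\in \Vv_{\Pg}$, one has $\tHamc(\Mm,\omega)\leqslant \Gg_{\Pg}^{\RR}$; moreover $\Gg_{\Pg}^{\RR}$ is a subgroup because $\Vv_{\Pg}$ is an additive subgroup of $\HHH^1_c(\Mm;\RR)$. Using the surjectivity of $\tflux_\omega$ from the short exact sequence recalled in Subsection~\ref{subsec=RousseauPy}, one obtains $\tflux_\omega(\Gg_{\Pg}^{\RR})=\Vv_{\Pg}$, the $\RR$-linear span of $\tflux_\omega(\Pg)$.

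The direction \textup{(ii)}$\Rightarrow$\textup{(i)} is then immediate from $\Pg\subseteq \Gg_{\Pg}^{\RR}$, which gives $\tflux_\omega(\Pg)\subseteq \tflux_\omega(\Gg_{\Pg}^{\RR})$. For the converse, suppose $R|_{\overline{\Pg}}=0$. Then Theorem~\ref{mthm=Reznikov} implies condition \textup{(2)} for $\Pg$, i.e.\ $\bb_{\mushf}$ vanishes on $\tflux_\omega(\Pg)\times \tflux_\omega(\Pg)$. Because Theorem~\ref{mthm=RRbilinear} guarantees that $\bb_{\mushf}$ is $\RR$-bilinear, vanishing on a set is inherited by vanishing on its $\RR$-linear span; therefore $\bb_{\mushf}$ vanishes on $\Vv_{\Pg}\times \Vv_{\Pg}=\tflux_\omega(\Gg_{\Pg}^{\RR})\times \tflux_\omega(\Gg_{\Pg}^{\RR})$. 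Invoking Theorem~\ref{mthm=Reznikov} in the other direction, together with the fact that condition \textup{(1)} is preserved, yields $R|_{\overline{\Gg_{\Pg}^{\RR}}}=0$.

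I do not expect a genuine obstacle: the argument is straightforward bookkeeping once Theorem~\ref{mthm=Reznikov} and the $\RR$-bilinearity of $\bb_{\mushf}$ are in place. The conceptual point is that the upgrade from mere $\ZZ$-bilinearity to $\RR$-bilinearity is exactly what makes the vanishing criterion insensitive to passing from $\tflux_\omega(\Pg)$ to its $\RR$-span. This is completely parallel to the reasoning behind the sister statement Theorem~\ref{thm=extendabilityRR}, which admits an entirely analogous proof with Theorem~\ref{mthm=Shelukhin_extendable} playing the role of Theorem~\ref{mthm=Reznikov}.
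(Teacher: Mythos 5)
Your proof is correct and takes essentially the same route as the paper's: the key direction (i)$\Rightarrow$(ii) applies Theorem~\ref{mthm=Reznikov} to $\Pg$, uses the $\RR$-bilinearity of $\bb_{\mushf}$ from Theorem~\ref{mthm=RRbilinear} to propagate the vanishing from $\tflux_{\omega}(\Pg)$ to its $\RR$-span $\Vv_{\Pg}=\tflux_{\omega}(\Gg_{\Pg}^{\RR})$, and then applies Theorem~\ref{mthm=Reznikov} in the reverse direction, exactly as in the paper. The only cosmetic difference is in (ii)$\Rightarrow$(i), which the paper dismisses as trivial (a cohomology class vanishing on a group restricts to zero on any subgroup, since $\overline{\Pg}\leqslant \overline{\Gg_{\Pg}^{\RR}}$), whereas you route it through Theorem~\ref{mthm=Reznikov} and the inclusion of flux images; both are valid.
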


We note that  $\tHamc(\Mm,\omega)$ is viewed as a discrete group to define $\mushf$. If we regard the  $\RR$-linear space $\HHH^1_c(\Mm;\RR)$ as a discrete additive group, then taking the $\RR$-span of a given subgroup of it may be seen as a transcendental procedure. For this reason, before we obtain the  $\RR$-bilinearity of $\bb_{\mushf}$, we had not succeeded in establishing Theorems~\ref{thm=extendabilityRR} or \ref{thm=ReznikovRR}.

\subsection{Applications of the $\RR$-bilinearity in the setting of  other invariant quasimorphisms}\label{subsec=application_RR_other}
In this subsection, we exhibit two applications of the $\RR$-bilinearity of bilinear forms not constructed from $\mushf$, but from certain invariant quasimorphisms.  The first application treats Calabi quasimorphisms for the case where $(\Mm,\omega)$ is a closed symplectic surface of genus at least two; by Remark~\ref{rem=pi1trivial} we then have $\tHamc(\Mm,\omega)=\Hamc(\Mm,\omega)$). The precise definitions around Calabi quasimorphisms will be given in Subsection~\ref{subsec=Calabi}. In this case, the $\RR$-span of Calabi quasimorphisms is known to be infinite-dimensional; see \cite{Py06} or \cite[Theorem 5]{Bra}. Nevertheless, we obtain the following result.

\begin{thm}[finite dimensionality for Calabi quasimorphisms on a surface modulo extendable quasimorphisms]\label{thm=findimCalabi}
Let $(\Mm, \omega)$ be a closed  surface equipped with a symplectic form whose genus $l$ is at least two. Let $\Gg=\Sympc_0(\Mm,\omega)$ and $\Ng=\Hamc(\Mm,\omega)$. Let $\Wv\subset \QQQ(\Ng)^{\Gg}$ be the $\RR$-span of Calabi quasimorphisms. Then we have
\[
\Rdim \Wv/(\Wv \cap i^{\ast}\QQQ(\Gg))\leq l(2l-1).
\]
\end{thm}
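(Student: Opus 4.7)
The plan is to use the machinery of Section~\ref{sec=bilinear}: the injective $\RR$-linear map
\[
\bBbG\colon \QQQ(\Ng)^\Gg/ i^* \QQQ(\Gg)\hookrightarrow \AAA(\Gamg),\qquad [\muf]\mapsto \bb_{\muf},
\]
from Definition~\ref{defn=bilinear_form_map}, where $\Gamg$ is the abelianization of $\Gg=\Sympc_0(M,\omega)$. By Remark~\ref{rem=pi1trivial} (genus $\geq 2$), we have $\tSympc(M,\omega)=\Sympc_0(M,\omega)$ and $\tHamc(M,\omega)=\Hamc(M,\omega)$, and the flux homomorphism induces an isomorphism $\Gamg \cong \HHH^1(M;\RR)$ of $\RR$-vector spaces of dimension $2l$. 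Restricting $\bBbG$ to $\Wv$ yields an injection
\[
\Wv/(\Wv\cap i^{\ast}\QQQ(\Gg))\hookrightarrow \AAA(\Gamg),\qquad [\muf]\mapsto \bb_{\muf}.
\]

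The key step is to show that for every Calabi quasimorphism $\muf\in \Wv$, the $\ZZ$-bilinear form $\bb_{\muf}$ is in fact $\RR$-bilinear, i.e.\ lies in the subspace $\AAA_{\RR}(\Gamg)\subset \AAA(\Gamg)$ of alternating $\RR$-bilinear forms. For this I would appeal to the topological criteria Proposition~\ref{prop=RRbilinear} and Corollary~\ref{cor=RRbilinear}. These criteria require that $\muf$ be sufficiently well-behaved with respect to the natural topology on $\tHamc(M,\omega)$ (so that the map $t\mapsto \muf([\fl^t,\gl])$ depends suitably continuously on one-parameter subgroups, or an analogous hypothesis). Py's Calabi quasimorphisms, and more generally every element of their $\RR$-span $\Wv$, are built from spectral/heat-kernel type constructions that are continuous in the $C^\infty$-topology (see \cite{Py06}), so the hypotheses of Corollary~\ref{cor=RRbilinear} are satisfied for every $\muf\in \Wv$. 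Hence the image of $\Wv/(\Wv\cap i^{\ast}\QQQ(\Gg))$ under $\bBbG$ lies inside $\AAA_{\RR}(\Gamg)$.

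Since $\Gamg\cong \HHH^1(M;\RR)$ has real dimension $2l$, the space of alternating $\RR$-bilinear forms has dimension
\[
\Rdim \AAA_{\RR}(\Gamg)=\binom{2l}{2}=l(2l-1).
\]
Combining this with the injection $\Wv/(\Wv\cap i^{\ast}\QQQ(\Gg))\hookrightarrow \AAA_{\RR}(\Gamg)$ yields
\[
\Rdim \Wv/(\Wv\cap i^{\ast}\QQQ(\Gg))\leq l(2l-1),
\]
as desired.

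The main obstacle is the verification that the bilinear forms $\bb_{\muf}$ associated with Calabi quasimorphisms are genuinely $\RR$-bilinear rather than merely $\ZZ$-bilinear. Since $\Wv$ is known to be infinite-dimensional \cite{Py06, Bra}, the finite-dimensionality comes entirely from this upgrade from $\ZZ$- to $\RR$-bilinearity modulo extendable quasimorphisms; thus establishing the applicability of the topological criterion of Corollary~\ref{cor=RRbilinear} to arbitrary $\muf\in \Wv$ (rather than to one specific quasimorphism as in Theorem~\ref{mthm=RRbilinear} or Theorem~\ref{thm=KKMM}) is the technical heart of the argument.
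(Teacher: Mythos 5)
Your overall skeleton matches the paper's: restrict the injective map $\bBbG$ of Definition~\ref{defn=bilinear_form_map} to $\Wv$, show the image lands in $\AAA_{\RR}(\HHH^1_c(\Mm;\RR))$, and count $\dim_{\RR}\AAA_{\RR}=\binom{2l}{2}=l(2l-1)$ (this is exactly the paper's combination of Proposition~\ref{prop=RRbilinear_Calabi} with Proposition~\ref{prop=finitedim}). However, your justification of the key step --- the $\RR$-bilinearity of $\bb_{\muf}$ for a Calabi quasimorphism $\muf$ --- has a genuine gap. You assert that Py's Calabi quasimorphism, and hence every element of $\Wv$, is ``continuous in the $C^\infty$-topology'' because of a ``spectral/heat-kernel type construction,'' and that this lets you verify the hypotheses of Corollary~\ref{cor=RRbilinear} directly for $\muf$. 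No such continuity statement is available: it is not proved in \cite{Py06}, it is not used anywhere in the paper, and it is in tension with the fact (noted in the paper via Theorem~\ref{theorem EPP}) that every Calabi quasimorphism on a closed surface of genus at least two is \emph{dis}continuous in the $C^0$-topology, precisely because it does not vanish on small disks. Moreover, $\Wv$ is the span of \emph{all} Calabi quasimorphisms in the sense of Definition~\ref{definition of Calabi qm}, not just Py's, so even a continuity result for one specific construction would not suffice; for an arbitrary Calabi quasimorphism the only available input is the defining restriction property on displaceable exact open sets.

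The paper's resolution of exactly this difficulty is the idea your proposal is missing: do not apply the criterion to $\muf$ itself, but to $\muf-\mubrf$, where $\mubrf$ is Brandenbursky's Calabi quasimorphism, which is \emph{extendable} to $\Sympc_0(\Mm,\omega)$ (Proposition~\ref{thm=Brabur}). Since both $\muf$ and $\mubrf$ restrict to the descended Calabi homomorphism on small displaceable disks, their difference vanishes there, and the Entov--Polterovich--Py criterion (Theorem~\ref{theorem EPP}) then shows that $\muf-\mubrf$ is continuous in the $C^0$-topology (Lemma~\ref{lemma minus Brandenbursky}). One can now apply Corollary~\ref{cor=RRbilinear} to $\muf-\mubrf$, taking $\Phi=\muf-\mubrf$ as the continuous bounding function and using flows of symplectic vector fields as the continuous one-parameter sections of the flux. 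This gives $\Bb(\muf-\mubrf)\in\AAA_{\RR}$; finally, since $\mubrf\in i^{\ast}\QQQ(\Gg)=\Ker(\BbG)$ (Proposition~\ref{prop=V}), one has $\Bb(\muf)=\Bb(\muf-\mubrf)$, so $\Bb(\muf)$ is $\RR$-bilinear after all. Without this subtraction trick (or some substitute argument establishing $\RR$-bilinearity for arbitrary Calabi quasimorphisms), your proof does not go through.
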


For the second application, we introduce the following form of continuity for $\RR$-valued functions on $\tHamc(\Mm,\omega)$. Recall that the notion of smooth isotopy from identity is defined in Subsection~\ref{subsec=notation}.

\begin{defn}[continuity via smooth isotopy]\label{continuity}
Let $(\Mm,\omega)$ be a symplectic manifold.
A function $\muf \colon \tHamc(\Mm,\omega) \to\RR$ is said to be \textit{continuous via smooth isotopy} if for every smooth isotopy $\gamma\colon [0,1] \to \Hamc(M,\omega)$ from identity, 
the function $[0,1]\to \RR;\,s \mapsto \mu\left(\left[\{\gamma(t)\}_{t\in[0,s]}\right]\right)$ is continuous. 
\end{defn}

Due to the `infinite dimensionality' of the group $\tHamc(\Mm,\omega)$, not every element in $\QQQ(\tHamc(\Mm,\omega))^{\tSympc(\Mm,\omega)}$ is continuous in a reasonable topology on $\tHamc(\Mm,\omega)$. For instance, every Calabi quasimorphism on $\tHamc(\Mm,\omega)=\Hamc(\Mm,\omega)$ for a closed symplectic surface $(\Mm,\omega)$ of genus at least two is \emph{dis}continuous in the $C^0$-topology (this can be showed by a result of Entov--Polterovich--Py \cite{EPP}; see Theorem~\ref{theorem EPP} and arguments in Subsection~\ref{subsec=Calabi}). In contrast, the notion of continuity via smooth isotopy imposes continuity on `small' subsets of $\tHamc(\Mm,\omega)$ parameterized by $[0,1]$, and this is a considerably mild condition. We might expect that quasimorphisms of  geomertric origin will have this property. Then, we have the following result.

\begin{thm}[finite dimensionality of the space of quasimorphisms with continuity via smooth isotopy modulo extendable quasimorphisms]\label{thm=findimCVSI}
Let $(\Mm, \omega)$ be a closed  symplectic manifold. Let $m=\dim_{\RR}\HHH^1_c(\Mm;\RR)$. Let $\Gg=\tSympc(\Mm,\omega)$ and $\Ng=\tHamc(\Mm,\omega)$. Let $\Qv^{\mathrm{c}}_{(\Mm,\omega)}$ be the $\RR$-linear subspace of $\QQQ(\Ng)^{\Gg}$ consisting of elements that are continuous via smooth isotopy. Then we have
\[
\Rdim \Qv^{\mathrm{c}}_{(\Mm,\omega)}\left/\middle(\Qv^{\mathrm{c}}_{(\Mm,\omega)} \cap i^{\ast}\QQQ(\Gg)\right)\leq \dfrac{m(m-1)}{2}.
\]
\end{thm}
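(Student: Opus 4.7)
The plan is to combine the injective $\RR$-linear map
\[
\bBbG\colon \QQQ(\Ng)^{\Gg}/i^{*}\QQQ(\Gg)\hookrightarrow \AAA(\Gamg),\qquad [\muf]\mapsto \bb_{\muf},
\]
from the general machinery of Section~\ref{sec=bilinear} with the $\RR$-bilinearity criterion alluded to in Subsection~\ref{subsec=outline} (Proposition~\ref{prop=RRbilinear} and Corollary~\ref{cor=RRbilinear}). By Banyaga's perfectness theorem for $\tHamc(\Mm,\omega)$ one has $[\Gg,\Gg]=\Ng$, so the flux identifies $\Gamg$ with $\HHH^1_c(\Mm;\RR)$ and $\dim_{\RR}\Gamg=m$. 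Hence, once we show that $\bBbG$ carries $\Qv^{\mathrm{c}}_{(\Mm,\omega)}$ into the subspace of alternating \emph{$\RR$-bilinear} forms on $\HHH^1_c(\Mm;\RR)$, the bound follows since that subspace has dimension $\binom{m}{2}=m(m-1)/2$.

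The core step is to verify that for $\muf\in \Qv^{\mathrm{c}}_{(\Mm,\omega)}$ the a priori only $\ZZ$-bilinear form $\bb_{\muf}$ is in fact $\RR$-bilinear. Alternation reduces this to checking $\RR$-linearity in the first variable. Given $\alpha\in \HHH^1_c(\Mm;\RR)$ and $\gl\in \Gg$, I would choose a compactly supported symplectic vector field $X$ with $[\iota_{X}\omega]=\alpha$ together with its flow $\gamma\colon [0,1]\to \Sympc_0(\Mm,\omega)$; setting $\fl_{s}:=[\{\gamma(t)\}_{t\in[0,s]}]$ one has $\tflux_{\omega}(\fl_{s})=s\alpha$ for every $s\in[0,1]$. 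Moreover, $\fl_{s}^{k}=[\{\delta(t)\}_{t\in[0,s]}]$ where $\delta(t):=\gamma(t)^{k}$ is again a smooth isotopy from identity, so for each fixed $k$ the function $s\mapsto \muf([\fl_{s}^{k},\gl])$ is continuous by the continuity-via-smooth-isotopy hypothesis on $\muf$ (the commutator with the fixed $\gl$ being absorbed via a representative smooth path for $\gl$, a bookkeeping point encapsulated by the abstract criterion). Combining with the uniform estimate
\[
\left|\frac{\muf([\fl_{s}^{k},\gl])}{k}-\bb_{\muf}(s\alpha,\tflux_{\omega}(\gl))\right|\leq \frac{\DD(\muf)}{k},
\]
coming from the defining approximate identity for $\bb_{\muf}$ together with its already-established $\ZZ$-bilinearity, we see that $s\mapsto \bb_{\muf}(s\alpha,\tflux_{\omega}(\gl))$ is a uniform limit of continuous functions on $[0,1]$, hence continuous. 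Being also $\ZZ$-linear, it is $\RR$-linear on $[0,1]$; additivity then extends it to all of $\RR$. Since $\tflux_{\omega}$ is surjective, this gives $\RR$-linearity of $\bb_{\muf}$ in its first variable throughout $\HHH^1_c(\Mm;\RR)$, and alternation yields $\RR$-linearity in the second variable as well.

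The resulting restricted injection of $\Qv^{\mathrm{c}}_{(\Mm,\omega)}/\bigl(\Qv^{\mathrm{c}}_{(\Mm,\omega)}\cap i^{*}\QQQ(\Gg)\bigr)$ into the $\binom{m}{2}$-dimensional space of alternating $\RR$-bilinear forms on $\HHH^1_c(\Mm;\RR)$ then delivers the bound $m(m-1)/2$. The main obstacle I anticipate is precisely the bookkeeping noted above: realizing, for each fixed $\gl\in \Gg$, the parameter-dependent commutators $[\fl_{s}^{k},\gl]$ in $\tSympc(\Mm,\omega)$ in a form to which continuity via smooth isotopy of $\muf$ directly applies. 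I expect this to be absorbed cleanly by the abstract $\RR$-bilinearity criterion Proposition~\ref{prop=RRbilinear}/Corollary~\ref{cor=RRbilinear}, which is presumably phrased exactly so as to handle such manipulations without requiring an explicit smooth-isotopy representative of each commutator.
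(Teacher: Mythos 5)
Your proposal is correct, and its skeleton is the same as the paper's: embed $\Qv^{\mathrm{c}}_{(\Mm,\omega)}/\bigl(\Qv^{\mathrm{c}}_{(\Mm,\omega)}\cap i^{\ast}\QQQ(\Gg)\bigr)$ into $\AAA(\HHH^1_c(\Mm;\RR))$ via the injection $\bBbG$, show the image lands in the $\RR$-bilinear forms, and use $\dim_{\RR}\AAA_{\RR}(\HHH^1_c(\Mm;\RR))\leq m(m-1)/2$ (this is exactly Proposition~\ref{prop=finitedim}). Where you genuinely differ is in how $\RR$-homogeneity of $\bb_{\muf}$ is extracted. The paper (Proposition~\ref{prop=RRbilinear_nonhomog}) verifies only the \emph{boundedness} hypothesis (3) of Proposition~\ref{prop=RRbilinear} (or $(3')$ of Remark~\ref{rem=unhomog}) --- continuity via smooth isotopy plus compactness of $[0,1]$ gives $\sup_{s\in[0,1]}|\muf([c_v(s),\gl])|<\infty$ --- and then the floor-function computation inside Proposition~\ref{prop=RRbilinear} does the rest; you instead prove that $s\mapsto\bb_{\muf}(s\alpha,\tflux_{\omega}(\gl))$ is continuous as a uniform limit of the functions $s\mapsto\muf([\fl_s^k,\gl])/k$ and conclude via the Cauchy functional equation. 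Your route is valid and a bit more self-contained; the paper's boundedness route has the advantage that it needs only a pre-homogenized quasimorphism $\muf'$ with $(\muf')_{\mathrm{h}}=\muf$ to be continuous via smooth isotopy, which is how the paper gets the stronger Theorem~\ref{thm=findimCVSI_strong} applying to $\mushf$ itself (Propositions~\ref{prop=Shelukhin_conti} and \ref{prop=Shelukhin_ch}). One caution about the step you defer: it is \emph{not} absorbed by Proposition~\ref{prop=RRbilinear} or Corollary~\ref{cor=RRbilinear}, whose hypotheses (3), $(3')$, (4) are precisely what must be checked. The actual resolution, spelled out in the paper's proof of Proposition~\ref{prop=RRbilinear_nonhomog}, is a covering-space argument: the continuous path $s\mapsto[c_v(s),\gl]$ in $\tHamc(\Mm,\omega)$ and the canonical lift $s\mapsto\bigl[\{[\psi^r,\bar{\gl}]\}_{r\in[0,s]}\bigr]$ of the smooth isotopy $s\mapsto\psi^s\bar{\gl}\psi^{-s}\bar{\gl}^{-1}$ in $\Hamc(\Mm,\omega)$ (where $\bar{\gl}=\qqm(\gl)$) are both lifts of the same path starting at the identity, hence coincide by uniqueness of path lifting; only after this identification can continuity via smooth isotopy be applied to the parameter-dependent commutators $[\fl_s^k,\gl]$. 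With that lemma supplied, your proof is complete.
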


\subsection{Applications to the flux homomorphism}\label{subsec=application_flux}

In a manner similar to deducing Corollary~\ref{cor=KKMM} from Theorem~\ref{thm=KKMM}, we obtain the following constraint on the fluxes of commuting two elements in $\tSympc(\Mm,\omega)$ from limit formula \eqref{eq=Shelukhin_form_limit}.

\begin{thm}[constraint on the fluxes of commuting two elements in $\tSympc(\Mm,\omega)$]\label{thm=commutingtSymp}
Let $(\Mm,\omega)$ be a closed symplectic manifold. Then for all $f,g\in \tSympc(\Mm,\omega)$ with $fg=gf$, we have
\[
\bb_{\mushf}(\tflux_{\omega}(f),\tflux_{\omega}(g))=0.
\]
\end{thm}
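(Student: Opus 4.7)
The plan is to invoke the limit formula \eqref{eq=Shelukhin_form_limit} that accompanies Theorem~\ref{mthm=RRbilinear}, namely
\[
\bb_{\mushf}(\tflux_{\omega}(f),\tflux_{\omega}(g)) = \lim_{k\to \infty} \frac{\mushf([f^k, g])}{k}.
\]
This reduces the task to controlling the numerator $\mushf([f^k, g])$ as $k \to \infty$ under the commuting hypothesis.

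Under the assumption $fg = gf$, an immediate induction gives $f^k g = g f^k$ for every $k \in \NN$. With the commutator convention $[x, y] = xyx^{-1}y^{-1}$ fixed in Subsection~\ref{subsec=notation}, this yields
\[
[f^k, g] = f^k g f^{-k} g^{-1} = (g f^k) f^{-k} g^{-1} = e,
\]
the identity element of $\tSympc(\Mm, \omega)$. Since $\mushf$ is a homogeneous quasimorphism, it vanishes on $e$, so $\mushf([f^k, g]) = 0$ for every $k \in \NN$. Substituting into the limit formula gives $\bb_{\mushf}(\tflux_{\omega}(f), \tflux_{\omega}(g)) = 0$, as desired.

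This argument is the direct analogue of the deduction of Corollary~\ref{cor=KKMM} from Theorem~\ref{thm=KKMM} already sketched in the excerpt, and once Theorem~\ref{mthm=RRbilinear} is in hand there is no genuine obstacle: the commuting hypothesis annihilates the commutators $[f^k, g]$ outright. It is worth noting, however, that the quasimorphism bound \eqref{eq=Shelukhin_form} applied naively to $[f, g] = e$ only gives the weaker estimate $|\bb_{\mushf}(\tflux_{\omega}(f), \tflux_{\omega}(g))| \leq \DD(\mushf)$, which is insufficient. The actual vanishing requires the leveraging of powers $f^k$ together with the limit formula (whose derivation exploits the $\ZZ$-bilinearity built into $\bb_{\mushf}$) in order to trade the defect for a $1/k$ factor and extract exact vanishing. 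Thus the genuine mathematical content sits upstream in Theorem~\ref{mthm=RRbilinear}, and the present statement is a clean geometric corollary that extends Corollary~\ref{cor=KKMM} beyond the surface case to arbitrary closed symplectic manifolds.
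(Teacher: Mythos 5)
Your proof is correct and uses exactly the mechanism the paper relies on: the limit formula \eqref{eq=Shelukhin_form_limit} applied to the commutators $[f^k,g]$, which are all trivial under the commuting hypothesis, so the limit vanishes. The paper simply packages this as the special case $[f,g]=e$ of the refined Theorem~\ref{thm=commutingSymp}, whose proof runs the same computation with $[f,g]$ equal to a central element $z\in\pi_1(\Hamc(\Mm,\omega))$ (giving $[f^k,g]=z^k$ and the value $\mushf(z)=I_{c_1}(z)$ instead of $0$), so your direct argument is essentially the paper's proof.
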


In fact, we also have the following refined result on fluxes of commuting two elements in $\Sympc_0(\Mm,\omega)$. Here, the homomorphism $I_{c_1}\colon \pi_1(\Hamc(\Mm,\omega))\to \RR$ will be defined by \eqref{eq=I_{c_1}} in Subsection~\ref{subsec:sh_qm}.

\begin{thm}[constraint on the fluxes of commuting two elements in $\Sympc_0(\Mm,\omega)$]\label{thm=commutingSymp}
Let $(\Mm,\omega)$ be a closed symplectic manifold. Let $\qqm\colon \tSympc(\Mm,\omega)\twoheadrightarrow \Sympc_0(\Mm,\omega)$ be the universal covering map. Then for all $f,g\in \tSympc(\Mm,\omega)$ with $\qqm(f) \qqm(g)=\qqm(g)\qqm(f)$ in $\Sympc_0(\Mm,\omega)$, we have
\begin{equation}\label{eq=commutingcenter}
\bb_{\mushf}(\tflux_{\omega}(f),\tflux_{\omega}(g))=I_{c_1}([f,g]).
\end{equation}
In particular, $\bb_{\mushf}(\tflux_{\omega}(f),\tflux_{\omega}(g))\in I_{c_1}(\pi_1(\Hamc(\Mm,\omega)))$.
\end{thm}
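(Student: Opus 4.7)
The plan is to reduce Theorem~\ref{thm=commutingSymp} to a one-line application of the limit formula \eqref{eq=Shelukhin_form_limit}, exploiting the fact that under the hypothesis the commutator $[f,g]$ is central in $\tSympc(\Mm,\omega)$.

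First I would set $\zeta = [f,g]$ and verify that $\zeta \in \pi_1(\Hamc(\Mm,\omega))$ (so that $I_{c_1}(\zeta)$ is defined). The hypothesis $\qqm(f)\qqm(g) = \qqm(g)\qqm(f)$ forces $\qqm(\zeta) = \id$, so $\zeta$ lies in $\ker \qqm = \pi_1(\Sympc_0(\Mm,\omega))$, which is a central discrete subgroup of the topological group $\tSympc(\Mm,\omega)$. On the other hand $\zeta \in [\tSympc(\Mm,\omega),\tSympc(\Mm,\omega)] \subseteq \tHamc(\Mm,\omega)$; since $\Hamc(\Mm,\omega) \hookrightarrow \Sympc_0(\Mm,\omega)$ is injective, the kernel of $\qqm|_{\tHamc(\Mm,\omega)}$ agrees with $\ker(\tHamc(\Mm,\omega)\twoheadrightarrow \Hamc(\Mm,\omega)) = \pi_1(\Hamc(\Mm,\omega))$, yielding the desired membership.

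Next, I would use the centrality of $\zeta$ to establish the key algebraic identity $[f^k,g] = \zeta^k$ for every $k \in \NN$: from $fg = \zeta gf$ and the centrality of $\zeta$, a short induction gives $f^k g f^{-k} = \zeta^k g$. Homogeneity of $\mushf$ then yields $\mushf([f^k,g]) = k\mushf(\zeta)$, and substitution into the limit formula \eqref{eq=Shelukhin_form_limit} produced by Theorem~\ref{mthm=RRbilinear} gives
\[
\bb_{\mushf}(\tflux_\omega(f),\tflux_\omega(g)) \;=\; \lim_{k\to\infty}\frac{\mushf([f^k,g])}{k} \;=\; \mushf(\zeta).
\]
To close, I invoke the identification $\mushf|_{\pi_1(\Hamc(\Mm,\omega))} = I_{c_1}$, which is essentially the defining property of $I_{c_1}$ in \eqref{eq=I_{c_1}}; this upgrades the previous display to $\mushf(\zeta) = I_{c_1}([f,g])$ and establishes \eqref{eq=commutingcenter}. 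The ``in particular'' claim is then immediate from $\zeta \in \pi_1(\Hamc(\Mm,\omega))$.

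The only ingredient with real content is the normalization $\mushf|_{\pi_1(\Hamc(\Mm,\omega))} = I_{c_1}$; essentially all substance will be packed into the precise definition of Shelukhin's quasimorphism and the conventional definition of $I_{c_1}$ as its restriction to Hamiltonian loops, both to be recorded in Subsection~\ref{subsec:sh_qm}. Granted that formalism, the computation above is short, and the steps verifying $\zeta \in \pi_1(\Hamc(\Mm,\omega))$ and the commutator identity $[f^k,g]=\zeta^k$ are entirely routine.
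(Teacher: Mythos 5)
Your proposal is correct and follows essentially the same route as the paper's proof: identify $[f,g]$ as a central element of $\pi_1(\Hamc(\Mm,\omega))$, deduce $[f^k,g]=[f,g]^k$, feed this into limit formula \eqref{eq=Shelukhin_form_limit}, and conclude via $\mushf|_{\pi_1(\Hamc(\Mm,\omega))}=I_{c_1}$ (Proposition~\ref{prop=ShelukhinHam}). One small correction to your framing: that last identification is \emph{not} ``the defining property of $I_{c_1}$'' --- in the paper $I_{c_1}$ is defined independently by \eqref{eq=I_{c_1}} as a characteristic number of the Hamiltonian fibration over $S^2$, and the equality with the restriction of $\mushf$ is a genuine theorem of Shelukhin (\cite[Corollary~2]{Shelukhin}), so you must cite it as such rather than treat it as a normalization convention.
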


By taking the contrapositions, we may regard Theorem~\ref{thm=commutingtSymp} and Theorem~\ref{thm=commutingSymp} as obstructions to commutativity in $\tSympc(\Mm,\omega)$ and in $\Sympc_0(\Mm,\omega)$, respectively. We will discuss them in Corollary~\ref{cor=obs_flux} in detail.

\subsection{Applications of the explicit expression of $\bb_{\mushf}$}\label{subsec=application_expression}

As is mentioned in Subsection~\ref{subsec=RousseauPy}, Py's Calabi quasimorphism $\mupyf\colon \tHamc(\Mm,\omega)\to \RR$ is defined for  a closed symplectic surface $(\Mm,\omega)$ of genus at least two. Then, we have the following relation between $\mushf=\mushf^{\Mm}$ and $\mupyf$.

\begin{thm}\label{thm=ShelukhinPy}
Let $(\Mm,\omega)$ be a closed surface  equipped with a symplectic form whose genus $l$ is at least two. Then, 
\[
\mushf-\frac{2-2l}{\mathrm{Area}(\Mm,\omega)}\mupyf\in i^{\ast}\QQQ(\tSympc(\Mm,\omega)),
\]
that is, the quasimorphism on the left-hand side is extendable to $\tSympc(\Mm,\omega)$.
\end{thm}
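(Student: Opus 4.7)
The plan is to exploit the injective $\RR$-linear map
\[
\bBbG\colon \QQQ(\Ng)^{\Gg}\big/i^{\ast}\QQQ(\Gg) \hookrightarrow \AAA(\Gamg)
\]
built in Section~\ref{sec=bilinear}, instantiated with $\Gg=\tSympc(\Mm,\omega)$, $\Ng=\tHamc(\Mm,\omega)$ (which coincides with $\Hamc(\Mm,\omega)$ by Remark~\ref{rem=pi1trivial}), and $\Gamg=\HHH^1_c(\Mm;\RR)$. If I can show
\[
\bb_{\mushf}=\frac{2-2l}{\mathrm{Area}(\Mm,\omega)}\,\bb_{\mupyf}
\]
as elements of $\AAA(\Gamg)$, then by the $\RR$-linearity and injectivity of $\bBbG$ the class of $\mushf-\frac{2-2l}{\mathrm{Area}(\Mm,\omega)}\mupyf$ in $\QQQ(\Ng)^{\Gg}/i^{\ast}\QQQ(\Gg)$ is trivial, which is precisely the desired extendability to $\tSympc(\Mm,\omega)$.

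For the Py side, Theorem~\ref{thm=KKMM} together with the limit formula~\eqref{eq=Py_form_limit} gives
\[
\bb_{\mupyf}(\tflux_{\omega}(\fl),\tflux_{\omega}(\gl))=\lim_{k\to\infty}\frac{\mupyf([\fl^{k},\gl])}{k}=\bb_{\omega}(\tflux_{\omega}(\fl),\tflux_{\omega}(\gl)),
\]
so $\bBbG([\mupyf])=\bb_{\omega}$. For the Shelukhin side, I would invoke Theorem~\ref{mthm=explicitShelukhin}(1) in the degenerate regime where the second factor $(\Ng,\omega_\Ng)$ is trivial and $\Mm=S$. Using $\vol(\Mm,\omega)=\mathrm{Area}(\Mm,\omega)$ for a surface, that formula specializes to
\[
\bb_{\mushf}=\vol(\Mm,\omega)\cdot\frac{2-2l}{\mathrm{Area}(\Mm,\omega)^{2}}\cdot\bb_{\omega}=\frac{2-2l}{\mathrm{Area}(\Mm,\omega)}\,\bb_{\omega},
\]
completing the identification and hence the argument.

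The main obstacle is justifying the degenerate case of Theorem~\ref{mthm=explicitShelukhin}(1), since that statement is formulated for products with a positive-dimensional closed symplectic factor. If the proof of Theorem~\ref{mthm=explicitShelukhin}(1) does not transparently run with a trivial second factor, I would instead pin down $\bb_{\mushf}$ on a pure surface by symmetry. By Theorem~\ref{mthm=RRbilinear} and Remark~\ref{rem=Symp/Symp_0}, $\bb_{\mushf}$ is an alternating $\RR$-bilinear form on $\HHH^1(\Mm;\RR)\cong\RR^{2l}$ invariant under the symplectic mapping class group, whose action surjects onto $\Sp(2l,\ZZ)$; since the space of $\Sp(2l,\RR)$-invariant alternating bilinear forms on $\RR^{2l}$ is one-dimensional (spanned by $\bb_{\omega}$) and $\Sp(2l,\ZZ)$ is Zariski-dense in $\Sp(2l,\RR)$, this forces $\bb_{\mushf}=c\cdot\bb_{\omega}$ for some scalar $c\in\RR$. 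The value $c=\frac{2-2l}{\mathrm{Area}(\Mm,\omega)}$ is then extracted via the limit formula~\eqref{eq=Shelukhin_form_limit} applied to one convenient pair $(\fl,\gl)\in\tSympc(\Mm,\omega)^{2}$ whose fluxes span a symplectic plane in $\HHH^1(\Mm;\RR)$, reducing the remaining task to a single model computation of $\mushf([\fl^{k},\gl])$ from Shelukhin's construction.
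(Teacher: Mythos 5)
Your primary route coincides with the paper's own proof: the paper sets $\muf_{\mathrm{SP}}=\mushf-\frac{2-2l}{\mathrm{Area}(\Mm,\omega)}\mupyf$, shows $\bBb([\muf_{\mathrm{SP}}])=0$ by combining Theorem~\ref{thm=Py} (your Py-side identification $\bb_{\mupyf}=\bb_{\omega}$) with Corollary~\ref{cor=bsurface} (your Shelukhin-side identification $\bb_{\mushf}=\frac{2-2l}{\mathrm{Area}(\Mm,\omega)}\bb_{\omega}$), and concludes by the injectivity of $\bBb$. The obstacle you flag is not an issue, so your fallback Zariski-density argument is unnecessary: the paper derives Corollary~\ref{cor=bsurface} by applying Theorem~\ref{mthm=explicitShelukhin}(2) with $n=1$, and a single closed surface of genus at least one is a legitimate instance of that statement, so no degenerate product factor ever enters.
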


\begin{rem}
Py's quasimorphism $\mathfrak{S}_{Py}$ appearing in \cite[Theorem~4]{Shelukhin} is different from Py's Calabi quasimorphism $\mupyf$, appearing in the present paper. We also remark that $\mushf-\dfrac{2-2l}{\mathrm{Area}(\Mm,\omega)}\mupyf$ in the setting of Theorem~\ref{thm=ShelukhinPy} is \emph{not} the zero map (see Remark~\ref{rem=local_Calabi}).
\end{rem}

We have a non-vanishing result for $\bb_{\mushf}$ as a corollary to Theorem~\ref{mthm=explicitShelukhin} (Corollary~\ref{cor=non-vanishing_Shelukhin}). As a \mbox{by-product}, we have a non-zero bound from below of the real dimension of an $\RR$-linear space related to that in Theorem~\ref{thm=findimCVSI} (Proposition~\ref{prop=dimatleast1}). We also apply Theorem~\ref{mthm=Reznikov} to obtain the characterization in terms of the intersection form of the triviality of the Reznikov class  for a closed symplectic surface of genus at least two (Corollary~\ref{cor=surfacetrivial}).

\subsection{Organization of the present paper}\label{subsec=organization}
Section~\ref{sec=prelim} provides preliminaries. In Section~\ref{sec=bilinear}, we introduce a general framework of constructing an ($\RR$-valued) alternating $\ZZ$-bilinear form out of an invariant quasimorphism for a group pair $(\Gg,\Ng)$ with $\Ng=[\Gg,\Gg]$. In Section~\ref{sec=Py}, we study bilinear forms constructed out of Calabi quasimorphisms on surfaces. In particular, we prove Theorems~\ref{thm=KKMM} and \ref{thm=findimCalabi}. In Section~\ref{sec=RRbilinear_Shelukhin}, we prove Theorem~\ref{mthm=RRbilinear}, Theorem~\ref{mthm=Shelukhin_extendable} and Theorem~\ref{thm=extendabilityRR} by employing the machinery built in Section~\ref{sec=bilinear}. The key here is the $\RR$-bilinearity of $\bb_{\mushf}$ (Proposition~\ref{prop=RRbilinear}). In addition, we prove Theorems~\ref{thm=findimCVSI}, \ref{thm=commutingtSymp} and \ref{thm=commutingSymp}. In Section~\ref{sec=Reznikov}, we prove Theorem~\ref{mthm=Reznikov} and Theorem~\ref{thm=ReznikovRR}. Section~\ref{sec=kari1} and Section~\ref{sec=kari2} are devoted to the proof of Theorem~\ref{mthm=explicitShelukhin}. In Section~\ref{sec=kari1}, we formulate and prove the axiomatized theorem for Theorem~\ref{mthm=explicitShelukhin} (Theorem~\ref{thm=XQ}). In Section~\ref{sec=kari2} we apply this theorem to the settings of Theorem~\ref{mthm=explicitShelukhin}; Theorem~\ref{thm=ShelukhinPy} is proved there.

\section{Preliminaries}\label{sec=prelim}

\subsection{Symplectic geometry}\label{subsec=symp}

In this subsection, we review some concepts in symplectic geometry which we will need in the subsequent sections.
For a more comprehensive introduction to this subject, we refer the reader to \cite{Ban97}, \cite{MS} and \cite{P01}. Recall from Subsection~\ref{subsec=notation} that we assume symplectic manifolds to be connected.

Let $(\Mm,\omega)$ be a  symplectic manifold.
In this subsection, we endow $\Sympc(\Mm,\omega)$ with the $C^\infty$-topology. 
For a smooth function $H\colon \Mm\to\mathbb{R}$, we define the \textit{Hamiltonian vector field} $X_H$ associated with $H$ by
\[\omega(X_H,V)=-dH(V)\text{ for every }V \in \mathcal{X}(\Mm),\]
where $\mathcal{X}(\Mm)$ is the set of smooth vector fields on $\Mm$.

For a smooth function $H\colon  [0,1] \times \Mm\to\mathbb{R}$ with compact support and for $t \in  [0,1] $, we define a function $H_t\colon \Mm\to\mathbb{R}$ by $H_t(x)=H(t,x)$.
Let $X_H^t$ denote the Hamiltonian vector field associated with $H_t$ and let $\{\varphi_H^t\}_{t\in\mathbb{R}}$ denote the isotopy generated by $X_H^t$ such that $\varphi^0=\mathrm{id}_{\Mm}$.
We set $\varphi_H=\varphi_H^1$ and $\varphi_H$ is called the \emph{Hamiltonian diffeomorphism generated by $H$}.
For a symplectic manifold $(\Mm,\omega)$, we define the group $\Hamc(\Mm,\omega)$ of Hamiltonian diffeomorphisms by
\[\Hamc(\Mm,\omega)=\{\varphi\in\mathrm{Diff}(\Mm)\;|\;\exists H\in C^\infty([0,1]\times \Mm)\text{ such that }\varphi=\varphi_H\}.\]
Then, $\Hamc(\Mm,\omega)$ is a normal subgroup of $\Sympc_0(\Mm,\omega)$.

A smooth function $H\colon  [0,1] \times \Mm\to\mathbb{R}$ is said to be \emph{normalized}  if 
\begin{itemize}
\item $\Mm$ is closed and $\int_{\Mm}H_t\omega^n=0$ for every $t\in [0,1]$, or 
\item $\Mm$ is open and $H$ has a compact support.
\end{itemize}

Let $\tSympc(\Mm,\omega)$ denote the universal covering of $\Sympc_0(\Mm,\omega)$.
We define the (symplectic) flux homomorphism $\widetilde{\flux}_\omega\colon\widetilde{\Sympc_0}(\Mm,\omega)\to \HHH_c^{1}(\Mm;\RR)$ by
\[\widetilde{\flux}_\omega([\{\psi^t\}_{t\in[0,1]}])=\int_0^1[\iota_{X_t}\omega]dt,\]
where $\{\psi^t\}_{t\in[0,1]}$ is a path in $\Sympc_0(\Mm,\omega)$ with $\psi^0=\id_{\Mm}$ and $[\{\psi^t\}_{t\in[0,1]}]$ is the element of the universal covering $\widetilde{\Sympc_0}(\Mm,\omega)$ represented by the path $\{\psi^t\}_{t\in[0,1]}$.
It is known that $\widetilde{\flux}_\omega$ is a well-defined homomorphism (\cite{Ban}, \cite{Ban97} and  \cite[Lemma 2.4.3]{O15a}).

We also define the descended flux homomorphism.
We set
\[
\Gamma_\omega=\widetilde{\flux}_\omega(\pi_1(\Sympc_0(\Mm,\omega))),
\]
which is called the \textit{symplectic flux group}.
Then, $\widetilde{\flux}_\omega \colon \widetilde{\Sympc_0}(\Mm, \omega) \to \HHH_c^{1}(\Mm ; \RR)$ induces a homomorphism $\Sympc_0(\Mm,\omega)\to \HHH_c^{1}(\Mm;\RR)/\Gamma_\omega$, which is denoted by $\flux_\omega$ and is called the descended flux homomorphism.
\begin{prop}[fundamental facts on the flux homomorphism and $\tSympc(\Mm,\omega)$]\label{survey on flux}
  Let $(\Mm,\omega)$ be a closed  symplectic manifold.
  Then, the following hold.
 \begin{enumerate}[label=\textup{(\arabic*)}]
  \item  The map $\tHamc(\Mm,\omega) \to \tSympc(\Mm,\omega)$ induced by the inclusion map $\Hamc(\Mm,\omega) \to \Sympc_0(\Mm,\omega)$ is an injective homomorphism.
  \item The flux homomorphism $\tflux_\omega \colon \tSympc(\Mm,\omega)\to \HHH^1_c(\Mm;\RR)$ is surjective.
  \item The group $\mathrm{Ker}(\tflux_\omega)$ equals $\tHamc(\Mm,\omega)$.
  \item The group $\tHamc(\Mm,\omega)$ equals the commutator subgroup $[\tSympc(\Mm,\omega),\tSympc(\Mm,\omega)]$ of $\tSympc(\Mm,\omega)$.
  \item  The center of $\tSympc(\Mm,\omega)$ is $\pi_1(\Sympc_0(\Mm,\omega))$.
  \item Assume that $(\Mm,\omega)$ is a surface of genus at least two equipped with a symplectic form. Then $\pi_1(\Sympc_0(\Mm,\omega))$ and $\pi_1(\Hamc(\Mm,\omega))$ are both trivial. In particular, $\tSympc(\Mm,\omega)$ coincides with $\Sympc_0(\Mm,\omega)$ and $\tHamc(\Mm,\omega)$ coincides with $\Hamc(\Mm,\omega)$ in this case.
\end{enumerate}
\end{prop}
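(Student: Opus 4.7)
The plan is to deduce items (1)--(4) from the flux short exact sequence, handle (5) via a centrality argument, and treat (6) by combining Moser's trick with the Earle--Eells theorem.

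I would first simultaneously establish (2) and the direction $\tHamc(\Mm,\omega) \subseteq \Ker(\tflux_\omega)$ of (3). The latter is a direct computation since a Hamiltonian vector field $X^t$ satisfies $\iota_{X^t}\omega = dH_t$, which is exact; surjectivity in (2) follows by choosing a representative closed $1$-form $\alpha$ of a prescribed cohomology class, taking the symplectic vector field $X$ with $\iota_X\omega = \alpha$, and integrating. For the reverse inclusion $\Ker(\tflux_\omega) \subseteq \tHamc(\Mm,\omega)$, I would apply Banyaga's path-method argument, which reparametrizes a symplectic isotopy with vanishing flux to a Hamiltonian one relative to fixed endpoints; this simultaneously completes (3) and realizes $\tHamc$ concretely inside $\tSympc$ as $\Ker(\tflux_\omega)$, yielding (1). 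For (4), the inclusion $[\tSympc, \tSympc] \subseteq \tHamc$ is automatic from the abelianness of $\tSympc/\tHamc \cong \HHH^1_c(\Mm;\RR)$, while the converse is Banyaga's perfectness theorem \cite{Ban} applied to $\tHamc(\Mm,\omega)$.

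The main technical obstacle is (5). One direction, $\pi_1(\Sympc_0(\Mm,\omega)) \subseteq Z(\tSympc(\Mm,\omega))$, is the general fact that the deck group of the universal cover of a connected topological group is central. For the reverse direction, any central $\tilde g \in \tSympc$ projects to an element $g \in \Sympc_0(\Mm,\omega)$ that commutes with every Hamiltonian diffeomorphism. I would show $g = \id_{\Mm}$ by contradiction: if $g(p) \neq p$ for some $p \in \Mm$, then choosing a Hamiltonian diffeomorphism $\phi$ supported in a small Darboux ball near $p$ but disjoint from $g(p)$, and satisfying $\phi(p) \neq p$, the relation $g\phi = \phi g$ yields $g\phi(p) = \phi g(p) = g(p)$, forcing $\phi(p) = p$, a contradiction. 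Hence $g = \id_{\Mm}$ and $\tilde g \in \pi_1(\Sympc_0(\Mm,\omega))$; this is the most delicate step and is likely cleanest to cite from the literature on centers of diffeomorphism groups.

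Finally for (6), Moser's trick identifies $\Sympc_0(\Sigma_{\genus}, \omega)$ homotopy-equivalently with $\mathrm{Diff}_0(\Sigma_{\genus})$: the assignment $\phi \mapsto \phi^*\omega$ is a fibration from $\mathrm{Diff}_0(\Sigma_{\genus})$ onto the (convex, hence contractible) space of area forms of total area $\int_{\Sigma_{\genus}}\omega$, with fiber $\Sympc_0(\Sigma_{\genus},\omega)$. The Earle--Eells theorem gives $\mathrm{Diff}_0(\Sigma_{\genus})$ contractible for $\genus \geq 2$, so $\pi_1(\Sympc_0(\Sigma_{\genus}, \omega)) = 1$. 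For $\pi_1(\Hamc(\Sigma_{\genus},\omega)) = 1$, the homotopy long exact sequence of the fibration $\Hamc(\Sigma_{\genus},\omega) \to \Sympc_0(\Sigma_{\genus},\omega) \to \HHH^1(\Sigma_{\genus};\RR)/\Gamma_\omega$ reads $\pi_2(\HHH^1(\Sigma_{\genus};\RR)/\Gamma_\omega) \to \pi_1(\Hamc) \to \pi_1(\Sympc_0) = 0$; since the base is a quotient of a vector space by a discrete subgroup, its $\pi_2$ vanishes, forcing $\pi_1(\Hamc) = 1$.
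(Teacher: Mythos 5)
The paper offers no proof of this proposition at all: it is a survey statement disposed of by citations --- items (1)--(4) to Banyaga \cite{Ban}, \cite{Ban97} and McDuff--Salamon \cite[Proposition 10.18]{MS}, item (6) to Polterovich \cite[Subsection 7.2]{P01}, with (5) left as standard. Your proposal reconstructs essentially the proofs contained in those references: the exactness computation plus integration of a symplectic vector field for (2) and one inclusion of (3), Banyaga's path-deformation method for the other inclusion and for (1), Banyaga's perfectness theorem for (4), the ``discrete normal subgroup of a connected group is central'' fact plus a displacement argument for (5), and the Moser fibration together with Earle--Eells for (6). So in substance you are supplying the standard arguments that the paper delegates to the literature; what your version buys is self-containedness, and your argument for (5) in particular is complete as written and needs no outside input beyond the existence of compactly supported Hamiltonian diffeomorphisms moving a point inside a small ball.

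Two places in the sketch are thinner than they should be. First, in (1): injectivity of the natural map $\tHamc(\Mm,\omega)\to\tSympc(\Mm,\omega)$ is equivalent to injectivity of $\pi_1(\Hamc(\Mm,\omega))\to\pi_1(\Sympc_0(\Mm,\omega))$, i.e.\ to the statement that two Hamiltonian isotopies which are homotopic rel endpoints through \emph{symplectic} isotopies are already homotopic rel endpoints through \emph{Hamiltonian} isotopies. This does not formally follow from the fact that a single zero-flux symplectic path can be deformed rel endpoints into a Hamiltonian one; you need the parametric (family) version of Banyaga's deformation, applied to the whole homotopy. The references cited in the paper prove exactly this, but your sentence claiming that identifying $\Ker(\tflux_\omega)$ with the image of $\tHamc(\Mm,\omega)$ ``yields (1)'' skips the actual content of (1). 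Second, in (6) you invoke the homotopy long exact sequence twice, for the Moser map $\phi\mapsto\phi^*\omega$ and for the flux homomorphism $\Sympc_0(\Mm,\omega)\to\HHH^1_c(\Mm;\RR)/\Gamma_\omega$; in each case the fibration (or at least local-section) property must be supplied. For Moser's map this is the parametric Moser argument; for flux, the cleanest fix is to note that sending $v$ to the time-one map of the symplectic vector field dual to the harmonic representative of $v$ gives a global continuous section, which splits $\Sympc_0(\Mm,\omega)$ topologically as $\Hamc(\Mm,\omega)\times\HHH^1_c(\Mm;\RR)$ (using $\Gamma_\omega=0$, which follows from the simple connectivity you established first) and makes the conclusion $\pi_1(\Hamc(\Mm,\omega))=1$ immediate.
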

For (1)--(4), see \cite{Ban}, \cite{Ban97} and {\cite[Proposition 10.18]{MS}}; for (4), see in particular \cite[Theorem~4.3.1]{Ban97}.
For (6), see \cite[Subsection 7.2]{P01}.

By Proposition \ref{survey on flux}~(1), we can regard $\tHamc(\Mm,\omega)$ as a subgroup of $\tSympc(\Mm,\omega)$; in fact, by (3) $\tHamc(\Mm,\omega)$ is the commutator subgroup of $\tSympc(\Mm,\omega)$.


If $(S,\omega)$ is a closed symplectic surface whose genus is at least two, then  $\Sympc_0(S,\omega)$ is simply connected by Proposition~\ref{survey on flux}~(6). In particular, the flux group $\Gamma_\omega$ of $(S, \omega)$  vanishes and the flux homomorphism $\flux_\omega$ is a homomorphism from $\Sympc_0(S, \omega)$ to $\HHH^1_c(S ; \RR)$.

Finally, we recall the Calabi homomorphism. 
Let $(\Mm,\omega)$ be a $2n$-dimensional  open symplectic manifold.
The \textit{Calabi homomorphism}
is a function $\mathrm{Cal}_{\Mm} \colon \tHamc(\Mm,\omega)\to\mathbb{R}$ defined by
\[
	\cal_{\Mm}(\varphi_H)=\int_0^1\left(\int_{\Mm} H_t \omega^n\right)dt,
\]
 where $H \colon [0,1] \times \Mm \to \RR$ is a normalized smooth function. 
 It is known that the Calabi homomorphism is a well-defined group homomorphism (see \cite{Cala}, \cite{Ban}, \cite{Ban97}, \cite{MS}, \cite{Hum} and \cite[Theorem 2.5.6]{O15a}).
Since $\int_{\Mm} H\omega^n=\int_{\Mm} (H\circ\psi)\omega^n$ holds for every smooth function $H\colon \Mm \to \RR$ and every $\psi \in \Sympc_0(\Mm,\omega)$,  the Calabi homomorphism is $\Sympc_0(\Mm,\omega)$-invariant.

\subsection{Invariant quasimorphisms}\label{subsec=qm}

We first recall the definition and properties of quasimorphisms. We refer the reader to \cite{Calegari} as a comprehensive treatise on quasimorphisms.

\begin{defn}\label{defn=qhom}
Let $\Gg$ be a group.
\begin{enumerate}[label=\textup{(\arabic*)}]
  \item A real-valued function $\phf \colon \Gg \to \RR$ on a group $\Gg$ is called a {\it quasimorphism} if
\[
 \DD(\phf):= \sup_{\gl_1,\gl_2 \in \Gg}|\mu(\gl_1\gl_2) - \mu (\gl_1) - \mu(\gl_2)|
\]
is finite. The constant $\DD(\phf)$ is called the {\it defect} of $\phf$.
  \item A quasimorphism $\phf$ on $\Gg$ is said to be {\it homogeneous} if
$\phf(\gl^k) = k \cdot \phf(\gl)$ for every $\gl \in \Gg$ and for every $k \in \ZZ$.
  \item We write $\QQQ(\Gg)$ for the $\RR$-linear space of homogeneous quasimorphisms on $\Gg$.
\end{enumerate}
\end{defn}
By employing the symbol $\sim_C$ as in Subsection~\ref{subsec=notation}, for $\phf\in \QQQ(\Gg)$ we have
\[
\phf(\gl_1\gl_2)\sim_{\DD(\phf)} \phf(\gl_1)+\phf(\gl_2)
\]
for all $\gl_1,\gl_2$ in $\Gg$. 
The following properties of homogeneous quasimorphisms are fundamental and easy to deduce from homogeneity. 

\begin{lem} \label{lem:qm}
  Let $\phf$ be a homogenous quasimorphism on a group $\Gg$. Then, for all $\gl_1,\gl_2 \in \Gg$, the following hold true:
  \begin{enumerate}[label=\textup{(\arabic*)}]
    \item $\phf(\gl_2\gl_1\gl_2^{-1})=\phf(\gl_1)$;
    \item if $\gl_1\gl_2=\gl_2\gl_1$, then $\phf(\gl_1\gl_2)=\phf(\gl_1)+\phf(\gl_2)$;
    \item $\left|\phf([\gl_1,\gl_2])\right| \leq  \DD(\phf).$
  \end{enumerate}
\end{lem}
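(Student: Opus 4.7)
The plan is to prove each item by exploiting homogeneity to amplify the quasimorphism relation $\phi(xy) \sim_{D(\phi)} \phi(x) + \phi(y)$ along powers, then divide and take a limit; item (3) follows directly from (1) plus one application of the defect inequality.

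For (1), the idea is to compute $\phi((g_2 g_1 g_2^{-1})^k)$ in two ways. On the one hand, homogeneity gives $\phi((g_2 g_1 g_2^{-1})^k) = k \, \phi(g_2 g_1 g_2^{-1})$. On the other hand, $(g_2 g_1 g_2^{-1})^k = g_2 g_1^k g_2^{-1}$, and applying the defect inequality twice together with $\phi(g_2^{-1}) = -\phi(g_2)$ (a consequence of homogeneity at $k=-1$) yields $\phi(g_2 g_1^k g_2^{-1}) \sim_{2 D(\phi)} k \, \phi(g_1)$. Combining the two and dividing by $|k|$, then letting $k \to \infty$, forces $\phi(g_2 g_1 g_2^{-1}) = \phi(g_1)$.

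For (2), the same trick works because $g_1 g_2 = g_2 g_1$ implies $(g_1 g_2)^k = g_1^k g_2^k$ for all $k \in \mathbb{Z}$. Then $k \, \phi(g_1 g_2) = \phi((g_1 g_2)^k) = \phi(g_1^k g_2^k) \sim_{D(\phi)} k \, \phi(g_1) + k \, \phi(g_2)$, and dividing by $|k|$ and sending $k \to \infty$ yields the claimed equality. For (3), write $[g_1, g_2] = g_1 \cdot (g_2 g_1^{-1} g_2^{-1})$ and apply the defect inequality once: $\phi([g_1, g_2]) \sim_{D(\phi)} \phi(g_1) + \phi(g_2 g_1^{-1} g_2^{-1})$. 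By item (1) already established, $\phi(g_2 g_1^{-1} g_2^{-1}) = \phi(g_1^{-1}) = -\phi(g_1)$, so the right-hand side is $0$ and $|\phi([g_1, g_2])| \leq D(\phi)$.

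None of the steps looks technically hard; the only mild subtlety is making sure to invoke homogeneity to first establish $\phi(g^{-1}) = -\phi(g)$ and to justify the limiting argument (taking $k$ to be a positive integer tending to infinity is enough since both sides of each displayed inequality are linear in $k$). The main thing to be careful about is to prove item (1) before using it in item (3), so that the chain of deductions is self-contained.
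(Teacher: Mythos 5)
Your proof is correct, and it is precisely the standard argument the paper has in mind: the paper gives no proof of this lemma, stating only that these properties are ``fundamental and easy to deduce from homogeneity,'' which is exactly the amplification-by-powers technique you use. All three steps check out, including the careful points you flag (establishing $\phf(\gl^{-1})=-\phf(\gl)$ first, and proving (1) before invoking it in (3)).
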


 The homogeneity condition on quasimorphisms is not restrictive in the following sense. 

\begin{lem}[homogenization]\label{lem=homogenization}
Let  $\phf$ be a quasimorphism on a group $\Gg$. Then, the map $\phf_{\mathrm{h}}\colon \Gg\to \RR$ defined by
\begin{equation}\label{eq=homogenization}
\phf_{\mathrm{h}}(\gl)=\lim_{k\to \infty}\frac{\phf(\gl^k)}{k}.
\end{equation}
is well-defined. In other words, the limit on the right-hand side exists. Furthermore, this $\phf_{\mathrm{h}}$ satisfies the following two properties.
\begin{enumerate}[label=\textup{(\arabic*)}]
  \item $\phf_{\mathrm{h}}\in \QQQ(\Gg)$, namely, $\phf_{\mathrm{h}}$ is a homogeneous quasimorphism on $\Gg$.
  \item $\|\phf_{\mathrm{h}}-\phf\|_{\infty}\leq \DD(\phf)$. That is,
\[
\sup_{\gl\in \Gg} |\phf_{\mathrm{h}}(\gl)-\phf(\gl)|\leq \DD(\phf).
\]
\end{enumerate}
\end{lem}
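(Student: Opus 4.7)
The plan is to establish well-definedness of the limit~\eqref{eq=homogenization} first, deduce assertion~(2) as a by-product, and then derive the homogeneity and quasimorphism properties in~(1).

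The core estimate for well-definedness is the iterated defect bound $|\phf(\gl^m) - m\phf(\gl)| \leq (m-1)\DD(\phf)$ for every positive integer $m$, obtained by induction on $m$ from the definition of $\DD(\phf)$. Applying this inequality with $\gl^n$ in place of $\gl$ and conversely gives two estimates for $\phf(\gl^{mn})$; combining them via the triangle inequality and dividing by $mn$ yields
\[
\left| \frac{\phf(\gl^m)}{m} - \frac{\phf(\gl^n)}{n} \right| \leq \DD(\phf)\left( \frac{1}{m} + \frac{1}{n} \right)
\]
for all positive integers $m$ and $n$. Hence $\{\phf(\gl^k)/k\}_{k\in\NN}$ is Cauchy and converges, so $\phf_{\mathrm{h}}(\gl)$ is well defined for every $\gl \in \Gg$. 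Specializing the iterated bound to the case comparing $\phf(\gl^n)/n$ with $\phf(\gl)$ and passing $n\to\infty$ in $|\phf(\gl^n)/n - \phf(\gl)| \leq (n-1)\DD(\phf)/n$ then yields $|\phf_{\mathrm{h}}(\gl) - \phf(\gl)| \leq \DD(\phf)$, which is assertion~(2).

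For the homogeneity claim in~(1), positive exponents are immediate: $\phf_{\mathrm{h}}(\gl^m) = \lim_{k\to\infty} \phf(\gl^{mk})/k = m\phf_{\mathrm{h}}(\gl)$ for every $m \geq 1$. The case $m = 0$ reduces to $\phf_{\mathrm{h}}(e) = 0$, where $e$ denotes the identity of $\Gg$; this follows directly from $\phf(e^k) = \phf(e)$ being constant in $k$. For $m = -n$ with $n \geq 1$, the defect bound applied to $\phf(\gl^{nk} \cdot \gl^{-nk}) = \phf(e)$ gives $|\phf(\gl^{nk}) + \phf(\gl^{-nk})| \leq \DD(\phf) + |\phf(e)|$; dividing by $k$ and letting $k\to\infty$ yield $\phf_{\mathrm{h}}(\gl^{-n}) = -\phf_{\mathrm{h}}(\gl^n) = -n\phf_{\mathrm{h}}(\gl)$. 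Thus $\phf_{\mathrm{h}}(\gl^k) = k\phf_{\mathrm{h}}(\gl)$ for every $k \in \ZZ$.

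Finally, the quasimorphism property of $\phf_{\mathrm{h}}$ is a three-term bookkeeping using~(2): for all $\gl_1, \gl_2 \in \Gg$,
\[
|\phf_{\mathrm{h}}(\gl_1\gl_2) - \phf_{\mathrm{h}}(\gl_1) - \phf_{\mathrm{h}}(\gl_2)| \leq 3\DD(\phf) + |\phf(\gl_1\gl_2) - \phf(\gl_1) - \phf(\gl_2)| \leq 4\DD(\phf),
\]
so $\DD(\phf_{\mathrm{h}}) \leq 4\DD(\phf)$ and $\phf_{\mathrm{h}} \in \QQQ(\Gg)$. The only step requiring genuine care is the Cauchy verification for $\{\phf(\gl^k)/k\}$, which is the classical Fekete-type argument via subadditivity up to a bounded error; once the limit is known to exist, every remaining assertion follows by direct bookkeeping against the defect of $\phf$.
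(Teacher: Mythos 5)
Your proof is correct in all steps: the iterated defect bound, the Cauchy estimate $\left|\phf(\gl^m)/m - \phf(\gl^n)/n\right| \leq \DD(\phf)\left(\tfrac{1}{m}+\tfrac{1}{n}\right)$, the deduction of (2), the treatment of zero and negative exponents, and the bound $\DD(\phf_{\mathrm{h}})\leq 4\DD(\phf)$ all check out. Note that the paper itself gives no proof of this lemma --- it is stated as a classical fact, with \cite{Calegari} cited as the standard reference --- and your argument is precisely the standard Fekete-type one that the literature uses, so there is nothing to compare against beyond observing that your write-up is a complete and self-contained version of it. (As a side remark, a more careful bookkeeping, estimating $\phf(\gl_1\gl_2)^k$ against $\phf(\gl_1^k)\phf(\gl_2^k)$ via conjugation tricks, gives the sharper constant $\DD(\phf_{\mathrm{h}})\leq 2\DD(\phf)$, but your weaker bound $4\DD(\phf)$ is entirely sufficient for the statement being proved.)
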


The element $\phf_{\mathrm{h}}$ in $\QQQ(\Gg)$ defined by \eqref{eq=homogenization} is called the \emph{homogenization} of $\phf$.

Secondly, we recall the definition of quasi-invariance and invariance of quasimorphisms for the pair $(\Gg,\Ng)$ of a group and its normal subgroup.

\begin{defn}[invariant quasimorphism]\label{defn=inv_qm}
Let $\Gg$ be  a group and $\Ng$ a normal subgroup of $\Gg$.
\begin{enumerate}[label=\textup{(\arabic*)}]
 \item A real-valued function $\ff\colon \Ng\to \RR$ is said to be \emph{$\Gg$-invariant} if 
\[
\ff(\gl \xl\gl^{-1}) = \ff(\xl)
\]
holds for every $\xl \in \Ng$ and for every $\gl \in \Gg$.
  \item The $\RR$-vector space $\QQQ(\Ng)^{\Gg}$ is defined as the space of homogeneous quasimorphisms on $\Ng$ that are $\Gg$-invariant.
  \item The $\RR$-vector space $\HHH^1(\Ng)^{\Gg}$ is defined as the space of \textup{(}genuine\textup{)} homomorphisms $\Ng\to \RR$ that are $\Gg$-invariant.
\end{enumerate}
\end{defn}

Let $\Gg$ be  a group and $\Ng$ a normal subgroup of $\Gg$. By Lemma~\ref{lem:qm}~(1), we have $\QQQ(\Gg)=\QQQ(\Gg)^{\Gg}$. The inclusion map $\im\colon \Ng\hookrightarrow \Gg$ induces $\im^{\ast}\colon \QQQ(\Gg)\to \QQQ(\Ng)$; $\phf\mapsto \phf|_{\Ng}$. Thus, we may regard this map as
\[
\im^{\ast}\colon \QQQ(\Gg)=\QQQ(\Gg)^{\Gg}\to \QQQ(\Ng)^{\Gg}.
\]

The following $\RR$-linear space $\VV(\Gg,\Ng)$ had been introduced in \cite{KKMMM} without the symbol; the symbol $\VV(\Gg,\Ng)$ was later given in \cite{KKMMMsurvey}.

\begin{defn}[the space $\VV(\Gg,\Ng)$ of non-extendable quasimorphisms]\label{defn=Vspace}
Let $\Gg$ be a group and $\Ng$ a normal subgroup of $\Gg$. Let $\im\colon \Ng\hookrightarrow \Gg$ be the inclusion map.
\begin{enumerate}[label=\textup{(\arabic*)}]
  \item An element $\muf$ in $\QQQ(\Ng)^{\Gg}$ is said to be \emph{extendable} \textup{(}as an element in $\QQQ(\Gg)$\textup{)} if $\muf$ belongs to $\im^{\ast}\QQQ(\Gg)$, that is, there exists $\phf\in \QQQ(\Gg)$ such that $\muf=\im^{\ast}\phf$.
  \item The \emph{space $\VV(\Gg,\Ng)$ of non-extendable quasimorphisms} is defined as
\[
\VV(\Gg,\Ng)=\QQQ(\Ng)^{\Gg}/\im^{\ast}\QQQ(\Gg).
\]
\end{enumerate}
\end{defn}

At the end of this subsection, we state the following result from \cite{KKMMM} and \cite{KKMMMcoarse}, which will be employed in Section~\ref{sec=bilinear}.

\begin{prop}[{\cite[Lemma~7.12]{KKMMM}, \cite[Theorem 8.9]{KKMMMcoarse} and \cite[Theorem 1.10]{KKMMM}}]\label{prop=abelianiroiro}
Let $\Gg$ be a group and $\Ng$ a normal subgroup of $\Gg$. Let $\im\colon \Ng\hookrightarrow \Gg$ be the inclusion map. Set $\Gamg=\Gg/\Ng$. Assume that $\Gamg$ is abelian. Then, the following hold true.
\begin{enumerate}[label=\textup{(\arabic*)}]
 \item For every $\phf\in \QQQ(\Gg)$, we have $\DD(\phf)=\DD(\im^{\ast}\phf)$.
 \item Let $\muf\in \QQQ(\Ng)^{\Gg}$, Then, $\muf$ is extendable 
 to $\Gg$ if and only if
\[
\sup_{\gl_1,\gl_2\in \Gg}\left|\muf([\gl_1,\gl_2])\right|<\infty.
\]
 \item Assume that $\HHH^2(\Gg)=0$. Then,
\[
\QQQ(\Ng)^{\Gg}=\HHH^1(\Ng)^{\Gg}+\im^{\ast}\QQQ(\Gg).
\]
\end{enumerate}
\end{prop}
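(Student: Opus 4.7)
The plan is to prove parts (1)--(3) in sequence, since (2) and (3) draw on the reasoning of (1). Throughout, set $C = \DD(\im^\ast \phf)$ and recall that $\Gamg = \Gg/\Ng$ being abelian forces $[\Gg,\Gg] \subseteq \Ng$. For part~(1), the inequality $\DD(\im^\ast\phf) \leq \DD(\phf)$ is immediate. For the reverse, I would fix $\gl_1, \gl_2 \in \Gg$, set $\alpha = \phf(\gl_1 \gl_2) - \phf(\gl_1) - \phf(\gl_2)$, and exploit that $d_k := (\gl_1 \gl_2)^k \gl_2^{-k} \gl_1^{-k}$ lies in $\Ng$ for every $k \in \NN$. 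Two applications of the defect inequality combined with homogeneity give $k\alpha \sim_{2\DD(\phf)} \phf(d_k) = (\im^\ast \phf)(d_k)$, so the task reduces to bounding $|(\im^\ast \phf)(d_k)|$ linearly in $k$ with slope $C$. I would decompose $d_k$ via the identities $[a^n, b] = \prod_{j=0}^{n-1} [a,b]^{a^j}$ and their iterates so that the conjugates of $[\gl_1, \gl_2]$ telescope under $\Gg$-invariance of $\im^\ast \phf$; Lemma~\ref{lem:qm}(3) applied to $\im^\ast \phf \in \QQQ(\Ng)^{\Gg}$ then bounds each independent contribution by $C$.

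Next, for part~(2), the forward direction is immediate: if $\muf = \im^\ast \phf$ for some $\phf \in \QQQ(\Gg)$, then since $[\gl_1, \gl_2] \in \Ng$, Lemma~\ref{lem:qm}(3) gives $|\muf([\gl_1, \gl_2])| = |\phf([\gl_1, \gl_2])| \leq \DD(\phf)$. For the converse, suppose $K := \sup_{\gl_1,\gl_2}|\muf([\gl_1, \gl_2])|$ is finite. My plan is to fix a set-theoretic section $s\colon \Gamg \to \Gg$ of the quotient $\Gg \twoheadrightarrow \Gamg$ with $s(0) = e$, define $\tilde\muf\colon \Gg \to \RR$ by $\tilde\muf(s(\gaml) n) = \muf(n)$, and verify that $\tilde\muf$ is a quasimorphism on $\Gg$. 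For $\gl_i = s(\gaml_i) n_i$, introducing the cocycle $c(\gaml_1, \gaml_2) := s(\gaml_1 + \gaml_2)^{-1} s(\gaml_1) s(\gaml_2) \in \Ng$, a direct rearrangement using $\Gg$-invariance of $\muf$ shows $\tilde\muf(\gl_1 \gl_2) - \tilde\muf(\gl_1) - \tilde\muf(\gl_2) \sim_{2\DD(\muf)} \muf(c(\gaml_1, \gaml_2))$. Abelianness of $\Gamg$ lets one express $c(\gaml_1, \gaml_2)$ in terms of commutators $[s(\gaml_i), s(\gaml_j)]$ and their conjugates, each with $\muf$-value bounded by $K$ (plus defect error), so $\tilde\muf$ is indeed a quasimorphism; Lemma~\ref{lem=homogenization} provides its homogenization $\phf \in \QQQ(\Gg)$, and homogeneity together with $\tilde\muf|_{\Ng} = \muf$ forces $\im^\ast \phf = \muf$.

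For part~(3), given $\muf \in \QQQ(\Ng)^\Gg$, the goal is to decompose $\muf = \hf + \im^\ast \phf$ with $\hf \in \HHH^1(\Ng)^\Gg$ and $\phf \in \QQQ(\Gg)$. My approach is to choose an arbitrary extension $\tilde\muf \colon \Gg \to \RR$ of $\muf$ (as in part~(2)) and study its defect cochain $\delta\tilde\muf(\gl_1, \gl_2)$. Using $\Gg$-invariance and part~(2), one obtains from $\delta\tilde\muf$ (modulo bounded adjustments) a genuine $2$-cocycle on $\Gg$ with real values whose class lies in $\HHH^2(\Gg)$; the hypothesis $\HHH^2(\Gg) = 0$ then furnishes a $1$-cochain $\tau \colon \Gg \to \RR$ trivializing it, and the homogenization of $\tilde\muf - \tau$ gives the desired $\phf \in \QQQ(\Gg)$, while the discrepancy $\muf - \im^\ast\phf$ is a $\Gg$-invariant genuine homomorphism $\hf$. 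This parallels the five-term inflation-restriction sequence $0 \to \HHH^1(\Gamg) \to \HHH^1(\Gg) \to \HHH^1(\Ng)^{\Gg} \to \HHH^2(\Gamg) \to \HHH^2(\Gg)$.

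The principal obstacle will be the decomposition step in part~(1): a naive expansion writes $d_k$ as a product of $\binom{k}{2}$ $\Gg$-conjugate commutators, yielding only an $O(k^2)$ bound on $|(\im^\ast \phf)(d_k)|$; obtaining the sharp $O(k)$ bound needed to close the argument requires a careful arrangement in which $\Gg$-invariance of $\im^\ast \phf$ collapses the $\binom{k}{2}$ conjugate evaluations into $O(k)$ genuinely distinct contributions controlled by $C$.
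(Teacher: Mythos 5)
The paper never proves this proposition: it is imported wholesale from \cite{KKMMM} and \cite{KKMMMcoarse} (see the sentence introducing it and the remark immediately after it), so your proposal must be judged against the arguments in that literature, and each of the three parts has a genuine gap. In part (1), the reduction $k\alpha\sim_{2\DD(\phf)}\phf(d_k)$ is correct, but the way you propose to close the estimate is circular: Lemma~\ref{lem:qm}~(3) applied to $\im^{\ast}\phf$ bounds by $C$ only commutators of elements \emph{of $\Ng$}, whereas the factors in any decomposition of $d_k$ are conjugates of $[\gl_1,\gl_2]$ with $\gl_1,\gl_2\in\Gg$, whose $\phf$-values are a priori bounded only by $\DD(\phf)$ --- the very quantity you are estimating. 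Moreover, conjugation invariance equalizes the \emph{values} of those factors but does not reduce the number of \emph{defect applications}, so no ``collapsing'' of the $\binom{k}{2}$ factors can produce a linear bound; the obstacle you flag at the end is real, and your proposed resolution does not address it. The identity you quote is the right tool aimed at the wrong target: apply it to $[\gl_1^k,\gl_2]=\bigl(\gl_1^{k-1}[\gl_1,\gl_2]\gl_1^{-(k-1)}\bigr)\cdots\bigl(\gl_1[\gl_1,\gl_2]\gl_1^{-1}\bigr)[\gl_1,\gl_2]$ itself. This is a product of $k$ elements of $\Ng$ all of whose partial products lie in $\Ng$, so splitting costs only $(k-1)C$; each factor has value $\phf([\gl_1,\gl_2])$ by Lemma~\ref{lem:qm}~(1), while $|\phf([\gl_1^k,\gl_2])|\le\DD(\phf)$ by Lemma~\ref{lem:qm}~(3) applied to $\phf$ on $\Gg$. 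Dividing by $k$ and letting $k\to\infty$ gives $|\phf([\gl_1,\gl_2])|\le C$ for \emph{all} $\gl_1,\gl_2\in\Gg$. Even then you are not done: to convert this commutator bound into $\DD(\phf)\le C$ you need Bavard's equality $\DD(\phf)=\sup_{\gl_1,\gl_2\in\Gg}|\phf([\gl_1,\gl_2])|$ (see \cite{Calegari}), a nontrivial theorem that never appears in your plan and whose proof contains exactly the kind of efficient (Culler-type) decomposition your ``collapsing'' would have to replace.

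In part (2) the forward direction is fine, but the converse fails as written: for an arbitrary set-theoretic section, $\tilde\muf$ need not be a quasimorphism. Your error term is $\muf(c(\gaml_1,\gaml_2))$ with $c(\gaml_1,\gaml_2)=s(\gaml_1+\gaml_2)^{-1}s(\gaml_1)s(\gaml_2)$, and abelianness of $\Gamg$ does \emph{not} express $c(\gaml_1,\gaml_2)$ as a bounded product of conjugates of the commutators $[s(\gaml_i),s(\gaml_j)]$: only the antisymmetrization $c(\gaml_1,\gaml_2)c(\gaml_2,\gaml_1)^{-1}$, which is conjugate to $[s(\gaml_1),s(\gaml_2)]$, has that form, while the symmetric part of the factor set is completely unconstrained. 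Concretely, take $\Gg=\Ng\times\ZZ$ with $\Ng$ free of rank two, $\muf\in\QQQ(\Ng)$ nonzero (then $\muf\in\QQQ(\Ng)^{\Gg}$ and $\sup_{\gl_1,\gl_2\in\Gg}|\muf([\gl_1,\gl_2])|\le\DD(\muf)<\infty$, since $[(x,k),(y,l)]=([x,y],0)$), and the section $s(k)=(w^{k^2},k)$ with $\muf(w)\ne 0$: then $\muf(c(k,l))=-2kl\,\muf(w)$ is unbounded, so your $\tilde\muf$ is not a quasimorphism even though $\muf$ visibly extends via the section $k\mapsto(e,k)$. Your plan provides no mechanism for selecting a good section, which is where the whole difficulty lies. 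In part (3) the same defect recurs together with a missing ingredient: after descending, the function $(\gaml_1,\gaml_2)\mapsto\muf(c(\gaml_1,\gaml_2))$ is only a \emph{quasi}-cocycle on $\Gamg$ (the cocycle identity holds up to $2\DD(\muf)$, by associativity of the factor set plus $\Gg$-invariance), and upgrading it to a genuine cocycle plus a bounded cochain is a bounded-cohomology statement, supplied by $\HHH^3_b(\Gamg)=0$ --- precisely the bounded $3$-acyclicity of the abelian quotient that the paper's remark after the statement identifies as the reason (3) holds. The hypothesis $\HHH^2(\Gg)=0$ alone cannot do this, your appeal to part (2) is unavailable (its hypothesis need not hold for general $\muf$), and the class you attribute to $\delta\tilde\muf$ in $\HHH^2(\Gg)$ is vacuous, since $\delta\tilde\muf$ is by construction a coboundary on $\Gg$. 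The ordinary five-term exact sequence you cite sees homomorphisms, not quasimorphisms, and cannot produce the bounded corrections the argument needs.
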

Here, in (1) $\DD(\im^{\ast}\phf)$ is the defect of the quasimorphism $\im^{\ast}\phf$ on $\Ng$. For (2), we note that $\Ng\geqslant [\Gg,\Gg]$ since $\Gamg$ is abelian; hence for all $\gl_1,\gl_2\in \Gg$, $[\gl_1,\gl_2]\in \Ng$. We also remark that \cite[Theorem 1.10]{KKMMM} implies (3) because $\Gamg=\Gg/\Ng$ is abelian and in particular boundedly $3$-acyclic in this setting.

\subsection{Shelukhin's quasimorphism} \label{subsec:sh_qm}
 Py's Calabi quasimorphism, constructed in \cite{Py06}, was studied in the previous work \cite{KK} and \cite{KKMM2} of some of the authors. We refer the reader to \cite[Section 8]{Rosenberg} and \cite[Subsection~2.4]{KKMM2} for an outlined construction. What we use in the present paper is Proposition~\ref{prop=KKMM}, which will be stated in Section~\ref{sec=Py}.

One of the main subjects in this paper is the $\tSympc(\Mm,\omega)$-invariant homogeneous quasimorphism on $\tHamc(\Mm,\omega)$ constructed by Shelukhin \cite{Shelukhin}. 
For a $2n$-dimensional symplectic manifold $(\Mm^{2n},\omega)$ of finite volume, Shelukhin  constructed a homogeneous quasimorphism $\mathfrak{S}_{\Mm}$ on $\tHamc(\Mm, \omega)$. As we mentioned in the introduction, we write $\mushf$ for $\mathfrak{S}_{\Mm}$ in the present paper. 
In this subsection, we briefly recall the definition and properties of Shelukhin's quasimorphism $\mushf$.

Let $\mathcal{S} \to \Mm$ be the fiber bundle whose fiber over $\xp \in \Mm$ is the space of $\omega_{\xp}$-compatible complex structures on the tangent space $T_{\xp} \Mm$.
Let $\mathcal{J}$ be the space of global sections of $\mathcal{S} \to \Mm$, that is, the space of $\omega$-compatible almost complex structures on $\Mm$.
Note that the space $\mathcal{J}$ is contractible since the fibers of $\mathcal{S} \to \Mm$ are homeomorphic to the contractible space $\Sp(2n, \RR)/\UU(n)$.
The canonical $\Sp(2n, \RR)$-invariant K\"{a}hler form, which is induced from the trace, defines the fiberwise-K\"{a}hler form $\sigma$ on $\mathfrak{S}$.
If $\Mm$ is closed,  then  a form $\Omega$ on $\mathcal{J}$ is defined by $\Omega(A, B) := \int_{\Mm}\sigma_{\xp}(A_{\xp}, B_{\xp}) \omega^n(\xp)$.
Note that the $\Hamc(\Mm, \omega)$-action on $\mathcal{J}$ preserves $\Omega$ (and indeed, so does the $\Sympc_0(\Mm, \omega)$-action).
We also note that this fiberwise-K\"{a}hler form $\sigma$ induces the metric structure on $\mathcal{J}$.
For every two points in $\mathcal{J}$, the geodesic between the two points is uniquely determined.


The $\Hamc(\Mm,\omega)$-action on $(\mathcal{J}, \Omega)$ is Hamiltonian (\cite{MR1622931} and \cite{MR1207204}), and its moment map $\mom$ is given as follows.
For an $\omega$-compatible almost complex structure $J \in \mathcal{J}$, let $h$ be the Hermitian metric induced from $J$ and $\omega$.
Let $\nabla$ be the second canonical connection of $(M,J,h)$, that is, the connection uniquely defined by the three conditions that $\nabla J = 0$, $\nabla h = 0$, and the $(1,1)$-part of $\nabla$ is everywhere vanishing (\cite[Section 2.6]{MR1456265}).
Let $\rho \in \Omega^2(M;\RR)$ be the $i$ times the curvature of the connection of the Hermitian line bundle $\Lambda_{\CC}^n(TM)$ induced from $\nabla$.
Then the Hermitian scalar curvature $S(J) \in C^{\infty}(\Mm,\RR)$ is defined by
\[
  S(J)\omega^n = n\rho\wedge\omega^{n-1},
\]
and the moment map $\mom \colon C_{\rm normal}^{\infty}(\Mm,\RR) \to C^{\infty}(\mathcal{J},\RR)$ is given by
\[
  (\mom(H))(J) = \int_{M}S(J)H\omega^n.
\]
Here $C_{\rm normal}^{\infty}(\Mm,\RR)$  is the space of normalized Hamiltonian functions, which is the Lie algebra of $\Hamc(\Mm,\omega)$.

For a symplectic manifold $(\Mm,\omega)$ of finite volume, the average Hermitian scalar curvature $A(\Mm,\omega)$ is defined as follows. 

\begin{defn}[average Hermitian scalar curvature]\label{defn=aHsc}
Let  $(\Mm,\omega)$ be a symplectic manifold of finite volume. Then the \emph{average Hermitian scalar curvature} is defined by
\begin{equation}\label{eq:aHsc}
A(\Mm,\omega)= \left. \int_{\Mm} S(J) \omega^n \middle/ \int_{\Mm} \omega^{n} \right.
= \left. n \int_{\Mm} c_1(\Mm) \omega^{n-1} \middle/ \int_{\Mm} \omega^{n}. \right. 
\end{equation}
Here, $c_1(\Mm) \in \HHH^2(M;\RR)$ denotes the first Chern class of $(\Mm,\omega)$.
\end{defn}
Here, we recall that the first Chern class can be defined for a symplectic manifold (for example, see \cite[Subsection 2.6]{MS} and \cite[Remark 2.4.6]{Ge}).

The geodesic from $J_0$ to $J_1$ will be denoted by $[J_0, J_1]$.
For all $J_0, J_1, J_2$ in $\mathcal{J}$, let $\Delta(J_0, J_1, J_2)$ denote the $2$-simplex whose restriction to each fiber $\mathcal{S}_{\xp}$ over $\xp \in \Mm$ is the geodesic triangle $\Delta((J_0)_{\xp}, (J_1)_{\xp}, (J_2)_{\xp})$ with respect to the K\"{a}hler form $\sigma_{\xp}$ with vertices $(J_0)_{\xp}$, $(J_1)_{\xp}$, and $(J_2)_{\xp}$.

We are now ready to define Shelukhin's quasimorphism $\mushf$ on $\tHamc(\Mm,\omega)$.
For $\tg \in \tHamc(\Mm,\omega)$, let $\{ h_t \}_{t \in [0,1]}$ be a path 
in $\Hamc(\Mm,\omega)$ representing $\tg$, and set $h=h_1$. 
Let $J \in \mathcal{J}$ be a basepoint and $D$ a disk in $\mathcal{J}$ whose boundary is equal to the loop $\{ h_t \cdot J \}_{t \in [0,1]} \ast [h \cdot J, J]$, where $\ast$ denotes concatenation.
Let $\{ H_t \}_{t \in [0,1]}$ be the path in 
$ C_{\rm normal}^{\infty}(\Mm,\RR)$ corresponding to the path $\{ h_t \}_{t \in [0,1]}$.
Then we define $\nuf_{J} \colon \tHamc(\Mm,\omega) \to \RR$ by
\begin{align}\label{before_homogenize}
  \nu_J(\tg) := \int_D \Omega - \int_0^1 \mom(H_t)(h_t\cdot J) dt.
\end{align}
This function $\nuf_{J}$ is a quasimorphism since the equality
\begin{align*}
  \nu_J(\widetilde{g} \widetilde{h}) - \nu_J(\widetilde{g}) - \nu_J(\widetilde{h}) = \int_{\Delta(J, gJ, ghJ)} \Omega
\end{align*}
and the inequality
\begin{align}\label{bounded_Reznikov}
  \left| \int_{\Delta(J, gJ, ghJ)} \Omega \right|
  = \left| \int_{\Mm} \middle( \int_{\Delta((J_\xp), (gJ)_{\xp}, (ghJ)_{\xp} )} \sigma_{\xp} \middle) \omega^n \right| \leq \vol(\Mm,\omega^n) \cdot C
\end{align}
hold, where $C$ is the area of an ideal triangle of $\Sp(2n,\RR)/\UU(n)$ with respect to the canonical K\"{a}hler form.

\begin{defn}[{\cite{Shelukhin}}]
Let $\Mm$ be a symplectic manifold of finite volume.  Then, the homogeneous quasimorphism
  \[
    \mushf \colon \tHamc(\Mm,\omega) \to \RR
  \]
  is defined as the homogenization (recall \eqref{eq=homogenization} in Lemma~\ref{lem=homogenization}) of $\nuf_J$; we call $\mushf$ \emph{Shelukhin's quasimorphism}.
\end{defn}

The quasimorphism $\nuf_J$ may depend on the choice of $J$. However, in \cite[Theorem 1]{Shelukhin}, it is shown that the homogenization $\mushf$ does not depend on the choice of $J$.

Let $(\Mm,\omega)$ be a closed symplectic manifold. Recall from Proposition~\ref{survey on flux}~(5) that the center of $\tSympc(\Mm,\omega)$ equals $\pi_1(\Sympc_0(\Mm,\omega))$. Shelukhin determined the values of $\mushf$ on $\tHamc(\Mm,\omega)\cap\pi_1(\Sympc_0(\Mm,\omega))=\pi_1(\Hamc(\Mm,\omega))$.
To state this, let us recall the homomorphism $I_{c_1} \colon \pi_1(\Hamc(\Mm,\omega)) \to \RR$ introduced in \cite{MR1666763}. 

First note that $\pi_1(\Hamc(M,\omega))$ is bijective to the set of isomorphism classes of fiber bundles $P \to S^2$ whose fiber is $\Mm$ and whose structure group is contained in $\Hamc(\Mm,\omega)$.
Under this bijection, for $\gamma \in \pi_1(\Hamc(\Mm,\omega))$, let $P_{\gamma} \to S^2$ denote the corresponding fiber bundle.
Then the vertical tangent bundle $T_{V} P \to P$ is a symplectic vector bundle, and hence we have the first Chern class $c_1^{V}$ of $T_{V} P \to P$.
There is another characteristic class $u \in \HHH^2(P;\RR)$, which is defined as a cohomology class satisfying $u^{n+1} = 0$ and $u|_{\text{fiber}} = [\omega]$.
Then the homomorphism $I_{c_1}$ is defined by
\begin{equation}\label{eq=I_{c_1}}
  I_{c_1}(\gamma) = \int_{P_{\gamma}} c_1^{V} u^n.
\end{equation}

\begin{prop}[{\cite[Corollary~2]{Shelukhin}}]\label{prop=ShelukhinHam}
Let $(\Mm,\omega)$ be a closed symplectic manifold. Then, the restriction of Shelukhin's quasimorphism $\mushf$ to $\pi_1(\Hamc(\Mm,\omega))$ equals $I_{c_1}$.
\end{prop}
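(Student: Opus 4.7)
The plan is to compute $\nu_J(\gamma)$ directly for a loop $\gamma\in\pi_1(\Hamc(\Mm,\omega))$ and identify this quantity with $I_{c_1}(\gamma)$; the passage to Shelukhin's homogenization $\mushf$ is then automatic, because applying the same calculation to $\gamma^k$ gives $\nu_J(\gamma^k)=I_{c_1}(\gamma^k)=k\cdot I_{c_1}(\gamma)$ (using that $P_{\gamma^k}$ is the pullback of $P_\gamma$ along a degree-$k$ map $S^2\to S^2$), whence $\mushf(\gamma)=\lim_k\nu_J(\gamma^k)/k=I_{c_1}(\gamma)$. Concretely, I would represent $\gamma$ by a smooth loop $\{h_t\}_{t\in[0,1]}$ in $\Hamc(\Mm,\omega)$ with $h_0=h_1=\id_{\Mm}$; the orbit curve $\{h_t\cdot J\}_{t\in[0,1]}$ in $\mathcal{J}$ is then already closed, so \eqref{before_homogenize} reduces to $\nu_J(\gamma)=\int_D\Omega-\int_0^1\mom(H_t)(h_t\cdot J)\,dt$ for any disk $D\subset\mathcal{J}$ whose boundary is this loop.

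Next I would realize the associated Hamiltonian bundle $P_\gamma\to S^2$ by clutching, $P_\gamma=(\Mm\times D^2_+)\sqcup_\phi(\Mm\times D^2_-)$, where $\phi$ identifies $(\xp,e^{2\pi it})\in\Mm\times\partial D^2_+$ with $(h_t(\xp),e^{2\pi it})\in\Mm\times\partial D^2_-$. The disk $D$ supplies an $\omega$-compatible almost complex structure $\tilde J$ on the vertical tangent bundle over the $D^2_+$-piece (by pulling back the tautological family over $\mathcal{J}$ via $D^2_+\to\mathcal{J}$), while the $D^2_-$-piece is equipped with the constant structure $J$; the two structures match along the equator by virtue of the clutching identification together with the definition of the action $h_t\cdot J=(h_t)_{\ast} J (h_t)_{\ast}^{-1}$. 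In this model the coupling class $u$ and the vertical first Chern class $c_1^V$ admit explicit de Rham representatives: a form $\tilde\omega$ built from $\omega$ plus a Hamiltonian-type correction involving $H_t$ on each hemisphere, and a form $\tilde\rho$ equal to $i$ times the curvature of the second canonical connection determined by $\tilde J$.

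The core of the proof, and the step I expect to be hardest, is to verify the Chern--Weil identity
\[
\int_{P_\gamma}\tilde\rho\wedge\tilde\omega^n \;=\; \int_D\Omega \;-\; \int_0^1\mom(H_t)(h_t\cdot J)\,dt
\]
by fiber integration over each hemisphere. On the $D^2_-$-piece the vertical ACS is constant, so $\tilde\rho$ restricts to $\rho$ on every fiber, and the defining identity $S(J)\omega^n=n\rho\wedge\omega^{n-1}$ together with the expansion of $\tilde\omega^n$ converts the fiber integral into the moment-map correction $-\int_0^1\mom(H_t)(h_t\cdot J)\,dt$. On the $D^2_+$-piece, where $\tilde J$ varies according to $D$, the fiber integral of $\tilde\rho\wedge\tilde\omega^n$ yields $\int_D\Omega$, because $\Omega$ was constructed precisely as the fiberwise integral of the $\Sp(2n,\RR)/\UU(n)$-K\"ahler form $\sigma$ against $\omega^n$, which is the Chern--Weil representative for the variation of $c_1^V$ along a family of compatible complex structures. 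The delicate point is to verify that the boundary contributions from the two hemispheres cancel consistently with the clutching identification; this is where the $\Hamc(\Mm,\omega)$-invariance of $\Omega$ intervenes and constitutes the technical heart of Shelukhin's argument in \cite{Shelukhin}.
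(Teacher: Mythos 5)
First, a point of comparison: the paper contains no proof of this proposition at all — it is imported verbatim as \cite[Corollary~2]{Shelukhin}, so there is no internal argument to measure your attempt against. Your proposal is an attempt to reconstruct Shelukhin's own proof, and its architecture is the right one: for a loop the orbit curve $\{h_t\cdot J\}$ is already closed, so $\nu_J$ needs no geodesic cap; since $I_{c_1}$ is a homomorphism, an identity $\nu_J=I_{c_1}$ on loops passes automatically to the homogenization $\mushf$; and the identification of the action-type quantity $\int_D\Omega-\int_0^1\mom(H_t)(h_t\cdot J)\,dt$ with $\int_{P_\gamma}c_1^{V}u^n$ via a clutching model and a parametrized Chern--Weil computation is indeed the substance of the cited result.

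The genuine gap is that this identification — which \emph{is} the proposition — is never actually established in your proposal. The two hemisphere claims are asserted rather than computed, and you explicitly defer the matching of boundary contributions, writing that it "constitutes the technical heart of Shelukhin's argument in \cite{Shelukhin}"; deferring the central step to the very reference whose statement is being proved leaves the argument a citation in disguise rather than a proof. Moreover, the one step you do spell out, the gluing of almost complex structures, fails as stated: with the action $h\cdot J=h_{\ast}J$, the condition for the boundary family $\{h_t\cdot J\}$ on $\Mm\times\partial D^2_+$ to match the constant family $J$ on $\Mm\times\partial D^2_-$ under $(x,e^{2\pi i t})\mapsto (h_t(x),e^{2\pi i t})$ is $d(h_t)\circ (h_t\cdot J)=J\circ d(h_t)$, which holds for $h_t^{\ast}J=h_t^{-1}\cdot J$ but \emph{not} for $h_t\cdot J$; one must reverse either the clutching direction or the loop, and this choice is precisely what decides whether the computation yields $I_{c_1}(\gamma)$ or $I_{c_1}(\gamma^{-1})=-I_{c_1}(\gamma)$. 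Relatedly, on the constant-$J$ hemisphere the fiber integral naturally produces terms $\mom(H_t)(h_t^{\ast}J)$ (via the naturality $S(J)\circ h_t=S(h_t^{\ast}J)$), not $\mom(H_t)(h_t\cdot J)$, so this orientation bookkeeping has real content and cannot be waved through. A smaller but real issue: the second canonical connection is defined for the tangent bundle of an almost Hermitian manifold, whereas you need a Hermitian connection on the \emph{vertical} subbundle over the total space $P_\gamma$; producing one requires choosing a horizontal distribution (a connection on the fibration), another ingredient the sketch omits.
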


We also recall from the introduction that $\Sympc(\Mm,\omega)$ acts on $\tHamc(\Mm,\omega)$ by \eqref{eq=Sympc-action}. 

\begin{prop}[{\cite[Proposition 1.1]{Shelukhin}}]\label{prop=Sympc-inv}
Let $(\Mm,\omega)$ be a closed symplectic manifold. Then, Shelukhin's quasimorphism $\mushf$ is $\Sympc(\Mm,\omega)$-invariant, namely, for every $\theta\in \Sympc(\Mm,\omega)$ and for every $\psi\in \tHamc(\Mm,\omega)$, we have
\[
\mushf(\theta\cdot \psi)=\mushf(\psi).
\]
\end{prop}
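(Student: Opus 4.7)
The plan is to trace the $\Sympc(\Mm,\omega)$-action through every ingredient in the definition of the pre-homogenized function $\nuf_J$ in \eqref{before_homogenize}, establishing an intertwining identity of the form $\nuf_J(\theta \cdot \tg) = \nuf_{\theta^{-1} \cdot J}(\tg)$, and then pass to the homogenization, using the nontrivial fact (established by Shelukhin in \cite{Shelukhin}) that $\mushf$ does not depend on the choice of basepoint $J$. Conjugating a path $\{h_t\}$ by $\theta$ yields $\{\theta h_t \theta^{-1}\}$ whose generating Hamiltonian is $H_t \circ \theta^{-1}$, and (under the convention $h \cdot J = h_\ast J$) one has the factorization $(\theta h_t \theta^{-1}) \cdot J = \theta \cdot \bigl(h_t \cdot (\theta^{-1} \cdot J)\bigr)$, so that a bounding disk $D'$ for the loop $\{h_t \cdot (\theta^{-1}J)\} \ast [h \cdot (\theta^{-1}J), \theta^{-1}J]$ pushes forward under $\theta$ to a bounding disk $D = \theta \cdot D'$ for the corresponding loop based at $J$.

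The two computations to verify are then:
\begin{enumerate}[label=\textup{(\roman*)}]
  \item $\theta^{\ast}\Omega = \Omega$ on $\mathcal{J}$. This follows because $\theta$ acts on each fiber $\mathcal{S}_{\xp}$ of the bundle $\mathcal{S} \to \Mm$ by a symplectic linear transformation between $(T_\xp \Mm, \omega_\xp)$ and $(T_{\theta(\xp)}\Mm, \omega_{\theta(\xp)})$, hence preserves the canonical $\Sp(2n,\RR)$-invariant K\"ahler form $\sigma$ on fibers; since $\theta^{\ast}\omega^n = \omega^n$, the integral defining $\Omega$ is unchanged.
  \item The moment map $\mom$ is equivariant in the sense $\mom(H \circ \theta^{-1})(\theta \cdot K) = \mom(H)(K)$. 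This reduces to $S(\theta \cdot J) = S(J) \circ \theta^{-1}$, which in turn follows from naturality of the Hermitian metric, the second canonical connection, the curvature of $\Lambda_{\CC}^n(T\Mm)$ and therefore of the Hermitian scalar curvature under pullback by a symplectomorphism.
\end{enumerate}
Combining (i) with $\int_D \Omega = \int_{D'} \theta^{\ast}\Omega = \int_{D'} \Omega$, and combining (ii) with the substitution $H_t' = H_t \circ \theta^{-1}$, $K = h_t \cdot (\theta^{-1} \cdot J)$ in the moment-map integrand, one obtains termwise the identity $\nuf_J(\theta \cdot \tg) = \nuf_{\theta^{-1} \cdot J}(\tg)$.

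To conclude, take the $k\to\infty$ limit of $\tfrac{1}{k}\nuf_J\bigl((\theta \cdot \tg)^k\bigr) = \tfrac{1}{k}\nuf_{\theta^{-1}\cdot J}(\tg^k)$, noting that $(\theta \cdot \tg)^k = \theta \cdot \tg^k$ since conjugation is a group automorphism. The left-hand side converges to $\mushf(\theta \cdot \tg)$ and the right-hand side converges to $\mushf(\tg)$ by definition of the homogenization and the independence of $\mushf$ from the basepoint.

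I expect the main obstacle to be the careful verification of point (ii), i.e.\ the naturality of the Hermitian scalar curvature under symplectomorphisms, since the second canonical connection involves the almost complex structure $J$, the induced Hermitian metric $h$, and the $(1,1)$-part condition on $\nabla$; one must check that all of these pieces transform covariantly under $\theta$, so that the connection, its curvature on $\Lambda_\CC^n(T\Mm)$, and hence $S$ itself are natural. Once this naturality is in hand, the rest of the argument is a bookkeeping of how the two terms in \eqref{before_homogenize} transform, together with the basepoint-independence of $\mushf$.
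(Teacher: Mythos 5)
Your proposal is correct, but note that the paper itself offers no proof of this proposition: it is imported verbatim as a citation to \cite[Proposition 1.1]{Shelukhin}, so there is no in-paper argument to compare against. Your argument --- establishing the intertwining identity $\nuf_J(\theta\cdot\tg)=\nuf_{\theta^{-1}\cdot J}(\tg)$ from the equivariance of all the ingredients ($\Omega$, the fiberwise geodesics, and the moment map via naturality of the Hermitian scalar curvature), and then concluding via homogenization together with the basepoint-independence of $\mushf$ (which the paper does record as \cite[Theorem 1]{Shelukhin}) --- is the standard proof and is essentially the one given in Shelukhin's paper. Two small points in your favor: you correctly observe that the fiberwise argument for $\theta^{\ast}\Omega=\Omega$ works for \emph{all} of $\Sympc(\Mm,\omega)$, not just $\Sympc_0(\Mm,\omega)$ as stated in the paper's parenthetical remark in Subsection~\ref{subsec:sh_qm}; and your reduction of the moment-map equivariance to $S(\theta\cdot J)=S(J)\circ\theta^{-1}$ via the uniqueness characterization of the second canonical connection is exactly the right way to handle the one genuinely nontrivial verification.
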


\subsection{Reznikov class}\label{subsec:re_class}
In the context of characteristic classes of foliated bundles such as the Bott--Thurston cocycles, Reznikov \cite{Reznikov} introduced certain cocycles of symplectomorphism groups.
One of the cocycles is defined as follows.
Let $(M, \omega), \Omega$, $J$, $\Delta(J_0, J_1, J_2)$ be as in Subsection \ref{subsec:sh_qm}.
\begin{definition}
  For $g,h \in \Sympc(M,\omega)$, define a two-cocycle $b_J$ on $\Sympc(M,\omega)$ by 
  \[
    b_J (g,h) = \int_{\Delta(J, gJ, ghJ)} \Omega.
  \] 
  The cohomology class $R = [b_J] \in \HHH^2(\Sympc(M,\omega))$ is called the \textit{Reznikov class}.
  We write $R|_G \in \HHH^2(G)$ for the class restricted to a subgroup $G$ of $\Sympc(M,\omega)$.
\end{definition}

The cohomology class $R$ of $\Sympc(M,\omega)$ is independent of the choice of $J$.
It is proved in \cite{Reznikov} that the class $R$ is non-zero when $M$ is a high-dimensional symplectic torus and a certain connected component of the smooth moduli space $\mathrm{Hom}(\pi_1(S), \mathrm{SO}(3))/\mathrm{SO}(3)$ of stable vector bundles over a Riemann surface $S$ of genus at least two.


\subsection{Symplectic blow-up}\label{subsec:blowup}

In this subsection, we review a blow-up of a symplectic manifold; throughout this subsection, we assume that the dimension of a symplectic manifold is larger than 2.
Before that, we review a blow-up of a topological manifold.

We set 
\[
  B^{2n}(R) = \{ v = (z_1, \cdots, z_n) \in \CC^{n} \mid \| v \| < R \},
\] 
where $\| v\| = \sqrt{|z_1|^2 + \cdots + |z_n|^2}$.
Let $\Mm$ be a $2n$-dimensional topological manifold and $\iota \colon B^{2n}(R) \to \Mm$ a topological embedding.
We set $\xl_0=\iota(0)$.
For every real number $r$ with $ 0 <  r < R$, we set
\[U_r=\{
(\vl,[\wl]) \in \CC^n \times \CC P^{n-1}\, | \, w\in \CC^n\setminus\{0\}, v\in \CC w, \|v \| < r
\}.\]
Here, $[w]$ denotes the image of $w$ through the natural projection $\CC^n\setminus\{0\} \to \CC P^{n-1}$. 
Then, we define the \emph{blow-up} $\hat{\Mm}_\iota$ of $\Mm$ (with respect to $\iota$) to  be
\[\hat{\Mm}_\iota=(\Mm \setminus \{\xl_0\}) \sqcup U_r/\sim,\]
where the equivalence relation $\sim$ identifies a point $(\zl,[\wl]) \in U_r$ with $\iota(\zl) \in \Mm$ for each $\zl \neq 0$ .

Let $\iota_1 \colon \Mm \setminus \{ \xl_0\} \to \hat{\Mm}_\iota$ and $\iota_2 \colon U_r \to \hat{\Mm}_\iota$ be the inclusions.
More precisely, $\iota_1$ is the composition of the map $\Mm \setminus \{ \xl_0\} \to (\Mm \setminus \{ \xl_0\}) \sqcup U_r$ and the quotient map $(\Mm \setminus \{ \xl_0\}) \sqcup U_r \to \hat{\Mm}_\iota$, and $\iota_2$ is the composition of the map $U_r \to (\Mm \setminus \{ \xl_0\}) \sqcup U_r$ and the quotient map $(\Mm \setminus \{ \xl_0\}) \sqcup U_r \to \hat{\Mm}_\iota$.

We define the map $\hat\iota \colon \CC P^{n-1} \to \hat{\Mm}_\iota$ by
\[\hat\iota ([\wl])=\iota_2(0,[\wl]).\]
Let $\pi \colon \hat{\Mm}_\iota \to \Mm$ be the projection.
Then, the following proposition is known.
\begin{prop}[for example, see {\cite[Proposition 9.3.3]{MS12}}]\label{blow_up_def}
Let $(\Mm,\omega)$ be a symplectic manifold, $\iota \colon B^{2n}(r) \to \Mm$ be an open symplectic embedding, and $J \in \mathcal{J}(\Mm,\omega)$ such that $\iota^\ast J=J_0$, where $J_0$ is the standard complex structure on $B^{2n}(r)$.
Then, for every $ 0 < \rho < r$, there exists a symplectic form $\omega_\rho$ on the blow-up $\hat{\Mm}_\iota$ with the following properties.
\begin{enumerate}[label=\textup{(\arabic*)}]
 \item The $2$-form $\pi^\ast \omega$ corresponds to $\omega_\rho$ on $\pi^{-1}(\Mm \setminus \iota(B^{2n}(r)))$.
 \item $\hat{\iota}^\ast \omega_\rho=\rho^2\omega_{FS}$, where $\omega_{FS}$ is the Fubini--Study form on $\CC P^n$.
\end{enumerate}
Here, under our convention, we set $\omega_{FS}([\CC P^1])=1$.
\end{prop}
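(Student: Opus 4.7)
The plan is to construct $\omega_\rho$ by a patching argument. On $\pi^{-1}(\Mm\setminus\iota(\overline{B^{2n}(\rho')}))$ for some $\rho<\rho'<r$, I will simply take $\omega_\rho=\pi^*\omega$; near the exceptional divisor $E=\hat\iota(\CC P^{n-1})$, I need a symplectic form that agrees with $\pi^*\omega$ in the overlap annulus and restricts to $\rho^2\omega_{FS}$ on $E$. Since the symplectic embedding $\iota$ satisfies $\iota^*\omega=\omega_0$ and $\iota^*J=J_0$, this reduces to a purely local construction on the topological blow-up $\widetilde{B^{2n}(r)}$, which I identify with the disk bundle of radius $r$ in the tautological line bundle $L=\mathcal{O}_{\CC P^{n-1}}(-1)$, equipped with the Hermitian norm $|v|$ inherited from $\CC^n$.

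On $L$ I seek $\omega_\rho$ of the K\"ahler-potential form $\omega_\rho=\frac{i}{2}\,\partial\bar\partial\,\psi(|v|^2)$ for a suitable smooth $\psi\colon[0,r^2)\to\RR$. The two model potentials here are $\psi_{\mathrm{out}}(t)=t$, which yields the standard form $\omega_0=\frac{i}{2}\partial\bar\partial|v|^2$, and $\psi_{\mathrm{in}}(t)=\rho^2\log t$, whose $\frac{i}{2}\partial\bar\partial$, though formed from a singular potential, is in fact a smooth form on all of $L$ equal to $\rho^2\, p^*\omega_{FS}$, where $p\colon L\to\CC P^{n-1}$ is the bundle projection. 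Indeed, in an affine trivialization of $L$ with $|v|^2=|\lambda|^2(1+|u|^2)$, the singular summand $\rho^2\log|\lambda|^2$ is pluriharmonic and disappears under $\partial\bar\partial$, leaving precisely $\rho^2\,\frac{i}{2}\partial\bar\partial\log(1+|u|^2)=\rho^2\, p^*\omega_{FS}$. I then take $\psi$ to interpolate between the two models: smooth on $[0,r^2)$, with $\psi(t)=t$ for $t$ near $r^2$ and $\psi(t)-\rho^2\log t$ smooth near $t=0$, and chosen so that $\omega_\rho$ is non-degenerate throughout.

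Given such a $\psi$, the verification of the required properties is short. Closedness of $\omega_\rho$ is automatic. Property~(1) holds on the outer annulus where $\psi(t)=t$ gives $\omega_\rho=\frac{i}{2}\partial\bar\partial|v|^2=\pi^*\omega_0=\pi^*\omega$, and extends to all of $\pi^{-1}(\Mm\setminus\iota(B^{2n}(r)))$ by the patching. Property~(2) follows because the smooth correction $\psi(t)-\rho^2\log t$ has a differential vanishing on $E$, so its contribution to $\omega_\rho|_E$ is trivial, while the singular part contributes exactly $\rho^2\omega_{FS}$. The main obstacle is producing a single $\psi$ meeting all three requirements (boundary match, smoothness across $E$, and non-degeneracy) simultaneously; the non-degeneracy reduces to a one-dimensional convexity estimate on $\psi$ ensuring that both the horizontal and vertical components of $\partial\bar\partial\psi(|v|^2)$ give positive-definite Hermitian blocks throughout the transition region. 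This can be arranged by a careful interpolation such as $\psi(t)=t+\rho^2\beta(t)\log(t/c)$ for a suitable smooth cutoff $\beta$ and constant $c$, yielding the desired $\omega_\rho$ on all of $\hat\Mm_\iota$.
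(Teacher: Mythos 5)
The paper does not actually prove this proposition: it is quoted as a known result with a pointer to \cite[Proposition~9.3.3]{MS12}, so there is no in-paper argument to compare yours against. Your K\"ahler-potential construction is a correct, standard alternative to the proof behind that citation. McDuff--Salamon argue differently: they observe that on the tautological bundle $L\to\CC P^{n-1}$ the form $\Phi^\ast\omega_0+\lambda^2 p^\ast\omega_{FS}$ (with $\Phi$ the blow-down map) is K\"ahler, and that away from the zero section it equals the pullback of $\omega_0$ under the explicit radial diffeomorphism $v\mapsto \sqrt{|v|^2+\lambda^2}\,v/|v|$ onto the complement of a closed ball; the blow-up form is then produced by gluing via an interpolation of radial maps, so non-degeneracy is automatic because the interpolated form is everywhere a pullback of $\omega_0$ by a diffeomorphism. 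Their route buys, in addition, the normal-form statement that the blow-up minus the exceptional divisor is symplectomorphic to the complement of a ball (useful for packing arguments); your route stays inside the K\"ahler category and is the one that generalizes most directly to complex-geometric settings.

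Two caveats on your sketch, neither fatal. First, your reduction of non-degeneracy to a one-dimensional condition is correct and is the crux: for $\omega_\rho=\tfrac{i}{2}\partial\bar\partial\,\psi(|v|^2)$, positivity is equivalent to $\psi'(t)>0$ and $\psi'(t)+t\psi''(t)>0$, i.e.\ to $g(s):=\psi(e^s)$ being increasing and convex. The clean way to finish is to prescribe $g'$ directly: it must increase from $e^{s}+\rho^2$ (inner model, $s\le s_1$) to $e^{s}$ (outer model, $s\ge s_2$), which is possible precisely when $e^{s_1}+\rho^2<e^{s_2}<r^2$ --- this is exactly where the hypothesis $\rho<r$ enters, and it deserves to be said explicitly. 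By contrast, your literal ansatz $\psi(t)=t+\rho^2\beta(t)\log(t/c)$ with a ``slowly varying'' cutoff $\beta$ can fail convexity near $t=0$ (the term $2\rho^2\frac{d}{ds}\beta(e^s)$ must be dominated by $e^s$, which degenerates as $s\to-\infty$), so the cutoff has to be engineered at the level of $g'$ rather than chosen naively. Second, a normalization point: with the paper's convention $\omega_{FS}([\CC P^1])=1$, the form $\tfrac{i}{2}\partial\bar\partial\log(1+|u|^2)$ equals $\pi\,\omega_{FS}$, so your inner weight should be $\rho^2/\pi$ (or one accepts $\hat\iota^\ast\omega_\rho=\pi\rho^2\omega_{FS}$); this only loosens the convexity constraint and does not affect the argument.
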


The symplectic manifold $(\hat{\Mm}_\iota, \omega_\rho)$ appearing in Proposition \ref{blow_up_def} is called the \emph{blow-up of $(\Mm,\omega)$} (with respect to $\iota$, $J$ and $\rho$).

Let $r_1,\ldots,r_n$ be real numbers with $0<r_1<r_2<\ldots<r_n$.
Then, let $T^{2n}(r_1,\ldots,r_n)$ denote the torus $(\RR/2r_1\ZZ)^2 \times \cdots \times (\RR/2r_n\ZZ)^2$ with coordinates $(\xl_1, \yl_1, \xl_2, \yl_2, \cdots, \xl_n, \yl_n)$ and $J_0$ be the standard complex structure of $T^{2n}(r_1,\ldots,r_n)$.
We define the standard symplectic form $\omega_0$ of $T^{2n}(r_1, \ldots, r_n)$ by $\omega_0=d\xl_1\wedge d\yl_1+\cdots+d\xl_n\wedge d\yl_n$.
For a real number $r$ with $ 0 <  r < r_1$, let $\iota(r) \colon B^{2n}(r) \to T^{2n}(r_1,\ldots,r_n)$ denote an embedding defined by $\iota(r)(\xl_1, \yl_1, \xl_2, \yl_2, \ldots, \xl_n,\yl_n) = (\xl_1, \yl_1, \xl_2, \yl_2, \ldots, \xl_n, \yl_n)$.
The following lemma is proved in \cite{KKMMMReznikov}.

\begin{lem}[{\cite[Lemma 2.16]{KKMMMReznikov}}]\label{torus blow up lemma}
Let $r_1,\ldots,r_n$ be real numbers with $0<r_1<r_2<\cdots<r_n$.
For real numbers $\rho,r$ with $ 0 < \rho<r < r_1$,
let $(\hat{\Mm}_r,\omega_\rho)$ be a blow-up of $(T^{2n}(r_1,\ldots,r_n),\omega_0)$ with respect to $\iota(r)$, $J_0$ and $\rho$.
Then, $A(\hat{\Mm}_r,\omega_\rho) \neq 0$.
\end{lem}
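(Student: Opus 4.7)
The plan is to show that the numerator $\int_{\hat{M}_r}c_1(\hat{M}_r)\wedge\omega_\rho^{n-1}$ of the defining expression \eqref{eq:aHsc} of $A(\hat{M}_r,\omega_\rho)$ is strictly negative; since the denominator $\vol(\hat{M}_r,\omega_\rho)$ is a positive real number, this immediately yields $A(\hat{M}_r,\omega_\rho)\neq 0$.

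Since the torus $T^{2n}(r_1,\ldots,r_n)$ is parallelizable, we have $c_1(T^{2n}(r_1,\ldots,r_n))=0$. The standard blow-up formula for the first Chern class then gives
\[
c_1(\hat{M}_r)=-(n-1)[E]^{\vee}\quad\text{in }\HHH^2(\hat{M}_r;\RR),
\]
where $[E]^{\vee}$ is the Poincar\'e dual of the exceptional divisor $E=\hat{\iota}(\CC P^{n-1})$. By Poincar\'e duality, $\int_{\hat{M}_r}[E]^{\vee}\wedge\omega_\rho^{n-1}=\int_{E}\omega_\rho^{n-1}|_{E}$. Proposition~\ref{blow_up_def}~(2) yields $\hat{\iota}^{\ast}\omega_\rho=\rho^2\omega_{FS}$, and the normalization $\omega_{FS}([\CC P^1])=1$ forces $\int_{\CC P^{n-1}}\omega_{FS}^{n-1}=1$; combining these, $\int_{E}\omega_\rho^{n-1}|_{E}=\rho^{2(n-1)}$. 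Therefore
\[
\int_{\hat{M}_r}c_1(\hat{M}_r)\wedge\omega_\rho^{n-1}=-(n-1)\rho^{2(n-1)},
\]
which is strictly negative by the assumptions $n\geq 2$ and $\rho>0$, proving $A(\hat{M}_r,\omega_\rho)<0$.

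The only step needing genuine care is the blow-up formula for $c_1$ in the symplectic setting. I would justify it by selecting an $\omega_\rho$-compatible almost complex structure $\hat{J}$ on $\hat{M}_r$ for which $E$ is $\hat{J}$-holomorphic with normal bundle $N_E\cong\mathcal{O}_{\CC P^{n-1}}(-1)$ (such a $\hat{J}$ is furnished by the construction of the symplectic blow-up) and then running the usual derivation: decompose $c_1(\hat{M}_r)=\pi^{\ast}\alpha+c\,[E]^{\vee}$ via the standard splitting of $\HHH^2(\hat{M}_r;\RR)$, observe $\alpha=0$ because $\pi$ is a diffeomorphism off $E$ and $c_1(T^{2n})=0$, and pin down $c=-(n-1)$ by the adjunction $T\hat{M}_r|_{E}=TE\oplus N_E$ together with $c_1(T\CC P^{n-1})=n[\omega_{FS}]$ and $c_1(N_E)=-[\omega_{FS}]$.
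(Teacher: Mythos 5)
Since the paper offers no internal proof of this lemma---it is imported verbatim from \cite[Lemma 2.16]{KKMMMReznikov}---the only comparison possible is with the computation one expects in that reference, and your proof is exactly that standard argument, carried out correctly. Concretely: $c_1(T^{2n}(r_1,\ldots,r_n))=0$ by parallelizability, the blow-up formula (justified, as you note, by choosing the compatible almost complex structure furnished by the construction in Proposition~\ref{blow_up_def}) gives $c_1(\hat{M}_r)=-(n-1)\,\mathrm{PD}[E]$, and $\hat{\iota}^{\ast}\omega_\rho=\rho^2\omega_{FS}$ with $\int_{\CC P^{n-1}}\omega_{FS}^{n-1}=1$ yields $\int_{\hat{M}_r}c_1(\hat{M}_r)\wedge\omega_\rho^{n-1}=-(n-1)\rho^{2(n-1)}<0$ (using $n\geq 2$, which is in force throughout Subsection~\ref{subsec:blowup}), whence $A(\hat{M}_r,\omega_\rho)<0\neq 0$.
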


\section{General recipe of constructing $\ZZ$-bilinear forms}\label{sec=bilinear}

In this section, except Examples~\ref{exa=symplectic} and \ref{exa=Reznikovsetting} we use the symbol $\Mg$ for a group (not for a symplectic manifold); for instance one as in diagram \eqref{diagram=core}. As we mentioned in Subsection~\ref{subsec=outline}, we build general machinery to construct an ($\RR$-valued) $\ZZ$-bilinear form out of an invariant quasimorphism for a group pair $(\Gg,\Ng)$ with $\Ng=[\Gg,\Gg]$. 

\subsection{The alternating $\ZZ$-bilinear form $\bb_{\muf}$ and the limit formulae}\label{subsec=limit_formula}

For an abelian group $\Gamg$, the notion of an ($\RR$-valued) \emph{alternating $\ZZ$-bilinear form on $\Gamg$} makes sense in a standard manner because $\Gamg$ is naturally equipped with the structure of a $\ZZ$-module. An equivalent formulation of this notion is a map $\bb\colon \Gamg\times \Gamg\to \RR$ that satisfies the following two conditions for all $\gaml_1,\gaml_2,\gaml\in \Gamg$:
\begin{enumerate}[label=\textup{(\arabic*)}]
  \item $\bb(\gaml_1+\gaml_2,\gaml)=\bb(\gaml_1,\gaml)+\bb(\gaml_2,\gaml)$,
  \item $\bb(\gaml_2,\gaml_1)=-\bb(\gaml_1,\gaml_2)$.
\end{enumerate}

In this section, we study  $(\Gg,\Ng)$, where $\Gg$ is a group and $\Ng=[\Gg,\Gg]$ is the commutator subgroup of $G$. For such a pair $(\Gg,\Ng)$, we set $\Gamg=\Gg/\Ng$, the abelianization of $\Gg$. These settings are summarized by the following short exact sequence
\begin{equation}\label{eq=shortex}
1 \longrightarrow \Ng=[\Gg,\Gg] \stackrel{\im}{\longrightarrow} \Gg \stackrel{\ppm}{\longrightarrow} \Gamg=\Gg/\Ng \longrightarrow 1.
\end{equation}
Our motivating example is the following.
\begin{exa}\label{exa=symplectic}
Let $(\Mm,\omega)$ be a closed symplectic manifold. Set $\Gg=\tSympc(\Mm,\omega)$ and $\Ng=\tHamc(\Mm,\omega)$. Then, $\Ng=[\Gg,\Gg]$ (recall Proposition~\ref{survey on flux}). In this case, $\Gamg=\Gg/\Ng$ is identical to $\HHH^1_c(\Mm;\RR)$, and the group quotient map $\ppm\colon \Gg\twoheadrightarrow \Gamg$ is the flux homomorphism
\[
\tflux_{\omega}\colon \tSympc(\Mm,\omega)\twoheadrightarrow \HHH^1_c(\Mm;\RR).
\]
\end{exa}

Recall that the space $\QQQ(\Ng)^{\Gg}$ is defined in Definition~\ref{defn=inv_qm}, and that `$a\sim_C b$' means $|a-b|\leq C$ (Subsection~\ref{subsec=notation}) for $a,b\in \RR$ and $C\in \RR_{\geq 0}$.

\begin{thm}[constructing an alternating $\ZZ$-bilinear form out of an invariant quasimorphism]\label{thm=bilinear_main}
Let $(\Gg,\Ng,\Gamg)$ be a triple fitting in \eqref{eq=shortex}. Let $\muf\in \QQQ(\Ng)^{\Gg}$. Then, there exists an alternating $\ZZ$-bilinear form $\bb_{\muf}$ on $\Gamg$ such that for all $\gl_1,\gl_2\in \Gg$,
\begin{equation}\label{eq=bilinear_main}
\muf([\gl_1,\gl_2])\sim_{\DD(\muf)} \bb_{\muf}(\ppm(\gl_1),\ppm(\gl_2)).
\end{equation}
\end{thm}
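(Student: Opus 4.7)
The plan is to define $\bb_{\muf}$ through a limit formula on lifts, and then upgrade the approximate bilinearity coming from standard commutator identities into genuine $\ZZ$-bilinearity by the usual quasimorphism trick of dividing by $k$ and sending $k\to\infty$. Concretely, for $\gl_1,\gl_2\in\Gg$ I set
\[
\tilde b(\gl_1,\gl_2):=\lim_{k\to\infty}\frac{\muf([\gl_1^k,\gl_2])}{k}.
\]
For existence of this limit, the identity $[\gl_1^{k+\ell},\gl_2]=\gl_1^k[\gl_1^\ell,\gl_2]\gl_1^{-k}\cdot[\gl_1^k,\gl_2]$ together with the $\Gg$-invariance of $\muf$ shows that $k\mapsto\muf([\gl_1^k,\gl_2])$ is a quasimorphism $\ZZ\to\RR$ with defect at most $\DD(\muf)$. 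Lemma~\ref{lem=homogenization} then supplies both the existence of the limit and the estimate $\tilde b(\gl_1,\gl_2)\sim_{\DD(\muf)}\muf([\gl_1,\gl_2])$, which is already the target inequality~\eqref{eq=bilinear_main}.

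The heart of the proof is the bi-additivity of $\tilde b$ on $\Gg\times\Gg$. The key uniform bound is that, for every $\nu\in\Ng$ and every $\gl\in\Gg$,
\[
|\muf([\nu,\gl])|\leq\DD(\muf),
\]
obtained by writing $[\nu,\gl]=\nu\cdot(\gl\nu^{-1}\gl^{-1})$ as a product of two elements of $\Ng$, applying subadditivity, and using $\Gg$-invariance together with homogeneity to cancel $\muf(\nu)+\muf(\nu^{-1})=0$. Combining this with the commutator identities $[xy,z]=x[y,z]x^{-1}\cdot[x,z]$ and $[x,yz]=[x,y]\cdot y[x,z]y^{-1}$, and writing $(\gl_1\gl_1')^k=\gl_1^k(\gl_1')^kc_k$ with $c_k\in\Ng$ (both $(\gl_1\gl_1')^k$ and $\gl_1^k(\gl_1')^k$ project to the same element of the abelian group $\Gamg$), I obtain
\[
\muf([(\gl_1\gl_1')^k,\gl_2])\sim_{3\DD(\muf)}\muf([\gl_1^k,\gl_2])+\muf([(\gl_1')^k,\gl_2])
\]
and an analogous $\sim_{\DD(\muf)}$ relation in the second variable. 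Dividing by $k$ and letting $k\to\infty$ sends these bounded errors to zero and yields additivity of $\tilde b$ in each argument.

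The same uniform bound shows $\tilde b(\nu,\gl)=0=\tilde b(\gl,\nu)$ whenever $\nu\in\Ng$, because the numerator $\muf([\nu^k,\gl])$ stays bounded uniformly in $k$. Combined with bi-additivity, this makes $\tilde b$ descend through $\ppm\colon\Gg\twoheadrightarrow\Gamg$ to a $\ZZ$-bilinear form $\bb_{\muf}$ on $\Gamg$, and the alternating property is immediate from $\tilde b(\gl,\gl)=\lim_k\muf(e)/k=0$. The main obstacle is essentially bookkeeping: verifying that every conjugation introduced by expanding commutators is absorbed by $\Gg$-invariance, and that the remaining terms contribute errors bounded uniformly in $k$, so that the $1/k$ normalization genuinely promotes approximate equalities into exact ones.
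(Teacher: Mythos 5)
Your proof is correct, but it follows a genuinely different route from the paper's. You take the limit formula
\[
\tilde b(\gl_1,\gl_2)=\lim_{k\to\infty}\frac{\muf([\gl_1^k,\gl_2])}{k}
\]
as the \emph{definition} and verify everything by hand: existence of the limit via the observation that $k\mapsto\muf([\gl_1^k,\gl_2])$ is a quasimorphism on $\ZZ$ of defect at most $\DD(\muf)$ (which simultaneously gives the estimate \eqref{eq=bilinear_main} through Lemma~\ref{lem=homogenization}~(2)), bi-additivity via the commutator identities and the uniform bound $|\muf([\nu,\gl])|\leq\DD(\muf)$ for $\nu\in\Ng$, and descent to $\Gamg$ from the vanishing on $\Ng$. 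I checked the details — in particular the decomposition $(\gl_1\gl_1')^k=\gl_1^k(\gl_1')^kc_k$ with $c_k\in\Ng$, the $3\DD(\muf)$ error count, and the fact that all error terms are bounded uniformly in $k$ — and they are sound. The paper instead lifts $\Gg$ to a free group $\Fg\twoheadrightarrow\Gg$, uses $\HHH^2(\Fg)=0$ to split $\pim^{\ast}\muf=\hf+\jm^{\ast}\phf$ with $\hf\in\HHH^1([\Fg,\Fg])^{\Fg}$ an invariant \emph{homomorphism} (Proposition~\ref{prop=decomposition}, resting on the imported Proposition~\ref{prop=abelianiroiro}), and defines $\hbb_{\muf}(\fl_1,\fl_2)=\hf([\fl_1,\fl_2])$; there bilinearity is essentially automatic because $\hf$ is a genuine homomorphism, and the limit formula \eqref{eq=limit_formula} is deduced afterwards as a corollary. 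What your approach buys is self-containedness and elementariness — no free-group lift, no cohomological vanishing, no external results — at the cost of more defect bookkeeping; what the paper's approach buys is the structural object $\hf$ (the ``core''), which it reuses later, e.g.\ in Proposition~\ref{prop=V} to identify $\Ker(\BbG)=\im^{\ast}\QQQ(\Gg)$ via Proposition~\ref{prop=ore}, something your construction alone does not immediately give. Since both constructions satisfy the limit formula, which determines the form uniquely, they produce the same $\bb_{\muf}$.
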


Once the existence of an alternating $\ZZ$-bilinear form $\bb_{\muf}$ satisfying \eqref{eq=bilinear_main} on $\Gamg$ is ensured, we can determine the form $\bb_{\muf}$ by  the following \emph{limit formula}.

\begin{thm}[limit formula]\label{thm=limit_formula}
Let $(\Gg,\Ng,\Gamg)$ be a triple fitting in \eqref{eq=shortex}. Let $\muf\in \QQQ(\Ng)^{\Gg}$. Then, for all $\gaml_1,\gaml_2\in \Gamg$, we have
\begin{equation}\label{eq=limit_formula}
\lim_{k\to\infty}\frac{\muf([\gl_1^k,\gl_2])}{k}=\bb_{\muf}(\gaml_1,\gaml_2)=\lim_{l\to\infty}\frac{\muf([\gl_1,\gl_2^l])}{l}.
\end{equation}
Here, $\gl_1,\gl_2$ are arbitrary elements in $\Gg$ satisfying $\ppm(\gl_1)=\gaml_1$ and $\ppm(\gl_2)=\gaml_2$.
\end{thm}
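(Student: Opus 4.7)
The plan is to derive the limit formula as a direct consequence of Theorem~\ref{thm=bilinear_main}, using only the $\ZZ$-linearity of $\bb_{\muf}$ together with the homomorphism property of $\ppm$.

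First, I would fix arbitrary lifts $\gl_1,\gl_2\in \Gg$ with $\ppm(\gl_1)=\gaml_1$ and $\ppm(\gl_2)=\gaml_2$, and apply \eqref{eq=bilinear_main} to the commutator $[\gl_1^k,\gl_2]$. This yields
\[
\bigl| \muf([\gl_1^k,\gl_2]) - \bb_{\muf}(\ppm(\gl_1^k),\ppm(\gl_2))\bigr| \leq \DD(\muf)
\]
for every positive integer $k$. Since $\ppm$ is a homomorphism into the abelian group $\Gamg$, one has $\ppm(\gl_1^k)=k\cdot \gaml_1$; the $\ZZ$-bilinearity of $\bb_{\muf}$ (specifically $\ZZ$-linearity in the first argument) then rewrites the right-hand side as $k\cdot \bb_{\muf}(\gaml_1,\gaml_2)$. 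Dividing by $k$ gives
\[
\left|\frac{\muf([\gl_1^k,\gl_2])}{k}-\bb_{\muf}(\gaml_1,\gaml_2)\right|\leq \frac{\DD(\muf)}{k},
\]
and letting $k\to\infty$ establishes the first equality in \eqref{eq=limit_formula}. The second equality will follow in exactly the same manner by applying Theorem~\ref{thm=bilinear_main} to $[\gl_1,\gl_2^l]$ and invoking $\ZZ$-linearity in the second variable; alternatively, one may combine the first case with the alternating property $\bb_{\muf}(\gaml_2,\gaml_1)=-\bb_{\muf}(\gaml_1,\gaml_2)$ and the identity $\muf([\gl_1,\gl_2^l])=-\muf([\gl_2^l,\gl_1])$ via Lemma~\ref{lem:qm}.

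I do not anticipate any genuine obstacle: the substantive content has been absorbed into Theorem~\ref{thm=bilinear_main}, and what remains is an arithmetic passage to the limit. The only conceptual observation to record is that the resulting expression is automatically independent of the choice of lifts $\gl_1,\gl_2$, since the right-hand side $\bb_{\muf}(\gaml_1,\gaml_2)$ depends only on $\gaml_1,\gaml_2\in \Gamg$; thus the bound above forces all admissible limits to coincide. As a free by-product, this also confirms that $\bb_{\muf}$ is uniquely determined among alternating $\ZZ$-bilinear forms on $\Gamg$ satisfying \eqref{eq=bilinear_main}.
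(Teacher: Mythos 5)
Your proof is correct and takes essentially the same route as the paper: apply the estimate \eqref{eq=bilinear_main} of Theorem~\ref{thm=bilinear_main} to $[\gl_1^k,\gl_2]$, use the $\ZZ$-bilinearity $\bb_{\muf}(k\gaml_1,\gaml_2)=k\,\bb_{\muf}(\gaml_1,\gaml_2)$, divide by $k$, and pass to the limit (and symmetrically in the second variable). The only difference is that the paper runs this argument for arbitrary sequences $(\gl_1^{(k)})_k$ with $\ppm(\gl_1^{(k)})=k\gaml_1$, proving the refined formula of Theorem~\ref{thm=refined_limit_formula} simultaneously, and your argument is exactly its specialization $\gl_1^{(k)}=\gl_1^k$.
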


In fact, Theorem~\ref{thm=bilinear_main} produces the following refined limit formula.

\begin{thm}[refined limit formula]\label{thm=refined_limit_formula}
Let $(\Gg,\Ng,\Gamg)$ be a triple fitting in \eqref{eq=shortex}. Let $\muf\in \QQQ(\Ng)^{\Gg}$. Let $\gaml_1,\gaml_2\in \Gamg$. Let $(\gl_1^{(k)})_{k\in \NN}$ be a sequence in $\Gg$ satisfying $\ppm(\gl_1^{(k)})=k\gaml_1$ for every $k\in \NN$. Let $\gl_2\in \Gg$ with $\ppm(\gl_2)=\gaml_2$. Then, 
\begin{equation}\label{eq=refined_limit_formula_left}
\lim_{k\to\infty}\frac{\muf([\gl_1^{(k)},\gl_2])}{k}=\bb_{\muf}(\gaml_1,\gaml_2).
\end{equation}
Let $(\gl_2^{(l)})_{l\in \NN}$ be a sequence in $\Gg$ satisfying $\ppm(\gl_2^{(l)})=l\gaml_2$ for every $l\in \NN$. Let $\gl_1\in \Gg$ with $\ppm(\gl_1)=\gaml_1$. Then, 
\begin{equation}\label{eq=refined_limit_formula_right}
\lim_{l\to\infty}\frac{\muf([\gl_1,\gl_2^{(l)}])}{l}=\bb_{\muf}(\gaml_1,\gaml_2).
\end{equation}
\end{thm}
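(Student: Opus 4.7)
The plan is to derive Theorem~\ref{thm=refined_limit_formula} as an essentially immediate consequence of Theorem~\ref{thm=bilinear_main}, using only the $\ZZ$-bilinearity of $\bb_{\muf}$ (in particular, $\ZZ$-linearity in the appropriate variable). The key observation is that Theorem~\ref{thm=bilinear_main} provides an \emph{absolute} bound on the error, independent of the arguments, which becomes negligible after dividing by a large integer.

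First I would apply \eqref{eq=bilinear_main} to the pair $(\gl_1^{(k)},\gl_2)\in \Gg\times \Gg$ to obtain
\[
\left|\muf([\gl_1^{(k)},\gl_2]) - \bb_{\muf}(\ppm(\gl_1^{(k)}),\ppm(\gl_2))\right| \leq \DD(\muf).
\]
Using the hypotheses $\ppm(\gl_1^{(k)}) = k\gaml_1$ and $\ppm(\gl_2)=\gaml_2$, together with the $\ZZ$-linearity of $\bb_{\muf}$ in the first variable (namely $\bb_{\muf}(k\gaml_1,\gaml_2)=k\cdot \bb_{\muf}(\gaml_1,\gaml_2)$), this becomes
\[
\left|\muf([\gl_1^{(k)},\gl_2]) - k\cdot \bb_{\muf}(\gaml_1,\gaml_2)\right| \leq \DD(\muf).
\]
Dividing both sides by $k$ and letting $k\to\infty$ yields \eqref{eq=refined_limit_formula_left}, since $\DD(\muf)/k\to 0$. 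The formula \eqref{eq=refined_limit_formula_right} is obtained by the symmetric argument: apply \eqref{eq=bilinear_main} to $(\gl_1,\gl_2^{(l)})$ and use $\ZZ$-linearity of $\bb_{\muf}$ in the second variable, giving $\bb_{\muf}(\gaml_1,l\gaml_2)=l\cdot \bb_{\muf}(\gaml_1,\gaml_2)$, then divide by $l$.

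There is effectively no obstacle beyond invoking Theorem~\ref{thm=bilinear_main}; the content of the refined formula lies entirely in noting that \eqref{eq=bilinear_main} holds uniformly for \emph{all} representatives, not only for powers $\gl_1^k$, so arbitrary sequences $(\gl_1^{(k)})$ projecting to $(k\gaml_1)$ are permitted. In particular, Theorem~\ref{thm=limit_formula} is recovered as the special case $\gl_1^{(k)}=\gl_1^k$ and $\gl_2^{(l)}=\gl_2^l$.
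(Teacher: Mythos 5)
Your proposal is correct and follows exactly the paper's own argument: apply \eqref{eq=bilinear_main} to the pair $(\gl_1^{(k)},\gl_2)$, use the $\ZZ$-bilinearity identity $\bb_{\muf}(k\gaml_1,\gaml_2)=k\,\bb_{\muf}(\gaml_1,\gaml_2)$, divide by $k$, and let $k\to\infty$, with the symmetric argument giving \eqref{eq=refined_limit_formula_right}. The paper likewise deduces Theorem~\ref{thm=limit_formula} as the special case $\gl_1^{(k)}=\gl_1^k$, $\gl_2^{(l)}=\gl_2^l$, just as you note.
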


\begin{proof}[Proofs of Theorems~$\ref{thm=limit_formula}$ and $\ref{thm=refined_limit_formula}$ modulo Theorem~$\ref{thm=bilinear_main}$]
Let $\gaml_1,\gaml_2\in\Gamg$. Let $(\gl_1^{(k)})_{k\in \NN}$ be a sequence in $\Gg$ satisfying $\ppm(\gl_1^{(k)})=k\gaml_1$ for every $k\in \NN$. Let $\gl_2\in \Gg$ with $\ppm(\gl_2)=\gaml_2$. Then under \eqref{eq=bilinear_main}, we have
\[
\muf([\gl_1^{(k)},\gl_2])\sim_{\DD(\muf)} \bb_{\muf}(k\gaml_1,\gaml_2)=k\bb_{\muf}(\gaml_1,\gaml_2).
\]
Here, recall that $\bb_{\muf}$ is $\ZZ$-bilinear. By dividing the formula above by $k$ and letting $k\to \infty$, we obtain \eqref{eq=refined_limit_formula_left}. We can show \eqref{eq=refined_limit_formula_right} in a  manner similar to one above. Then, \eqref{eq=refined_limit_formula_left} and \eqref{eq=refined_limit_formula_right} imply \eqref{eq=limit_formula}.
\end{proof}

It remains to prove Theorem~\ref{thm=bilinear_main}. In Subsection~\ref{subsec=bilinear_const}, we explain the construction of $\bb_{\muf}$, and present the proof of Theorem~\ref{thm=bilinear_main}.

\subsection{Construction of the alternating $\ZZ$-bilinear form}\label{subsec=bilinear_const}

The main idea for the construction of the alternating $\ZZ$-bilinear form $\bb_{\muf}$ in Theorem~\ref{thm=bilinear_main} is to \emph{lift $\Gg$ to a free group $\Fg$}; this was one of the key ideas in \cite[Section~7]{KKMMMcoarse}. We use the following setting in this subsection.

\begin{setting}\label{setting=GNFM}
Let $\Gg$ be a group and $\Ng=[\Gg,\Gg]$. Let $\im\colon\Ng\hookrightarrow \Gg$ be the inclusion map. Fix a free group $\Fg$ and a surjective group homomorphism $\pim\colon \Fg\twoheadrightarrow \Gg$, and set $\Rel=\Ker(\pim)$. Set $\Mg=[\Fg,\Fg]$, and let $\jm\colon \Mg\hookrightarrow \Fg$ be the inclusion map.
\end{setting}

We note that such a pair $(\Fg,\pim)$ always exists: for instance, view $\Gg$ as a set, and let $\Fg$ be the free group over the set $\Gg$. We also remark that $\pim\colon \Fg\twoheadrightarrow \Gg$ induces a surjective group homomorphism $\Mg\twoheadrightarrow \Ng$, which we also write $\pim$ by abuse of notation. We employ the following result.

\begin{prop}\label{prop=decomposition}
Let $\Fg$ be a free group and $\Mg=[\Fg,\Fg]$. Let $\jm\colon \Mg\hookrightarrow \Fg$ be the inclusion map. Then the following hold.
\begin{enumerate}[label=\textup{(\arabic*)}]
  \item For every $\phf\in \QQQ(\Fg)$, $\DD(\jm^{\ast}\phf)=\DD(\phf)$.
  \item We have the following direct-sum decomposition:
\[
\QQQ(\Mg)^{\Fg}=\HHH^1(\Mg)^{\Fg}\oplus \jm^{\ast}\QQQ(\Fg).
\]
\end{enumerate}
\end{prop}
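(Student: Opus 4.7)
The plan is to deduce both assertions almost immediately from the already-quoted Proposition~\ref{prop=abelianiroiro}, after noting two structural features of a free group $\Fg$: its abelianization $\Fg/\Mg$ is (free) abelian, and its second group cohomology vanishes, $\HHH^2(\Fg)=0$.

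For assertion~(1), I would simply apply Proposition~\ref{prop=abelianiroiro}(1) to the pair $(\Gg,\Ng)=(\Fg,\Mg)$: since $\Fg/\Mg$ is abelian, that proposition yields $\DD(\jm^{\ast}\phf)=\DD(\phf)$ for every $\phf\in \QQQ(\Fg)$, which is exactly (1).

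For assertion~(2), the inclusion $\HHH^1(\Mg)^{\Fg}+\jm^{\ast}\QQQ(\Fg)\subseteq \QQQ(\Mg)^{\Fg}$ is tautological. For the reverse inclusion, I would invoke Proposition~\ref{prop=abelianiroiro}(3), whose hypothesis $\HHH^2(\Fg)=0$ is the standard fact that free groups have cohomological dimension one; this gives
\[
\QQQ(\Mg)^{\Fg}=\HHH^1(\Mg)^{\Fg}+\jm^{\ast}\QQQ(\Fg).
\]
The only remaining point is to verify that the sum is direct, i.e.\ $\HHH^1(\Mg)^{\Fg}\cap \jm^{\ast}\QQQ(\Fg)=\{0\}$. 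Suppose $\muf$ lies in this intersection; write $\muf=\jm^{\ast}\phf$ for some $\phf\in \QQQ(\Fg)$. Since $\muf$ is a genuine homomorphism, $\DD(\muf)=0$; then part~(1), already proved, forces $\DD(\phf)=\DD(\jm^{\ast}\phf)=0$, so $\phf$ is itself a homomorphism $\Fg\to\RR$. Any such homomorphism vanishes on $\Mg=[\Fg,\Fg]$, hence $\muf=\phf|_{\Mg}=0$, as required.

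No step looks like a real obstacle; the proof is essentially the observation that the general machinery of Section~2 specializes cleanly because $\Fg$ is simultaneously cohomologically trivial in degree $2$ and has abelian abelianization, and the direct-sum property is exactly the rigidity statement that an extendable invariant homomorphism on $\Mg$ is forced to vanish via the defect identity of~(1).
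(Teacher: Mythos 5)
Your proposal is correct and follows essentially the same route as the paper's own proof: part~(1) via Proposition~\ref{prop=abelianiroiro}~(1) using that $\Fg/\Mg$ is abelian, the sum decomposition via Proposition~\ref{prop=abelianiroiro}~(3) using $\HHH^2(\Fg)=0$, and directness via the defect identity of part~(1) forcing an extendable invariant homomorphism to come from a genuine homomorphism on $\Fg$, which must vanish on $[\Fg,\Fg]$. There is nothing to add.
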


\begin{proof}
Observe that $\Fg/\Mg$ is abelian. Now, item~(1) follows from Proposition~\ref{prop=abelianiroiro}~(1). 

We will prove item (2). Since $\Fg$ is a free group, $\HHH^2(\Fg)=0$. Then, Proposition~\ref{prop=abelianiroiro}~(3) implies that
\[
\QQQ(\Mg)^{\Fg}=\HHH^1(\Mg)^{\Fg}+ \jm^{\ast}\QQQ(\Fg).
\]
Finally, we show the sum above  is a direct sum. Let $\nuf\in \HHH^1(\Mg)^{\Fg} \cap \jm^{\ast}\QQQ(\Fg)$. Take $\phf\in \QQQ(\Fg)$ such that $\jm^{\ast}\phf=\nuf$. Then, by item (1) we have $\DD(\phf)=\DD(\phf|_{\Mg})=\DD(\nuf)$, which equals $0$ since $\nuf\in \HHH^1(\Mg)^{\Fg}$. Hence, $\phf\in \HHH^1(\Fg)$. Since $\Mg=[\Fg,\Fg]$, this implies that $\nuf\equiv 0$, as desired.
\end{proof}

In Subsection~\ref{subsec=Bb}, we will also employ the following result.

\begin{prop}\label{prop=ore}
Assume Setting~$\ref{setting=GNFM}$. Then for $\muf\in \QQQ(\Ng)^{\Gg}$, the following are equivalent.
\begin{enumerate}[label=\textup{(\roman*)}]
 \item $\muf\in \im^{\ast}\QQQ(\Gg)$;
 \item $\pim^{\ast}\muf \in \jm^{\ast}\QQQ(\Fg)$.
\end{enumerate}
\end{prop}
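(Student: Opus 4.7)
My plan is to reduce both conditions to a boundedness statement on commutators via Proposition~\ref{prop=abelianiroiro}(2), and then match the two boundedness statements using surjectivity of $\pim$.

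First, I would verify the hypotheses needed to apply Proposition~\ref{prop=abelianiroiro}(2) on each side. On the $\Gg$-side, the quotient $\Gg/\Ng$ is abelian by the very choice $\Ng=[\Gg,\Gg]$, so the proposition applies and gives
\[
\muf\in \im^{\ast}\QQQ(\Gg) \ \Longleftrightarrow \ \sup_{\gl_1,\gl_2\in \Gg}\left|\muf([\gl_1,\gl_2])\right|<\infty.
\]
On the $\Fg$-side, first note that $\pim^{\ast}\muf$ makes sense as an element of $\QQQ(\Mg)^{\Fg}$: it is a homogeneous quasimorphism (as the pullback of one), and its $\Fg$-invariance follows from the $\Gg$-invariance of $\muf$ together with $\pim(\fl m \fl^{-1})=\pim(\fl)\pim(m)\pim(\fl)^{-1}$ for $m\in \Mg$, $\fl\in \Fg$. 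Since $\Fg/\Mg$ is abelian (being the abelianization of a free group), Proposition~\ref{prop=abelianiroiro}(2) again applies and yields
\[
\pim^{\ast}\muf\in \jm^{\ast}\QQQ(\Fg) \ \Longleftrightarrow \ \sup_{\fl_1,\fl_2\in \Fg}\left|\pim^{\ast}\muf([\fl_1,\fl_2])\right|<\infty.
\]

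Next I would compare the two suprema. For any $\fl_1,\fl_2\in \Fg$, the homomorphism property of $\pim$ gives $\pim([\fl_1,\fl_2])=[\pim(\fl_1),\pim(\fl_2)]$, so
\[
\pim^{\ast}\muf([\fl_1,\fl_2]) = \muf([\pim(\fl_1),\pim(\fl_2)]).
\]
Thus the right-hand supremum is at most the supremum over all commutators in $\Gg$, hence at most the left-hand supremum. Conversely, since $\pim\colon \Fg\twoheadrightarrow \Gg$ is surjective, every pair $(\gl_1,\gl_2)\in \Gg\times \Gg$ can be lifted to a pair $(\fl_1,\fl_2)\in \Fg\times \Fg$, and so every commutator $[\gl_1,\gl_2]$ equals some $\pim([\fl_1,\fl_2])$; this gives the reverse inequality. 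The two suprema coincide, and combining with the two equivalences above yields (i) $\Leftrightarrow$ (ii).

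The argument is essentially bookkeeping once the right tools are invoked; I do not foresee a substantive obstacle, but the point to treat carefully is confirming that $\pim^{\ast}\muf$ is indeed $\Fg$-invariant and homogeneous so that Proposition~\ref{prop=abelianiroiro}(2) is applicable on the $\Fg$-side.
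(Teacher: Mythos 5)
Your proposal is correct and follows essentially the same route as the paper: the paper's proof also applies Proposition~\ref{prop=abelianiroiro}~(2) twice (once for the pair $(\Fg,\Mg)$ and once for $(\Gg,\Ng)$), with the identification of the two suprema via $\pim^{\ast}\muf([\fl_1,\fl_2])=\muf([\pim(\fl_1),\pim(\fl_2)])$ and surjectivity of $\pim$ left implicit. Your extra checks (that $\pim^{\ast}\muf$ is homogeneous and $\Fg$-invariant, and the explicit two-sided comparison of suprema) are exactly the bookkeeping the paper suppresses, and they are all valid.
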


In Proposition~\ref{prop=ore}, `(i) implies (ii)' is straightforward. The converse: `(ii) implies (i)' is non-trivial. This implication is formulated as  the \emph{descending condition $(D)$} in \cite[Definition~7.4]{KKMMMcoarse} to more general setting. 

\begin{proof}[Proof of Proposition~\textup{\ref{prop=ore}}]
We appeal to Proposition~\ref{prop=abelianiroiro}~(2). Then, (ii) holds if and only if 
\[
\sup_{\fl_1,\fl_2\in \Fg}\left|\pim^{\ast}\muf([\fl_1,\fl_2])\right|<\infty
\]
holds. Again by Proposition~\ref{prop=abelianiroiro}~(2), the last condition is equivalent to (i).
\end{proof}

Under Setting~\ref{setting=GNFM}, let $\muf\in \QQQ(\Ng)^{\Gg}$. Then, set $\hmuf=\pim^{\ast}\muf$; it lies in $\QQQ(\Mg)^{\Fg}$. By Proposition~\ref{prop=decomposition}, $\hmuf$ admits a decomposition
\begin{equation}\label{eq=core}
\hmuf=\hf+\jm^{\ast}\phf.
\end{equation}
Here $\hf\in \HHH^1(\Mg)^{\Fg}$ and $\phf\in \QQQ(\Fg)$; the pair $(\hf,\jm^{\ast}\phf)$ is uniquely determined by $\hmuf$. We illustrate this argument by the following diagram.

\begin{equation}\label{diagram=core}
 \xymatrix@C=36pt{
\Fg \ar@{->>}[d]_{\pim} \ar@{}[r]|*{\geqslant}  &  \Mg=[\Fg,\Fg] \ar@{->>}[d]_{\pim}  \\
\Gg\cong \Fg/\Rel \ar@{}[r]|*{\geqslant}  &  \Ng=[\Gg,\Gg],
} \qquad 
\xymatrix{
\QQQ(\Mg)^{\Fg}\ar@{}[r]|*{\ni} &  \hmuf=\pim^{\ast}\muf \ar@{}[r]|*{=}& \hf+\jm^{\ast}\phf \\
\QQQ(\Ng)^{\Gg}\ar[u]_{\pim^{\ast}}\ar@{}[r]|*{\ni}& \muf. \ar@{|->}[u]. &}
\end{equation}

The invariant \emph{homomorphism} $\hf\in \HHH^1(\Mg)^{\Fg}$ plays a key role to the construction of the alternating $\ZZ$-bilinear form $\bb_{\muf}$. We construct a map $\hbb_{\muf}\colon \Fg\times \Fg\to \RR$ in the following manner. The terminology of `core' in Definition~\ref{defn=core} (1) originates from \cite[Definition~7.9]{KKMMMcoarse}. 

\begin{defn}[core and $\hbb_{\muf}$]\label{defn=core}
Assume Setting~\ref{setting=GNFM}. Let $\muf\in \QQQ(\Ng)^{\Gg}$. 
\begin{enumerate}[label=\textup{(\arabic*)}]
\item The element $\hf$ in $\HHH^1(\Mg)^{\Fg}$ appearing decomposition \eqref{eq=core} of $\hmuf=\pim^{\ast}\muf$ is called the \emph{core} of $\muf$ with respect to $(\Fg,\pim)$.
\item Define $\hbb_{\muf,\Fg,\pim}=\hbb_{\muf}\colon \Fg\times \Fg\to \RR$ by 
\[
\hbb_{\muf}(\fl_1,\fl_2)=\hf([\fl_1,\fl_2])
\]
for all $\fl_1,\fl_2\in \Fg$. Here, $\hf$ is the core of $\muf$ with respect to $(\Fg,\pim)$.
\end{enumerate}
\end{defn}

In what follows, we show that this map $\hbb_{\muf}\colon \Fg\times \Fg\to \RR$ descends to a map $\Gamg\times \Gamg\to \RR$, which will turn out to be  the desired alternating $\ZZ$-bilinear form $\bb_{\muf}$.

\begin{lem}\label{lem=uptodefect}
Assume Setting~$\ref{setting=GNFM}$, and let $\muf\in \QQQ(\Ng)^{\Gg}$. Then, for all $\fl_1,\fl_2\in \Fg$ we have
\[
\muf([\pim(\fl_1),\pim(\fl_2)])\sim_{\DD(\muf)} \hbb_{\muf}(\fl_1,\fl_2).
\]
\end{lem}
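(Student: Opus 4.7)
The strategy is a direct unpacking of the decomposition \eqref{eq=core} combined with the standard bound on commutator values of homogeneous quasimorphisms. First, using that $\pim$ is a homomorphism and that $\pim^{\ast}\muf = \hmuf$, I would rewrite the left-hand side as
\[
\muf([\pim(\fl_1),\pim(\fl_2)]) = \muf\bigl(\pim([\fl_1,\fl_2])\bigr) = \hmuf([\fl_1,\fl_2]).
\]
Since $[\fl_1,\fl_2]\in \Mg$, the decomposition $\hmuf = \hf + \jm^{\ast}\phf$ in \eqref{eq=core} can be applied to this element. This yields
\[
\hmuf([\fl_1,\fl_2]) = \hf([\fl_1,\fl_2]) + \phf([\fl_1,\fl_2]) = \hbb_{\muf}(\fl_1,\fl_2) + \phf([\fl_1,\fl_2]),
\]
by the very definition of $\hbb_{\muf}$ in Definition~\ref{defn=core}~(2).

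It remains to estimate the error term $\phf([\fl_1,\fl_2])$. By Lemma~\ref{lem:qm}~(3) applied to $\phf\in \QQQ(\Fg)$, we have $|\phf([\fl_1,\fl_2])| \leq \DD(\phf)$. The main task is then to show $\DD(\phf) \leq \DD(\muf)$. I plan to argue this by tracing defects through each step of the decomposition: the pullback satisfies $\DD(\pim^{\ast}\muf) \leq \DD(\muf)$ (directly from the definition of defect and the fact that $\pim$ is a homomorphism); then $\DD(\hmuf) = \DD(\jm^{\ast}\phf)$ because $\hf\in \HHH^1(\Mg)^{\Fg}$ is a genuine homomorphism and thus contributes nothing to the defect; and finally $\DD(\jm^{\ast}\phf) = \DD(\phf)$ by Proposition~\ref{prop=decomposition}~(1). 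Chaining these gives $\DD(\phf) = \DD(\jm^{\ast}\phf) = \DD(\hmuf) \leq \DD(\muf)$.

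Combining the display above with $|\phf([\fl_1,\fl_2])| \leq \DD(\muf)$ produces the desired inequality. There is no substantive obstacle here: the argument is a bookkeeping exercise threading the decomposition \eqref{eq=core} through the pullback by $\pim$, with the one non-routine input being the identity $\DD(\jm^{\ast}\phf) = \DD(\phf)$ from Proposition~\ref{prop=decomposition}~(1), which itself rests on the abelianness of $\Fg/\Mg$. The only place requiring mild care is to remember that $\hf$ is invariant and lifts to $\Mg$, so that $\hf([\fl_1,\fl_2])$ is literally the number $\hbb_{\muf}(\fl_1,\fl_2)$, rather than a value computed after descending to $\Gg$ or $\Ng$.
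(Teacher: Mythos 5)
Your proposal is correct and follows essentially the same route as the paper: both apply the decomposition $\hmuf=\hf+\jm^{\ast}\phf$ to the commutator, bound $\phf([\fl_1,\fl_2])$ by $\DD(\phf)$ via Lemma~\ref{lem:qm}~(3), and transfer the defect through the chain $\DD(\phf)=\DD(\jm^{\ast}\phf)=\DD(\hmuf)$ using Proposition~\ref{prop=decomposition}~(1) and the fact that the homomorphism $\hf$ contributes nothing to the defect. The only (immaterial) difference is that you settle for $\DD(\pim^{\ast}\muf)\leq\DD(\muf)$, whereas the paper records equality, which holds because $\pim$ restricts to a surjection $\Mg\twoheadrightarrow\Ng$.
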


\begin{proof}
By using decomposition \eqref{eq=core} of $\hmuf=\pim^{\ast}\muf$, we have
\[
\hmuf([\fl_1,\fl_2])=\hf([\fl_1,\fl_2])+\jm^{\ast}\phf([\fl_1,\fl_2]).
\]
Since $\phf\in \QQQ(\Fg)$, we have $\phf([\fl_1,\fl_2])\sim_{\DD(\phf)}0$. Moreover, by Proposition~\ref{prop=decomposition}~(1), 
\begin{align*}
\DD(\phf)&=\DD(\jm^{\ast}\phf)=\DD(\jm^{\ast}\phf+\hf)\\
&=\DD(\hmuf)=\DD(\muf).
\end{align*}
Hence, by Lemma~\ref{lem:qm} we conclude  that
\[
\muf([\pim(\fl_1),\pim(\fl_2)])=\hmuf([\fl_1,\fl_2])\sim_{\DD(\muf)}\hbb_{\muf}(\fl_1,\fl_2),
\]
as desired.
\end{proof}

\begin{lem}[properties of $\hbb_{\muf}$]\label{lem=hbb}
Assume Setting~$\ref{setting=GNFM}$, and let $\muf\in \QQQ(\Ng)^{\Gg}$. Then, the map $\hbb_{\muf}\colon \Fg\times \Fg\to \RR$ satisfies the following.
\begin{enumerate}[label=\textup{(\arabic*)}]
\item $\hbb_{\muf}(\fl_1\fl_2,\fl)=\hbb_{\muf}(\fl_1,\fl)+\hbb_{\muf}(\fl_2,\fl)$ for all $\fl_1,\fl_2,\fl\in \Fg$.
\item $\hbb_{\muf}(\fl_1,\fl_2)=-\hbb_{\muf}(\fl_2,\fl_1)$ for all $\fl_1,\fl_2\in \Fg$.
\item $\hbb_{\muf}(\fl',\fl)=0$ for every $\fl\in \Fg$ and every $\fl'\in \Mg$.
\item $\hbb_{\muf}(\rl,\fl)=0$ for every $\fl\in \Fg$ and every $\rl\in \Rel$.
\end{enumerate}
\end{lem}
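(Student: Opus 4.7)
The plan is to exploit the two defining features of the core $\hf$: it is a \emph{genuine} $\RR$-valued homomorphism on $\Mg$ (not merely a quasimorphism) and it is $\Fg$-invariant under conjugation. Items (1), (2), and (3) will follow by elementary manipulations of commutator identities combined with these two properties. Item (4), by contrast, cannot be handled by a direct computation because $\rl\in \Rel$ need not lie in $\Mg$, and I will need a scaling argument based on Lemma~\ref{lem=uptodefect}. For this reason I will prove the items in the order listed, so that the additivity in (1) is available when attacking (4).

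For (1), I will invoke the standard commutator identity
\[
[\fl_1\fl_2,\fl]=\fl_1\,[\fl_2,\fl]\,\fl_1^{-1}\cdot[\fl_1,\fl].
\]
The right-hand side is a product of two elements of $\Mg$, so applying the homomorphism $\hf$ splits additively, and the conjugation $\fl_1[\fl_2,\fl]\fl_1^{-1}$ is absorbed by $\Fg$-invariance, yielding $\hf([\fl_2,\fl])+\hf([\fl_1,\fl])$. For (2) I will use $[\fl_1,\fl_2]=[\fl_2,\fl_1]^{-1}$ together with the fact that $\hf$ is a homomorphism. For (3), given $\fl'\in\Mg$, I decompose
\[
[\fl',\fl]=\fl'\cdot\bigl(\fl\,\fl'^{-1}\fl^{-1}\bigr).
\]
Both factors lie in $\Mg$ (the second by normality of $\Mg$ in $\Fg$), so applying $\hf$ and then using $\Fg$-invariance $\hf(\fl\,\fl'^{-1}\fl^{-1})=\hf(\fl'^{-1})$ gives $\hf(\fl')+\hf(\fl'^{-1})=0$.

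The main obstacle is (4), because nothing in the intrinsic structure of $\hf$ directly sees the kernel $\Rel$ of $\pim$. The idea is to bootstrap from the already-established (1) together with Lemma~\ref{lem=uptodefect}. By iterating the additivity proved in (1),
\[
n\,\hbb_{\muf}(\rl,\fl)=\hbb_{\muf}(\rl^n,\fl)\qquad\text{for every } n\in\NN.
\]
Since $\rl\in\Rel$, also $\rl^n\in\Rel$, so $\pim(\rl^n)$ is the identity in $\Gg$; applying Lemma~\ref{lem=uptodefect} gives
\[
\bigl|\hbb_{\muf}(\rl^n,\fl)\bigr|\leq \DD(\muf)+\bigl|\muf([\pim(\rl^n),\pim(\fl)])\bigr|=\DD(\muf).
\]
Consequently $|n\,\hbb_{\muf}(\rl,\fl)|\leq \DD(\muf)$ for every $n\in\NN$, which forces $\hbb_{\muf}(\rl,\fl)=0$. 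This is the familiar trick by which a bound up to the defect is sharpened to a genuine equality, and it is precisely what allows $\hbb_{\muf}$ to descend to $\Gamg\times\Gamg$ in the next step of the construction.
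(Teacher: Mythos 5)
Your proof is correct and follows essentially the same route as the paper: the same commutator identity $[\fl_1\fl_2,\fl]=\fl_1[\fl_2,\fl]\fl_1^{-1}[\fl_1,\fl]$ for item (1), the inversion identity for (2), and for item (4) the same bootstrapping argument — combine Lemma~\ref{lem=uptodefect} with $\pim(\rl^n)=e_{\Gg}$ and the additivity $\hbb_{\muf}(\rl^n,\fl)=n\,\hbb_{\muf}(\rl,\fl)$ from (1) to get a bound $|n\,\hbb_{\muf}(\rl,\fl)|\leq \DD(\muf)$ uniform in $n$, forcing the value to vanish. The only cosmetic difference is item (3), which the paper deduces from (1) (a homomorphism $\Fg\to\RR$ kills $\Mg=[\Fg,\Fg]$) while you verify it directly via the decomposition $[\fl',\fl]=\fl'\cdot(\fl\,\fl'^{-1}\fl^{-1})$ and $\Fg$-invariance of $\hf$; both are equally valid one-line arguments.
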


\begin{proof}
Item~(1) follows from the following commutator calculus:
\begin{equation}\label{eq=commutatorcal}
[\fl_1\fl_2,\fl]=\fl_1[\fl_2,\fl]\fl_1^{-1} [\fl_1,\fl].
\end{equation}
Here, recall that $\hf\in \HHH^1(\Mg)^{\Fg}=\HHH^1([\Fg,\Fg])^{\Fg}$. Item~(2) holds since $[\fl_2,\fl_1]=[\fl_1,\fl_2]^{-1}$. Now, item~(3) follows from item (1). 

Finally, we prove item (4). By Lemma~\ref{lem=uptodefect}, we have
\[
\hbb_{\muf}(\rl,\fl)\sim_{\DD(\muf)} \muf([\pim(\rl),\pim(\fl)])=\muf([e_{\Gg},\pim(\fl)])=0,
\]
where $e_{\Gg}$ denotes the group unit of $\Gg$. 
Let $k\in \NN$. By replacing $\rl$ by $\rl^k$, we have
\begin{equation}\label{eq=Rel_k}
\hbb_{\muf}(\rl^k,\fl)\sim_{\DD(\phf)} 0.
\end{equation}
By item (1), $\hbb_{\muf}(\rl^k,\fl)=k\hbb_{\muf}(\rl,\fl)$. Therefore, by dividing \eqref{eq=Rel_k} by $k$ and letting $k\to \infty$, we have item (4). This completes the proof.
\end{proof}

Let $\Gamg=\Gg/\Ng$ and $\pim\colon \Gg\twoheadrightarrow \Gamg$ be the group quotient map. By Lemma~\ref{lem=hbb}, the map $\hbb_{\muf}\colon \Fg\times \Fg\to \RR$ descends to a map $\Gamg\times \Gamg\to \RR$. That means, the map
\[
\Gamg\times \Gamg\to \RR;\quad (\gaml_1,\gaml_2)\mapsto \hbb_{\muf}(\fl_1,\fl_2)
\]
is well-defined, where we take arbitrary elements $\fl_1,\fl_2$ in $\Fg$ satisfying $(\ppm \circ \pim)(\fl_1)=\gaml_1$ and $(\ppm \circ \pim)(\fl_2)=\gaml_2$. Again by Lemma~\ref{lem=hbb}, this map is an alternating  $\ZZ$-bilinear form on $\Gamg$, and we write $\bb_{\muf}$ for it. Now we are in a position to close up the proof of Theorem~\ref{thm=bilinear_main}.

\begin{proof}[Proof of Theorem~$\ref{thm=bilinear_main}$]
We have defined the alternating $\ZZ$-bilinear form $\bb_{\muf}\colon\Gamg\times \Gamg\to \RR$. Finally, Lemma~\ref{lem=uptodefect} shows \eqref{eq=bilinear_main}.
\end{proof}

\subsection{The maps $\BbG$ and $\bBbG$}\label{subsec=Bb}

\begin{defn}
Let $\Gamg$ be an abelian group.
\begin{enumerate}[label=\textup{(\arabic*)}]
  \item We define $\AAA(\Gamg)$ to be the $\RR$-linear space of $\RR$-valued alternating $\ZZ$-bilinear forms $\Gamg\times \Gamg\to \RR$ on $\Gamg$.
  \item Assume that $\Gamg$ is equipped with a structure of an $\RR$-linear space. Then, we define the subspace $\AAA_{\RR}(\Gamg)$ of $A(\Gamg)$ by
\[
\AAA_{\RR}(\Gamg)=\{\bb\in \AAA(\Gamma)\;|\;\bb \textrm{ is $\RR$-bilinear}\}.
\]
\end{enumerate}
\end{defn}

Here, for an $\RR$-linear space $\Gamg$, the notion of an ($\RR$-valued) \emph{alternating $\RR$-bilinear form} on $\Gamma$ is equivalent to being an element $\bb$ in $\AAA(\Gamg)$ that satisfies 
\[
\bb(t\gaml_1,\gaml_2)=t\bb(\gaml_1,\gaml_2)
\]
for all $\gaml_1,\gaml_2\in \Gamg$ and every $t\in \RR$.

Let $(\Gg,\Ng,\Gamg)$ be a triple fitting in \eqref{eq=shortex}. 
By Theorem~\ref{thm=bilinear_main}, we can define a map
\[
\BbG\colon \QQQ(\Ng)^{\Gg}\to \AAA(\Gamg);\quad \muf\mapsto \BbG(\muf)=\bb_{\muf}.
\] 
By limit formula \eqref{eq=limit_formula}, this map $\BbG$ is an $\RR$-linear map.

\begin{prop}\label{prop=V}
Under the setting above, $\Ker(\BbG)=\im^{\ast}\QQQ(\Gg)$.
\end{prop}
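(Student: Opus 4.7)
The plan is to verify the two inclusions $\im^{\ast}\QQQ(\Gg)\subseteq \Ker(\BbG)$ and $\Ker(\BbG)\subseteq \im^{\ast}\QQQ(\Gg)$ separately; both follow quickly from machinery already built in this section.

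For the forward inclusion, I would start with $\muf=\im^{\ast}\phf$ for some $\phf\in \QQQ(\Gg)$. Then for all $\gl_1,\gl_2\in \Gg$, Lemma~\ref{lem:qm}~(3) gives $|\muf([\gl_1,\gl_2])|=|\phf([\gl_1,\gl_2])|\leq \DD(\phf)$, so these commutator values are uniformly bounded. Applying the limit formula of Theorem~\ref{thm=limit_formula}, for any $\gaml_1,\gaml_2\in \Gamg$ with lifts $\gl_1,\gl_2\in \Gg$,
\[
\bb_{\muf}(\gaml_1,\gaml_2)=\lim_{k\to \infty}\frac{\muf([\gl_1^k,\gl_2])}{k}=\lim_{k\to \infty}\frac{\phf([\gl_1^k,\gl_2])}{k}=0,
\]
since the numerator is bounded by $\DD(\phf)$. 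Hence $\bb_{\muf}=0$, giving $\muf\in \Ker(\BbG)$.

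For the reverse inclusion, suppose $\bb_{\muf}=0$ and adopt Setting~\ref{setting=GNFM} together with the decomposition \eqref{eq=core}: $\hmuf=\pim^{\ast}\muf=\hf+\jm^{\ast}\phf$ with $\hf\in \HHH^1(\Mg)^{\Fg}$ the core of $\muf$. By construction of $\hbb_{\muf}$ in Definition~\ref{defn=core}~(2), and the fact (established in Subsection~\ref{subsec=bilinear_const}) that $\hbb_{\muf}$ descends through $\Gamg\times \Gamg$ to $\bb_{\muf}$, we get for all $\fl_1,\fl_2\in \Fg$
\[
\hf([\fl_1,\fl_2])=\hbb_{\muf}(\fl_1,\fl_2)=\bb_{\muf}(\ppm\pim(\fl_1),\ppm\pim(\fl_2))=0.
\]
Since $\Mg=[\Fg,\Fg]$ is generated by commutators of elements of $\Fg$ and $\hf\colon \Mg\to \RR$ is a genuine homomorphism, the vanishing of $\hf$ on all commutators forces $\hf\equiv 0$. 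Therefore $\hmuf=\jm^{\ast}\phf\in \jm^{\ast}\QQQ(\Fg)$, and Proposition~\ref{prop=ore} yields $\muf\in \im^{\ast}\QQQ(\Gg)$, as required.

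There is no serious obstacle: the key inputs, namely the limit formula, the core decomposition, the identification $\hbb_{\muf}(\fl_1,\fl_2)=\bb_{\muf}(\ppm\pim(\fl_1),\ppm\pim(\fl_2))$, and the descending criterion of Proposition~\ref{prop=ore}, have all been prepared. The only small care needed is the final homomorphism argument: that $\hf$ vanishing on all basic commutators implies $\hf\equiv 0$ on $\Mg$, which is immediate since commutators generate $\Mg$ as a group and $\hf$ is additive.
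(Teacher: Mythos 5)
Your proof is correct and takes essentially the same approach as the paper: the substantive inclusion $\Ker(\BbG)\subseteq \im^{\ast}\QQQ(\Gg)$ is argued exactly as in the paper's proof (the core $\hf$ vanishes because it is a homomorphism killing the commutators that generate $\Mg=[\Fg,\Fg]$, and Proposition~\ref{prop=ore} then descends the conclusion to $\Gg$). For the easy inclusion $\im^{\ast}\QQQ(\Gg)\subseteq\Ker(\BbG)$ the paper simply runs the same chain of equivalences in the other direction (the straightforward implication of Proposition~\ref{prop=ore} forces the core to vanish), whereas you use Lemma~\ref{lem:qm}~(3) together with the limit formula of Theorem~\ref{thm=limit_formula} --- a minor, equally valid variation.
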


\begin{proof}
Fix $(\Fg,\pim)$, where $\Fg$ is a free group and $\pim\colon \Fg\twoheadrightarrow \Gg$ is a surjective group homomorphism. Let $\muf\in \QQQ(\Ng)^{\Gg}$. Then, as we argued in Subsection~\ref{subsec=bilinear_const}, $\BbG(\muf)=\bb_{\muf}$ is lifted to $\hbb_{\muf}\colon \Fg\times \Fg\to\RR$. The element $\muf$ in $\QQQ(\Ng)^{\Gg}$ belongs to $\Ker(\BbG)$ if and only if $\hbb_{\muf}\equiv 0$; it happens if and only if the core $\hf$ of $\muf$ is the zero-map (recall that $\hf$ is an invariant homomorphism). To summarize, $\muf$ is an element of $\Ker(\BbG)$ if and only if $\pim^{\ast}\muf\in \jm^{\ast}\QQQ(\Fg)$. By Proposition~\ref{prop=ore}, it happens if and only if $\muf\in \im^{\ast}\QQQ(\Gg)$, as desired.
\end{proof}

Now we are in a position to define the map $\bBbG$; recall from Definition~\ref{defn=Vspace} that $\VV(\Gg,\Ng)$ is defined as $\VV(\Gg,\Ng)=\QQQ(\Ng)^{\Gg}/\im^{\ast}\QQQ(\Gg)$. We also include the definition of $\BbG$ in Definition~\ref{defn=bilinear_form_map} as a reference.

\begin{defn}\label{defn=bilinear_form_map}
Let $(\Gg,\Ng,\Gamg)$ be a triple fitting in \eqref{eq=shortex}. 
\begin{enumerate}[label=\textup{(\arabic*)}]
\item The $\RR$-linear map $\BbG$ is defined as
\[
\BbG\colon \QQQ(\Ng)^{\Gg}\to \AAA(\Gamg); \quad \muf \mapsto \bb_{\muf}.
\] 
\item By Proposition~\ref{prop=V}, the $\RR$-linear map $\BbG$ descends to the following \emph{injective} $\RR$-linear map
\[
\bBbG\colon \VV(\Gg,\Ng)\hookrightarrow \AAA(\Gamg);\quad [\muf]\mapsto \bb_{\muf}.
\]
\end{enumerate}
\end{defn}

We note that in \eqref{eq=shortex}, the group $\Ng=[\Gg,\Gg]$ is determined by $\Gg$. Thus, once $\Gg$ is given, the two maps $\BbG$ and $\bBbG$ are defined.

\begin{prop}\label{prop=finitedim}
Let $(\Gg,\Ng,\Gamg)$ be a triple fitting in \eqref{eq=shortex}. Assume that $\Gamg$ is equipped with a structure of a finite-dimensional $\RR$-linear space, and let $m$ be the dimension. Then
\[
\dim_{\RR} \bBbG^{-1}(\AAA_{\RR}(\Gamg))\leq \frac{m(m-1)}{2}.
\]
In particular, $\dim_{\RR} \bBbG^{-1}(\AAA_{\RR}(\Gamg))<\infty$.
\end{prop}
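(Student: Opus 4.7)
The plan is to exploit the injectivity of $\bBbG$ that was established in Definition~\ref{defn=bilinear_form_map}~(2) together with a standard dimension count for alternating $\RR$-bilinear forms on a finite-dimensional $\RR$-vector space.

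First, I would record the purely linear-algebraic fact that $\AAA_{\RR}(\Gamg)$ is a finite-dimensional $\RR$-linear subspace of $\AAA(\Gamg)$. Since $\Gamg$ is an $\RR$-linear space of dimension $m$, the space of alternating $\RR$-bilinear forms $\Gamg \times \Gamg \to \RR$ is naturally isomorphic to $\bigwedge^2(\Gamg^{\ast})$, and hence has $\RR$-dimension $\binom{m}{2} = m(m-1)/2$.

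Next, because $\bBbG\colon \VV(\Gg,\Ng) \hookrightarrow \AAA(\Gamg)$ is an \emph{injective} $\RR$-linear map, its restriction to the preimage $\bBbG^{-1}(\AAA_{\RR}(\Gamg))$ is an $\RR$-linear injection from $\bBbG^{-1}(\AAA_{\RR}(\Gamg))$ into $\AAA_{\RR}(\Gamg)$. Therefore,
\[
\dim_{\RR} \bBbG^{-1}(\AAA_{\RR}(\Gamg)) \leq \dim_{\RR} \AAA_{\RR}(\Gamg) = \frac{m(m-1)}{2}.
\]

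I do not expect any genuine obstacle in this argument: the main content is entirely absorbed into the injectivity statement of $\bBbG$ (which comes for free from Proposition~\ref{prop=V}) and the elementary dimension count of $\bigwedge^2(\Gamg^\ast)$. The only item worth being pedantic about is to note explicitly that $\bBbG^{-1}(\AAA_{\RR}(\Gamg))$ is indeed an $\RR$-linear subspace of $\VV(\Gg,\Ng)$, which follows immediately from the $\RR$-linearity of $\bBbG$ and the fact that $\AAA_{\RR}(\Gamg)$ is a linear subspace of $\AAA(\Gamg)$.
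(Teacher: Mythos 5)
Your proposal is correct and follows essentially the same route as the paper: the paper's proof likewise observes that $\dim_{\RR}\AAA_{\RR}(\Gamg)\leq m(m-1)/2$ and then invokes the injectivity of $\bBbG$ from Definition~\ref{defn=bilinear_form_map}~(2) (via Proposition~\ref{prop=V}). Your extra remarks on the identification with $\bigwedge^2(\Gamg^{\ast})$ and on $\bBbG^{-1}(\AAA_{\RR}(\Gamg))$ being a linear subspace merely make explicit what the paper leaves implicit.
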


\begin{proof}
Observe that $\dim_{\RR}\AAA_{\RR}(\Gamg)\leq \dfrac{m(m-1)}{2}$. Now, the injectivity of $\bBbG$ ends our proof.
\end{proof}

In Example~\ref{exa=symplectic}, it is unclear whether $\VV(\Gg,\Ng)=\VV(\tSympc(\Mm,\omega),\tHamc(\Mm,\omega))$ is finite-dimensional. Nevertheless, since $\Gamg=\HHH^1_c(\Mm;\RR)$ is a finite-dimensional $\RR$-linear space, the subspace $ \bBbG^{-1}(\AAA_{\RR}(\HHH^1_c(\Mm;\RR)))$ of $\VV(\Gg,\Ng)$ is finite-dimensional by Proposition~\ref{prop=finitedim}. This observation will be one of the keys to the proof of Theorem~\ref{thm=findimCalabi}.

We also have the following application of $\bb_{\muf}$ to the extension problem for invariant quasimorphisms.

\begin{prop}\label{prop=extendability}
Let $(\Gg,\Ng,\Gamg)$ be a triple fitting in \eqref{eq=shortex}. Let $\Pg$ be a subgroup of $\Gg$ with $\Pg\geqslant \Ng$. Then, the following are equivalent.
\begin{enumerate}[label=\textup{(\roman*)}]
 \item $\muf$ is extendable to an element in $\QQQ(\Pg)$;
 \item $\bb_{\muf}|_{\ppm(\Pg)\times \ppm(\Pg)}\equiv 0$.
\end{enumerate}
\end{prop}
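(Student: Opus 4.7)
The plan is to reduce the proposition to two results already in hand: the defining inequality of Theorem~\ref{thm=bilinear_main}, namely $\muf([\gl_1,\gl_2])\sim_{\DD(\muf)}\bb_{\muf}(\ppm(\gl_1),\ppm(\gl_2))$ for all $\gl_1,\gl_2\in\Gg$, and the boundedness-of-commutators criterion for extendability from Proposition~\ref{prop=abelianiroiro}~(2). A key preliminary observation is that the pair $(\Pg,\Ng)$ itself fits into the framework of Proposition~\ref{prop=abelianiroiro}: since $\Ng\leqslant\Pg\leqslant\Gg$ and $\Pg/\Ng$ embeds into the abelian group $\Gamg=\Gg/\Ng$, the quotient $\Pg/\Ng$ is abelian, so $[\Pg,\Pg]\leqslant\Ng$ and $\muf\in\QQQ(\Ng)^{\Gg}\subseteq\QQQ(\Ng)^{\Pg}$.

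For the direction (i)$\Rightarrow$(ii), I would assume $\phf\in\QQQ(\Pg)$ with $\phf|_{\Ng}=\muf$ and take arbitrary $p_1,p_2\in\Pg$. For every $k\in\NN$ we have $[p_1^k,p_2]\in[\Pg,\Pg]\leqslant\Ng$, so $\muf([p_1^k,p_2])=\phf([p_1^k,p_2])$, and Lemma~\ref{lem:qm}~(3) gives
\[
\left|\frac{\muf([p_1^k,p_2])}{k}\right|=\left|\frac{\phf([p_1^k,p_2])}{k}\right|\leq \frac{\DD(\phf)}{k}.
\]
Letting $k\to\infty$ and invoking the limit formula (Theorem~\ref{thm=limit_formula}) yields $\bb_{\muf}(\ppm(p_1),\ppm(p_2))=0$, proving (ii).

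For the converse (ii)$\Rightarrow$(i), I would apply Theorem~\ref{thm=bilinear_main} directly to $p_1,p_2\in\Pg\subseteq\Gg$: the assumption $\bb_{\muf}(\ppm(p_1),\ppm(p_2))=0$ together with the estimate of that theorem gives $|\muf([p_1,p_2])|\leq\DD(\muf)$ for all $p_1,p_2\in\Pg$. Then I would apply Proposition~\ref{prop=abelianiroiro}~(2) to the triple $(\Pg,\Ng,\Pg/\Ng)$, valid by the abelianness observation above, to conclude that $\muf$ extends to some $\phf\in\QQQ(\Pg)$.

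I do not expect a serious obstacle here; the proposition is essentially a packaging of Theorem~\ref{thm=bilinear_main} with Proposition~\ref{prop=abelianiroiro}~(2). The only point requiring a moment's care is verifying that Proposition~\ref{prop=abelianiroiro}~(2) can indeed be invoked for $(\Pg,\Ng)$ in place of $(\Gg,\Ng)$, which hinges on the elementary fact that a subgroup of an abelian quotient is abelian.
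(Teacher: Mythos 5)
Your proof is correct and follows essentially the same route as the paper: both directions ultimately rest on Proposition~\ref{prop=abelianiroiro}~(2) applied to the pair $(\Pg,\Ng)$ (legitimate, as you note, because $\Pg/\Ng\leqslant\Gamg$ is abelian) together with the estimate \eqref{eq=bilinear_main}. The only cosmetic difference is that for (i)$\Rightarrow$(ii) you argue directly from the extension via Lemma~\ref{lem:qm}~(3) and the limit formula, whereas the paper invokes the forward direction of Proposition~\ref{prop=abelianiroiro}~(2) and then converts boundedness of $\bb_{\muf}$ on $\ppm(\Pg)\times\ppm(\Pg)$ into vanishing using $\ZZ$-bilinearity.
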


\begin{proof}
Note that $\ppm(\Pg)=\Pg/\Ng$ is abelian since $\ppm(\Pg)\leqslant \Gamg$. 
Then, 
Proposition~\ref{prop=abelianiroiro} (2) 
implies the following: (i) happens if and only if 
\[
\sup\{|\muf([\al_1,\al_2])|\;|\; \al_1,\al_2\in \Pg\}<\infty.
\]
By \eqref{eq=bilinear_main}, this occurs exactly when (ii) is satisfied. Indeed, by $\ZZ$-bilinearity, $\bb_{\muf}$ is bounded on $\ppm(\Pg)\times \ppm(\Pg)$ if and only if $\bb_{\muf}|_{\ppm(\Pg)\times \ppm(\Pg)}\equiv 0$.
\end{proof}

\subsection{Invariance under a group action}\label{subsec=action}

\begin{prop}[constructing an invariant bilinear form]\label{prop=inv}
Let $(\Gg,\Ng,\Gamg)$ be a triple fitting in \eqref{eq=shortex}. Let $\Theta$ be a group. Assume that $\Theta$ acts on $\Gg$ by group automorphisms, so that the $\Theta$-action on $\Gg$ induces that on $\Gamg=\Gg/\Ng$. Let $\BbG\colon \QQQ(\Ng)^{\Gg}\to\AAA(\Gamg)$ be the map defined in Definition~\textup{\ref{defn=bilinear_form_map}}. Let $\muf\in \QQQ(\Ng)^{\Gg}$, and assume that $\muf$ is $\Theta$-invariant, meaning that
\[
\muf(\theta \cdot \xl)=\muf(\xl)
\]
for every $\theta\in \Theta$ and every $\xl\in \Ng$. Then, $\BbG(\muf)$ is $\Theta$-invariant, meaning that
\[
\BbG(\muf)(\theta\cdot\gaml_1,\theta\cdot\gaml_2)=\BbG(\muf)(\gaml_1,\gaml_2)
\]
for every $\theta\in \Theta$ and all $\gaml_1,\gaml_2\in \Gamg$.
\end{prop}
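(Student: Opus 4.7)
The plan is to exploit the limit formula of Theorem~\ref{thm=limit_formula} together with the elementary fact that a group automorphism commutes with taking powers and commutators. Fix $\theta\in \Theta$ and $\gaml_1,\gaml_2\in \Gamg$, and choose lifts $\gl_1,\gl_2\in \Gg$ with $\ppm(\gl_i)=\gaml_i$. Because $\Theta$ acts on $\Gg$ by group automorphisms, the commutator subgroup $\Ng=[\Gg,\Gg]$ is automatically $\Theta$-invariant (so the induced action on $\Gamg=\Gg/\Ng$ is well-defined), and $\theta\cdot \gl_i$ is a lift of $\theta\cdot \gaml_i$ for $i=1,2$.

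Applying Theorem~\ref{thm=limit_formula} to the pair $(\theta\cdot\gaml_1,\theta\cdot\gaml_2)$ with these lifts, we get
\begin{equation*}
\BbG(\muf)(\theta\cdot\gaml_1,\theta\cdot\gaml_2) \;=\; \lim_{k\to\infty}\frac{\muf\bigl([(\theta\cdot\gl_1)^k,\,\theta\cdot\gl_2]\bigr)}{k}.
\end{equation*}
Since $\theta$ is a group automorphism, $(\theta\cdot\gl_1)^k = \theta\cdot \gl_1^k$ and $[\theta\cdot\gl_1^k,\,\theta\cdot\gl_2] = \theta\cdot[\gl_1^k,\gl_2]$, and this element lies in $\Ng$ (so $\muf$ can be evaluated on it). The assumed $\Theta$-invariance of $\muf$ then gives
\begin{equation*}
\muf\bigl([(\theta\cdot\gl_1)^k,\,\theta\cdot\gl_2]\bigr) \;=\; \muf\bigl(\theta\cdot[\gl_1^k,\gl_2]\bigr) \;=\; \muf([\gl_1^k,\gl_2]).
\end{equation*}
Dividing by $k$, taking the limit, and invoking Theorem~\ref{thm=limit_formula} once more (this time for the pair $(\gaml_1,\gaml_2)$ with lifts $\gl_1,\gl_2$) yields $\BbG(\muf)(\theta\cdot\gaml_1,\theta\cdot\gaml_2)=\BbG(\muf)(\gaml_1,\gaml_2)$, which is the desired $\Theta$-invariance.

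There is essentially no obstacle here: the argument is three lines once the limit formula is available, and the only substantive ingredients are that automorphisms commute with powers and commutators and that $\muf$ is $\Theta$-invariant on $\Ng$. One could alternatively work directly from the defining inequality \eqref{eq=bilinear_main}, using $\ZZ$-bilinearity to divide out a power of $k$, but this essentially re-derives Theorem~\ref{thm=limit_formula}. A more structural variant is to lift the $\Theta$-action to $\Fg$ (as in Setting~\ref{setting=GNFM}) and observe that the core $\hf$ of $\muf$ inherits $\Theta$-invariance, from which $\Theta$-invariance of $\hbb_{\muf}$ and hence of $\bb_{\muf}$ follows; however, this route requires choosing and keeping track of additional data, so the limit-formula approach is cleaner.
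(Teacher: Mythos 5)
Your proof is correct and is essentially identical to the paper's: both apply the limit formula of Theorem~\ref{thm=limit_formula} to lifts of $\theta\cdot\gaml_1,\theta\cdot\gaml_2$, use that the automorphism $\theta$ commutes with powers and commutators, and then invoke the $\Theta$-invariance of $\muf$. The additional remarks (well-definedness of the induced action, the alternative route via the core $\hf$) are fine but not needed beyond what the paper itself does.
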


Here, observe that the $\Theta$-action on $\Gg$ leaves $\Ng=[\Gg,\Gg]$ invariant (setwise) because this action is by group automorphisms. 

\begin{proof}[Proof of Proposition~\textup{\ref{prop=inv}}]
Let $\theta\in \Theta$. Let $\gaml_1,\gamma_2\in \Gamg$. Take $\gl_1,\gl_2\in \Gg$ such that $\ppm(\gl_1)=\gaml_1$ and $\ppm(\gl_2)=\gaml_2$. Since the $\Theta$-action on $\Gg$ is by group automorphisms and $\muf$ is $\Theta$-invariant,  by limit formula \eqref{eq=limit_formula} we have
\begin{align*}
\BbG(\muf)(\theta\cdot\gaml_1,\theta\cdot\gaml_2)&=\lim_{k\to \infty}\frac{\muf([(\theta\cdot \gl_1)^k,\theta\cdot \gl_2])}{k}\\
&=\lim_{k\to \infty}\frac{\muf([\theta\cdot (\gl_1^k),\theta\cdot \gl_2])}{k}\\
&=\lim_{k\to \infty}\frac{\muf(\theta\cdot [\gl_1^k,\gl_2])}{k}=\BbG(\muf)(\gaml_1, \gaml_2),
\end{align*}
as desired.
\end{proof}

\subsection{A criterion for the $\RR$-bilinearity}\label{subsec=RRbilinear}

Let $(\Gg,\Ng,\Gamg)$ be a triple fitting in \eqref{eq=shortex}
In certain cases, $\Gamg$ is equipped with a structure of an $\RR$-linear space; for instance, the one as in Example~\ref{exa=symplectic}.  In such a case, it is natural to ask for which $\muf\in \QQQ(\Ng)^{\Gg}$, $\bb_{\muf}$ belongs to $\AAA_{\RR}(\Gamg)$, that is, $\bb_{\muf}$ is $\RR$-bilinear. The following proposition provides a criterion for proving the $\RR$-bilinearity of $\bb_{\muf}$.

\begin{prop}[criterion for the $\RR$-bilinearity of $\bb_{\muf}$]\label{prop=RRbilinear}

Let $(\Gg,\Ng,\Gamg)$ be a triple fitting in \eqref{eq=shortex}. Assume the following two conditions.
\begin{enumerate}[label=\textup{(\arabic*)}]
 \item $\Gamg$ is equipped with a structure of an $\RR$-linear space;
 \item for every $\gaml\in \Gamg$, there exists  a group homomorphism  $c_{\gaml}\colon \RR \to \Gg$ such that for every $t\in \RR$, $(\ppm \circ c_{\gaml})(t)=t\gaml$.
\end{enumerate}
Let $\muf\in \QQQ(\Ng)^{\Gg}$. Assume the following condition:
\begin{enumerate}
\item[\textup{(3)}] for every $\gl\in \Gg$ and every $\gaml\in\Gamg$, 
\[
\sup\{|\muf([c_{\gaml}(s),\gl])|\;|\; s\in [0,1]\}<\infty.
\]
\end{enumerate}
Then, $\BbG(\muf)=\bb_{\muf}$ belongs to $\AAA_{\RR}(\Gamg)$.
\end{prop}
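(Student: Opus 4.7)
The plan is to verify $\RR$-homogeneity of $\bb_{\muf}$ in the first argument. Since $\bb_{\muf}$ is already known to be alternating and $\ZZ$-bilinear by Theorem~\ref{thm=bilinear_main}, establishing $\bb_{\muf}(t\gaml_1,\gaml_2) = t\,\bb_{\muf}(\gaml_1,\gaml_2)$ for all $t \in \RR$ and all $\gaml_1,\gaml_2 \in \Gamg$, combined with $\ZZ$-additivity in both slots and antisymmetry, delivers full $\RR$-bilinearity. The cases $t=0$ and $t<0$ are automatic from the $\ZZ$-bilinearity, so the actual work is for $t > 0$.

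Fix $t > 0$, $\gaml_1,\gaml_2 \in \Gamg$, and a lift $\gl_2 \in \Gg$ of $\gaml_2$. The key device is to apply the refined limit formula (Theorem~\ref{thm=refined_limit_formula}) to the sequence $\gl_1^{(k)} := c_{\gaml_1}(tk)$, which by hypothesis (2) satisfies $\ppm(\gl_1^{(k)}) = tk\,\gaml_1 = k(t\gaml_1)$. This yields
\[
  \bb_{\muf}(t\gaml_1,\gaml_2) \;=\; \lim_{k \to \infty}\frac{\muf([c_{\gaml_1}(tk),\gl_2])}{k}.
\]
Next I would decompose $tk = \lfloor tk \rfloor + s_k$ with $s_k \in [0,1)$ and, using that $c_{\gaml_1}\colon \RR \to \Gg$ is a homomorphism, write $c_{\gaml_1}(tk) = c_{\gaml_1}(1)^{\lfloor tk\rfloor}\cdot c_{\gaml_1}(s_k)$. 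The commutator identity $[ab,c] = a[b,c]a^{-1}\cdot [a,c]$, applied with $a = c_{\gaml_1}(1)^{\lfloor tk \rfloor}$, $b = c_{\gaml_1}(s_k)$, $c = \gl_2$, together with the defect bound and the $\Gg$-invariance of $\muf$ (which absorbs the conjugation by $a$), will produce
\[
  \muf([c_{\gaml_1}(tk),\gl_2]) \;\sim_{\DD(\muf)}\; \muf([c_{\gaml_1}(s_k),\gl_2]) + \muf([c_{\gaml_1}(1)^{\lfloor tk\rfloor},\gl_2]).
\]

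Hypothesis (3) enters precisely here: it guarantees $\sup_k |\muf([c_{\gaml_1}(s_k),\gl_2])| < \infty$, so dividing by $k$ annihilates the fractional-remainder term in the limit. For the integer-power term, Theorem~\ref{thm=limit_formula} applied to the lift $c_{\gaml_1}(1) \in \Gg$ of $\gaml_1$ gives $\muf([c_{\gaml_1}(1)^m,\gl_2])/m \to \bb_{\muf}(\gaml_1,\gaml_2)$ as $m \to \infty$; substituting $m = \lfloor tk \rfloor$ and using $\lfloor tk \rfloor /k \to t$ makes this term contribute exactly $t\,\bb_{\muf}(\gaml_1,\gaml_2)$. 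Assembling these observations completes the verification.

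The main obstacle is exactly the fractional remainder $c_{\gaml_1}(s_k)$: without a uniform bound on $\muf([c_{\gaml_1}(s),\gl_2])$ for $s \in [0,1]$, the discrepancy between $c_{\gaml_1}(tk)$ and the integer power $c_{\gaml_1}(1)^{\lfloor tk\rfloor}$ could in principle spoil the $1/k$ scaling and prevent the error from being absorbed. Condition (3) is tailor-made to suppress this; it is a mild equicontinuity-type assumption along the one-parameter family $\{c_{\gaml_1}(s)\}_{s \in [0,1]}$. Everything else in the argument---the commutator identity, the $\Gg$-invariance of $\muf$, and the two limit formulae---serves to reduce the problem to precisely this piece of control.
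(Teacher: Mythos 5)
Your proposal is correct and follows essentially the same route as the paper's proof: the identical decomposition $tk=\lfloor tk\rfloor+\{tk\}$ via the homomorphism $c_{\gaml_1}$, the same commutator identity absorbed by $\Gg$-invariance and the defect, condition (3) to kill the fractional remainder after dividing by $k$, and the limit formula to extract $t\,\bb_{\muf}(\gaml_1,\gaml_2)$ from the integer-power term. The only cosmetic differences are that you invoke the refined limit formula for the sequence $c_{\gaml_1}(tk)$ where the paper applies the ordinary one to the lift $c_{\gaml_1}(t)$, and that you dispatch $t<0$ by $\ZZ$-bilinearity up front.
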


\begin{proof}
Let $\gaml_1,\gaml_2\in\Gamg$. Take $\gl_2\in \Gg$ with $\ppm(\gl_2)=\gaml_2$. Take $c_{\gaml_1}\colon \RR\to \Gg$ as in condition (2). Set $\gl_1=c_{\gaml_1}(1)$. Let $t\in \RR$. If $t=0$, then $\bb_{\muf}(0\cdot \gaml_1,\gaml_2)=0=0\cdot \bb_{\muf}(\gaml_1,\gaml_2)$. Hence, we may assume that $t\ne 0$. Let $k\in \NN$, and write $tk=\lfloor tk \rfloor +\{tk\}$. Here, for $u\in \RR$, $\lfloor u \rfloor$ is the largest integer not exceeding $u$, and $\{u\}=u-\lfloor u\rfloor\in [0,1)$. Then, by limit formula \eqref{eq=limit_formula}, we have
\begin{align*}
\bb_{\muf}(t\gaml_1,\gaml_2)&=\lim_{k\to \infty}\frac{\muf([c_{\gaml_1}(t)^k,\gl_2])}{k}\\
&=\lim_{k\to \infty}\frac{\muf([c_{\gaml_1}(tk),\gl_2])}{k}\\
&=\lim_{k\to \infty}\frac{\muf([c_{\gaml_1}(\lfloor tk \rfloor +\{tk\}),\gl_2])}{k}\\
&=\lim_{k\to \infty}\frac{\muf([\gl_1^{\lfloor tk \rfloor} c_{\gaml_1}(\{tk\}),\gl_2])}{k}.
\end{align*}
By \eqref{eq=commutatorcal} and $\muf\in \QQQ(\Ng)^{\Gg}$, we have
\[
\muf([\gl_1^{\lfloor tk \rfloor} c_{\gaml_1}(\{tk\}),\gl_2])\sim_{\DD(\muf)}\muf([\gl_1^{\lfloor tk \rfloor} ,\gl_2])+\muf([c_{\gaml_1}(\{tk\}),\gl_2]).
\]
By condition~(3), we can set $C=\sup\{|\muf([c_{\gaml_1}(s),\gl_2])|\;|\; s\in [0,1]\}<\infty$. Then, we have
\[
\frac{\muf([\gl_1^{\lfloor tk \rfloor} c_{\gaml_1}(\{tk\}),\gl_2])}{k}\sim_{(\DD(\muf)+C)/k}\frac{\muf([\gl_1^{\lfloor tk \rfloor} ,\gl_2])}{k}.
\]
By recalling \eqref{eq=limit_formula}, we have
\begin{align*}
\lim_{k\to\infty}\frac{\muf([\gl_1^{\lfloor tk \rfloor} ,\gl_2])}{k}
&=\lim_{k\to\infty}\left(\frac{\muf([\gl_1^{\lfloor tk \rfloor} ,\gl_2])}{\lfloor tk \rfloor}\cdot\frac{\lfloor tk \rfloor}{k}\right)\\
&=t \bb_{\muf}(\gaml_1,\gaml_2).
\end{align*}
Therefore, we conclude that
\[
\bb_{\muf}(t\gaml_1,\gaml_2)=t \bb_{\muf}(\gaml_1,\gaml_2),
\]
as desired.
\end{proof}

\begin{rem}\label{rem=unhomog}
In the setting of Proposition~\ref{prop=RRbilinear}, assume that a (not necessarily homogeneous) quasimorphism $\muf'$ on $\Ng$ satisfies $(\muf')_{\mathrm{h}}=\muf$, namely, the homogenization of $\muf'$ equals $\muf$. Then, condition (3) can be replaced by
\begin{enumerate}
\item[$(3')$] for every $\gl\in \Gg$ and every $\gaml\in\Gamg$, 
\[
\sup\{|\muf'([c_{\gaml}(s),\gl])|\;|\; s\in [0,1]\}<\infty.
\]
\end{enumerate}
Indeed, recall from Lemma~\ref{lem=homogenization}~(2) that $\|\muf-\muf'\|_{\infty}\leq \DD(\muf')$. Now, the equivalence between (3) and ($3'$) is straightforward.\end{rem}

\begin{cor}\label{cor=RRbilinear}

Let $(\Gg,\Ng,\Gamg)$ be a triple fitting in \eqref{eq=shortex}. Assume the following three conditions.
\begin{enumerate}[label=\textup{(\arabic*)}]
 \item $\Gamg$ is equipped with a structure of an $\RR$-linear space; 
\item $\Gg$ is equipped with a topology, and $\Ng$ is equipped with the relative topology of it;
 \item for every $\gaml\in \Gamg$, there exists  a continuous group homomorphism  $c_{\gaml}\colon \RR \to \Gg$ such that for every $t\in \RR$, $(\ppm \circ c_{\gaml})(t)=t\gaml$.
\end{enumerate}
Let $\muf\in \QQQ(\Ng)^{\Gg}$. Assume the following condition.
\begin{enumerate}
\item[\textup{(4)}] There exists a continuous function $\Phi\colon \Ng\to \RR$ such that for all $\gl_1,\gl_2\in \Gg$, 
\[
|\muf([\gl_1,\gl_2])|\leq |\Phi([\gl_1,\gl_2])|.
\]
\end{enumerate}
Then, $\BbG(\muf)=\bb_{\muf}$ belongs to $\AAA_{\RR}(\Gamg)$.
\end{cor}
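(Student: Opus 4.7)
The plan is to derive this as a direct corollary of Proposition~\ref{prop=RRbilinear}. Hypothesis~(1) of the corollary coincides with hypothesis~(1) of that proposition, and the existence of continuous group homomorphisms $c_\gaml \colon \RR \to \Gg$ supplied by hypothesis~(3) furnishes the paths demanded by hypothesis~(2) of the proposition (where continuity is not required). All that remains is to use hypotheses~(2), (3), and (4) of the corollary to verify hypothesis~(3) of Proposition~\ref{prop=RRbilinear}, namely that for every $\gl \in \Gg$ and every $\gaml \in \Gamg$,
\[
\sup_{s \in [0,1]} |\muf([c_\gaml(s), \gl])| < \infty.
\]

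Fix $\gl \in \Gg$ and $\gaml \in \Gamg$, and define $\Psi \colon [0,1] \to \Ng$ by $\Psi(s) = [c_\gaml(s), \gl] = c_\gaml(s)\,\gl\, c_\gaml(s)^{-1}\,\gl^{-1}$; this lands in $\Ng = [\Gg,\Gg]$. The continuity of $c_\gaml$, combined with continuity of multiplication and inversion in $\Gg$ (which is implicitly a topological group in the intended applications), makes $\Psi$ a continuous map into $\Ng$ endowed with the relative topology. Hypothesis~(4) supplies a continuous $\Phi \colon \Ng \to \RR$ with $|\muf([c_\gaml(s), \gl])| \leq |\Phi(\Psi(s))|$ for every $s \in [0,1]$; since $\Phi \circ \Psi$ is a continuous function on the compact interval $[0,1]$, it is bounded, and hence $s \mapsto \muf([c_\gaml(s), \gl])$ is bounded as well.

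With hypothesis~(3) of Proposition~\ref{prop=RRbilinear} now in place, the proposition directly yields $\BbG(\muf) = \bb_{\muf} \in \AAA_\RR(\Gamg)$. The only substantive subtlety — and it is minor — is ensuring continuity of the commutator path $\Psi$; in the motivating cases such as $\Gg = \tSympc(M,\omega)$ with the $C^\infty$-topology, $\Gg$ is a topological group, so this is automatic. I do not foresee any other obstacle.
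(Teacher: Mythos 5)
Your proposal is correct and follows essentially the same route as the paper: reduce to Proposition~\ref{prop=RRbilinear}, then verify its condition~(3) by noting that $s\mapsto \Phi([c_{\gaml}(s),\gl])$ is continuous on the compact interval $[0,1]$, hence bounded, and apply hypothesis~(4). The subtlety you flag about needing continuity of the group operations to make $s\mapsto[c_{\gaml}(s),\gl]$ continuous is indeed glossed over in the paper's one-line appeal to conditions~(2) and~(3), and your explicit acknowledgment of it is a fair (and accurate) reading of the intended hypotheses.
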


In Corollary~\ref{cor=RRbilinear}, we do not assume that $\Ng$ is a closed subgroup of $\Gg$. Neither do we assume that the structure of an $\RR$-linear space on $\Gamg$ is compatible with the quotient topology on $\Gamg=\Gg/\Ng$.

\begin{proof}[Proof of Corollary~$\ref{cor=RRbilinear}$]
By Proposition~\ref{prop=RRbilinear}, it suffices to show that 
\begin{equation}\label{eq=cbounded}
\sup\{|\muf([c_{\gaml}(s),\gl])|\;|\; s\in [0,1]\}<\infty
\end{equation}
for every $\gl\in \Gg$ and every $\gaml\in \Gamg$. Consider the  function
\[
[0,1]\to\RR;\quad s\mapsto \Phi([c_{\gaml}(s),\gl]).
\]
By conditions~(2) and (3), this map is continuous. By compactness of $[0,1]$, the image of this function is bounded. Hence, we obtain \eqref{eq=cbounded} by condition (4).
\end{proof}

\subsection{The bilinear form $\bb_{\muf}$ and the center of $\Gg$}\label{subsection=center}

Results in this subsection will be employed in Section~\ref{sec=Reznikov} to prove Theorem~\ref{mthm=Reznikov}. We first state the following lemma; for the reader's convenience, we include its proof.

\begin{lem}\label{lem=descend}
Let $\Gg$ be a group and  $\Ng$ a normal subgroup of $\Gg$. Let $\Zg$ be a normal subgroup of $\Gg$. Let $\cGg=\Gg/\Zg$ and $\cNg=\Ng/(\Ng\cap \Zg)$. Let $\qqm\colon \Gg\twoheadrightarrow \cGg$ be the group quotient map. Then for $\muf\in \QQQ(\Ng)^{\Gg}$, the following two conditions are equivalent.
\begin{enumerate}[label=\textup{(\roman*)}]
  \item The quasimorphism $\muf$ descends to an element in $\QQQ(\cNg)^{\cGg}$. Namely, there exists $\cmuf\in \QQQ(\cNg)^{\cGg}$ such that $\cmuf\circ \left(\qqm|_{\Ng}\right)=\muf$;
  \item the restriction of $\muf$ to $\Ng\cap \Zg$ is the zero map.
\end{enumerate}
\end{lem}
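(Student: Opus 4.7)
The plan is to handle the two directions separately. The implication (i) $\Rightarrow$ (ii) is immediate: if $\cmuf\in \QQQ(\cNg)^{\cGg}$ and $z\in \Ng\cap \Zg$, then $\qqm(z)$ is the identity of $\cNg$, so by homogeneity $\cmuf(\qqm(z))=0$; hence $\muf(z) = \cmuf(\qqm(z)) = 0$.

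For the substantive direction (ii) $\Rightarrow$ (i), I would define $\cmuf \colon \cNg \to \RR$ by $\cmuf(\bar{\xl}) = \muf(\xl)$ for any lift $\xl \in \Ng$ of $\bar{\xl} \in \cNg$. Well-definedness is the main obstacle, since we cannot assume elements of $\Zg$ commute with elements of $\Ng$. Suppose $\xl, \xl' \in \Ng$ satisfy $\qqm(\xl)=\qqm(\xl')$; then $\zl := \xl'\xl^{-1}$ lies in $\Zg$ (as $\qqm(\zl)=e_{\cGg}$) and also in $\Ng$ (as $\xl,\xl'\in\Ng$), so $\zl \in \Ng \cap \Zg$. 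I want to show $\muf(\zl \xl) = \muf(\xl)$. The trick is to expand $(\zl\xl)^k$ using the normality of $\Zg$ in $\Gg$:
\[
(\zl\xl)^k = \zl_0 \zl_1 \cdots \zl_{k-1} \cdot \xl^k, \quad \text{where } \zl_i := \xl^i \zl \xl^{-i}.
\]
A straightforward induction confirms this identity. Each $\zl_i$ belongs to $\Ng\cap\Zg$ (since $\Ng$ and $\Zg$ are both normal in $\Gg$), and hence so does $\wl_k := \zl_0 \zl_1 \cdots \zl_{k-1}$.

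Applying homogeneity of $\muf$ on the left, the quasimorphism defect on the right, and the hypothesis $\muf|_{\Ng\cap\Zg}\equiv 0$ to $\muf(\wl_k)=0$, we get
\[
k \muf(\zl\xl) = \muf((\zl\xl)^k) = \muf(\wl_k \xl^k) \sim_{\DD(\muf)} \muf(\wl_k) + \muf(\xl^k) = k\muf(\xl).
\]
Dividing by $k$ and letting $k \to \infty$ yields $\muf(\zl\xl)=\muf(\xl)$, proving well-definedness.

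Once $\cmuf$ is well-defined, the remaining verifications are routine. Homogeneity of $\cmuf$ follows from $\cmuf(\bar{\xl}^k)=\muf(\xl^k)=k\muf(\xl)=k\cmuf(\bar{\xl})$. The quasimorphism inequality transfers with $\DD(\cmuf)\leq \DD(\muf)$, since for any lifts $\xl,\yl\in\Ng$ of $\bar{\xl},\bar{\yl}\in\cNg$ we have $|\cmuf(\bar{\xl}\bar{\yl})-\cmuf(\bar{\xl})-\cmuf(\bar{\yl})| = |\muf(\xl\yl)-\muf(\xl)-\muf(\yl)|\leq \DD(\muf)$. Finally, $\cGg$-invariance follows from the $\Gg$-invariance of $\muf$: for $\gl\in\Gg$ and $\xl\in\Ng$, $\cmuf(\qqm(\gl)\bar{\xl}\qqm(\gl)^{-1}) = \muf(\gl\xl\gl^{-1}) = \muf(\xl) = \cmuf(\bar{\xl})$. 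This completes the argument.
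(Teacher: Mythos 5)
Your proof is correct and follows essentially the same route as the paper: the paper likewise collects the $\Zg$-part of the $k$-th power into a single element of $\Ng\cap\Zg$ (writing $(\xl\zl)^k=\xl^k\zl^{(k)}$ with $\zl^{(k)}\in\Ng\cap\Zg$), applies the defect bound, divides by $k$, and lets $k\to\infty$. Your only differences are cosmetic: you exhibit the element explicitly as a product of conjugates $\xl^i\zl\xl^{-i}$, and you spell out the routine verifications (homogeneity, defect, $\cGg$-invariance) that the paper leaves implicit.
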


\begin{proof}
We first show that condition (i) is equivalent to 
\begin{equation}\label{eq=normal}
\muf(\xl\zl)=\muf(\xl)
\end{equation}
for every $\xl\in \Ng$ and every $\zl\in \Ng\cap \Zg$. Indeed, if \eqref{eq=normal} holds, then $\muf$ descends to an element $\cmuf$ in $\QQQ(\cNg)$; since $\muf\in \QQQ(\Ng)^{\Gg}$, this $\cmuf$ is $\cGg$-invariant. The converse is trivial. In particular, we obtain that (i) implies (ii). 

In what follows, we prove that (ii) implies (i). Assume (ii), and let $\xl\in \Ng$ and $\zl\in\Ng\cap  \Zg$. For every $k\in \NN$, there exists $\zl^{(k)}\in \Ng\cap \Zg$ such that $(\xl\zl)^k=\xl^k\zl^{(k)}$. Hence, we have
\[
\muf(\xl\zl)=\frac{\muf((\xl\zl)^k)}{k}=\frac{\muf(\xl^k\zl^{(k)})}{k}\sim_{\frac{\DD(\muf)}{k}} \frac{\muf(\xl^k)+\muf(\zl^{(k)})}{k}=\muf(\xl).
\]
By letting $k\to \infty$, we obtain \eqref{eq=normal}, as desired.
\end{proof}

For a group $\Gg$, $Z(\Gg)$ denotes the center of $\Gg$. The following result immediately follows from limit formula \eqref{eq=limit_formula}.

\begin{lem}\label{lem=centernull}
Let $(\Gg,\Ng,\Gamg)$ be a triple fitting in \eqref{eq=shortex}. Let $\muf\in \QQQ(\Ng)^{\Gg}$. Then, for every $\laml\in \ppm(Z(\Gg))$ and every $\gaml\in \Gamg$,
\[
\bb_{\muf}(\laml,\gaml)=0.
\]
\end{lem}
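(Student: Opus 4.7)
The plan is to derive the lemma directly from the limit formula \eqref{eq=limit_formula} in Theorem~\ref{thm=limit_formula}. Let $\laml\in\ppm(Z(\Gg))$ and $\gaml\in\Gamg$. Choose a lift $\zl\in Z(\Gg)$ of $\laml$, i.e.\ $\ppm(\zl)=\laml$, and any lift $\gl\in \Gg$ of $\gaml$, i.e.\ $\ppm(\gl)=\gaml$. By the limit formula,
\[
\bb_{\muf}(\laml,\gaml)=\lim_{k\to\infty}\frac{\muf([\zl^{k},\gl])}{k}.
\]

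Since $\zl$ lies in the center of $\Gg$, so does $\zl^k$ for every $k\in\NN$; in particular $\zl^k$ commutes with $\gl$, so $[\zl^k,\gl]$ equals the identity element $e_{\Ng}$ of $\Ng$. Homogeneity of $\muf$ forces $\muf(e_{\Ng})=0$ (e.g.\ $\muf(e_{\Ng})=\muf(e_{\Ng}^2)=2\muf(e_{\Ng})$), so each numerator in the limit above vanishes and the result follows.

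There is no real obstacle here: the argument is essentially a one-line application of the limit formula together with centrality. The only step that requires any care is choosing to evaluate $\bb_{\muf}$ by taking powers of the central element (rather than powers of the other argument), which is what makes the commutator trivial.
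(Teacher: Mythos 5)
Your proof is correct and is exactly the argument the paper intends: the paper states that the lemma ``immediately follows from limit formula \eqref{eq=limit_formula},'' and your one-line derivation (central lift $\zl$ makes $[\zl^k,\gl]$ the identity, and $\muf(e_{\Ng})=0$ by homogeneity) is precisely that immediate deduction. One small remark: since $\zl$ is central it also commutes with every power $\gl^l$, so either side of the limit formula would work equally well; no special care in choosing which argument receives the powers is actually needed.
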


Now we are in a position to state the key proposition to the proof of Theorem~\ref{mthm=Reznikov}; we will apply this to the setting of Example~\ref{exa=Reznikovsetting} below.

\begin{prop}[bilinear form and modding out by the center]\label{prop=center}
Let $(\Gg,\Ng,\Gamg)$ be a triple fitting in \eqref{eq=shortex}. Let $\cGg=\Gg/Z(\Gg)$, $\cNg=\Ng/(\Ng\cap Z(\Gg))$ and $\cGamg=\Gamg/\ppm(Z(\Gg))$, which give the following short exact sequence:
\begin{equation}\label{eq=shortex_center}
1 \longrightarrow \cNg=[\cGg,\cGg] \longrightarrow \cGg \stackrel{\cppm}{\longrightarrow} \cGamg=\cGg/\cNg \longrightarrow 1.
\end{equation}
Let $\qqm\colon \Gg\twoheadrightarrow \cGg$ be the group quotient map. 
Let $\BbG\colon \QQQ(\Ng)^{\Gg}\to \AAA(\Gamg)$ and $\BbcG\colon \QQQ(\cNg)^{\cGg}\to \AAA(\cGam)$ be the maps defined in Definition~$\ref{defn=bilinear_form_map}$ for  $\Gg$ and $\cGg$, respectively. 
Let $\muf\in \QQQ(\Ng)^{\Gg}$. Assume that the restriction of $\muf$ to $\Ng\cap Z(\Gg)$ equals the zero map, and take the element  $\cmuf$ in $\QQQ(\cNg)^{\cGg}$ with $\muf=\cmuf\circ \left(\qqm|_{\Ng}\right)$. Let $\Pg$ be a subgroup of $\Gg$ with $\Pg\geqslant \Ng$, and set $\cPg=\qqm(\Pg)$. Then, the following are all equivalent:
\begin{enumerate}[label=\textup{(\roman*)}]
  \item $\muf$ is extendable to an element in $\QQQ(\Pg)$;
  \item $\cmuf$ is extendable to an element in $\QQQ(\cPg)$;
  \item the restriction of $\BbG(\muf)$ to $\ppm(\Pg)\times \ppm(\Pg)$ is the zero form;
  \item the restriction of $\BbcG(\cmuf)$ to $\cppm(\cPg)\times \cppm(\cPg)$ is the zero form.
\end{enumerate}
\end{prop}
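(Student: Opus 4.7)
The plan is to establish the four-way equivalence via two applications of Proposition~\ref{prop=extendability} together with a descent argument linking $\BbG(\muf)$ and $\BbcG(\cmuf)$. First, (i)~$\Leftrightarrow$~(iii) follows immediately from Proposition~\ref{prop=extendability} applied to the triple $(\Gg,\Ng,\Gamg)$ and the subgroup $\Pg$. Similarly, (ii)~$\Leftrightarrow$~(iv) follows from the same proposition applied to the triple $(\cGg,\cNg,\cGamg)$ and the subgroup $\cPg$: the existence of $\cmuf$ is guaranteed by Lemma~\ref{lem=descend} under the vanishing hypothesis on $\Ng\cap Z(\Gg)$, the relation $\cPg\geqslant \cNg$ is immediate from $\Pg\geqslant \Ng$ and surjectivity of $\qqm$, and the short exact sequence~\eqref{eq=shortex_center} follows from the elementary identity $[\cGg,\cGg]=\qqm([\Gg,\Gg])=\qqm(\Ng)\cong \Ng/(\Ng\cap Z(\Gg))=\cNg$.

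The remaining task is the equivalence (iii)~$\Leftrightarrow$~(iv), which I would deduce from the identity
\[
\BbG(\muf)(\gaml_1,\gaml_2)=\BbcG(\cmuf)(\overline{\gaml}_1,\overline{\gaml}_2)
\]
valid for all $\gaml_1,\gaml_2\in \Gamg$, where $\overline{\gaml}_i$ denotes the image of $\gaml_i$ under the natural projection $\Gamg\twoheadrightarrow \cGamg$. By Lemma~\ref{lem=centernull}, $\BbG(\muf)$ vanishes whenever one of its arguments lies in $\ppm(Z(\Gg))$, so alternating $\ZZ$-bilinearity forces it to descend to an alternating $\ZZ$-bilinear form on $\cGamg$; the matching with $\BbcG(\cmuf)$ is then a direct application of the limit formula~\eqref{eq=limit_formula}. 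Indeed, given lifts $\gl_i\in \Gg$ of $\gaml_i$, the elements $\cgl_i=\qqm(\gl_i)\in \cGg$ satisfy $\cppm(\cgl_i)=\overline{\gaml}_i$, and since $[\gl_1^k,\gl_2]\in \Ng$ and $\cmuf\circ(\qqm|_{\Ng})=\muf$, one has $\cmuf([\cgl_1^k,\cgl_2])=\muf([\gl_1^k,\gl_2])$ for every $k$; dividing by $k$ and letting $k\to\infty$ yields the desired identity.

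To close the argument, I would observe that the natural projection $\Gamg\twoheadrightarrow \cGamg$ carries $\ppm(\Pg)$ onto $\cppm(\cPg)=\cppm(\qqm(\Pg))$. Via the identity above, this immediately produces (iii)~$\Leftrightarrow$~(iv). I do not expect any serious obstacle here: everything is essentially already encoded in Proposition~\ref{prop=extendability}, Lemma~\ref{lem=descend} and Lemma~\ref{lem=centernull}. The only step requiring mild care is the descent identity above, where one must check that the form on $\cGamg$ arising from $\BbG(\muf)$ literally equals $\BbcG(\cmuf)$; the limit formula together with $\cmuf\circ(\qqm|_{\Ng})=\muf$ makes this verification automatic, so the argument is essentially a bookkeeping exercise routing through the diagram.
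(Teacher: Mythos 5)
Your proposal is correct and follows essentially the same route as the paper's proof: two applications of Proposition~\ref{prop=extendability} for (i)~$\Leftrightarrow$~(iii) and (ii)~$\Leftrightarrow$~(iv), then descent of $\BbG(\muf)$ to $\cGamg$ via Lemma~\ref{lem=centernull} and identification with $\BbcG(\cmuf)$ via the limit formula~\eqref{eq=limit_formula} to get (iii)~$\Leftrightarrow$~(iv). Your extra verifications (that $[\cGg,\cGg]=\cNg$, that $\cmuf([\cgl_1^k,\cgl_2])=\muf([\gl_1^k,\gl_2])$, and that $\Gamg\twoheadrightarrow\cGamg$ carries $\ppm(\Pg)$ onto $\cppm(\cPg)$) are exactly the details the paper leaves implicit.
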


Here, the existence of $\cmuf$ follows from Lemma~\ref{lem=descend}. We note that  (ii) trivially implies (i), as we can compose an extension of $\cmuf$ and $\qqm|_{\Pg}$. The fact that (iii) implies (ii) will be employed in Section~\ref{sec=Reznikov}.

\begin{proof}[Proof of Proposition~$\ref{prop=center}$]
The equivalence between (i) and (iii) follows from Proposition~\ref{prop=extendability}; so does the one between (ii) and (iv). By Lemma~\ref{lem=centernull}, $\BbG(\muf)\colon \Gamg\times \Gamg\to \RR$ descends to a $\ZZ$-bilinear form $\cbb\colon \cGamg\times \cGamg\to \RR$. By limit formula \eqref{eq=limit_formula}, $\cbb$ coincides with $\BbcG(\cmuf)$. Therefore, (iii) and (iv) are equivalent. This ends our proof.
\end{proof}

\begin{exa}\label{exa=Reznikovsetting}
In Section~\ref{sec=Reznikov}, we will apply Proposition~\ref{prop=center} to the following case. Let $(\Mm,\omega)$ be a closed symplectic manifold. Now, consider the setting where the short exact sequence \eqref{eq=shortex} corresponds to
\[
1 \longrightarrow \tHamc(\Mm,\omega) \longrightarrow \tSympc(\Mm,\omega) \xrightarrow{\tflux_{\omega}} \HHH^1_c(\Mm;\RR) \longrightarrow 1.
\]
Recall from Proposition~\ref{survey on flux} that $Z(\tSympc(\Mm,\omega))=\pi_1(\Sympc_0(\Mm,\omega))$ and that $Z(\tSympc(\Mm,\omega)) \cap \tHamc(\Mm,\omega)=\pi_1(\Hamc(\Mm,\omega))$; the short exact sequence \eqref{eq=shortex_center} in this setting corresponds to
\[
1 \longrightarrow \Hamc(\Mm,\omega) \longrightarrow \Sympc_0(\Mm,\omega) \xrightarrow{\flux_{\omega}} \HHH^1_c(\Mm;\RR)/\Gamma_{\omega} \longrightarrow 1.
\]
Here, $\Gamma_{\omega}=\tflux_{\omega}(\pi_1(\Sympc_0(\Mm,\omega)))$ is the symplectic flux group (recall Subsection~\ref{subsec=symp}).
\end{exa}


\section{Bilinear forms constructed out of Calabi quasimorphisms on surfaces}\label{sec=Py}

In this section, we study bilinear forms constructed out of Calabi quasimorphisms on surfaces. In Subsection~\ref{subsec=Py}, we will prove Theorem~\ref{thm=KKMM}. In Subsection~\ref{subsec=Calabi}, we recall the definition of Calabi quasimorphisms (Definition~\ref{definition of Calabi qm}). Then for the case of closed symplectic surfaces with  genus at least two, we show that $\ZZ$-bilinear forms constructed out of Calabi quasimorphisms are $\RR$-bilinear (Proposition~\ref{prop=RRbilinear_Calabi}), and  obtain Theorem~\ref{thm=findimCalabi}. For a closed symplectic surface $(\Mm,\omega)$ of genus at least two, we have $\tSympc(\Mm,\omega)=\Sympc_0(\Mm,\omega)$ and $\tHamc(\Mm,\omega)=\Hamc(\Mm,\omega)$ (Proposition~\ref{survey on flux}~(6)). Nevertheless, in Subsection~\ref{subsec=Py} we use symbols $\tSympc(\Mm,\omega)$ and $\tHamc(\Mm,\omega)$ because these groups appear in a general situation of $\mushf$.

\subsection{Constructing the symplectic pairing out of Py's Calabi quasimorphism}\label{subsec=Py}
Here the proof of Theorem~\ref{thm=KKMM} is provided. In terms of the map $\BbSymp$ defined in Definition~\ref{defn=bilinear_form_map} 
for a closed symplectic surface $(\Mm,\omega)$ of genus at least two, Theorem~\ref{thm=KKMM} can be rephrased as follows.
\begin{thm}\label{thm=Py}
Let $(\Mm,\omega)$ be a closed surface of genus at least two  equipped with a symplectic form.  Let $\Bb=\BbSymp\colon \QQQ(\tHamc(\Mm,\omega))^{\tSympc(\Mm,\omega)}\to \AAA(\HHH^1_c(\Mm;\RR))$ be the map defined in Definition~\textup{\ref{defn=bilinear_form_map}}.
Let $\mupyf$ be Py's Calabi quasimorphism on $\tHamc(\Mm,\omega)$. Then, $\bb_{\mupyf}=\Bb(\mupyf)$ equals the symplectic pairing $\bb_{\omega}$.
\end{thm}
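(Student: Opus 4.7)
The plan is to reduce the identification $\bb_{\mupyf} = \bb_\omega$ to an approximate commutator formula for Py's Calabi quasimorphism and then invoke the uniqueness already built into the limit formula of Theorem~\ref{thm=limit_formula}. The approximate formula in question is Proposition~\ref{prop=KKMM}, which asserts the existence of a constant $C \geq 0$ such that
\[
\mupyf([\fl,\gl]) \sim_C \bb_\omega(\tflux_\omega(\fl), \tflux_\omega(\gl))
\]
for all $\fl, \gl \in \tSympc(\Mm,\omega)$. This is essentially the geometric content established in \cite{KKMM2} and is to be stated at the outset of Subsection~\ref{subsec=Py}.

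Granting the above formula, the conclusion follows from the general machinery of Section~\ref{sec=bilinear}. Let $\alpha, \beta \in \HHH^1_c(\Mm;\RR)$, and choose $\fl, \gl \in \tSympc(\Mm,\omega)$ with $\tflux_\omega(\fl) = \alpha$ and $\tflux_\omega(\gl) = \beta$; such lifts exist by the surjectivity of $\tflux_\omega$ (Proposition~\ref{survey on flux}~(2)). By the limit formula (Theorem~\ref{thm=limit_formula}) applied to $\muf = \mupyf$,
\[
\bb_{\mupyf}(\alpha, \beta) = \lim_{k \to \infty} \frac{\mupyf([\fl^k, \gl])}{k}.
\]
The pair $(\fl^k, \gl)$ has fluxes $k\alpha$ and $\beta$, so applying Proposition~\ref{prop=KKMM} and using the $\ZZ$-bilinearity of $\bb_\omega$ yields
\[
\frac{\mupyf([\fl^k, \gl])}{k} \sim_{C/k} \frac{\bb_\omega(k\alpha, \beta)}{k} = \bb_\omega(\alpha, \beta).
\]
Letting $k \to \infty$, we obtain $\bb_{\mupyf}(\alpha, \beta) = \bb_\omega(\alpha, \beta)$. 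Since $\alpha, \beta$ were arbitrary, this identifies the two forms on all of $\HHH^1_c(\Mm;\RR) \times \HHH^1_c(\Mm;\RR)$.

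The main obstacle is, of course, the content of Proposition~\ref{prop=KKMM} itself, the geometric heart of the argument. Its proof, carried out in \cite{KKMM2}, requires a careful analysis of Py's construction: one represents a given flux class by a closed $1$-form concentrated in an annular neighborhood of a non-separating simple closed curve, and then exploits the Calabi-type property of $\mupyf$ on embedded disks to reduce the commutator value to a Calabi-homomorphism computation on an open subsurface, which is in turn identified with the symplectic intersection pairing via Rousseau's Theorem~\ref{thm=rousseau}. For the present paper, however, one may simply invoke this formula as a black box; once it is in hand, the equality $\bb_{\mupyf} = \bb_\omega$ is immediate from the uniqueness provided by the limit formula, with no further geometric input required.
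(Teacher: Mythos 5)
There is a genuine gap, and it sits exactly where you placed your black box. You assert that Proposition~\ref{prop=KKMM} states a uniform estimate $\mupyf([\fl,\gl]) \sim_C \bb_{\omega}(\tflux_{\omega}(\fl),\tflux_{\omega}(\gl))$ valid for \emph{all} $\fl,\gl\in\tSympc(\Mm,\omega)$. It does not. That uniform statement is Theorem~\ref{thm=KKMM} (inequality \eqref{eq=Py_form}), which is a result of the present paper rather than of \cite{KKMM2}, and in the paper's logical order it is \emph{deduced from} Theorem~\ref{thm=Py} via Theorem~\ref{thm=bilinear_main}. So by invoking it as input you are assuming a statement equivalent to the one being proved: given the Section~\ref{sec=bilinear} machinery, a uniform estimate with any constant $C$ and the identity $\bb_{\mupyf}=\bb_{\omega}$ are interchangeable through the limit formula, which is precisely the computation you perform. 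The argument is therefore circular.

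What Proposition~\ref{prop=KKMM} actually provides is much more special: for given $\gaml_1,\gaml_2\in\HHH^1_c(\Mm;\RR)$ there exist $m\in\NN$, a \emph{sequence} $(\fl_1^{(k)})_{k\in\NN}$ with $\tflux_{\omega}(\fl_1^{(k)})=k\gaml_1$ (not powers $\fl^k$ of a single element), and elements $\fl_2^{+},\fl_2^{-}$ with $\tflux_{\omega}(\fl_2^{+})+\tflux_{\omega}(\fl_2^{-})=\gaml_2/m$, such that $\mupyf$ of the \emph{product of two commutators} $[\fl_1^{(k)},\fl_2^{+}][\fl_1^{(k)},(\fl_2^{-})^{-1}]^{-1}$ lies within $\DD(\mupyf)$ of $\frac{k}{m}\bb_{\omega}(\gaml_1,\gaml_2)$. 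Because the first entry is a sequence rather than powers, the plain limit formula of Theorem~\ref{thm=limit_formula} (which you use) does not apply; the paper instead invokes the refined limit formula \eqref{eq=refined_limit_formula_left} of Theorem~\ref{thm=refined_limit_formula}, and then uses additivity of $\bb_{\mupyf}$ in the second variable to recombine $\tflux_{\omega}(\fl_2^{+})+\tflux_{\omega}(\fl_2^{-})$ and identify the limit with $\frac{1}{m}\bb_{\mupyf}(\gaml_1,\gaml_2)$. These are exactly the steps your proposal skips by assuming the uniform estimate. To repair the outline without circularity you would either run the paper's argument on the specific test elements of Proposition~\ref{prop=KKMM} using the refined limit formula, or first establish independently that $\mupyf([\fl,\gl])$ depends, up to bounded error, only on the pair of fluxes (for instance via Theorem~\ref{thm=bilinear_main} itself, or an elementary commutator manipulation using the $\tSympc(\Mm,\omega)$-invariance of $\mupyf$) and then evaluate on those test elements; in either case the missing content must be supplied, not cited.
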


\begin{rem}\label{rem=intersection}
In the setting of Theorem~\ref{thm=Py}, since $\Mm$ is compact and the dimension $2n$ equals $2$, the form $\bb_{\omega}\colon \HHH^1_c(\Mm;\RR)\times \HHH^1_c(\Mm;\RR)\to \RR$ can be rewritten as 
\[
\bb_{\omega}(\vv,\wv)=(\vv \smile \wv)[\Mm],
\]
where $\smile$ is the cup product on the cohomology ring $\HHH^{\bullet}(\Mm;\RR)$ of $\Mm$ and $[\Mm]\in \HHH_2(\Mm;\RR)$ denotes the fundamental class of $\Mm$. We also mention that the bilinear form $\bb_{\omega}$ on $\HHH^1_c(\Mm;\RR)$ in this case coincides with the intersection form $\bb_I$.

In a manner similar to one above, if $(\Mm,\omega)$ is a closed connected symplectic manifold of dimension $2n$, then $\bb_{\omega}\colon \HHH^1_c(\Mm;\RR)\times \HHH^1_c(\Mm;\RR)\to \RR$ can be rewritten as 
\[
\bb_{\omega}(\vv,\wv)=(\vv \smile \wv \smile [\omega]^{n-1})[\Mm],
\]
where $[\Mm]\in \HHH_{2n}(\Mm;\RR)$ denotes the fundamental class of $\Mm$.
\end{rem}

The key ingredient to the proof of Theorem~\ref{thm=Py} was already in the previous work \cite{KKMM2} by some of the authors, although they were unaware of the fact that this would lead Theorem~\ref{thm=Py}. 

\begin{prop}[{\cite[Lemma~4.5 and Proposition~4.7]{KKMM2}}]\label{prop=KKMM}
Let $(\Mm,\omega)$ be a closed surface of genus at least two  equipped with a symplectic form. Let $\mupyf$ be Py's Calabi quasimorphism on $\tHamc(\Mm,\omega)$. Let $\gaml_1,\gaml_2\in \HHH^1_c(\Mm,\omega)$. Then, there exist $m\in \NN$, a sequence  $(\fl_1^{(k)})_{k\in \NN}$ in $\tSympc(\Mm,\omega)$ and $\fl_2^+,\fl_2^-\in \tSympc(\Mm,\omega)$ such that the following hold.
\begin{enumerate}[label=\textup{(\arabic*)}]
 \item $\tflux_{\omega}(\fl_2^+)+\tflux_{\omega}(\fl_2^-)=\frac{\gaml_2}{m}$.
 \item For every $k\in \NN$, $\tflux_{\omega}(\fl^{(k)})=k\gaml_1$.
 \item For every $k\in \NN$,
\[
\mupyf([\fl_1^{(k)},\fl_2^+][\fl_1^{(k)},(\fl_2^-)^{-1}]^{-1})\sim_{\DD(\mupyf)} \frac{k}{m}\cdot \bb_{\omega}(\gaml_1,\gaml_2).
\]
\end{enumerate}
\end{prop}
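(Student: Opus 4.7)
The plan is to build $m$, $(\fl_1^{(k)})_{k\in\NN}$, and $\fl_2^{\pm}$ explicitly via representatives of cohomology classes with carefully chosen support, and then exploit the \emph{local Calabi property} of Py's quasimorphism $\mupyf$ on proper subsurfaces of $(\Mm,\omega)$. Concretely, I would first fix a de Rham representative $\al_2$ of $\gaml_2$, and produce an integer $m\geq 1$ together with a decomposition $\tfrac{1}{m}\al_2 = \al_+ + \al_-$ in which $\al_\pm$ is a closed $1$-form supported inside an embedded genus-zero subsurface $\Sigma_\pm\subset \Mm$, with $\Sigma_+\cap \Sigma_-=\emptyset$. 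Such a splitting exists on a closed surface of genus $\geq 2$ by choosing a system of disjoint simple closed curves dual to a basis of $\HHH^1(\Mm;\RR)$, thickening to tubular annuli, and taking $m$ large enough that the obstruction coming from the fundamental class of $\Sigma_\pm$ can be absorbed.

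Next, I would realize $\fl_2^\pm \in \tSympc(\Mm,\omega)$ as lifts of symplectic isotopies supported in $\Sigma_\pm$ whose flux equals $[\al_\pm]$; this uses surjectivity of the flux onto $\HHH^1_c(\Sigma_\pm;\RR)$ for symplectomorphisms supported in a proper subsurface. Property~(1) then follows from the splitting. For the sequence in the first slot, pick any $\fl_1\in \tSympc(\Mm,\omega)$ with $\tflux_\omega(\fl_1)=\gaml_1$ and set $\fl_1^{(k)}=\fl_1^k$, so that (2) holds by the homomorphism property of $\tflux_\omega$. The elements $[\fl_1^{(k)},\fl_2^\pm]$ are then commutators in $\tSympc(\Mm,\omega)$, hence lie in $\tHamc(\Mm,\omega)$ by Proposition~\ref{survey on flux}~(4), and the product
\[
  c_k := [\fl_1^{(k)},\fl_2^+]\,[\fl_1^{(k)},(\fl_2^-)^{-1}]^{-1}
\]
lies in $\tHamc(\Mm,\omega)$.

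The heart of the argument is the Calabi property of $\mupyf$: if $\psi\in \tHamc(\Mm,\omega)$ is supported in a proper subsurface of genus zero $U\subsetneq \Mm$, then up to an error bounded by $\DD(\mupyf)$ the value $\mupyf(\psi)$ equals the Calabi invariant $\cal_U(\psi)$ (this is the property used by Py in constructing $\mupyf$). The support of $[\fl_1^{(k)},\fl_2^\pm]$ is contained in $\Sigma_\pm\cup \fl_1^{(k)}(\Sigma_\pm)$; by replacing the initial subsurfaces with slightly enlarged open neighborhoods one arranges that $c_k$ is supported in a proper subsurface $U_k\subsetneq \Mm$, enabling us to apply the Calabi formula to $c_k$ with a defect cost bounded by $\DD(\mupyf)$ independent of $k$.

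What remains is the explicit computation of $\cal_{U_k}(c_k)$. Using that the Calabi invariant of a commutator of a symplectic isotopy with a Hamiltonian one can be written as an integral of Hamiltonian against $\tflux_\omega$ of the outer factor (a Rousseau-type identity applied inside $U_k$), the two terms $[\fl_1^{(k)},\fl_2^+]$ and $[\fl_1^{(k)},(\fl_2^-)^{-1}]^{-1}$ combine so that the Hamiltonians defining $\fl_2^+$ and $\fl_2^-$ assemble into a representative of $\tfrac{1}{m}\gaml_2$ paired against $k\gaml_1$ via $\omega$. The outcome of the pairing is $\tfrac{k}{m}\int_\Mm \al_1\wedge(\al_++\al_-)=\tfrac{k}{m}\bb_\omega(\gaml_1,\gaml_2)$, yielding (3). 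The main obstacle I anticipate is making the local Calabi formula apply to $c_k$ with a defect bound that is \emph{uniform in $k$}, since the subsurface $U_k$ grows with $k$; handling this correctly requires verifying that the defect in the local Calabi property for $\mupyf$ depends only on $\DD(\mupyf)$ and not on the subsurface, which is precisely what is carried out in \cite{KKMM2}.
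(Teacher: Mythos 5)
There is a genuine gap --- in fact two, and they sit at the heart of your construction. (Note that the paper does not reprove this proposition; it imports it from \cite{KKMM2}, but the same mechanism is reproduced for $\mushf$ in the proof of Proposition~\ref{prop=XQ}, so that is the right object of comparison.) First, your foundational decomposition is impossible. There is no ``system of disjoint simple closed curves dual to a basis of $\HHH^1(\Mm;\RR)$'': pairwise disjoint simple closed curves have pairwise zero algebraic intersection, so their homology classes span an \emph{isotropic} subspace of $\HHH_1(\Mm;\RR)$, of dimension at most $l$, while a basis of the $2l$-dimensional $\HHH^1(\Mm;\RR)$ would require a non-degenerate span. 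More generally, a closed $1$-form compactly supported in a disjoint union of genus-zero subsurfaces $\Sigma_+\sqcup\Sigma_-$ represents a class in the image of $\HHH_1(\Sigma_+\sqcup\Sigma_-;\RR)\to \HHH_1(\Mm;\RR)$ under Poincar\'e duality, which is a \emph{rational isotropic} subspace (spanned by integral classes of pairwise disjoint curves) of dimension at most $l$; since there are only countably many rational subspaces, a generic $\gaml_2$ lies in none of them, and no choice of $m$ helps. Your two demands --- genus-zero supports (to invoke a Calabi property) and fluxes spanning all of $\HHH^1$ --- are in direct tension, and the actual construction resolves it differently: in each handle it embeds a single copy of $P_\epsilon$, a neighborhood of $\alpha_j\cup\beta_j$ in a one-holed torus (genus one, not zero), and splits $\gaml_2/m$ algebraically rather than by disjoint supports, taking $\fl_2^+=\prod_j\iota_{j\ast}(\tau_{\qd_j})$ (fluxes $c_j[\alpha_j]^\ast$) and $\fl_2^-=\prod_j\iota_{j\ast}(\tau'_{\qd_j})$ (fluxes $d_j[\beta_j]^\ast$) supported in the \emph{same} $P_\epsilon$'s; the integer $m$ is taken large so that $|c_j|,|d_j|<\epsilon$, which is what makes $\tau_{\qd_j},\tau'_{\qd_j}$ exist inside the thin region $P_\epsilon$ (Lemma~\ref{lem:local_flux_calculation}), not to absorb a fundamental-class obstruction.

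Second, the obstacle you flag at the end --- uniformity of the defect in $k$ --- is not ``carried out in \cite{KKMM2}''; it is \emph{avoided} there, and your route does not survive it. With $\fl_1^{(k)}=\fl_1^k$ for a single $\fl_1$, the support of $c_k$ lies in $\Sigma_\pm\cup\fl_1^k(\Sigma_\pm)$, which for generic $\fl_1$ is contained in no proper subsurface at all, and in any case Py's quasimorphism satisfies the Calabi identity (exactly, with zero error --- your ``up to $\DD(\mupyf)$'' misstates it) only on displaceable subsets and essential annuli, not on arbitrary proper subsurfaces; the uniform local-Calabi lemma your argument needs does not exist. The actual proof instead designs $\fl_1^{(k)}=\prod_j\iota_{j\ast}(\sigma_{\qd_j}^k\sigma_{\qd_j}^{\prime k})$ --- deliberately \emph{not} the powers of a fixed element, which is exactly why the statement is phrased for a sequence with $\tflux_{\omega}(\fl_1^{(k)})=k\gaml_1$ and why the paper proves the refined limit formula (Theorem~\ref{thm=refined_limit_formula}) rather than using the plain one --- so that the commutators collapse to exact identities $[\fl_1^{(k)},\fl_2^+]=\prod_j\iota_{j\ast}([\sigma_{\qd_j},\tau_{\qd_j}]^k)$ and $[\fl_1^{(k)},(\fl_2^-)^{-1}]=\prod_j\iota_{j\ast}([\sigma'_{\qd_j},\tau_{\qd_j}^{\prime-1}]^k)$. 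Each $[\sigma_{\qd_j},\tau_{\qd_j}]$ is a \emph{fixed} compactly supported Hamiltonian diffeomorphism whose value under the quasimorphism is computed once and for all (the $\mushf$-analogue is Proposition~\ref{prop:flux}); $k$ then enters only through homogeneity, the disjointly supported factors are handled by additivity on commuting elements, and the single defect $\DD(\mupyf)$ in item~(3) comes from exactly one application of the quasimorphism inequality to the product of the two big commutators. Without this exact-collapse mechanism, your error terms grow with $k$ and the limit-formula argument that deduces Theorem~\ref{thm=Py} breaks down.
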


\begin{proof}[Proof of Theorem~$\ref{thm=Py}$]
Let $\gaml_1,\gaml_2\in \HHH^1_c(\Mm;\RR)$, and take $m\in \NN$, $(\fl_1^{(k)})_{k\in \NN}$, $\fl_2^+$ and $\fl_2^-$ as in Proposition~\ref{prop=KKMM}. By the refined limit formula \eqref{eq=refined_limit_formula_left} in Theorem~\ref{thm=refined_limit_formula} and Proposition~\ref{prop=KKMM}~(2), we have
\[
\lim_{k\to\infty}\frac{\mupyf([\fl_1^{(k)},\fl_2^+][\fl_1^{(k)},(\fl_2^-)^{-1}]^{-1})}{k}=\bb_{\mupyf}(\gaml_1,\tflux_{\omega}(\fl_2^+))+\bb_{\mupyf}(\gaml_1,\tflux_{\omega}(\fl_2^-)).
\]
By Proposition~\ref{prop=KKMM}~(1), the right-hand side of the equality above equals $\frac{1}{m}\bb_{\mupyf}(\gaml_1,\gaml_2)$. Therefore, Proposition~\ref{prop=KKMM}~(3) implies that
\[
\frac{1}{m}\bb_{\omega}(\gaml_1,\gaml_2)=\frac{1}{m}\bb_{\mupyf}(\gaml_1,\gaml_2);
\]
hence $\bb_{\mupyf}\equiv \bb_{\omega}$.
\end{proof}

\subsection{Calabi quasimorphisms on surfaces}\label{subsec=Calabi}

In this subsection,  we prove Theorem~\ref{thm=findimCalabi}. We rephrase it as follows.

\begin{thm}[paraphrase of Theorem~\ref{thm=findimCalabi}]\label{thm=Calabi}
Let $(\Mm, \omega)$ be a closed  surface  equipped with a symplectic form whose genus $l$ is at least two. Let $\Gg=\Sympc_0 (\Mm, \omega)$ and $\Ng=\Hamc(\Mm,\omega)$. Let $\Wv\subset \QQQ(\Ng)^{\Gg}$ be the $\RR$-span of Calabi quasimorphisms. Then the image of $\Wv$ under the projection $\QQQ(\Ng)^{\Gg}\twoheadrightarrow \VV(\Gg, \Ng)$ has real dimension  at most $l(2l-1)$.
\end{thm}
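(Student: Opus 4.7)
The plan is to combine the injectivity of $\bBbG\colon \VV(\Gg,\Ng)\hookrightarrow \AAA(\Gamg)$ from Definition~\ref{defn=bilinear_form_map} with the dimension count in Proposition~\ref{prop=finitedim}. Since the genus $l$ is at least two, Proposition~\ref{survey on flux}(6) gives $\Gamg_\omega=0$, $\tSympc(\Mm,\omega)=\Gg$, and $\tHamc(\Mm,\omega)=\Ng$, so the flux homomorphism identifies $\Gamg=\Gg/\Ng$ with $\HHH^1(\Mm;\RR)$, an $\RR$-vector space of dimension $m=2l$. Consequently $\dim_{\RR}\AAA_{\RR}(\Gamg)=\binom{2l}{2}=l(2l-1)$. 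By the $\RR$-linearity of $\BbG$ and the injectivity of $\bBbG$, it is enough to prove that $\bb_{\muf}=\BbG(\muf)\in\AAA_{\RR}(\Gamg)$ for every Calabi quasimorphism $\muf$: linearity then puts the image of $\Wv$ in $\VV(\Gg,\Ng)$ inside $\bBbG^{-1}(\AAA_{\RR}(\Gamg))$, and Proposition~\ref{prop=finitedim} caps its real dimension by $l(2l-1)$.

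To establish $\RR$-bilinearity of $\bb_{\muf}$ I will apply Proposition~\ref{prop=RRbilinear}. For each $\gaml\in\HHH^1(\Mm;\RR)$, pick a closed $1$-form $\alpha$ with $[\alpha]=\gaml$, let $X_\alpha$ be the symplectic vector field determined by $\iota_{X_\alpha}\omega=\alpha$, and set $c_{\gaml}(t)=\phi^t_{X_\alpha}\in\Gg$; this is a continuous $1$-parameter subgroup of $\Gg$ with $\tflux_{\omega}(c_{\gaml}(t))=t\gaml$, verifying conditions (1) and (2) of Proposition~\ref{prop=RRbilinear}.

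The main obstacle will be verifying condition (3) of Proposition~\ref{prop=RRbilinear}: for every $\gl\in\Gg$ and $\gaml\in\HHH^1(\Mm;\RR)$,
\[
\sup_{s\in[0,1]}|\muf([c_{\gaml}(s),\gl])|<\infty.
\]
Since $\muf$ is not continuous in any reasonable topology on $\Ng$, I cannot invoke Corollary~\ref{cor=RRbilinear} directly; instead I plan to exploit the defining Calabi property of $\muf$ (namely that $\muf$ agrees with the Calabi homomorphism on Hamiltonians supported in a displaceable disk) together with a fragmentation argument. The family $\{[c_{\gaml}(s),\gl]\}_{s\in[0,1]}$ is generated by a time-dependent Hamiltonian whose $L^1$-norm on $\Mm$ varies continuously in $s$ and hence stays bounded uniformly over the compact interval $[0,1]$. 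Fragmenting each $[c_{\gaml}(s),\gl]$ into a uniformly bounded number of Hamiltonian diffeomorphisms supported in fixed displaceable disks, applying the Calabi property on each piece, and absorbing the finitely many defect terms of size $\DD(\muf)$, should yield the required uniform bound. Once condition (3) is verified, Proposition~\ref{prop=RRbilinear} gives $\bb_{\muf}\in\AAA_{\RR}(\Gamg)$, and the conclusion follows from the first paragraph.
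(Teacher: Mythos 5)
Your reduction is exactly the paper's: by $\RR$-linearity of $\BbG$ and injectivity of $\bBbG$, it suffices to show $\bb_{\muf}\in\AAA_{\RR}(\HHH^1_c(\Mm;\RR))$ for each Calabi quasimorphism $\muf$, after which Proposition~\ref{prop=finitedim} with $m=2l$ gives the bound $l(2l-1)$; your one-parameter subgroups $c_{\gaml}$ built from symplectic flows are also precisely how the paper verifies conditions (1)--(2). The divergence is in the boundedness condition (3), and there the paper does something quite different: it never proves condition (3) for $\muf$ itself. In Proposition~\ref{prop=RRbilinear_Calabi} it subtracts Brandenbursky's Calabi quasimorphism $\mubrf$, which is extendable to $\Sympc_0(\Mm,\omega)$ (Proposition~\ref{thm=Brabur}); since any two Calabi quasimorphisms agree with $\underline{\mathrm{Cal}}$ on Hamiltonian diffeomorphisms supported in small displaceable disks, the difference $\muf-\mubrf$ vanishes on such disks and is therefore $C^0$-continuous by the Entov--Polterovich--Py criterion (Theorem~\ref{theorem EPP}, Lemma~\ref{lemma minus Brandenbursky}); Corollary~\ref{cor=RRbilinear} then applies with $\Phi=\muf-\mubrf$, and finally $\BbG(\muf)=\BbG(\muf-\mubrf)$ because extendable quasimorphisms lie in $\Ker(\BbG)$ (Proposition~\ref{prop=V}).

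Your fragmentation route for condition (3) has a genuine gap, in two places. First, a uniform bound on the $L^1$-norm of the generating Hamiltonians does not control the number of fragments: fragmentation number is governed by $C^1$-smallness of the isotopy relative to the fixed cover of displaceable disks, not by the $L^1$-norm of the Hamiltonian (a Hamiltonian of tiny $L^1$-norm can generate a diffeomorphism needing arbitrarily many fragments), so the phrase ``uniformly bounded number'' would have to be extracted from $C^\infty$-compactness of the family $s\mapsto[c_{\gaml}(s),\gl]$ via a parametric fragmentation lemma that you neither state nor prove. Second, applying the Calabi property to each piece only yields a finite bound if the Calabi invariants of the pieces are themselves uniformly bounded in $s$; this is a property of a particular fragmentation construction (pieces generated by cut-off Hamiltonians whose integrals are controlled by the original data), not a consequence of the Calabi property, and it too needs proof. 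Neither point makes the approach unsalvageable --- the needed statement is true, since $\muf=(\muf-\mubrf)+\mubrf$ with the first summand $C^0$-continuous and the second bounded on commutators --- but the parametric fragmentation lemma with both controls is the real mathematical content of condition (3), which your sketch defers to ``should yield the required uniform bound''; the paper's subtraction trick exists precisely to bypass that work.
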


The key to the proof of Theorem~\ref{thm=Calabi} is the $\RR$-bilinearity of the $\ZZ$-bilinear form constructed from a Calabi quasimorphism on surfaces. We first recall the definition of Calabi quasimorphisms.

\begin{defn} \label{definition of displaceable subset}
A subset $A$ of a symplectic manifold $(\Mm,\omega)$ is said to be \emph{displaceable} if there exists $\varphi \in \Hamc(\Mm,\omega)$ satisfying $\varphi(A) \cap \overline{A} = \emptyset$. Here, $\overline{A}$ is the topological closure of $A$  in $\Mm$.
\end{defn}

Let $(M, \omega)$ be an exact symplectic manifold. Then the \emph{descended Calabi homomorphism} of $(M,\omega)$
$\underline{\mathrm{Cal}}_{\Mm} \colon \Hamc(\Mm, \omega) \to \RR$  is defined by
\begin{align} \label{eq=dCalabi}
\underline{\mathrm{Cal}}_{\Mm} =\int_0^1\left(\int_{\Mm} H_t \omega^n\right)dt,
\end{align}
where $H \colon [0,1] \times \Mm \to \RR$ is a normalized smooth function generating $\varphi \in \Hamc(\Mm, \omega)$. It is well known that the value of the right-hand side of equation \eqref{eq=dCalabi} does not depend on the choice of $H$ when $\Mm$ is exact (see \cite[Section~10.3]{MS} for example). By the definition, the composition of
\[ \tSympc(\Mm,\omega) \to \Sympc_0(M,\omega) \xrightarrow{\underline{\mathrm{Cal}}_{\Mm}} \RR\]
coincides with the Calabi homomorphism $\mathrm{Cal}_{\Mm} \colon \tSympc(\Mm, \omega) \to \RR$.

\begin{defn}[\cite{EP03} and \cite{PR}]\label{definition of Calabi qm}
Let $(\Mm, \omega)$ be a $2n$-dimensional  symplectic manifold.
A \emph{Calabi quasimorphism} is defined as a homogeneous quasimorphism $\muf \colon \Hamc(\Mm, \omega) \to \RR$ such that 
for every non-empty open displaceable subset $U$ of $\Mm$ with exact $\omega|_U$, 
 the restriction of $\muf$ to $\Hamc(U,\omega)$ coincides with the descended Calabi homomorphism $\underline{\mathrm{Cal}}_U$.
\end{defn}

Let $(\Mm, \omega)$ be a closed symplectic surface of genus at least two. Then Brandenbursky \cite{Bra} constructed a Calabi quasimorphism $\mubrf$ on $\Hamc(\Mm,\omega)$. We only need the following property of his quasimorphism.

\begin{prop}[see \cite{Bra}]\label{thm=Brabur}
Brandenbursky's Calabi quasimorphism $\mubrf \colon \Hamc(\Mm, \omega) \to \RR$ is extendable to $\mathrm{Symp}_0(\Mm,\omega)$.
\end{prop}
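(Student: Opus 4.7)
\emph{Proof proposal.} Since $(\Mm,\omega)$ is a closed surface of genus at least two, Proposition~\ref{survey on flux}~(6) gives the identifications $\Sympc_0(\Mm,\omega)=\tSympc(\Mm,\omega)$ and $\Hamc(\Mm,\omega)=\tHamc(\Mm,\omega)$; both groups are simply connected.

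The plan is to adapt Brandenbursky's braid-theoretic construction so that it is defined on all of $\Sympc_0(\Mm,\omega)$ from the outset, and to observe that the defining estimates never use the Hamiltonian condition but only the symplectic one. Recall the construction: one fixes an integer $n\ge 2$ and a homogeneous quasimorphism $\psi$ on the pure surface braid group $P_n(\Mm)$; then, for almost every ordered configuration $x\in X_n(\Mm)$ and each $\varphi\in\Hamc(\Mm,\omega)$ represented by an isotopy $\{\varphi^t\}_{t\in[0,1]}$, one forms a loop $\ell_x(\varphi)\in P_n(\Mm)$ by concatenating a reference path with the path $t\mapsto \varphi^t(x)$ and a return path. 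Brandenbursky's quasimorphism is
\[
\mubrf(\varphi)=\int_{X_n(\Mm)}\psi\bigl(\ell_x(\varphi)\bigr)\,\omega^{\times n}(x).
\]

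First, I would observe that, because $\Sympc_0(\Mm,\omega)$ is simply connected, for any $\fl\in\Sympc_0(\Mm,\omega)$ and any smooth isotopy $\{\fl^t\}_{t\in[0,1]}$ from the identity to $\fl$, the homotopy class of $\ell_x(\fl)\in P_n(\Mm)$ depends only on $\fl$ and on $x$. This produces a function $\tilde{\mubrf}\colon\Sympc_0(\Mm,\omega)\to\RR$ by the same integral formula. Second, I would verify that $\tilde{\mubrf}$ is a homogeneous quasimorphism by transcribing Brandenbursky's proof. The essential inputs are (i) the defect bound of $\psi$ on $P_n(\Mm)$, (ii) the identity $\ell_x(\fl\gl)=\ell_x(\fl)\cdot\ell_{\fl(x)}(\gl)$ in $P_n(\Mm)$ up to a uniformly bounded correction, and (iii) the fact that any $\fl\in\Sympc_0(\Mm,\omega)$ preserves the product measure $\omega^{\times n}$ on $X_n(\Mm)$, which licenses a change of variable $x\mapsto \fl^{-1}(x)$ inside the integral. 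Step (iii) uses only that $\fl$ is a symplectomorphism, not that $\fl$ is Hamiltonian, so the estimates give a finite defect. Since $\tilde{\mubrf}|_{\Hamc(\Mm,\omega)}=\mubrf$ by construction, $\mubrf$ is extendable to $\Sympc_0(\Mm,\omega)$.

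The step I expect to require the most care is verifying the concatenation identity (ii) and the homotopy-class well-definedness of $\ell_x(\fl)$ for arbitrary smooth symplectic isotopies; these are phrased in \cite{Bra} for Hamiltonian isotopies, and one must confirm that nothing in that argument uses a Hamiltonian function explicitly. As a backup, one can invoke Proposition~\ref{prop=abelianiroiro}~(2) and instead show that $\sup_{\fl,\gl\in\Sympc_0(\Mm,\omega)}|\mubrf([\fl,\gl])|<\infty$: since $[\fl,\gl]\in\Hamc(\Mm,\omega)$, one may apply Brandenbursky's integral formula to $[\fl,\gl]$ and bound $|\psi(\ell_x([\fl,\gl]))|$ by iterated use of the defect of $\psi$ on the four-term decomposition of $\ell_x([\fl,\gl])$, cancelling the terms $\psi(\ell_\bullet(\fl))$ and $\psi(\ell_\bullet(\fl^{-1}))$ after integration via the $\omega^{\times n}$-invariance of $\fl$ and $\gl$.
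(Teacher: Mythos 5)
Your proposal is correct and takes essentially the same route as the paper: the paper's proof is the one-line observation that $\mubrf$ is, by its very construction in \cite{Bra}, the restriction to $\Hamc(\Mm,\omega)$ of a quasimorphism defined on $\mathrm{Symp}_0(\Mm,\omega)$. Your detailed verification---that the Gambaudo--Ghys-type integral construction is well defined on all of $\Sympc_0(\Mm,\omega)$ thanks to simple connectedness (Proposition~\ref{survey on flux}~(6)) and that its quasimorphism estimates use only $\omega$-preservation, never the Hamiltonian condition---is precisely the content behind that observation.
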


Proposition \ref{thm=Brabur} follows from the very construction of  $\mubrf$: it is constructed as the restriction of a quasimorphism on $\mathrm{Symp}_0(\Mm,\omega)$.

\begin{thm}[{\cite[Theorem~1.7]{EPP}}]\label{theorem EPP}
Let $(\Mm, \omega)$ be a closed surface with symplectic form and $\muf$ a homogeneous quasimorphism on $\Hamc(\Mm,\omega)$.
Then, $\muf$ is continuous in the $C^0$-topology if and only if the following condition is satisfied: there exists a positive number $A$ such that for every disk $D\subset S$ of area less than $A$, the restriction of $\muf$ to $\Hamc(D,\omega|_D)$ vanishes.
\end{thm}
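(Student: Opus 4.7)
The statement splits naturally into two implications, which I would attack by different techniques.

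For the direction $(\Leftarrow)$, I would invoke a quantitative fragmentation statement on the closed surface $\Mm$: there exists an integer $N=N(\Mm,\omega,A)$ such that every $\varphi\in\Hamc(\Mm,\omega)$ sufficiently $C^0$-close to $\id_\Mm$ admits a factorization $\varphi=\varphi_1\cdots\varphi_N$ with $\mathrm{supp}(\varphi_i)$ contained in some disk of area less than $A$. Since $\mu(\varphi_i)=0$ by hypothesis, iterating the defect bound gives $|\mu(\varphi)|\leq (N-1)\DD(\mu)$ on a $C^0$-neighborhood of $\id_\Mm$, so $\mu$ is bounded there. Boundedness is promoted to continuity at $\id_\Mm$ by combining with additivity of $\mu$ along autonomous Hamiltonian flows: Lemma~3.3~(2) and homogeneity yield $\mu(\varphi_t)=t\mu(\varphi_1)$ for the time-$t$ map of a time-independent Hamiltonian, which together with a refinement of fragmentation that tracks the $C^0$-distance to $\id_\Mm$ forces $\mu(\varphi)\to 0$ as $\varphi\to\id_\Mm$. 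The quasi-triangle inequality $|\mu(\varphi_1)-\mu(\varphi_2)|\leq |\mu(\varphi_1\varphi_2^{-1})|+\DD(\mu)$ then propagates continuity from $\id_\Mm$ to every point of $\Hamc(\Mm,\omega)$.

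For the direction $(\Rightarrow)$, I would choose $A>0$ small enough that every disk $D\subset\Mm$ of area less than $A$ is displaceable—possible because $\Mm$ is closed of finite area. Fix such $D$, a displacing $\psi\in\Hamc(\Mm,\omega)$ with $\psi(\overline{D})\cap\overline{D}=\emptyset$, and $\varphi\in\Hamc(D,\omega|_D)$. Since $\psi\varphi\psi^{-1}$ is supported in $\psi(D)$, disjoint from $\mathrm{supp}(\varphi)\subset D$, the elements $\varphi$ and $\psi\varphi\psi^{-1}$ commute. The argument then proceeds in three steps. Step~1: using iterated commutation identities across $\psi$-displaced copies together with homogeneity, promote $\mu|_{\Hamc(D,\omega|_D)}$ from a quasimorphism to a genuine homomorphism; the key idea is that $(\varphi_1\varphi_2)^n$ can be rewritten using commuting $\psi$-displaced copies so that its $\mu$-value differs from $n(\mu(\varphi_1)+\mu(\varphi_2))$ only by bounded defects, and dividing by $n$ and passing to the limit yields exact additivity. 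Step~2: by Banyaga's theorem, $[\Hamc(D,\omega|_D),\Hamc(D,\omega|_D)]=\Ker(\underline{\mathrm{Cal}}_D)$, so any homomorphism $\Hamc(D,\omega|_D)\to\RR$ equals $c\cdot\underline{\mathrm{Cal}}_D$ for some $c\in\RR$; hence $\mu|_{\Hamc(D,\omega|_D)}=c\cdot\underline{\mathrm{Cal}}_D$. Step~3: $\underline{\mathrm{Cal}}_D$ is not $C^0$-continuous—as witnessed by thin, tall bump Hamiltonians $H_n$ supported in shrinking sub-disks of $D$ with $\varphi_{H_n}\to\id_\Mm$ in $C^0$ but $\underline{\mathrm{Cal}}_D(\varphi_{H_n})\to 1$—so $C^0$-continuity of $\mu$ forces $c=0$, whence $\mu|_{\Hamc(D,\omega|_D)}\equiv 0$.

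The main obstacle is Step~1 of the direction $(\Rightarrow)$, namely promoting $\mu|_{\Hamc(D,\omega|_D)}$ from a quasimorphism to a genuine homomorphism. Displaceability of $D$ alone only yields the defect-bounded identity $\mu(\varphi_1\varphi_2)\sim_{O(\DD(\mu))}\mu(\varphi_1)+\mu(\varphi_2)$; converting this into exact additivity requires a carefully engineered iteration that leverages homogeneity and multiple commuting displaced copies so that the bounded defect can be driven to zero in the limit.
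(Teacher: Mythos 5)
You should know at the outset that the paper you are working with does not prove this statement at all: it is quoted verbatim from Entov--Polterovich--Py (\cite[Theorem~1.7]{EPP}) and used as a black box in Section~4, so your attempt can only be measured against the known argument from the literature. Measured that way, your $(\Rightarrow)$ direction breaks at Step~1 --- precisely the step you flag as the main obstacle --- and the break is not a technicality: the claim that displaceability plus homogeneity upgrade $\muf|_{\Hamc(D,\omega|_D)}$ to a genuine homomorphism is \emph{false} in that generality. Group-theoretically: in $G=(F_2\times F_2)\rtimes \ZZ/2$ with $\theta$ the swap, the subgroup $H=F_2\times 1$ commutes elementwise with $\theta H\theta^{-1}=1\times F_2$, yet any homogeneous quasimorphism $\nu$ on $F_2$ that is not a homomorphism gives the swap-invariant quasimorphism $(a,b)\mapsto\nu(a)+\nu(b)$ on $F_2\times F_2$, which extends to $G$ (finite index) and restricts to $\nu$ on the displaced subgroup $H$. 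The same failure occurs on $\Hamc(\Mm,\omega)$ itself: Gambaudo--Ghys-type quasimorphisms restrict to braid-type quasimorphisms, not homomorphisms, on disk subgroups. Quantitatively, your proposed iteration cannot converge: with $N$ pairwise disjoint Hamiltonian copies of $D$ one gets $|\muf(\varphi_1\varphi_2)-\muf(\varphi_1)-\muf(\varphi_2)|\le \DD(\muf)/N$, but on a closed surface the area constraint forces $N\le \mathrm{Area}(\Mm,\omega)/\mathrm{Area}(D)$, so the defect bound stalls at a positive constant; rewriting $(\varphi_1\varphi_2)^n$ requires $n$ disjoint displaced copies and is impossible for large $n$. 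The $C^0$-continuity hypothesis must therefore be used \emph{before} Step~3, and the standard proof does exactly that, much more cheaply: continuity gives $|\muf|\le C$ on a $C^0$-ball of radius $\epsilon$ about $\id_{\Mm}$; take $A=\pi(\epsilon/2)^2$; any embedded disk $D$ of area less than $A$ is carried onto a round disk of diameter less than $\epsilon$ by some element of $\Hamc(\Mm,\omega)$; then for $\varphi\in\Hamc(D,\omega|_D)$ every power $\varphi^n$ is conjugate to an element of $C^0$-norm less than $\epsilon$, so conjugation-invariance and homogeneity give $|\muf(\varphi)|\le C/n\to 0$. No displacement trick, no Banyaga, no Calabi homomorphism is needed.

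Your $(\Leftarrow)$ direction is sound in outline --- assuming uniform $C^0$-fragmentation is legitimate, since that is the technical heart of \cite{EPP}, and it does yield boundedness of $\muf$ on a $C^0$-neighborhood of $\id_\Mm$ --- but your final propagation step is wrong. The inequality $|\muf(\varphi_1)-\muf(\varphi_2)|\le |\muf(\varphi_1\varphi_2^{-1})|+\DD(\muf)$ only shows that the oscillation of $\muf$ at $\varphi_2$ is at most $\DD(\muf)$; the additive defect does not vanish as $\varphi_1\to\varphi_2$, so continuity at the identity does not propagate this way. The repair is to homogenize at the target point: for fixed $n$,
\[
|\muf(\varphi_1)-\muf(\varphi_2)|=\tfrac{1}{n}\,|\muf(\varphi_1^n)-\muf(\varphi_2^n)|\le \tfrac{1}{n}\left(|\muf(\varphi_1^n\varphi_2^{-n})|+\DD(\muf)\right),
\]
and since $\varphi\mapsto\varphi^n\varphi_2^{-n}$ is $C^0$-continuous and equals $\id_\Mm$ at $\varphi_2$, the right-hand side is at most $(C+\DD(\muf))/n$ on a $C^0$-neighborhood of $\varphi_2$; choosing $n$ large first gives continuity. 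This is exactly the content of the lemma, used in \cite{EPP}, that a homogeneous quasimorphism on a topological group is continuous if and only if it is bounded on some neighborhood of the identity; your detour through additivity along autonomous flows is neither needed nor sufficient.
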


\begin{lem}\label{lemma minus Brandenbursky}
Let $(\Mm, \omega)$ be a closed surface of genus at least two  equipped with a symplectic form. Let $\muf$ be a Calabi quasimorphism on $\Hamc(\Mm, \omega)$. Then $\muf - \mubrf$ is continuous in the $C^0$-topology.
\end{lem}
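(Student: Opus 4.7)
The plan is to apply the Entov--Polterovich--Py criterion (Theorem~\ref{theorem EPP}) to the homogeneous quasimorphism $\muf - \mubrf$ on $\Hamc(\Mm,\omega)$. Note that $\muf - \mubrf$ is indeed a homogeneous quasimorphism since $\QQQ(\Hamc(\Mm,\omega))$ is an $\RR$-linear space. By Theorem~\ref{theorem EPP}, it therefore suffices to exhibit a positive constant $A > 0$ such that $(\muf - \mubrf)|_{\Hamc(D,\omega|_D)} \equiv 0$ for every embedded disk $D \subset \Mm$ of area less than $A$.

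To produce such an $A$, I would invoke the standard fact that on a closed orientable surface of positive genus, every embedded open disk of sufficiently small area is displaceable by a Hamiltonian diffeomorphism (one may, for instance, take $A$ to be any constant strictly less than half the total area, although we do not need the optimal value). Fix any such $A$ and let $D \subset \Mm$ be a disk with $\mathrm{Area}(D,\omega|_D) < A$. Since $D$ is contractible, $\omega|_D$ is exact, and by the choice of $A$ the subset $D$ is displaceable. Hence the hypothesis in Definition~\ref{definition of Calabi qm} is satisfied for $U = D$.

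Applying the Calabi property to both $\muf$ and $\mubrf$, we obtain
\[
\muf|_{\Hamc(D,\omega|_D)} = \underline{\mathrm{Cal}}_D = \mubrf|_{\Hamc(D,\omega|_D)},
\]
and therefore $(\muf - \mubrf)|_{\Hamc(D,\omega|_D)} \equiv 0$. Combining this with Theorem~\ref{theorem EPP} yields the $C^0$-continuity of $\muf - \mubrf$, as desired.

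I do not expect any serious obstacle in this argument; the only non-formal input is the displaceability of small disks on a closed surface of genus at least two, which is classical. The conceptual point is simply that all Calabi quasimorphisms on $(\Mm,\omega)$ agree on Hamiltonian diffeomorphisms supported in small displaceable disks (because the Calabi homomorphism there is canonical), so their pairwise differences automatically satisfy the local vanishing condition of Entov--Polterovich--Py.
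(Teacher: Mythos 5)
Your proof is correct and follows essentially the same route as the paper's: both invoke the displaceability of sufficiently small disks, apply the Calabi property of $\muf$ and $\mubrf$ (Definition~\ref{definition of Calabi qm}) to conclude that $\muf-\mubrf$ vanishes on $\Hamc(D,\omega|_D)$ for every disk $D$ of area less than $A$, and then conclude $C^0$-continuity via the Entov--Polterovich--Py criterion (Theorem~\ref{theorem EPP}). Your additional remarks (homogeneity of the difference, exactness of $\omega|_D$ by contractibility) only make explicit what the paper leaves implicit.
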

\begin{proof}
Let $\muf$ be a Calabi quasimorphism. Let $A > 0$ such that every disk whose area is less than $A$ is displaceable. Then, by the definition of the Calabi property and Theorem \ref{theorem EPP}, $\muf - \mubrf$ vanishes on every disk whose area is less than $A$. Then Theorem~\ref{theorem EPP} implies that $\muf - \mubrf$ is continuous in $C^0$-topology.
\end{proof}

\begin{prop}\label{prop=RRbilinear_Calabi}
Let $(\Mm, \omega)$ be a closed surface of genus at least two  equipped with a symplectic form. Let 
\[
\Bb=\BbSymp\colon \QQQ(\Hamc(\Mm, \omega))^{\Sympc_0(\Mm,\omega)}\to \AAA(\HHH^1_c(\Mm;\RR))
\]
be the map defined in Definition~$\ref{defn=bilinear_form_map}$. 
Let $\muf \colon \Hamc(\Mm, \omega) \to \RR$ be a Calabi quasimorphism on $\Hamc(\Mm, \omega)$. Then, $\Bb(\muf)$ belongs to $\AAA_{\RR}(\HHH^1_c(\Mm;\RR))$, namely, the \textup{(}$\ZZ$-bilinear form\textup{)} $\Bb(\muf)$ is $\RR$-bilinear.
\end{prop}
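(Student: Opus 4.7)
The plan is to apply Corollary~\ref{cor=RRbilinear} to the triple $(\Gg, \Ng, \Gamma) = (\Sympc_0(\Mm, \omega), \Hamc(\Mm, \omega), \HHH^1_c(\Mm;\RR))$. Recall from Proposition~\ref{survey on flux}(6) that $\Sympc_0(\Mm,\omega) = \tSympc(\Mm,\omega)$ and $\Hamc(\Mm,\omega) = \tHamc(\Mm,\omega)$ in this setting, so the sequence \eqref{eq=shortex} is realized by the flux homomorphism $\tflux_\omega$. Equip $\Gg$ with the $C^\infty$-topology and $\Ng$ with the relative topology; conditions (1) and (2) of Corollary~\ref{cor=RRbilinear} are then immediate.

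For condition (3), given $\gaml \in \HHH^1_c(\Mm;\RR)$, choose a closed $1$-form $\alpha$ on $\Mm$ with $[\alpha] = \gaml$. The symplectic vector field $X_\alpha$ determined by $\iota_{X_\alpha}\omega = \alpha$ is complete since $\Mm$ is closed, and its flow $c_{\gaml} \colon \RR \to \Sympc_0(\Mm, \omega)$, $t \mapsto \phi_t^\alpha$, is a smooth (in particular $C^\infty$-continuous) one-parameter subgroup satisfying $\tflux_\omega(c_{\gaml}(t)) = t\gaml$.

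The main step is producing the function $\Phi$ in condition (4). By Proposition~\ref{thm=Brabur}, Brandenbursky's Calabi quasimorphism $\mubrf$ admits an extension $\widehat{\mubrf} \in \QQQ(\Sympc_0(\Mm,\omega))$, so by Lemma~\ref{lem:qm}(3),
\[
\left|\mubrf([\gl_1,\gl_2])\right| \leq \DD(\widehat{\mubrf})
\]
for all $\gl_1, \gl_2 \in \Sympc_0(\Mm,\omega)$. By Lemma~\ref{lemma minus Brandenbursky}, the difference $\muf - \mubrf$ is continuous in the $C^0$-topology on $\Hamc(\Mm,\omega)$, and since the $C^0$-topology is coarser than the induced $C^\infty$-topology, this difference is continuous with respect to the latter as well. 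Define $\Phi \colon \Ng \to \RR$ by
\[
\Phi(x) := \left|(\muf - \mubrf)(x)\right| + \DD(\widehat{\mubrf}).
\]
Then $\Phi$ is continuous, and for every commutator $[\gl_1,\gl_2]$ the triangle inequality gives
\[
\left|\muf([\gl_1,\gl_2])\right| \leq \left|(\muf - \mubrf)([\gl_1,\gl_2])\right| + \DD(\widehat{\mubrf}) = \Phi([\gl_1,\gl_2]) = \left|\Phi([\gl_1,\gl_2])\right|.
\]
Condition (4) is verified, and Corollary~\ref{cor=RRbilinear} delivers the $\RR$-bilinearity of $\Bb(\muf)$.

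The main potential obstacle is that a generic Calabi quasimorphism $\muf$ need not be continuous in any natural topology on $\Hamc(\Mm,\omega)$ (as reflected by Theorem~\ref{theorem EPP}); Lemma~\ref{lemma minus Brandenbursky} is the critical ingredient, as it isolates the possibly discontinuous part of $\muf$ into the extendable summand $\mubrf$, whose commutators are then automatically bounded by Lemma~\ref{lem:qm}(3).
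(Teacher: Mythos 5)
Your proof is correct and takes essentially the same route as the paper: both verify Corollary~\ref{cor=RRbilinear} via the symplectic-flow construction for condition (3) and use Lemma~\ref{lemma minus Brandenbursky} together with the extendability of $\mubrf$ (Proposition~\ref{thm=Brabur}) as the key inputs. The only minor variation is that the paper (working in the $C^0$-topology rather than the $C^\infty$-topology) applies the corollary to the continuous quasimorphism $\muf-\mubrf$ and then transfers the conclusion to $\muf$ via $\Ker(\Bb)=\im^{\ast}\QQQ(\Sympc_0(\Mm,\omega))$ (Proposition~\ref{prop=V}), whereas you apply it directly to $\muf$ by absorbing the commutator values of $\mubrf$ into the constant $\DD(\widehat{\mubrf})$ via Lemma~\ref{lem:qm}~(3) --- both bookkeeping choices are valid.
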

\begin{proof}
We show that the quasimorphism $\muf - \mubrf$ satisfies the hypothesises of Corollary~\ref{cor=RRbilinear}. In this proof, we endow $\Sympc(M,\omega)$ with $C^0$-topology. It is straightforward to show (1) and (2) of Corollary~\ref{cor=RRbilinear}. Item (4) follows from Lemma~\ref{lemma minus Brandenbursky}.

We now prove that (3) is satisfied. This proof is almost the same as the proof of the surjectivity of $\tflux_\omega \colon \tSympc(M, \omega) \to \HHH^1_c(\Mm, \omega)$ (see \cite[Proposition~3.1.6]{Ban97}). Let $v \in \HHH^1_c(\Mm, \omega)$. Then we can take a closed 1-form $\alpha$ with compact support on $\Mm$ that represents $v$. Let $X$ be a vector field on $\Mm$ satisfying $\iota_X \omega = \alpha$. Since $\alpha$ is closed, the Cartan formula implies that $X$ is a symplectic vector field. Let $\{ \psi^t\}$ be the flow generated by $X$. By the definition of the flux homomorphism, we have $\tflux_\omega([\{\psi^t\}_{t\in [0,1]}]) = v$.  We also recall from Proposition~\ref{survey on flux} that $\tSympc(\Mm, \omega)=\Sympc_0(\Mm, \omega)$ and that $\tflux_{\omega}$ can be regarded as $\flux_{\omega}$. Thus, we can define a continuous map $c_{v}\colon \RR \to \Sympc_0(\Mm, \omega)$ by $c_{v}(t)=\psi^t$ for $t\in \RR$. By construction, $c_v$ is a group homomorphism, and for every $t\in \RR$ we have
\[
\flux_\omega(c_{v}(t)) = tv.
\]
This ends the proof of (3). 

Therefore, $\Bb(\muf-\mubrf)$ belongs to $\AAA_{\RR}(\HHH^1_c(\Mm;\RR))$ by Corollary~\ref{cor=RRbilinear}. By Theorem~\ref{thm=Brabur}, $\muf$ and $\muf-\mubrf$ represent the same class in $\VV(\Sympc_0 (\Mm, \omega), \Hamc(\Mm,\omega))$, so that $\Bb(\muf)=\Bb(\muf-\mubrf)$ (recall Proposition~\ref{prop=V}). Hence, we conclude that $\Bb(\muf)\in \AAA_{\RR}(\HHH^1_c(\Mm;\RR))$.
\end{proof}

\begin{proof}[Proof of Theorem~\textup{\ref{thm=Calabi}}]
We have $\dim_{\RR}\HHH^1_c(\Mm;\RR)=2l$. Then, Proposition~\ref{prop=RRbilinear_Calabi} together with Proposition~\ref{prop=finitedim} ends the proof.
\end{proof}

\section{$\RR$-bilinearity of $\bb_{\mushf}$ and the proof of Theorem~\ref{mthm=RRbilinear}}\label{sec=RRbilinear_Shelukhin}

In this section, we first prove that the $\ZZ$-bilinear form $\bb_{\mushf}$ constructed out of Shelukhin's quasimorphism $\mushf$ is in fact $\RR$-bilinear. Then, we present proofs of Theorem~\ref{mthm=RRbilinear} and Theorem~\ref{mthm=Shelukhin_extendable}.

\subsection{Proof of the $\RR$-bilinearity of $\bb_{\mushf}$}

Let $(\Mm,\omega)$ be a closed symplectic manifold. We first recall the definition of continuity via smooth isotopy from Definition~\ref{continuity}: a function $\muf \colon \tHamc(\Mm,\omega) \to\RR$ is said to be \textit{continuous via smooth isotopy} if for every smooth isotopy $\gamma\colon [0,1] \to \Hamc(M,\omega)$ from identity, 
the function $s \mapsto \mu\left(\left[\{\gamma(t)\}_{t\in[0,s]}\right]\right)$ is continuous. 


Here is one of the key propositions in this subsection.

\begin{prop}[criterion for $\RR$-bilinearity in terms of continuity via isotopy]\label{prop=RRbilinear_nonhomog}
  Let $(\Mm, \omega)$ be a symplectic manifold of finite volume. 
  Set $\Gg=\tSympc(\Mm, \omega)$, $\Ng=\tHamc(\Mm, \omega)$ and $\Gamg=\HHH^1_c(\Mm ; \RR)$.
  Let $\Bb_G \colon \QQQ(N)^{G}\to \Gamma$ be the map defined in Definition~\textup{\ref{defn=bilinear_form_map}}. 
Let $\muf'$ be a quasimorphism on $N$ and let $\muf = (\muf')_{\rm h}$ its homogenization.
If $\muf'$ is continuous via smooth isotopy and $\muf \in \QQQ(N)^{G}$, then the $\ZZ$-bilinear form $\bb_{\muf}=\Bb_G(\muf)$ is $\RR$-bilinear.
\end{prop}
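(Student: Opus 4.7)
The plan is to verify the hypotheses of Proposition~\ref{prop=RRbilinear}, invoking Remark~\ref{rem=unhomog} so that the boundedness condition is placed on $\muf'$ rather than on its homogenization $\muf$. Conditions~(1) and (2) of Proposition~\ref{prop=RRbilinear} are immediate in this setting: $\HHH^1_c(\Mm;\RR)$ carries the natural structure of an $\RR$-linear space, and for each $\gaml \in \HHH^1_c(\Mm;\RR)$ one constructs a one-parameter homomorphism $c_\gaml \colon \RR \to \tSympc(\Mm,\omega)$ exactly as in the proof of Proposition~\ref{prop=RRbilinear_Calabi}: choose a compactly supported closed $1$-form $\alpha$ representing $\gaml$, let $X$ be the symplectic vector field satisfying $\iota_X \omega = \alpha$, let $\{\psi^u\}_{u \in \RR}$ be the flow of $X$, and set $c_\gaml(t) = \left[\{\psi^u\}_{u \in [0,t]}\right]$. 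Since $\{\psi^u\}_{u \in \RR}$ is a one-parameter subgroup of $\Sympc_0(\Mm,\omega)$, $c_\gaml$ is a group homomorphism; and $\tflux_\omega(c_\gaml(t)) = t\gaml$ by the very definition of the flux homomorphism.

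By Remark~\ref{rem=unhomog}, it then remains to verify the analogue $(3')$ of condition~(3), namely that for every $g \in \tSympc(\Mm,\omega)$ and every $\gaml \in \HHH^1_c(\Mm;\RR)$,
\[
\sup_{s \in [0,1]}\, \left|\muf'\!\bigl([c_\gaml(s), g]\bigr)\right| < \infty.
\]
Let $\qqm \colon \tSympc(\Mm,\omega) \twoheadrightarrow \Sympc_0(\Mm,\omega)$ denote the universal covering map, set $g_1 = \qqm(g)$, and define
\[
\Gamma \colon [0,1] \longrightarrow \Hamc(\Mm,\omega), \qquad \Gamma(s) = \psi^s\, g_1\, \psi^{-s}\, g_1^{-1}.
\]
Then $(s, x) \mapsto \Gamma(s)(x)$ is smooth, $\Gamma(0) = \mathrm{id}_\Mm$, and each $\Gamma(s)$, being a commutator in $\Sympc_0(\Mm,\omega)$, lies in $\Hamc(\Mm,\omega)$. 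Thus $\Gamma$ is a smooth isotopy from identity in $\Hamc(\Mm,\omega)$ in the sense of Subsection~\ref{subsec=notation}.

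The crucial step is to identify, for every $s \in [0,1]$,
\[
\left[\{\Gamma(t)\}_{t \in [0,s]}\right] = [c_\gaml(s), g] \qquad \text{in } \tHamc(\Mm,\omega).
\]
Both sides are continuous in $s$ as paths in $\tSympc(\Mm,\omega)$, both equal $\mathrm{id}$ at $s = 0$, and both project under $\qqm$ to the path $s \mapsto \Gamma(s)$; the uniqueness of continuous path lifting in a covering space then forces the two lifts to coincide. The continuity of $\muf'$ via smooth isotopy, applied to $\Gamma$, now shows that $s \mapsto \muf'([c_\gaml(s), g])$ is continuous on $[0,1]$, and hence bounded by compactness. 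This establishes $(3')$, and Proposition~\ref{prop=RRbilinear} combined with Remark~\ref{rem=unhomog} yields $\bb_\muf \in \AAA_\RR(\HHH^1_c(\Mm;\RR))$, as desired.

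I expect the main technical point to be the lifting identification in the last paragraph, since the commutator $[c_\gaml(s), g]$ is manipulated in the universal cover while $\Gamma$ lives downstairs; the cleanest route seems to be the appeal to uniqueness of path lifting rather than an explicit computation with concatenation of representing paths. Once that identification is secured, the rest of the argument is a direct application of the framework developed in Section~\ref{sec=bilinear}.
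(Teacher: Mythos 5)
Your proposal is correct and takes essentially the same approach as the paper: you verify conditions (1) and (2) of Proposition~\ref{prop=RRbilinear} with the flow construction $c_\gaml(t)=\left[\{\psi^u\}_{u\in[0,t]}\right]$ borrowed from the proof of Proposition~\ref{prop=RRbilinear_Calabi}, and condition $(3')$ of Remark~\ref{rem=unhomog} by applying continuity via smooth isotopy to the commutator isotopy $s\mapsto \psi^s\,\bar{\gl}\,\psi^{-s}\,\bar{\gl}^{-1}$. The only difference is presentational: where the paper simply asserts that $t\mapsto[c_v(t),\gl]$ is a lift of this smooth isotopy from identity, you justify the identification $[c_\gaml(s),g]=\left[\{\Gamma(t)\}_{t\in[0,s]}\right]$ explicitly via uniqueness of continuous path lifting, which is precisely the implicit content of the paper's step.
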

  \begin{proof}
We will prove this proposition by checking the following three conditions: (1) and (2) of Proposition \ref{prop=RRbilinear}, and ($3'$) of Remark~\ref{rem=unhomog}. Then, Proposition \ref{prop=RRbilinear}, together with Remark~\ref{rem=unhomog}, ensures the $\RR$-bilinearity of $\bb_{\muf}$. Condition (1) of Proposition \ref{prop=RRbilinear} is obvious.

In what follows, we confirm condition (2) of Proposition \ref{prop=RRbilinear}. Recall that we have checked condition (3) of Corollary~\ref{cor=RRbilinear} in the proof of Proposition~\ref{prop=RRbilinear_Calabi} for a closed symplectic surface $(\Mm,\omega)$ of genus at least two. The last part of that proof employed Proposition~\ref{survey on flux}~(6), which is a special property for such $(\Mm,\omega)$. Nevertheless, the other part of the proof remains to work for  a general closed symplectic manifold $(\Mm,\omega)$, and we have the following: given $v\in \Gamg=\HHH^1_c(\Mm;\RR)$, there exists a symplctic vector field $X$ on $\Mm$ such that the flow $\{ \psi^t\}_{t\in \RR}$ generated by $X$ satisfies $\tflux_\omega( [\{\psi^t\}_{t\in [0,1]}] ) = v$. Now, define a map $c_v\colon \RR\to \tSympc(\Mm,\omega)$ by
\begin{equation}\label{eq=tflow}
c_v(t)=[\{\psi^s\}_{s\in [0,t]}]
\end{equation}
for every $t\in \RR$. Then, $c_v$ is a group homomorphism, and for every $t\in \RR$ we have
\[
\tflux_{\omega}(c_v(t))=tv.
\]
Hence, we have verified condition (2) of Proposition \ref{prop=RRbilinear}.

Finally, we prove condition ($3'$) of Remark~\ref{rem=unhomog}. Take $\gl\in\Gg$ and $v\in \Gamg$. Take the group homomorphism $c_v\colon \RR\to \tSympc(\Mm,\omega)$ defined by \eqref{eq=tflow}. To see condition ($3'$) of Remark~\ref{rem=unhomog}, it suffices to show that
\begin{align} \label{eq=conti.via.iso}
\sup\{|\muf'([c_v(t),\gl])|\;|\; t\in [0,1]\}<\infty.
\end{align}
Note that $[c_v(t), \gl]$ is contained in $\tHamc(\Mm, \omega)$ by Proposition~\ref{survey on flux} (4). Set $\bar{\gl} = \qqm (g)$, where $\qqm \colon \tSympc(\Mm, \omega) \to \Sympc_0(\Mm, \omega)$ is the universal covering map. Then the path
\[ [0,1] \to \tHamc(\Mm, \omega);\quad t \mapsto [c_v(t), \gl] = c_v(t) \gl c_v(t)^{-1} \gl^{-1}\]
in $\tHamc(\Mm, \omega)$ is a lift of the smooth isotopy from identity
\[ [0,1] \to \Hamc(\Mm, \omega); \quad t \mapsto [\psi^t, \bar{\gl}] = \psi^t \bar{\gl} \psi^{-t} \bar{\gl}^{-1}.\]
%
%
Therefore, by continuity  via smooth isotopy of $\muf'$, the function defined by
\[ 
[0,1] \to \RR;\quad t \mapsto \muf'([c_v(t), \gl]) 
\]
is continuous. By compactness of $[0,1]$, we conclude \eqref{eq=conti.via.iso}.
Now we have verified the three conditions, as desired.
\end{proof}


On the proof of the $\RR$-bilinearity of $\bb_{\mushf}$, the following proposition is the second key.

%
%
%

\begin{prop}\label{prop=Shelukhin_conti}
For every symplectic manifold $(\Mm, \omega)$ of finite volume and  an $\omega$-compatible almost complex structure $J$ on $\Mm$, 
the map $\nuf_{J} \colon \tHamc(\Mm,\omega) \to \RR$ defined by \eqref{before_homogenize}
is continuous via smooth isotopy.
\end{prop}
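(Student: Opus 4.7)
Fix a smooth isotopy $\gamma \colon [0,1] \to \Hamc(\Mm, \omega)$ from identity and let $K \colon [0,1] \times \Mm \to \RR$ be its normalized generating Hamiltonian. For each $s \in [0,1]$, the element $\tilde{g}_s := [\{\gamma(t)\}_{t\in[0,s]}]$ is represented by the reparametrized path $t \mapsto \gamma(ts)$, which is generated by $H^s_t = s K_{ts}$. Following \eqref{before_homogenize}, the plan is to write $\nuf_J(\tilde{g}_s) = A(s) - B(s)$ with
\[
A(s) = \int_{D_s}\Omega, \qquad B(s) = \int_0^1 \mom(H^s_t)(\gamma(ts) \cdot J)\, dt,
\]
and show that each term depends continuously on $s$.

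The term $B$ is straightforward: linearity of $\mom$ in its Hamiltonian argument and the substitution $u = ts$ yield
\[
B(s) = \int_0^1 s\, \mom(K_{ts})(\gamma(ts) \cdot J)\, dt = \int_0^s \mom(K_u)(\gamma(u) \cdot J)\, du.
\]
The integrand is continuous in $u$ because $K$ is smooth, $u \mapsto \gamma(u) \cdot J$ is continuous in $\mathcal{J}$, and $\mom$ depends continuously on its arguments. Hence $B$ is continuous (in fact $C^1$) in $s$ by the fundamental theorem of calculus, with $B(0) = 0$.

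For the term $A$, the plan is to exhibit a single canonical parameterization. For each $u \in [0,1]$, let $\sigma_u \colon [0,1] \to \mathcal{J}$ denote the fiberwise geodesic from $J$ to $\gamma(u) \cdot J$, which is well-defined by the uniqueness of geodesics between any two points in the symmetric space $\Sp(2n,\RR)/\UU(n)$. Define $F \colon [0,1] \times [0,1] \to \mathcal{J}$ by $F(r,u) = \sigma_u(r)$ and set $D_s := F([0,1] \times [0,s])$. Tracing the boundary of the rectangle $[0,1] \times [0,s]$ under $F$ shows that $\partial D_s$ coincides with the loop $\{\gamma(u) \cdot J\}_{u\in[0,s]} \ast [\gamma(s) \cdot J, J]$ required in the definition of $\nuf_J$. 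Consequently
\[
A(s) = \int_0^s \left(\int_0^1 (F^*\Omega)_{(r,u)}(\partial_r, \partial_u)\, dr\right) du,
\]
and a second application of the fundamental theorem of calculus gives continuity of $A$ in $s$.

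The main obstacle is the regularity needed to justify the formula for $A(s)$: one must verify that $F$ is smooth enough and that $F^*\Omega$ is a bounded continuous two-form on the compact parameter domain $[0,1]^2$. The required smoothness of $u \mapsto \sigma_u$ follows from smoothness of $u \mapsto \gamma(u) \cdot J$ in the Fr\'echet topology of $\mathcal{J}$, together with smooth dependence of the fiberwise geodesic on its endpoints in the symmetric-space fibers. Boundedness of $F^*\Omega$ reduces to a pointwise bound on the norms of $\partial_r F$ and $\partial_u F$ with respect to the fiberwise K\"ahler metric, integrated against $\omega^n$ over $\Mm$; this is finite by the same mechanism used in \eqref{bounded_Reznikov}, using finiteness of $\vol(\Mm,\omega)$ and compactness of the parameter domain. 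Once these regularity statements are settled, continuity of $s \mapsto A(s)$, and hence continuity via smooth isotopy of $\nuf_J$, follows.
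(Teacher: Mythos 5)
Your proposal is correct and follows essentially the same route as the paper: the same splitting of $\nuf_J$ into the disk term and the moment-map term, the same reparameterization reducing the latter to $\int_0^s \mom(K_u)(\gamma(u)\cdot J)\,du$, and the same key inputs (smoothness of $(t,x)\mapsto(\gamma(t)\cdot J)_x$ and compactness of the union of supports) for the disk term. Your geodesic-cone parameterization $F(r,u)=\sigma_u(r)$ is simply a more explicit realization of the thin-region estimate that the paper asserts directly from the uniform bound on $\bigl\|\tfrac{d}{dt}(\gamma(t)\cdot J)_x\bigr\|_x$; the only small correction is that the boundedness of $F^*\Omega$ over the fibers rests on the compactness of $\bigcup_{t\in[0,1]}\mathrm{supp}(\gamma(t))$ (as in the paper), not on finiteness of $\vol(\Mm,\omega)$, since the integrand vanishes identically outside that compact set.
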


We remark that the proof below of this proposition goes along a line similar to that of \cite[Equation (8)]{Shelukhin}.

\begin{proof}[Proof of Proposition~\textup{\ref{prop=Shelukhin_conti}}]
Take a smooth isotopy $\gamma\colon [0,1] \to \Hamc(M,\omega)$ from identity.
Then, there exists a normalized Hamiltonian function $H_t\colon [0,1]\times M\to\RR$ such that $\varphi_H^t=\gamma(t)$ (for example, see \cite[Remark 4.2.3]{Ban} and \cite[Theorem 2.3.16]{O15a}).
Define a function $\Psi\colon[0,1]\to\RR$ by $\Psi(s)= \nu_J\left(\left[\{\gamma(t)\}_{t\in[0,s]}\right]\right)$.
Recall that we defined the map $\nu_J\colon \tHamc (\Mm, \omega) \to\RR$ by
\[\nu_J(\tg) := \int_D \Omega - \int_0^1 \mom(H_t)(h_t\cdot J).\]
Here, we define maps $\Psi^1, \Psi^2\colon [0,1] \to\RR$ by 
\[\Psi^1(s) := \int_{D(s)} \Omega, \, \Psi^2(s) := \int_0^s \mom(H_t)(h_t\cdot J).\]
Here, $D(s)$ is a disk in $\mathcal{J}$ whose boundary is equal to the loop $\{ h_t \cdot J \}_{t \in [0,s]} \ast [h_s\cdot J, J]$.
Then, it is sufficient to prove that the maps $\Psi^1, \Psi^2$ are continuous.
First, note that $\Psi^2$ is continuous because
\[\Psi^2(s) := \int_0^s \mom(H_t)(h_t\cdot J) = \int_0^s S(h_t\cdot J)H_t(x)\omega^n dt = \int_0^s S(J)H_t(h_t x)\omega^n dt.\]

Next, we prove that $\Psi^1$ is continuous.
Note that $[0,1]\times M\to \mathcal{S}$, $(t,x)\mapsto (h_t\cdot J)_x$ is a smooth map.
Hence the map $[0,1]\times M\to T\mathcal{S}$, $(t,x)\mapsto \frac{d}{dt}(h_t\cdot J)_x$ is a continuous map.
Therefore, since $\bigcup\limits_{t\in [0,1]} \mathrm{supp}(H_t)$ is compact, 
$\max\limits_{s\in[0,1],x\in M}\left\|\frac{d}{dt}|_{t=s}(h_t\cdot J)_x\right\|_x<\infty$.
Here, $\|\cdot\|_x$ is the norm induced from the fiberwise-K\"{a}hler form $\sigma_x$ on $\mathcal{J}_x$.
Thus, for every $s_0\in [0,1]$ and every $\epsilon>0$, there exists $\delta>0$ such that for every $s\in[0,1]$ with $|s-s_0|<\delta$, $\left|\int_{D(s)} \Omega-\int_{D(s_0)} \Omega \right|<\epsilon$.
Hence $\Psi^1$ is continuous.
\end{proof}

Now we are in a position to present the $\RR$-bilinearity of $\bb_{\mushf}$.

\begin{prop}[$\RR$-bilinearity of $\bb_{\mushf}$]\label{prop=RRbilinear_Shelukhin}
  Let $(\Mm, \omega)$ be a symplectic manifold of finite volume. Let $\Bb=\BbSymp\colon \QQQ(\tHamc(\Mm,\omega))^{\tSympc(\Mm,\omega)}\to \AAA(\HHH^1_c(\Mm;\RR))$ be the map defined in Definition~\textup{\ref{defn=bilinear_form_map}}. 
Then the $\ZZ$-bilinear form $\bb_{\mushf}=\Bb(\mushf)$, constructed from Shelukhin's quasimorphism $\mushf$, is $\RR$-bilinear.
\end{prop}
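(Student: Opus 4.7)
The plan is to simply combine the two technical propositions that immediately precede the statement. By Proposition~\ref{prop=RRbilinear_nonhomog}, to conclude $\RR$-bilinearity of $\bb_{\mushf}$ it suffices to exhibit an (a priori inhomogeneous) quasimorphism $\muf'$ on $\tHamc(\Mm,\omega)$ such that $(\muf')_{\mathrm{h}}=\mushf$ as elements of $\QQQ(\tHamc(\Mm,\omega))^{\tSympc(\Mm,\omega)}$ and such that $\muf'$ is continuous via smooth isotopy.

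The natural candidate is $\muf'=\nuf_{J}$, the (inhomogeneous) quasimorphism from \eqref{before_homogenize} used in Shelukhin's construction, for any fixed $\omega$-compatible almost complex structure $J\in \mathcal{J}$. First, by the very definition of $\mushf$, we have $(\nuf_{J})_{\mathrm{h}}=\mushf$. Second, by Proposition~\ref{prop=Sympc-inv} the homogenization $\mushf$ is $\Sympc(\Mm,\omega)$-invariant, and in particular $\tSympc(\Mm,\omega)$-invariant, so $\mushf \in \QQQ(\tHamc(\Mm,\omega))^{\tSympc(\Mm,\omega)}$. Third, by Proposition~\ref{prop=Shelukhin_conti}, $\nuf_{J}$ is continuous via smooth isotopy.

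With these three facts in hand, Proposition~\ref{prop=RRbilinear_nonhomog} applies with $\muf'=\nuf_{J}$ and $\muf=\mushf$, yielding that $\bb_{\mushf}=\Bb(\mushf)$ is $\RR$-bilinear. There is no real obstacle here: all the work has already been packaged into Propositions~\ref{prop=RRbilinear_nonhomog} and \ref{prop=Shelukhin_conti}, and the only thing to verify is that the hypotheses of the former are met by $\nuf_{J}$, which is immediate. The proof will accordingly be only a few lines long.
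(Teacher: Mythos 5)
Your proposal is correct and is essentially identical to the paper's own proof: the paper likewise observes that $\mushf$ is the homogenization of $\nuf_J$ and concludes by combining Propositions~\ref{prop=RRbilinear_nonhomog} and \ref{prop=Shelukhin_conti}. Your additional explicit check of $\tSympc(\Mm,\omega)$-invariance via Proposition~\ref{prop=Sympc-inv} is a harmless (and reasonable) elaboration of a hypothesis the paper leaves implicit.
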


\begin{proof}
Recall that Shelukhin's quasimorphism $\mushf$ is the homogenization of $\nuf_{J}$.
Therefore, the assertion follows from Propositions \ref{prop=RRbilinear_nonhomog} and \ref{prop=Shelukhin_conti}.
\end{proof}

In addition, we prove Theorem~\ref{thm=findimCVSI}. In fact, we will show the following stronger statement.

\begin{thm}[stronger statement of Theorem~\ref{thm=findimCVSI}]\label{thm=findimCVSI_strong}
Let $(\Mm, \omega)$ be a closed  symplectic manifold. Let $m=\dim_{\RR}\HHH^1_c(\Mm;\RR)$. Let $\Gg=\tSympc(\Mm,\omega)$ and $\Ng=\tHamc(\Mm,\omega)$. Define the $\RR$-linear subspace $\Qv^{\mathrm{ch}}_{(\Mm,\omega)}$ of $\QQQ(\Ng)^{\Gg}$ by
\[
\Qv^{\mathrm{ch}}_{(\Mm,\omega)}=\{\muf\in \QQQ(\Ng)^{\Gg}\;|\; \textrm{there exists $\muf'$ such that $(\muf')_{\mathrm{h}}=\muf$}\},
\]
where $\muf'$ is assumed to be a \textup{(}not necessarily homogeneous\textup{)} quasimorphism on $\Ng$ that is continuous via smooth isotopy.
Then the image of $\Qv^{\mathrm{ch}}_{(\Mm,\omega)}$ under the projection $\QQQ(\Ng)^{\Gg}\twoheadrightarrow \VV(\Gg, \Ng)$ has real dimension  at most $\dfrac{m(m-1)}{2}$.
\end{thm}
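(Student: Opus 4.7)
The plan is to apply the general machinery from Section~\ref{sec=bilinear} together with the criterion for $\RR$-bilinearity established in Proposition~\ref{prop=RRbilinear_nonhomog}. The strategy is very short: one shows that the image of $\Qv^{\mathrm{ch}}_{(\Mm,\omega)}$ in $\VV(\Gg,\Ng)$ lands inside the finite-dimensional subspace $\bBbG^{-1}(\AAA_{\RR}(\HHH^1_c(\Mm;\RR)))$, which was already bounded in dimension by Proposition~\ref{prop=finitedim}.

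First, I would take an arbitrary $\muf \in \Qv^{\mathrm{ch}}_{(\Mm,\omega)}$ and, by definition of this space, pick a (not necessarily homogeneous) quasimorphism $\muf'$ on $\Ng$ that is continuous via smooth isotopy and satisfies $(\muf')_{\mathrm{h}} = \muf$. Since $\muf \in \QQQ(\Ng)^{\Gg}$ by assumption, Proposition~\ref{prop=RRbilinear_nonhomog} applies and yields that $\bb_{\muf} = \BbG(\muf)$ belongs to $\AAA_{\RR}(\HHH^1_c(\Mm;\RR))$. In other words, the composition
\[
\Qv^{\mathrm{ch}}_{(\Mm,\omega)} \hookrightarrow \QQQ(\Ng)^{\Gg} \xrightarrow{\BbG} \AAA(\HHH^1_c(\Mm;\RR))
\]
factors through $\AAA_{\RR}(\HHH^1_c(\Mm;\RR))$.

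Next, pass to $\VV(\Gg,\Ng) = \QQQ(\Ng)^{\Gg}/\im^{\ast}\QQQ(\Gg)$. Let $\overline{\Qv}$ denote the image of $\Qv^{\mathrm{ch}}_{(\Mm,\omega)}$ under the quotient projection $\QQQ(\Ng)^{\Gg}\twoheadrightarrow \VV(\Gg,\Ng)$. By the previous paragraph and the definition of $\bBbG$ (Definition~\ref{defn=bilinear_form_map}), we have
\[
\overline{\Qv} \subseteq \bBbG^{-1}\bigl(\AAA_{\RR}(\HHH^1_c(\Mm;\RR))\bigr).
\]
Since $\bBbG$ is injective, Proposition~\ref{prop=finitedim} applied to the triple $(\Gg,\Ng,\HHH^1_c(\Mm;\RR))$, whose abelianization factor has real dimension $m$, yields
\[
\Rdim \bBbG^{-1}\bigl(\AAA_{\RR}(\HHH^1_c(\Mm;\RR))\bigr) \leq \frac{m(m-1)}{2},
\]
and consequently $\Rdim \overline{\Qv} \leq \dfrac{m(m-1)}{2}$, as required.

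There is no substantive obstacle: the real work was already done in setting up $\bBbG$, in Proposition~\ref{prop=finitedim}, and, most crucially, in establishing the $\RR$-bilinearity criterion of Proposition~\ref{prop=RRbilinear_nonhomog}. Once those are in hand, Theorem~\ref{thm=findimCVSI_strong} (and hence Theorem~\ref{thm=findimCVSI}, noting that Propositions~\ref{prop=Sympc-inv} and \ref{prop=Shelukhin_conti}-style arguments guarantee $\Qv^{\mathrm{c}}_{(\Mm,\omega)} \subseteq \Qv^{\mathrm{ch}}_{(\Mm,\omega)}$ via $\muf' = \muf$) follows by the direct chain of inclusions described above.
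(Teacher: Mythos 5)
Your proposal is correct and is essentially the paper's own proof, which simply combines Proposition~\ref{prop=RRbilinear_nonhomog} and Proposition~\ref{prop=finitedim}; you have just spelled out the intermediate step that the image of $\Qv^{\mathrm{ch}}_{(\Mm,\omega)}$ in $\VV(\Gg,\Ng)$ lies in $\bBbG^{-1}(\AAA_{\RR}(\HHH^1_c(\Mm;\RR)))$. No gaps to report.
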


\begin{proof}
Combine Propositions~\ref{prop=RRbilinear_nonhomog} and \ref{prop=finitedim}.
\end{proof}

The $\RR$-linear space $\Qv^{\mathrm{ch}}_{(\Mm,\omega)}$ might be of interest due to the following fact. 

\begin{prop}\label{prop=Shelukhin_ch}
Let $(\Mm,\omega)$ be a closed symplectic manifold. Then, Shelukhin's quasimorphism $\mushf$ is an element of the $\RR$-linear space $\Qv^{\mathrm{ch}}_{(\Mm,\omega)}$, defined in Theorem~\textup{\ref{thm=findimCVSI_strong}}.
\end{prop}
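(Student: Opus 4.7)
The plan is to observe that this proposition follows almost immediately from what has already been established. By the very definition of Shelukhin's quasimorphism (see the construction preceding Proposition~\ref{prop=ShelukhinHam}), $\mushf$ is obtained as the homogenization of the function $\nuf_{J}\colon \tHamc(\Mm,\omega)\to \RR$ defined by \eqref{before_homogenize}, for any choice of $\omega$-compatible almost complex structure $J$. The estimate \eqref{bounded_Reznikov} shows that $\nuf_J$ is a quasimorphism on $\tHamc(\Mm,\omega)$, so in particular $(\nuf_J)_{\mathrm{h}} = \mushf$ makes sense via the formula \eqref{eq=homogenization}.

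Next, I would invoke Proposition~\ref{prop=Shelukhin_conti} to conclude that $\nuf_J$ is continuous via smooth isotopy. Together with $\mushf \in \QQQ(\tHamc(\Mm,\omega))^{\tSympc(\Mm,\omega)}$, which is Proposition~\ref{prop=Sympc-inv} (Shelukhin's invariance result) combined with the standard fact that the $\tSympc(\Mm,\omega)$-action on $\tHamc(\Mm,\omega)$ by conjugation factors through $\Sympc_0(\Mm,\omega)\subset\Sympc(\Mm,\omega)$, this verifies every condition imposed on the element $\muf = \mushf$ in the definition of $\Qv^{\mathrm{ch}}_{(\Mm,\omega)}$ with the witness $\muf' = \nuf_J$. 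Hence $\mushf \in \Qv^{\mathrm{ch}}_{(\Mm,\omega)}$.

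There is essentially no obstacle here: the proposition is a direct bookkeeping consequence of results already proved, and its purpose in the paper seems to be to record that the finite-dimensionality bound in Theorem~\ref{thm=findimCVSI_strong} is non-vacuous and actually applies to $\mushf$. The only minor point worth making explicit in the write-up is the reduction from $\Sympc(\Mm,\omega)$-invariance (the form given in Proposition~\ref{prop=Sympc-inv}) to $\tSympc(\Mm,\omega)$-invariance (the form required by the definition of $\QQQ(\Ng)^{\Gg}$), which is immediate from the description \eqref{eq=Sympc-action} of the conjugation action.
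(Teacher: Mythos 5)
Your proposal is correct and follows essentially the same route as the paper: the paper's proof is precisely the one-line observation that $(\nuf_J)_{\mathrm{h}}=\mushf$ together with Proposition~\ref{prop=Shelukhin_conti}, so $\nuf_J$ serves as the witness $\muf'$ in the definition of $\Qv^{\mathrm{ch}}_{(\Mm,\omega)}$. Your additional remarks (the quasimorphism estimate \eqref{bounded_Reznikov} and the reduction from $\Sympc(\Mm,\omega)$-invariance to $\tSympc(\Mm,\omega)$-invariance via \eqref{eq=Sympc-action}) just make explicit bookkeeping that the paper leaves implicit.
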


\begin{proof}
Since $(\nuf_J)_{\mathrm{h}}=\mushf$, it immediately follows from Proposition~\ref{prop=Shelukhin_conti}.
\end{proof}

\subsection{Proof of Theorem~\ref{mthm=RRbilinear}}

In the following proof, for a closed symplectic manifold  $(\Mm,\omega)$, we  set $\Gg=\tSympc(\Mm, \omega)$, $\Ng=\tHamc(\Mm, \omega)$ and $\Gamg=\HHH^1_c(\Mm ; \RR)$. 

\begin{proof}[Proof of Theorem~\textup{\ref{mthm=RRbilinear}}]
Recall from Subsection~\ref{subsec=symp} that $\Ng=[\Gg,\Gg]$. Therefore, we may appeal to the general theory developed in Section~\ref{sec=bilinear} and obtain the map
\[
\BbG\colon \QQQ(\Ng)^{\Gg}\to \AAA(\Gamg)
\]
defined in Definition~\ref{defn=bilinear_form_map}.
We set $\bb_{\mushf}=\BbG(\mushf)$. Then, all properties of $\bb_{\mushf}$ asserted in Theorem~\ref{mthm=RRbilinear}, \emph{except} $\Sympc(\Mm,\omega)$-invariance and $\RR$-bilinearity, follow from Theorem~\ref{thm=bilinear_main}. 

To show $\Sympc(\Mm,\omega)$-invariance, combine Proposition~\ref{prop=Sympc-inv} with Proposition~\ref{prop=inv} for the $\Sympc(\Mm,\omega)$-action \eqref{eq=Sympc-action} on $\Gg$ by group automorphisms.

Finally, the $\RR$-bilinearity of $\bb_{\mushf}$ is proved in Proposition~\ref{prop=RRbilinear_Shelukhin}. Now our proof has been completed.
\end{proof}

\subsection{Extendability and $\bb_{\mushf}$}\label{subsec=extendablityRR}

Here, we prove Theorem~\textup{\ref{mthm=Shelukhin_extendable}} and Theorem~\ref{thm=extendabilityRR}.

\begin{proof}[Proof of Theorem~\textup{\ref{mthm=Shelukhin_extendable}}]
This equivalence follows from the general theory in Section~\ref{sec=bilinear} (Proposition~\ref{prop=extendability}). 
\end{proof}

\begin{proof}[Proof of Theorem~\textup{\ref{thm=extendabilityRR}}]
By Theorem~\textup{\ref{mthm=Shelukhin_extendable}}, (i) is equivalent to saying that $\bb_{\mushf}$ vanishes on $\tflux_{\omega}(\Pg)\times \tflux_{\omega}(\Pg)$. Since $\bb_{\mushf}$ is $\RR$-bilinear, this condition is equivalent to the vanishing of $\bb_{\mushf}$ on $\Vv_{\Pg}\times \Vv_{\Pg}$. Again by Proposition~\ref{prop=extendability}, this is equivalent to (ii). 
\end{proof}

\subsection{Constraint on the fluxes of commuting two symplectomorphisms}
In this subsection, we prove Theorems~\ref{thm=commutingtSymp} and \ref{thm=commutingSymp}. We will only show Theorem~\ref{thm=commutingSymp}, as this theorem is a refinement of Theorem~\ref{thm=commutingtSymp}.

\begin{proof}[Proof of Theorem~\textup{\ref{thm=commutingSymp}}]
By assumption, there exists $z\in \pi_1(\Hamc(\Mm,\omega))$ such that $[f,g]=z$. 
By Proposition~\ref{survey on flux}, $z$ commutes with every element in $\tSympc(\Mm,\omega)$.  
By \eqref{eq=commutatorcal} we have $[f^k,g]=z^k$ for every $k\in \NN$. By limit formula \eqref{eq=Shelukhin_form_limit}, we obtain
\[
\mushf(z)=\bb_{\mushf}(\tflux_{\omega}(f),\tflux_{\omega}(g)).
\]
Since $z\in \pi_1(\Hamc(\Mm,\omega))$, Proposition~\ref{prop=ShelukhinHam} ends our proof of \eqref{eq=commutingcenter}.
\end{proof}

\begin{cor}[obstruction to commutativity of symplectomorphisms in terms of the fluxes]\label{cor=obs_flux}
Let $(\Mm,\omega)$ be a closed symplectic manifold. Let $\qqm\colon \tSympc(\Mm,\omega)\twoheadrightarrow \Sympc_0(\Mm,\omega)$ be the universal covering map.
\begin{enumerate}[label=\textup{(\arabic*)}]
  \item Let $f,g\in \tSympc(\Mm,\omega)$ with $\bb_{\mushf}(\tflux_{\omega}(f),\tflux_{\omega}(g))\ne 0$. Then, there do \emph{not} exist $\psi,\psi'\in \tHamc(\Mm,\omega)$ such that $f\psi$ and $g\psi'$ commute.
  \item Let $f,g\in \tSympc(\Mm,\omega)$ such that 
\[
\bb_{\mushf}(\tflux_{\omega}(f),\tflux_{\omega}(g))\in \RR\setminus I_{c_1}(\pi_1(\Hamc(\Mm,\omega))).
\]
Then, there do \emph{not} exist $\varphi,\varphi'\in \Hamc(\Mm,\omega)$ such that $\qqm(f)\varphi$ and $\qqm(g)\varphi'$ commute in $\Sympc_0(\Mm,\omega)$.
\end{enumerate}
\end{cor}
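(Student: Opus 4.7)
The plan is to derive both parts by contraposition from Theorem~\ref{thm=commutingtSymp} and Theorem~\ref{thm=commutingSymp} respectively, using the fact that $\tHamc(\Mm,\omega) = \Ker(\tflux_\omega)$, which is Proposition~\ref{survey on flux}~(3).

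For part~(1), I would assume that $\psi,\psi' \in \tHamc(\Mm,\omega)$ exist with $(f\psi)(g\psi') = (g\psi')(f\psi)$ in $\tSympc(\Mm,\omega)$. Applying Theorem~\ref{thm=commutingtSymp} to the pair $f\psi$, $g\psi'$ gives
\[
\bb_{\mushf}(\tflux_\omega(f\psi),\,\tflux_\omega(g\psi')) = 0.
\]
Since $\tflux_\omega$ is a group homomorphism whose kernel contains $\psi$ and $\psi'$, this reduces to $\bb_{\mushf}(\tflux_\omega(f),\tflux_\omega(g)) = 0$, contradicting the hypothesis.

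For part~(2), I would suppose that $\varphi,\varphi' \in \Hamc(\Mm,\omega)$ exist with $\qqm(f)\varphi$ and $\qqm(g)\varphi'$ commuting in $\Sympc_0(\Mm,\omega)$. The one small step needed here is to lift this commutativity into $\tSympc(\Mm,\omega)$: using surjectivity of the universal covering map $\tHamc(\Mm,\omega) \twoheadrightarrow \Hamc(\Mm,\omega)$, I choose lifts $\tilde\varphi,\tilde\varphi' \in \tHamc(\Mm,\omega)$ of $\varphi,\varphi'$. Then $\qqm(f\tilde\varphi)=\qqm(f)\varphi$ and $\qqm(g\tilde\varphi')=\qqm(g)\varphi'$ commute in $\Sympc_0(\Mm,\omega)$, so Theorem~\ref{thm=commutingSymp} applied to $f\tilde\varphi$ and $g\tilde\varphi'$ yields
\[
\bb_{\mushf}(\tflux_\omega(f\tilde\varphi),\,\tflux_\omega(g\tilde\varphi')) \in I_{c_1}(\pi_1(\Hamc(\Mm,\omega))).
\]
Since $\tilde\varphi,\tilde\varphi' \in \Ker(\tflux_\omega)$, the left-hand side collapses to $\bb_{\mushf}(\tflux_\omega(f),\tflux_\omega(g))$, contradicting the hypothesis.

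There is no genuine obstacle in the argument: each part is a one-line invocation of the corresponding commutativity theorem, after observing that Hamiltonian perturbations (whether in $\tHamc(\Mm,\omega)$ or, via a lift, coming from $\Hamc(\Mm,\omega)$) lie in the kernel of $\tflux_\omega$ and therefore cannot affect the relevant flux values. The only subtle point worth flagging explicitly in the write-up is the existence of lifts to $\tHamc(\Mm,\omega)$ in part~(2), which is immediate from the construction of $\tHamc(\Mm,\omega)$ as the universal cover of $\Hamc(\Mm,\omega)$ recalled in Subsection~\ref{subsec=notation}.
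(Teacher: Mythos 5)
Your proposal is correct and follows essentially the same route as the paper: both parts are obtained by contraposition of Theorems~\ref{thm=commutingtSymp} and \ref{thm=commutingSymp}, after noting that $\tHamc(\Mm,\omega)=\Ker(\tflux_{\omega})$ so the Hamiltonian perturbations (or their lifts to $\tHamc(\Mm,\omega)$ in part~(2)) do not change the flux values. The paper's proof is just a terser version of your argument, and your explicit remark about choosing lifts $\tilde\varphi,\tilde\varphi'$ in part~(2) is the correct way to fill in the step the paper leaves implicit.
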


\begin{proof}
Recall that $\tHamc(\Mm,\omega)=\Ker(\tflux_{\omega})$. Now, (1) and (2) both follow from the contraposition of Theorem~\ref{thm=commutingtSymp}.
\end{proof}



\begin{rem}\label{rem=countable}
The subgroup $I_{c_1}(\pi_1(\Hamc(\Mm,\omega)))$ of $\RR$ is `small,' in the sense that this set is at most countable. Indeed, by \cite[Exercise~2.4.6]{O15a}, $\pi_1(\Sympc_0(\Mm,\omega))$ is at most countable; we also have $\pi_1(\Hamc(\Mm,\omega))=\tHamc(\Mm,\omega) \cap \pi_1(\Sympc_0(\Mm,\omega))$ by Proposition~\ref{survey on flux}. Therefore, $\pi_1(\Hamc(\Mm,\omega))$ is at most countable.
\end{rem}

\section{Characterization of the triviality of the Reznikov class}\label{sec=Reznikov}
In this Section, we prove Theorem~\ref{mthm=Reznikov} and Theorem~\ref{thm=ReznikovRR}. One key here is Proposition~\ref{prop=ShelukhinHam}. Another key is the following result in  \cite{KKMMMReznikov}, where the triviality of the Reznikov class is characterized in terms of the extendability of Shelukhin's quasimorphism $\mushf$.

\begin{prop}[{\cite[Lemma 5.2]{KKMMMReznikov}}]\label{prop=KKMMMReznikov}
Let $(\Mm,\omega)$ be a closed  symplectic manifold. Let $\Pg$ be a subgroup of $\tSympc(\Mm,\omega)$ with $\Pg\geqslant \tHamc(\Mm,\omega)$. Let $\overline{\Pg}$ be the image of $\Pg$ by the universal covering map $\qqm\colon \tSympc(\Mm,\omega)\twoheadrightarrow \Sympc_0(\Mm,\omega)$. Then, the following two conditions are equivalent.
\begin{enumerate}[label=\textup{(\roman*)}]
  \item The Reznikov class $R$ is trivial on $\overline{\Pg}$;
  \item there exists $\phf\in \QQQ(\overline{\Pg})$ such that $\left(\phf|_{\Hamc(\Mm,\omega)} \right)\circ \left(\qqm|_{\tHamc(\Mm,\omega)}\right)=\mushf$.
\end{enumerate}
\end{prop}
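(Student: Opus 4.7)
My plan starts from the key cocycle identity derived directly from the definition of $\nu_J$ in \eqref{before_homogenize} and the additivity of integration over the geodesic triangles $\Delta(J, gJ, ghJ)$: for all $\tilde{g}, \tilde{h} \in \tHamc(\Mm,\omega)$,
\[
\nu_J(\tilde{g}\tilde{h}) - \nu_J(\tilde{g}) - \nu_J(\tilde{h}) = b_J(\qqm(\tilde{g}), \qqm(\tilde{h})).
\]
In cohomological terms, $\nu_J$ is a $1$-cochain on $\tHamc(\Mm,\omega)$ whose coboundary is the pullback of the Reznikov cocycle along $\qqm|_{\tHamc(\Mm,\omega)}$. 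Estimate \eqref{bounded_Reznikov} shows $b_J$ is bounded, so any primitive of $b_J$ is automatically a quasimorphism, enabling homogenization via Lemma~\ref{lem=homogenization}.

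For (i)$\Rightarrow$(ii): Suppose $R|_{\overline{\Pg}}=0$, so $b_J|_{\overline{\Pg}}=\delta c$ for some $1$-cochain $c\colon\overline{\Pg}\to\RR$, which is necessarily a quasimorphism. Let $c_{\mathrm{h}}\in\QQQ(\overline{\Pg})$ be its homogenization. Combining the key identity with $\delta c = b_J|_{\overline{\Pg}}$ shows that $h:=\nu_J-c\circ\qqm|_{\tHamc(\Mm,\omega)}$ has trivial coboundary on $\tHamc(\Mm,\omega)$, so $h$ is a genuine group homomorphism. Homogenizing yields
\[
\mushf \;=\; c_{\mathrm{h}}\circ\qqm|_{\tHamc(\Mm,\omega)} + h.
\]
The plan is to absorb $h$ by modifying $c$ by a homomorphism on $\overline{\Pg}$ (which leaves $\delta c$ unchanged), after which $\phf := c_{\mathrm{h}}$ satisfies the requirement of (ii).

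For (ii)$\Rightarrow$(i): Given $\phf\in\QQQ(\overline{\Pg})$ as in (ii), the composition $\phf\circ\qqm|_{\tHamc(\Mm,\omega)}$ vanishes on $\pi_1(\Hamc(\Mm,\omega))=\mathrm{Ker}(\qqm|_{\tHamc(\Mm,\omega)})$, so by Proposition~\ref{prop=ShelukhinHam} one already gets $I_{c_1}=0$. The function $h:=\nu_J-\phf\circ\qqm|_{\tHamc(\Mm,\omega)}$ is bounded on $\tHamc(\Mm,\omega)$ by Lemma~\ref{lem=homogenization}~(2), and by $h|_{\pi_1(\Hamc(\Mm,\omega))}=I_{c_1}=0$ descends through $\qqm|_{\tHamc(\Mm,\omega)}$ to a bounded function $\bar h\colon\Hamc(\Mm,\omega)\to\RR$. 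Perfectness of $\Hamc(\Mm,\omega)$ (Banyaga) lets one rewrite the coboundary identity on $\overline{\Pg}$ as $b_J|_{\overline{\Pg}}=\delta\phf+\delta(\text{correction})$, exhibiting $b_J|_{\overline{\Pg}}$ as a coboundary.

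The main obstacle is the delicate interplay with the kernel $\pi_1(\Hamc(\Mm,\omega))$: by Proposition~\ref{prop=ShelukhinHam}, $\mushf|_{\pi_1(\Hamc(\Mm,\omega))}=I_{c_1}$, whereas (ii) forces this restriction to vanish. Consequently, the equivalence (i)$\Leftrightarrow$(ii) silently contains the statement $R|_{\overline{\Pg}}=0 \Rightarrow I_{c_1}=0$, whose verification amounts to a transgression computation in the Lyndon--Hochschild--Serre spectral sequence of the central extension $1\to\pi_1(\Hamc(\Mm,\omega))\to\tHamc(\Mm,\omega)\to\Hamc(\Mm,\omega)\to 1$, identifying the transgression of $I_{c_1}$ with $R|_{\Hamc(\Mm,\omega)}$. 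Making this transgression identification precise, together with the algebraic maneuver of absorbing homomorphisms via Banyaga's perfectness, is the technical heart of the proof.
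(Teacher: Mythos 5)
Note first that the paper does not prove this proposition at all: it is imported verbatim from \cite[Lemma 5.2]{KKMMMReznikov}, so your attempt can only be judged on its own terms, not against an in-paper argument.

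Your direction (i)$\Rightarrow$(ii) is essentially sound, but the closing move is misstated. The plan to ``absorb $h$ by modifying $c$ by a homomorphism on $\overline{\Pg}$'' cannot work and is unnecessary: every homomorphism $\overline{\Pg}\to\RR$ kills the perfect group $\Hamc(\Mm,\omega)$ (Banyaga), so changing $c$ by a homomorphism does not change $c\circ \qqm$ on $\tHamc(\Mm,\omega)$ at all. What actually closes the argument is that $h=\nu_J-c\circ \qqm$ is itself a homomorphism on the \emph{perfect} group $\tHamc(\Mm,\omega)$, hence $h\equiv 0$; then homogenizing $\nu_J=c\circ\qqm$ gives $\mushf=c_{\mathrm{h}}\circ\qqm$ and $\phf:=c_{\mathrm{h}}$ works. (As a by-product, $\nu_J$ vanishes on $\pi_1(\Hamc(\Mm,\omega))$, so $I_{c_1}=0$ drops out in two lines via Proposition~\ref{prop=ShelukhinHam}; the Lyndon--Hochschild--Serre transgression you single out as ``the technical heart'' is peripheral.)

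Direction (ii)$\Rightarrow$(i) contains the genuine gap. Two issues. First, a fixable one: for a non-homomorphism, ``bounded and vanishing on $\pi_1(\Hamc(\Mm,\omega))$'' does not imply descent; the correct reason $h$ (indeed $\nu_J$ itself) descends is that $b_J(g,\mathrm{id})=0$ gives $\nu_J(\tilde{g}z)=\nu_J(\tilde{g})+\nu_J(z)$ for central $z$, that $\nu_J$ restricted to $\pi_1(\Hamc(\Mm,\omega))$ is therefore a genuine homomorphism equal to its homogenization $I_{c_1}$, and that $I_{c_1}=0$ under (ii). Second, and fatally: everything you produce is a primitive of $b_J$ \emph{on $\Hamc(\Mm,\omega)$ only}, i.e.\ $R|_{\Hamc(\Mm,\omega)}=0$. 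Nothing in your argument relates $b_J(g,h)$ for general $g,h\in\overline{\Pg}$ to $\phf$: the identity $\delta\nu_J=\qqm^{\ast}b_J$ is available only on $\tHamc(\Mm,\omega)$, because the moment-map correction in \eqref{before_homogenize} exists only in Hamiltonian directions (the $\Sympc_0(\Mm,\omega)$-action on $(\mathcal{J},\Omega)$ is symplectic but not Hamiltonian). Indeed, if some natural primitive of $\qqm^{\ast}b_J$ existed on all of $\tSympc(\Mm,\omega)$, then $R|_{\Sympc_0(\Mm,\omega)}$ would be trivial whenever $\pi_1(\Sympc_0(\Mm,\omega))=1$, contradicting Corollary~\ref{cor=surfacetrivial} for surfaces of genus at least two. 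Perfectness of $\Hamc(\Mm,\omega)$ cannot bridge from $\Hamc(\Mm,\omega)$ to $\overline{\Pg}$; the missing tool is the amenability (abelianness) of the quotient $\overline{\Pg}/\Hamc(\Mm,\omega)$. Concretely: $\beta:=b_J|_{\overline{\Pg}}-\delta\phf$ is a \emph{bounded} $2$-cocycle on $\overline{\Pg}$ whose restriction to $\Hamc(\Mm,\omega)$ equals $\delta(\bar{\nu}_J-\phf|_{\Hamc(\Mm,\omega)})$ with $\bar{\nu}_J-\phf|_{\Hamc(\Mm,\omega)}$ bounded by Lemma~\ref{lem=homogenization}~(2); since $\HHH^2_b(\overline{\Pg})\cong \HHH^2_b(\Hamc(\Mm,\omega))^{\overline{\Pg}/\Hamc(\Mm,\omega)}$ for an amenable quotient, $\beta$ is the coboundary of a bounded function on $\overline{\Pg}$, whence $b_J|_{\overline{\Pg}}$ is a coboundary. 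Equivalently one can argue at the quasimorphism level via the extendability criterion of Proposition~\ref{prop=abelianiroiro}~(2). This bounded-cohomology/extendability step is the actual heart of \cite[Lemma 5.2]{KKMMMReznikov}, and it is entirely absent from your proposal.
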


\begin{proof}[Proof of Theorem~\textup{\ref{mthm=Reznikov}}]
We focus on the setting as in Example~\ref{exa=Reznikovsetting}. Namely,  the short exact sequence \eqref{eq=shortex} corresponds to
\[
1 \longrightarrow \tHamc(\Mm,\omega) \longrightarrow \tSympc(\Mm,\omega) \xrightarrow{\tflux_{\omega}} \HHH^1_c(\Mm;\RR) \longrightarrow 1
\]
and the short exact sequence \eqref{eq=shortex_center} corresponds to
\[
1 \longrightarrow \Hamc(\Mm,\omega) \longrightarrow \Sympc_0(\Mm,\omega) \xrightarrow{\flux_{\omega}} \HHH^1_c(\Mm;\RR)/\Gamma_{\omega} \longrightarrow 1.
\]
Let $\qqm\colon \tSympc(\Mm,\omega)\twoheadrightarrow \Sympc_0(\Mm,\omega)$ be the universal covering map.

Assume that $R|_{\overline{\Pg}}\in \HHH^2(\overline{\Pg})$ is trivial. Then, by Proposition~\ref{prop=KKMMMReznikov} there exists $\phf\in \QQQ(\overline{\Pg})$ such that $\left(\phf|_{\Hamc(\Mm,\omega)} \right)\circ \qqm=\mushf$. In particular, $\mushf$ must descend to an element in $\QQQ(\Hamc(\Mm,\omega))^{\Sympc_0(\Mm,\omega)}$. By Lemma~\ref{lem=descend}, the restriction of $\mushf$ to $\pi_1(\Hamc(\Mm,\omega))$ must be the zero map. By Proposition~\ref{prop=ShelukhinHam}, this implies (1). Since $\phf\circ \left(\qqm|_{\Pg}\right)\in\QQQ(\Pg)$ is an extension of $\mushf$, we also obtain (2) by Proposition~\ref{prop=extendability}.

In what follows, we show the converse. Assume (1) and (2). Then, by Proposition~\ref{prop=ShelukhinHam}, we can apply Proposition~\ref{prop=center} (``(iii) implies (ii)'') to the setting of  Example~\ref{exa=Reznikovsetting} with $\muf=\mushf$. Hence, there exists $\phf\in \QQQ(\overline{L})$ as in condition (ii) of Proposition~\ref{prop=KKMMMReznikov}. By Proposition~\ref{prop=KKMMMReznikov}, we conclude that $R|_{\overline{\Pg}}\in \HHH^2(\overline{\Pg})$ is trivial.
\end{proof}

\begin{proof}[Proof of Theorem~$\ref{thm=ReznikovRR}$]
Note that (ii) trivially implies (i). In what follows, we prove the converse implication. Assume (i). Then, by Theorem~\ref{mthm=Reznikov}, $I_{c_1}\equiv 0$ and the restriction of $\bb_{\mushf}$ to $\tflux_{\omega}(\Pg)\times \tflux_{\omega}(\Pg)$ equals the zero-form. Since $\bb_{\mushf}$ is $\RR$-bilinear, we conclude that the restriction of $\bb_{\mushf}$ to $\tflux_{\omega}(\Gg_{\Pg}^{\RR})\times \tflux_{\omega}(\Gg_{\Pg}^{\RR})$ equals the zero-form as well. Again by Theorem~\ref{mthm=Reznikov}, we obtain (ii).
\end{proof}

\section{Axiomatized theorem toward the explicit expression of $\bb_{\mushf}$}\label{sec=kari1}

In this section and the next section, we use the symbol $\Nm$ for a symplectic manifold, not for a group. Recall that we equip the product of two symplectic manifolds with the product symplectic structure (Subsection~\ref{subsec=notation}).

As we mentioned in Subsection~\ref{subsec=outline}, we will obtain Theorem~\ref{mthm=explicitShelukhin} by proving an axiomatized theorem. In this section, we state the axiomatized theorem  (Theorem~\ref{thm=XQ} below), and provide the proof of it.

\subsection{Axiomatized theorem for Theorem~\ref{mthm=explicitShelukhin}}\label{subsec=axiom}

To state the axiomatized theorem for Theorem~\ref{mthm=explicitShelukhin}, we introduce the $(\Xm,\Qm)$-condition as follows. 
 We set $D_{\ee}=([0,1]\times[0,1])\setminus([2\ee, 1-2\ee] \times [2\ee, 1-2\ee])$ for fixed $\ee \in (0,1/4)$.
  Let $p\colon\RR^2\to\RR^2/\ZZ^2$ be the natural projection
  and set $P_{\ee}=p(D_{\ee})$.
  We consider $P_{\ee}$ as a  symplectic manifold with the symplectic form $\omega_0 = dx \wedge dy$, where $(x,y)$ is the standard coordinates on $P_{\ee} \subset \RR^2 / \ZZ^2$.
  
\begin{defn}[$(\Xm,\Qm)$-condition]\label{definition:MQ}
Let $(\Xm,\omega_\Xm)$ and $(\Qm,\omega_{\Qm})$ be two closed symplectic manifolds. We say that a symplectic manifold $(\Mm,\omega)$ satisfies the \emph{$(\Xm,Q)$-condition} if the following conditions are all satisfied.
\begin{enumerate}[label=\textup{(\arabic*)}]
  \item The symplectic manifold $(\Mm,\omega)$ is the direct product of $(\Xm,\omega_\Xm)$ and $(\Qm,\omega_{\Qm})$.
  \item There exists a positive integer $\nn$ such that $\Mm$ admits a system of symplectic embeddings $I=\{\iPN_i \colon P_{\epsilon_i} \times \Nm_i \to \Xm \}_{i=1, \dots, \nn}$, where $(\Nm_i, \omega_i)$ is a
  $(\dim \Xm - 2)$-dimensional closed symplectic manifold and $0< \ee_i < \frac14$.
  \item For each $i=1,\dots, \nn$, let $\ioP_{i \ast}^{P_{\ee_i},X}  \colon \widetilde{\Sympc_0}(P_{\epsilon_i},\omega_0) \to \widetilde{\Sympc_0}(\Xm,\omega_\Xm)$ be the map defined from $\ioP_{i}$ as in Subsection \ref{subsec=notation}  $\ref{item:emb}$, and set $\Gg_i := \mathop{\mathrm{Im}} \left( \ioP_{i \ast}^{P_{\ee_i},X} \right)$.
  Then, $\Gg_i$ commutes with $\Gg_j$ whenever $i$ and $j$ are distinct elements in $\{1,\ldots,\nn\}$.
  \item The flux homomorphim $\widetilde{\flux}_{\omega_{\Qm}}$ has a section homomorphism.
\end{enumerate}
\end{defn} 



Now we are ready to present the statement of the axiomatized theorem. 
Recall that $A(\Mm,\omega)$ denotes the average Hermitian scalar curvature (Definition~\ref{defn=aHsc}).

\begin{thm}[axiomatized theorem for Theorem~\textup{\ref{mthm=explicitShelukhin}}] \label{thm=XQ}
  Let $(\Xm,\omega_\Xm)$ and $(\Qm,\omega_{\Qm})$ be two closed symplectic manifolds. Let $(\Mm,\omega)$ be a $2n$-dimensional  symplectic manifold. Assume that $(\Mm,\omega)$ satisfies the $(\Xm,\Qm)$-condition, and take $\nn$ and $I=\{\iPN_i \colon P_{\epsilon_i} \times N_i \to \Xm\}_{i=1, \dots, \nn}$ as in Definition~\textup{\ref{definition:MQ}}. 
 Set $V_I := \bigoplus\limits_{i=1}^{\nn}  \ioP_{i \ast}(\HHH^1_c(P_{\ee_i} ; \RR)) \subset \HHH^1_c(\Xm ; \RR)$. 
Let $\bb_{I,\Qm}$ be a $\RR$-bilinear form on $V_I \oplus \HHH^1_c(\Qm;\RR) \subset \HHH^1_c(M ; \RR)$ defined by
  \[ \bb_{I,\Qm} =  n \cdot \left( \sum_{i=1}^{\nn} \vol(\Nm_i \times \Qm, \omega_{\Nm_i \times \Qm}) \left( A(\Mm, \omega) - A(\Nm_i \times Q , \omega_{\Nm_i \times \Qm}) \right) \ioP_{i \ast}^{P_{\ee_i},X} \bb_i \right) \oplus 0_{\Qm},\]
where $\bb_{i}$ denotes the intersection form on $\HHH^1_c( P_{\epsilon_i} ; \RR)$.
Then, we have
\[
\bb_{\mushf}|_{(V_I \oplus \HHH^1_c(\Qm;\RR))\times (V_I \oplus \HHH^1_c(\Qm;\RR))} \equiv\bb_{I,Q}.
\]
\end{thm}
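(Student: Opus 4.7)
The plan is to use the $\RR$-bilinearity and alternating property of $\bb_{\mushf}$ granted by Theorem~\ref{mthm=RRbilinear} to reduce the claim to computing $\bb_{\mushf}(v,w)$ when $v,w$ each lie in one of the direct summands of
\[
V_I \oplus \HHH^1_c(\Qm;\RR) \;=\; \bigoplus_{i=1}^{\nn} \ioP_{i\ast}^{P_{\ee_i},\Xm}\bigl(\HHH^1_c(P_{\ee_i};\RR)\bigr) \;\oplus\; \HHH^1_c(\Qm;\RR).
\]
All off-diagonal pairs will be handled uniformly by Theorem~\ref{thm=commutingtSymp}, while the diagonal blocks $\ioP_{i\ast}^{P_{\ee_i},\Xm}\HHH^1_c(P_{\ee_i};\RR) \times \ioP_{i\ast}^{P_{\ee_i},\Xm}\HHH^1_c(P_{\ee_i};\RR)$ require the refined limit formula combined with an embedding functoriality for $\mushf$ and Rousseau's theorem.

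For the off-diagonal cases, the strategy is to produce commuting lifts in $\tSympc(\Mm,\omega)$ and invoke Theorem~\ref{thm=commutingtSymp}. For $v,w\in\HHH^1_c(\Qm;\RR)$, the section homomorphism $s$ of $\tflux_{\omega_\Qm}$ provided by condition~(4) of Definition~\ref{definition:MQ} has abelian image, so $s(v),s(w)$ commute; extending by the identity on $\Xm$ preserves both the commutativity and the fluxes, yielding $\bb_{\mushf}(v,w)=0$. For $v\in V_I$ and $w\in\HHH^1_c(\Qm;\RR)$, any lift of $v$ supported in the $\Xm$-factor commutes with $s(w)$ supported in the $\Qm$-factor. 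For $v\in\ioP_{i\ast}\HHH^1_c(P_{\ee_i};\RR)$ and $w\in\ioP_{j\ast}\HHH^1_c(P_{\ee_j};\RR)$ with $i\ne j$, condition~(3) of Definition~\ref{definition:MQ} directly supplies commuting lifts in $\Gg_i\times\Gg_j$. Each of these cases gives $\bb_{\mushf}(v,w)=0$, matching the corresponding vanishing components of $\bb_{I,\Qm}$.

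For the diagonal block indexed by $i$, choose $\tih,\tg\in\tSympc(P_{\ee_i},\omega_0)$ with $\tflux_{\omega_0}(\tih)=v$ and $\tflux_{\omega_0}(\tg)=w$; since $\ioP_{i\ast}^{P_{\ee_i},\Mm}$ is a group homomorphism, it intertwines commutators and the refined limit formula (Theorem~\ref{thm=refined_limit_formula}) gives
\[
\bb_{\mushf}(\ioP_{i\ast}v,\ioP_{i\ast}w) \;=\; \lim_{k\to\infty}\frac{\mushf\bigl(\ioP_{i\ast}^{P_{\ee_i},\Mm}([\tih^{k},\tg])\bigr)}{k},
\]
with $[\tih^k,\tg]\in\tHamc(P_{\ee_i})$. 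The core input is a product-type embedding functoriality for Shelukhin's quasimorphism (extending \cite[Proposition~1.2]{Shelukhin}, in the spirit of \cite{KKMMMReznikov}), which expresses $\mushf\circ\ioP_{i\ast}^{P_{\ee_i},\Mm}$ on $\tHamc(P_{\ee_i})$ as a linear combination of $\mushf^{P_{\ee_i}}$ and $\cal_{P_{\ee_i}}$ with coefficients determined by $\vol(\Nm_i\times\Qm,\omega_{\Nm_i\times\Qm})$, $A(\Mm,\omega)$ and $A(\Nm_i\times\Qm,\omega_{\Nm_i\times\Qm})$. Applying this to $[\tih^k,\tg]$ and invoking Rousseau's theorem (Theorem~\ref{thm=rousseau}) on the $2$-dimensional open manifold $P_{\ee_i}$, which yields $\cal_{P_{\ee_i}}([\tih^k,\tg])=k\,\bb_{\omega_0}(v,w)$, the limit will equal $n\,\vol(\Nm_i\times\Qm,\omega_{\Nm_i\times\Qm})\bigl(A(\Mm,\omega)-A(\Nm_i\times\Qm,\omega_{\Nm_i\times\Qm})\bigr)\bb_{\omega_0}(v,w)$, matching $\bb_{I,\Qm}$ on this block.

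The principal obstacle is establishing the product-embedding functoriality with the correct numerical coefficients, since the open subset $P_{\ee_i}\times\Nm_i\times\Qm\subset\Mm$ need not be exact and the Calabi homomorphism on it is not classically defined. This will be handled by restricting to Hamiltonians pulled back from the $P_{\ee_i}$-factor: the integrals defining $\mushf$ decouple cleanly across the three factors, normalization becomes unambiguous on each factor, and the $n$-fold binomial identity $(\omega_{P_{\ee_i}}+\omega_{\Nm_i\times\Qm})^n=n\,\omega_{P_{\ee_i}}\wedge\omega_{\Nm_i\times\Qm}^{n-1}$ supplies the prefactor $n$ in the final formula. The potential $\mushf^{P_{\ee_i}}$-contribution will be shown not to affect the limit by exploiting the flat Kähler structure on the ambient torus $(T^2,\omega_0)$, where $A(T^2,\omega_0)=0$, together with an application of the same embedding functoriality to the inclusion $P_{\ee_i}\hookrightarrow T^2$.
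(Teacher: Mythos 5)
Your proposal is correct in its overall architecture but follows a genuinely different route from the paper. The paper never decomposes into blocks: for arbitrary $\gamma_1,\gamma_2\in V_I\oplus\HHH^1_c(\Qm;\RR)$ it builds explicit elements $\fk,\gk,\hk\in\tSympc(\Mm,\omega)$ from the symplectomorphisms $\sigma_{\qd},\tau_{\qd},\sigma'_{\qd},\tau'_{\qd}$ of \cite{KKMM2} (pushed into the embedded pieces) together with the section $s$ of $\tflux_{\omega_{\Qm}}$, computes $\mushf([\fk,\gk][\fk,(\hk)^{-1}]^{-1})\sim_{\DD(\mushf)}k\cdot\bb_{I,\Qm}(\gamma_1,\gamma_2/m)$ by citing Proposition~\ref{prop:flux} (that is, \cite[Proposition~4.6]{KKMMMReznikov}) for the values of $\mushf$ on the embedded commutators, and then applies the refined limit formula \eqref{eq=refined_limit_formula_left}, exactly as in the proof of Theorem~\ref{thm=Py}. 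Your reduction to blocks via the $\RR$-bilinearity of Theorem~\ref{mthm=RRbilinear} (already available at this point, so no circularity), with all off-diagonal blocks killed by Theorem~\ref{thm=commutingtSymp} and commuting lifts supplied by the abelian image of $s$, by $\eta$, and by condition~(3) of Definition~\ref{definition:MQ}, is sound and arguably cleaner than the paper's treatment of cross terms, which are handled implicitly inside the single construction.

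The weak point is your diagonal block. There you replace the paper's citation of Proposition~\ref{prop:flux} by a ``product-type embedding functoriality'' that you only sketch; be aware that this formula \emph{is} the analytic heart of the matter (it is essentially what \cite{KKMMMReznikov} proves), not a routine lemma, so a complete write-up would have to carry out that computation (total geodesy of product almost complex structures, additivity of Hermitian scalar curvature, renormalization). Moreover, one step in the sketch does not follow as stated: you claim the $\mushf^{P_{\ee_i}}$-contribution does not affect the limit ``because $A(T^2,\omega_0)=0$.'' Transferring via \cite[Proposition~1.2]{Shelukhin} to the torus gives $\mushf^{P_{\ee_i}}([\tih^k,\tg])=\mushf^{T^2}([x^k,y])$ with $x,y\in\tSympc(T^2,\omega_0)$ lifting nonzero fluxes, and by Theorem~\ref{thm=bilinear_main} such values grow like $k\,\bb_{\mushf^{T^2}}(\tflux_{\omega_0}(x),\tflux_{\omega_0}(y))$; vanishing of $A(T^2,\omega_0)$ alone does not rule out linear growth. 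What you actually need is $\bb_{\mushf^{T^2}}\equiv 0$, which is true but requires its own argument: translations give a section homomorphism of the flux of $T^2$ with commuting image, so Theorem~\ref{thm=commutingtSymp} applies. (Two further remarks: the Calabi homomorphism $\cal_U\colon\tHamc(U,\omega)\to\RR$ of Subsection~\ref{subsec=symp} is defined for every open symplectic manifold, exactness being needed only for the descended version, so your normalization workaround is unnecessary; and you could bypass the functoriality entirely by computing the diagonal blocks directly from Proposition~\ref{prop:flux} applied to $\sigma_{\qd},\tau_{\qd}$-type lifts of a basis of $\HHH^1_c(P_{\ee_i};\RR)$, which together with your off-diagonal argument gives a complete proof from results already in the paper.)
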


In many applications of Theorem \ref{thm=XQ} in this paper, we only deal with the case where $\Qm$ is the one-point set; we will see in Section~\ref{sec=kari2}.


To prove Theorem \ref{thm=XQ}, we will first show the following proposition.

\begin{prop} \label{prop=XQ}
  Let $(\Mm,\omega)$ be a symplectic manifold satisfying the $(\Xm,\Qm)$-condition for some $\Xm$ and $\Qm$. Let $\mushf$ be Shelukhin's quasimorphism on $\tHamc(\Mm,\omega)$. Let $\gaml_1,\gaml_2\in V_I \oplus \HHH^1_c(\Qm;\RR)$. Then, there exist $m\in \NN$, a sequence  $(\fl_1^{(k)})_{k\in \NN}$ in $\tSympc(\Mm,\omega)$ and $\fl_2^+,\fl_2^-\in \tSympc(\Mm,\omega)$ such that the following hold.
  \begin{enumerate}[label=\textup{(\arabic*)}]
   \item $\tflux_{\omega}(\fl_2^+)+\tflux_{\omega}(\fl_2^-)=\frac{\gaml_2}{m}$.
   \item For every $k\in \NN$, $\tflux_{\omega}(\fl^{(k)})=k\gaml_1$.
   \item For every $k\in \NN$,
  \[
  \mushf([\fl_1^{(k)},\fl_2^+][\fl_1^{(k)},(\fl_2^-)^{-1}]^{-1})\sim_{\DD(\mushf)} \frac{k}{m}\cdot \bb_{I,Q}(\gaml_1,\gaml_2).
  \]
  \end{enumerate}
  \end{prop}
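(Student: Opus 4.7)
The plan is to adapt the surface construction underlying Proposition~\ref{prop=KKMM} (from \cite{KKMM2}) to the multi-factor setting, using the $(X,Q)$-condition to localize contributions frame-by-frame, and using Shelukhin's embedding functoriality (the formula $\mushf^M \circ \iota_* = \mushf^U - A(M,\omega)\Cal_U$ for an open embedding $U \hookrightarrow M$ into a closed $M$) applied twice: first to transport each local piece into $M$, and then to transport it into the ambient flat torus.

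First, decompose $\gaml_1 = \sum_i \iPN_{i*}(\alpha_i) + \gaml_1^Q$ and $\gaml_2 = \sum_i \iPN_{i*}(\beta_i) + \gaml_2^Q$ with $\alpha_i, \beta_i \in \HHH^1_c(P_{\ee_i};\RR)$. For each frame $P_{\ee_i}$ I would carry out a surface-theoretic construction via twist symplectomorphisms supported in thin horizontal and vertical strips of the frame, producing a common integer $m \in \NN$ and elements $f_1^{(k),i}, f_2^{\pm,i} \in \tSympc(P_{\ee_i},\omega_0)$ satisfying $\tflux(f_1^{(k),i}) = k\alpha_i$, $\tflux(f_2^{+,i}) + \tflux(f_2^{-,i}) = \beta_i/m$, and such that the Hamiltonian
\[
u_i^{(k)} := [f_1^{(k),i}, f_2^{+,i}][f_1^{(k),i}, (f_2^{-,i})^{-1}]^{-1} \in \tHamc(P_{\ee_i})
\]
has two-dimensional Calabi value $\Cal_{P_{\ee_i}}(u_i^{(k)}) = -(k/m)\bb_i(\alpha_i,\beta_i)$. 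Pushing each piece forward to $\tSympc(M)$ via $\iPN_{i*}^{P_{\ee_i},X}$, and using the section of $\tflux_{\omega_Q}$ provided by condition~(4) of the $(X,Q)$-condition to produce mutually commuting elements in $\tSympc(Q) \hookrightarrow \tSympc(M)$ realizing $k\gaml_1^Q$, $\gaml_2^Q/m$ and $0$, one assembles $\fl_1^{(k)} = \bar f_1^{(k),Q}\prod_i \bar f_1^{(k),i}$ and $\fl_2^{\pm} = \bar f_2^{\pm,Q}\prod_i \bar f_2^{\pm,i}$; conditions (1) and (2) then follow from the additivity of $\tflux$.

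For (3), the mutual commutativity of the $\Gg_i$ (condition~(3) of the $(X,Q)$-condition), together with commutativity of the $Q$-factor with each $\Gg_i$, causes the double commutator to factor as $\prod_i \iPN_{i*}(u_i^{(k)} \times \id_{N_i \times Q})$ (the $Q$-contribution is the identity since its constituents commute). Since these local pieces live in pairwise commuting subgroups, Lemma~\ref{lem:qm}~(2) makes $\mushf^M$ additive on the product. For each $i$, two successive applications of Shelukhin's embedding functoriality—first via $\iPN_i \times \id_Q \colon P_{\ee_i} \times N_i \times Q \hookrightarrow M$, then via $P_{\ee_i} \times N_i \times Q \hookrightarrow T^2 \times N_i \times Q$ (where $T^2 = \RR^2/\ZZ^2$ is the ambient torus containing $P_{\ee_i}$)—yield
\[
\mushf^M\bigl(\iPN_{i*}(u_i^{(k)} \times \id)\bigr) = \mushf^{T^2 \times N_i \times Q}(u_i^{(k)} \times \id) - n\vol(N_i \times Q)\bigl(A(M) - A(N_i \times Q)\bigr)\Cal_{P_{\ee_i}}(u_i^{(k)}),
\]
where I use the Calabi identity $\Cal_{P \times N'}(u \times \id_{N'}) = n\vol(N')\Cal_P(u)$ (valid for a surface $P$ because $\omega_P^2 = 0$) and the identity $A(T^2 \times N') = A(N')$ (which follows from $c_1(T^2) = 0$ by a direct Chern class calculation). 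The Calabi term contributes exactly $(k/m)\, n\vol(N_i \times Q)(A(M) - A(N_i \times Q))\bb_i(\alpha_i,\beta_i)$; summing over $i$ gives the claimed right-hand side $(k/m)\bb_{I,Q}(\gaml_1,\gaml_2)$.

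The principal obstacles are twofold. First, the surface construction on $P_{\ee_i}$: one must produce explicit twist symplectomorphisms, supported in the frame, whose double commutator has Calabi value realizing the intersection pairing $\bb_i$, with a common denominator $m$ uniform in $i$; this adapts the annulus/pants argument of \cite[Section~4]{KKMM2} to the torus-minus-square setting. Second, the residual term $\mushf^{T^2 \times N_i \times Q}(u_i^{(k)} \times \id)$ must be bounded uniformly in $k$; a naive commutator bound via Lemma~\ref{lem:qm}~(3) is unavailable because the $f_j^{\cdot,i}$ are commutators in $\tSympc$ rather than $\tHamc$, so one must exploit $c_1(T^2) = 0$ directly—roughly, this forces the relevant restriction of the bilinear form associated with $\mushf^{T^2 \times N_i \times Q}$ to vanish—and this is the main bookkeeping issue in matching the precise constant $\DD(\mushf)$ in the statement.
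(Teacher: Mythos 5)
Your skeleton coincides with the paper's: decompose $\gaml_1,\gaml_2$ into frame classes plus a $Q$-part, build frame-supported elements as in \cite{KKMM2} (the paper uses exactly the elements $\sigma_{\qd},\tau_{\qd},\sigma'_{\qd},\tau'_{\qd}$ of Lemma~\ref{lem:local_flux_calculation}), assemble them together with a section of $\tflux_{\omega_Q}$, note that the $Q$-contribution to the commutators dies because the section has abelian image, factor the double commutator into pairwise commuting pushed-forward pieces, and apply Lemma~\ref{lem:qm}~(2). Conditions (1) and (2) then follow exactly as in the paper's Lemma~\ref{lem:inG}, and this part of your proposal is sound.

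The gap is in step (3), and it is precisely the step you defer as ``the main bookkeeping issue.'' The paper does \emph{not} evaluate $\mushf$ on the local pieces by functoriality plus a Calabi computation; it cites Proposition~\ref{prop:flux} (i.e.\ \cite[Proposition 4.6]{KKMMMReznikov}), which gives the \emph{exact} values $\mushf\left(\ioP^{P_{\ee},M}_{\ast}([\sigma_{\qd},\tau_{\qd}])\right)=-bc\,n\vol(N,\omega_N)(A(N,\omega_N)-A(M,\omega))$ and the analogous value for $[\sigma'_{\qd},\tau'^{-1}_{\qd}]$ --- in other words, with your ``residual term'' identically zero. Your double application of embedding functoriality correctly isolates the Calabi contribution, but the leftover $\mushf^{T^2\times N_i\times Q}(u_i^{(k)}\times\id)$ is never shown to vanish, and the remedy you sketch cannot close this. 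The honest way to ``exploit $c_1(T^2)=0$'' is the limit formula applied to commuting global torus rotations, which shows only that the bilinear form of $T^2\times N_i\times Q$ vanishes on the $\HHH^1(T^2;\RR)$-directions; this bounds each residual by a multiple of $\DD(\mushf^{T^2\times N_i\times Q})$, the defect of a \emph{different} quasimorphism on a \emph{different} manifold, summed over the $\nn$ frames. That yields at best an estimate $\mushf([\fk,\gk][\fk,(\hk)^{-1}]^{-1})\sim_{C}\frac{k}{m}\bb_{I,Q}(\gaml_1,\gaml_2)$ with $C=\DD(\mushf)+O\bigl(\sum_i\DD(\mushf^{T^2\times N_i\times Q})\bigr)$, not the constant $\DD(\mushf)$ demanded by the statement. (Such a weaker, $k$-independent bound would still suffice for Theorem~\ref{thm=XQ}, which only uses the $k\to\infty$ limit, but it does not prove Proposition~\ref{prop=XQ} as stated.) So the missing ingredient is the exact evaluation of Shelukhin's quasimorphism on the pushed-forward frame commutators; in the paper this is imported wholesale from \cite{KKMMMReznikov}, and your proposal neither reproduces that computation nor can it reach the stated defect constant without it.
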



\subsection{Proof of Theorem~\ref{thm=XQ}}

In order to prove Theorem~\ref{thm=XQ}, we will use symplectomorphisms constructed in \cite{KKMM2}, so we briefly review them.
For $\qd =(a,b,c,d)\in \mathbb{R}^4$ satisfying $|c|\leq \ee$ and $|d|\leq \ee$,
the authors constructed ${\sigma}_{\qd}, {\tau}_{\qd}, {\sigma}'_{\qd}, {\tau}'_{\qd} \in \tSympc(P_\ee)$ 
satisfying the following property.

\begin{lem}[{\cite[Lemma 3.3]{KKMM2}}]\label{lem:local_flux_calculation}
  Let $\qd =(a,b,c,d)\in \mathbb{R}^4$ satisfy $|c|\leq \ee$ and $|d|\leq \ee$. 
Let $\alpha,\beta\colon[0,1]\to P_{\ee}$ be curves on $P_{\ee}$ defined by $\alpha(t)=p(0,t)$, $\beta(t)=p(t,0)$. Then,  we have
  \begin{gather*}
    \widetilde{\flux}_{\omega_{0}}\left( {\sigma}_{\qd} \right)=b [ \beta]^\ast , \quad
    \widetilde{\flux}_{\omega_{0}}\left( {\sigma}'_{\qd} \right)=a [ \alpha]^\ast,\\
    \widetilde{\flux}_{\omega_{0}}\left( {\tau}_{\qd} \right)=c  [ \alpha]^\ast, \quad
    \widetilde{\flux}_{\omega_{0}}\left( {\tau}'_{\qd} \right)=d[ \beta]^\ast.
  \end{gather*}
  \end{lem}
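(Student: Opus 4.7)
My plan is to unpack the construction of the four symplectomorphisms ${\sigma}_{\qd}, {\tau}_{\qd}, {\sigma}'_{\qd}, {\tau}'_{\qd}$ from \cite{KKMM2} far enough to read off their generating vector fields, and then just apply the definition of $\widetilde{\flux}_{\omega_0}$. The region $P_\ee$ deformation retracts to a wedge of the two circles $\alpha$ and $\beta$, so $\HHH^1_c(P_\ee;\RR)$ is two-dimensional. Using Poincar\'e--Lefschetz duality (or direct integration), the compactly supported class of a closed $1$-form $\eta$ on $P_\ee$ is determined by the pairings $\int_\alpha \eta$ and $\int_\beta \eta$. Since $\alpha(t) = p(0,t)$ gives $\int_\alpha dy = 1$, $\int_\alpha dx = 0$ and $\beta(t) = p(t,0)$ gives $\int_\beta dx = 1$, $\int_\beta dy = 0$, we may identify $[dy]_c = [\alpha]^\ast$ and $[dx]_c = [\beta]^\ast$ inside $\HHH^1_c(P_\ee;\RR)$.

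Next I would recall from \cite{KKMM2} the explicit construction: each of ${\sigma}_{\qd}, {\sigma}'_{\qd}, {\tau}_{\qd}, {\tau}'_{\qd}$ is realized as the time-$1$ map of an isotopy $\{\psi^t\}_{t\in[0,1]}\subset \Sympc_0(P_\ee,\omega_0)$ whose generating vector field $X_t$ is supported in one of the four coordinate ``strips'' that make up $P_\ee$ (the top, bottom, left, right bands of the square frame $D_\ee$ after projecting). Concretely, ${\sigma}_{\qd}$ is generated by a vector field of the form $X_t = \chi_t(x,y)\, b\,\partial_y$ on the horizontal strip $\{\,(x,y):y\in [0,2\ee]\,\}$, where $\chi_t$ is a cutoff interpolating between the two sides of that strip so that $\psi^1$ realizes a vertical translation by $b$ on the inner piece; the analogous descriptions give ${\sigma}'_{\qd}$ a horizontal vector field of strength $a$ on a vertical strip, ${\tau}_{\qd}$ a horizontal vector field of strength $c$ on a horizontal strip, and ${\tau}'_{\qd}$ a vertical vector field of strength $d$ on a vertical strip. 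The hypothesis $|c|,|d|\leq \ee$ guarantees that these translations stay inside $P_\ee$, so the isotopies are well-defined.

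With the vector fields in hand, the flux calculation is almost formal. For ${\sigma}_{\qd}$ one has $\iota_{X_t}\omega_0 = \chi_t(x,y)\, b\, dx$; this is a closed, compactly supported $1$-form (by a standard check on each strip), and integrating against the generators gives
\[
\int_0^1\!\!\int_\beta \iota_{X_t}\omega_0\, dt = b, \qquad \int_0^1\!\!\int_\alpha \iota_{X_t}\omega_0\, dt = 0,
\]
since $\alpha$ traverses the left vertical boundary of the frame, where the total horizontal displacement in the strip equals $b$ once, while $\alpha$ lies outside (or crosses perpendicularly) the cutoff region with total integral zero. This yields $\widetilde{\flux}_{\omega_0}({\sigma}_{\qd}) = b\,[dx]_c = b\,[\beta]^\ast$. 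The three remaining identities are obtained by the same device after swapping the roles of $\partial_x\leftrightarrow \partial_y$ and of the two strips.

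The routine but main obstacle is verifying that $[\iota_{X_t}\omega_0]_c$ really equals the constant coefficient times $[dx]_c$ or $[dy]_c$ in compactly supported cohomology, despite $\iota_{X_t}\omega_0$ being only supported in one strip rather than all of $P_\ee$; equivalently, showing that the cutoff factors $\chi_t$ do not contribute to the cohomology class. This is where the precise shape of $P_\ee$ and the location of the isotopy's support matter, and I would handle it by exhibiting an explicit primitive of $\iota_{X_t}\omega_0 - b\cdot dx$ (respectively $-a\cdot dy$, etc.) supported in the same strip, using that each strip is simply connected so any closed $1$-form on it is exact.
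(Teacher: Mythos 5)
Your overall route---unpacking the defining vector fields from \cite{KKMM2} and computing $\int_0^1[\iota_{X_t}\omega_0]\,dt$ against the curves $\alpha,\beta$---is exactly how this lemma is established; the present paper offers no proof at all (it simply cites \cite[Lemma~3.3]{KKMM2}, the elements being defined there as flows of the vector fields in the figure), and the computational core of your proposal matches that source. However, two of your steps are wrong as written. First, your direction-versus-support assignments for $\sigma_{\qd}$ and $\sigma'_{\qd}$ are incompatible with symplecticity: for $X_t=\chi_t(x,y)\,b\,\partial_y$ one has $\iota_{X_t}\omega_0=\pm b\,\chi_t\,dx$, which is closed only if $\chi_t$ is independent of $y$; hence a $\partial_y$-field must be cut off in $x$ alone and be supported in the \emph{vertical} annulus $p(\{x\in[0,2\ee]\cup[1-2\ee,1]\})$ (which is flow-invariant), and dually a $\partial_x$-field must be cut off in $y$ and live in the horizontal annulus. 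You have this right for $\tau_{\qd},\tau'_{\qd}$ but swapped for $\sigma_{\qd},\sigma'_{\qd}$: with a genuinely $y$-dependent cutoff on a horizontal strip your ``standard check'' that $\iota_{X_t}\omega_0$ is closed fails, the isotopy is not symplectic, and the flux is undefined. (The signs, and which of the four maps carries which parameter, are fixed by the arrow directions in the figure of \cite{KKMM2} and cannot be recovered from first principles, so it is fine that you leave them open---but the support constraint above is forced and not optional.)

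Second, the compactly supported cohomology bookkeeping in your last paragraph does not work: $dx$ and $dy$ are \emph{not} compactly supported on $P_{\ee}$, so ``$[dx]_c$'' and ``$[dy]_c$'' are undefined, and a primitive of $\iota_{X_t}\omega_0-b\,dx$ ``supported in the same strip'' cannot exist since that difference is not supported in the strip; moreover, exactness on a simply connected strip produces a primitive on the strip, not a compactly supported primitive on $P_{\ee}$, which is what equality in $\HHH^1_c$ requires. The correct (and simpler) bookkeeping is the one you gesture at in your opening: define $[\alpha]^\ast,[\beta]^\ast$ as the dual basis under the pairing $\HHH^1_c(P_{\ee};\RR)\times \HHH_1(P_{\ee};\RR)\to\RR$, and observe that this pairing is perfect. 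Indeed, writing $\bar P$ for the compact surface with one boundary circle whose interior is $P_{\ee}$, the restriction $\HHH^0(\bar P)\to \HHH^0(\partial\bar P)$ is surjective, so the long exact sequence of the pair gives injectivity of $\HHH^1_c(P_{\ee};\RR)\cong \HHH^1(\bar P,\partial\bar P;\RR)\to \HHH^1(\bar P;\RR)$, and both spaces are $2$-dimensional with $\HHH^1(\bar P;\RR)$ detected by periods over $\alpha,\beta$. Once this is in place, your period computations $\int_\beta \iota_{X_t}\omega_0=\pm b$ and $\int_\alpha \iota_{X_t}\omega_0=0$ (the latter trivial, since $x$ is constant along $\alpha$) finish the proof with no primitive needed.
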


These elements ${\sigma}_{\qd}, {\tau}_{\qd}, {\sigma}'_{\qd}, {\tau}'_{\qd}$ are defined as the flow of the vector fields as shown in Figure \ref{vec_field} (see \cite{KKMM2} and \cite{KKMMMReznikov} for more details on the construction).

\begin{figure}[htbp]
  \begin{minipage}[c]{0.24\hsize}
    \centering
    \includegraphics[width=4truecm]{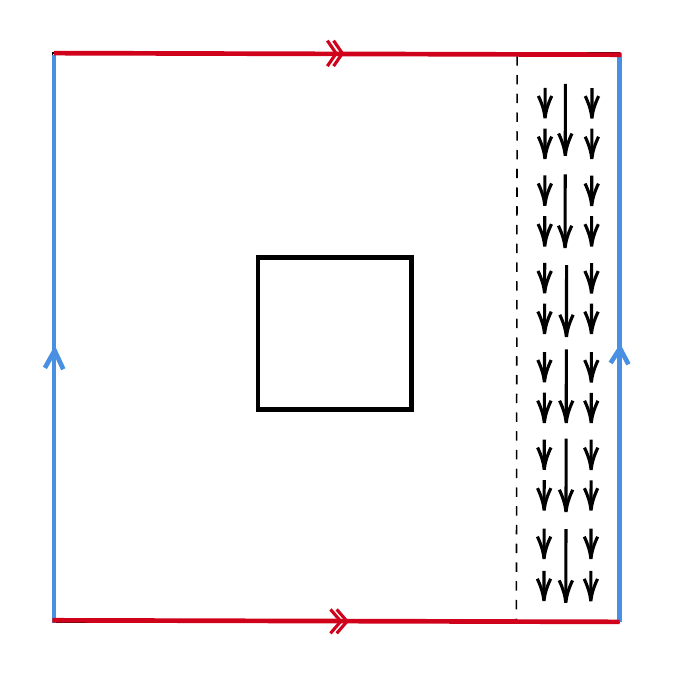}
  \end{minipage}
  \begin{minipage}[c]{0.24\hsize}
    \centering
    \includegraphics[width=4truecm]{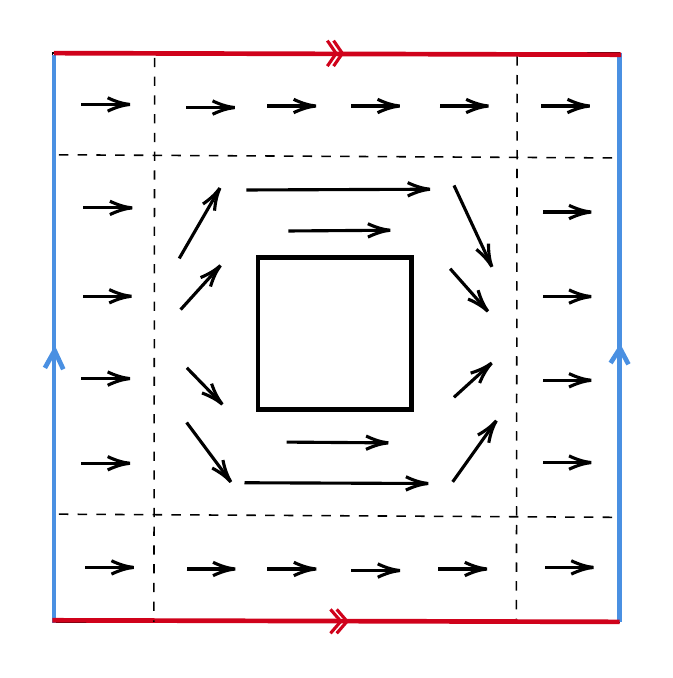}
  \end{minipage}
  \begin{minipage}[c]{0.24\hsize}
    \centering
    \includegraphics[width=4truecm]{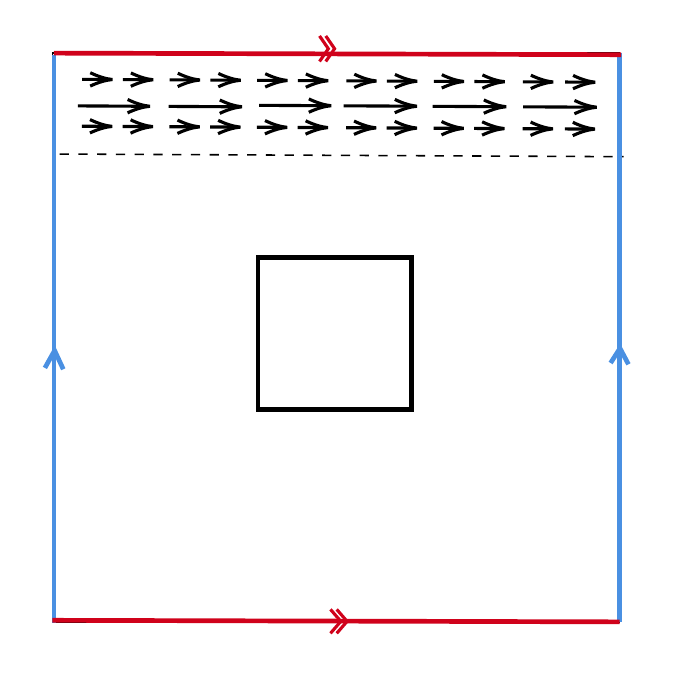}
  \end{minipage}
  \begin{minipage}[c]{0.24\hsize}
    \centering
    \includegraphics[width=4truecm]{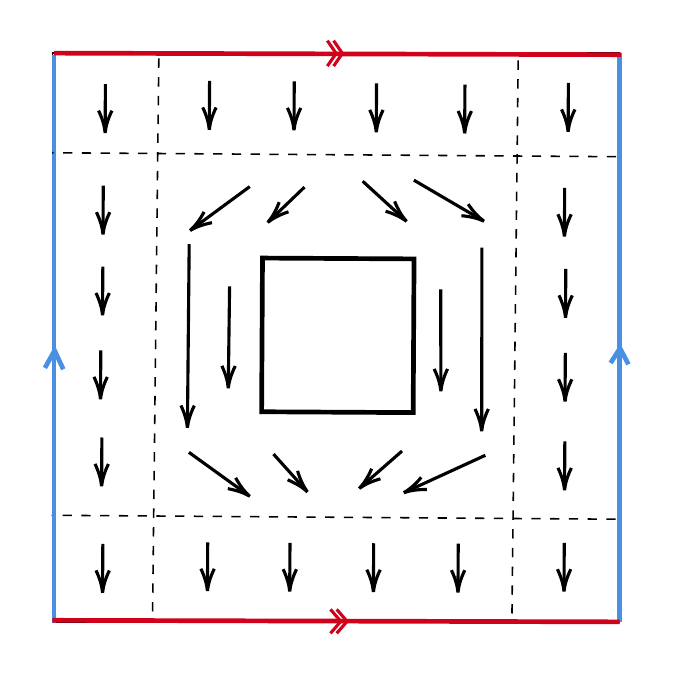}
  \end{minipage}
  \caption{Vector field defining $\sigma_{\qd}$, $\tau_{\qd}$, $\sigma'_{\qd}$ and $\tau'_{\qd}$}
  \label{vec_field}
\end{figure}

  We will use the following calculations in \cite{KKMMMReznikov}.

  \begin{lem}[{\cite[Lemma 4.3]{KKMMMReznikov}}]  \label{average scalar curvature}
  Let $(S,\omega_S)$ be a closed surface with a symplectic form $\omega_S$ and $(N, \omega_N)$ a closed symplectic manifold. Then,
  \[A (S \times N , \omega_{S\times N} ) = A(S,\omega_S) + A(N, \omega_N). \]
  \end{lem}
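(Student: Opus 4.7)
The plan is to compute both sides using the formula
\[ A(\Mm,\omega) = \frac{n \int_{\Mm} c_1(\Mm) \wedge \omega^{n-1}}{\int_{\Mm} \omega^n} \]
from Definition~\ref{defn=aHsc}, applied to the product $(S \times N, \omega_{S \times N})$. Writing $\dim N = 2m$, so that $\dim(S \times N) = 2(m+1)$, we have $c_1(S \times N) = \pi_S^\ast c_1(S) + \pi_N^\ast c_1(N)$ and $\omega_{S \times N} = \pi_S^\ast \omega_S + \pi_N^\ast \omega_N$, where $\pi_S$ and $\pi_N$ denote the projections. To ease notation I would suppress the pullback symbols.

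First I would expand $\omega_{S \times N}^{m+1}$ via the binomial theorem. Since $\dim S = 2$, the powers $\omega_S^k$ vanish for $k \geq 2$, and likewise $\omega_N^{m+1} = 0$, so only the cross term survives, giving
\[ \omega_{S \times N}^{m+1} = (m+1)\,\omega_S \wedge \omega_N^m, \qquad \int_{S \times N} \omega_{S \times N}^{m+1} = (m+1) \int_S \omega_S \cdot \int_N \omega_N^m. \]
A parallel degree count on the numerator shows that in $c_1(S \times N) \wedge \omega_{S \times N}^m$ the $c_1(S)$ contribution survives only when paired with $\omega_N^m$ (the $k=0$ term of the binomial expansion), while the $c_1(N)$ contribution survives only when paired with $\omega_S \wedge \omega_N^{m-1}$ (the $k=1$ term, with coefficient $\binom{m}{1}=m$). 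This yields
\[ \int_{S \times N} c_1(S \times N) \wedge \omega_{S \times N}^m = \int_S c_1(S) \cdot \int_N \omega_N^m + m \int_S \omega_S \cdot \int_N c_1(N) \wedge \omega_N^{m-1}. \]

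Dividing the numerator by the denominator, the factor $(m+1)$ cancels and the expression splits cleanly as
\[ A(S \times N, \omega_{S \times N}) = \frac{\int_S c_1(S)}{\int_S \omega_S} + \frac{m \int_N c_1(N) \wedge \omega_N^{m-1}}{\int_N \omega_N^m}, \]
whose two summands are precisely $A(S, \omega_S)$ (using $n=1$ for the surface) and $A(N, \omega_N)$ respectively. There is no real obstacle in this argument: it is a routine degree-counting computation, made particularly clean by the hypothesis $\dim S = 2$, which annihilates all but two terms in the binomial expansions.
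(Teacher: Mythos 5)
Your proof is correct. Note that the paper itself contains no proof of this lemma: it is quoted verbatim from \cite[Lemma 4.3]{KKMMMReznikov}, so there is no internal argument to compare against; your computation supplies the missing (and surely intended) argument. The steps all check out: writing $n=m+1$ for the product, the definition gives $A(S\times N,\omega_{S\times N}) = (m+1)\int_{S\times N} c_1(S\times N)\wedge\omega_{S\times N}^{m} \big/ \int_{S\times N}\omega_{S\times N}^{m+1}$; the expansions $\omega_{S\times N}^{m+1}=(m+1)\,\omega_S\wedge\omega_N^{m}$ and $\omega_{S\times N}^{m}=\omega_N^{m}+m\,\omega_S\wedge\omega_N^{m-1}$ are forced by $\dim S=2$ and $\dim N=2m$; Fubini then yields exactly your two surviving terms in the numerator, the overall factor $m+1$ cancels against the binomial coefficient in the denominator, and the quotient splits as $A(S,\omega_S)+A(N,\omega_N)$ with the correct normalizations ($n=1$ for $S$, $n=m$ for $N$). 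The one ingredient you invoke without justification is $c_1(S\times N)=\pi_S^{\ast}c_1(S)+\pi_N^{\ast}c_1(N)$; this deserves a sentence, namely that one may compute $c_1$ with the product almost complex structure $J_S\oplus J_N$ (compatible with $\omega_{S\times N}$), for which $T(S\times N)\cong \pi_S^{\ast}TS\oplus\pi_N^{\ast}TN$ as complex bundles, and apply the Whitney sum formula, using that $c_1$ of a symplectic manifold is independent of the choice of compatible $J$.
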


  \begin{prop}[{\cite[Proposition 4.6]{KKMMMReznikov}}]  \label{prop:flux}
  Let $(N, \omega_N)$ and $(M, \omega)$ be closed symplectic manifolds of dimensions $2n-2$ and $2n$, respectively.
    Assume that  $\iPN\colon (P_{\ee} \times N,\omega_{P_{\ee}\times N})\to (M ,\omega)$ is an open  symplectic embedding. 
   Let $\qd =(a,b,c,d)\in \mathbb{R}^4$ satisfy $|c|\leq \ee$ and $|d|\leq \ee$. 
   Then the following hold:
   \begin{enumerate}[label=\textup{(\arabic*)}]
     \item $ \mushf \left(\ioP^{P_{\ee},M}_{\ast}\left([\sigma_{\qd},\tau_{\qd}]\right)\right)=-bc  n \cdot \vol(N, \omega_N) \left( A(N,\omega_N) - A(M,\omega) \right)$;
     \item $ \mushf  \left(\ioP^{P_{\ee},M}_{\ast}\left([\sigma^{\prime}_{\qd},\tau^{\prime -1}_{\qd}]\right)\right)=-ad  n \cdot \vol(N, \omega_N) \left( A(N,\omega_N) - A(M,\omega) \right)$.
   \end{enumerate}
   \end{prop}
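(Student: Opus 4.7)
The key tool is Shelukhin's embedding functoriality (Proposition~1.2 of \cite{Shelukhin}): for an open symplectic embedding $\iota\colon (\Um,\omega_{\Um})\hookrightarrow (\Mm,\omega)$ of a finite-volume source into a closed target, one has $\mushf^{\Mm}\circ\iota_{\ast}=\mushf^{\Um}-A(\Mm,\omega)\cal_{\Um}$ on $\tHamc(\Um)$. My strategy is to apply this formula twice and combine it with Rousseau's theorem (Theorem~\ref{thm=rousseau}) and Lemma~\ref{average scalar curvature}. Throughout, set $g:=[\sigma_{\qd}\times\id_{\Nm},\tau_{\qd}\times\id_{\Nm}]$, which lies in $\tHamc(P_{\ee}\times \Nm)$ because $\tSympc/\tHamc$ is abelian.

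\textbf{Step 1 (two applications of the embedding formula).} Let $j\colon P_{\ee}\hookrightarrow T^{2}=\RR^{2}/\ZZ^{2}$ denote the natural open inclusion of the punctured torus. Applying Shelukhin's formula to $\iPN\colon P_{\ee}\times \Nm\hookrightarrow \Mm$ and to $j\times\id_{\Nm}\colon P_{\ee}\times \Nm\hookrightarrow T^{2}\times \Nm$ yields, respectively,
\begin{align*}
\mushf^{\Mm}((\iPN)_{\ast}g) &=\mushf^{P_{\ee}\times \Nm}(g)-A(\Mm,\omega)\cdot\cal_{P_{\ee}\times \Nm}(g), \\
\mushf^{T^{2}\times \Nm}((j\times\id_{\Nm})_{\ast}g) &=\mushf^{P_{\ee}\times \Nm}(g)-A(T^{2}\times \Nm,\omega_{T^{2}\times \Nm})\cdot\cal_{P_{\ee}\times \Nm}(g).
\end{align*}
Lemma~\ref{average scalar curvature} together with $A(T^{2})=0$ (flat torus, $c_{1}=0$) gives $A(T^{2}\times \Nm)=A(\Nm,\omega_{\Nm})$, so subtracting yields
\begin{equation}\label{eq:keyreduction}
\mushf^{\Mm}((\iPN)_{\ast}g)=\mushf^{T^{2}\times \Nm}((j\times\id_{\Nm})_{\ast}g)+(A(\Nm,\omega_{\Nm})-A(\Mm,\omega))\cdot\cal_{P_{\ee}\times \Nm}(g).
\end{equation}

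\textbf{Step 2 (the Calabi term).} Since $P_{\ee}\times \Nm$ is open, Rousseau's theorem applies and gives $\cal_{P_{\ee}\times \Nm}(g)=n\cdot\bb_{\omega_{P_{\ee}\times \Nm}}(\tflux(\sigma_{\qd}\times\id_{\Nm}),\tflux(\tau_{\qd}\times\id_{\Nm}))$. By Lemma~\ref{lem:local_flux_calculation} the two fluxes are $b\,\mathrm{pr}_{1}^{\ast}[\beta]^{\ast}$ and $c\,\mathrm{pr}_{1}^{\ast}[\alpha]^{\ast}$. Expanding $\omega_{P_{\ee}\times \Nm}^{n-1}$ binomially and using that $\omega_{0}^{2}=0$ on the two-dimensional factor forces every term containing a $\mathrm{pr}_{1}^{\ast}\omega_{0}$ to vanish once wedged with $\mathrm{pr}_{1}^{\ast}([\beta]^{\ast}\wedge[\alpha]^{\ast})$, leaving only $\mathrm{pr}_{2}^{\ast}\omega_{\Nm}^{n-1}$. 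Fubini, combined with the intersection pairing on $P_{\ee}$, then gives
\[
\cal_{P_{\ee}\times \Nm}(g)=-bc\cdot n\cdot\vol(\Nm,\omega_{\Nm}),
\]
with signs and normalization tracked as in \cite{KKMM2}.

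\textbf{Step 3 (vanishing of the Shelukhin term on $T^{2}\times \Nm$).} It remains to prove $\mushf^{T^{2}\times \Nm}((j\times\id_{\Nm})_{\ast}g)=0$. Choose compatible complex structures $J_{T^{2}}$ on $T^{2}$ (for which $S(J_{T^{2}})\equiv 0$) and $J_{\Nm}$ on $\Nm$, and evaluate Shelukhin's defining integral \eqref{before_homogenize} at the product $J=J_{T^{2}}\times J_{\Nm}$. For a path $h_{t}\times\id_{\Nm}$ generated by a $T^{2}$-normalized Hamiltonian $H_{t}\otimes 1$, the moment-map term splits additively (using $S(J_{T^{2}}\times J_{\Nm})=S(J_{T^{2}})+S(J_{\Nm})$): the first summand vanishes since $S(J_{T^{2}})\equiv 0$ (and this persists under pullback by $h_{t}$), while the second vanishes by normalization $\int_{T^{2}}H_{t}\,\omega_{0}=0$. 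A parallel computation, using the product decomposition of $\mathcal{J}(T^{2}\times \Nm)$ along $J$ and the fiberwise K\"ahler form $\sigma$, shows that the $\Omega$-integral over a disk bounded by $\{h_{t}\times\id_{\Nm}\cdot J\}$ equals the analogous $T^{2}$-integral multiplied by $\int_{\Nm}\omega_{\Nm}^{n-1}$; applying this to the commutator and homogenizing, the contributions coming from $j_{\ast}\sigma_{\qd}$ and $j_{\ast}\tau_{\qd}$ combine to a $T^{2}$-level quantity on which Shelukhin's quasimorphism already vanishes (using $A(T^{2})=0$ and Proposition~\ref{prop=ShelukhinHam} to absorb any residual $\pi_{1}(\Hamc(T^{2}))$-contribution, together with the $\Sympc$-invariance Proposition~\ref{prop=Sympc-inv}). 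Substituting back into \eqref{eq:keyreduction} gives assertion~(1). Assertion~(2) follows by the same three steps applied to the pair $(\sigma_{\qd}',(\tau_{\qd}')^{-1})$: Lemma~\ref{lem:local_flux_calculation} now produces fluxes proportional to $[\alpha]^{\ast}$ and $[\beta]^{\ast}$ (with an overall sign from the inverse), yielding the coefficient $-ad$ in place of $-bc$.

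\textbf{Main obstacle.} The only step that is not essentially formal is Step~3: while iterating the embedding formula reduces the whole statement to a single vanishing on $T^{2}\times \Nm$, actually proving that vanishing requires a careful product decomposition of $\mathcal{J}(T^{2}\times \Nm)$ (including cross-term contributions in the fiberwise K\"ahler form $\Omega$) and explicit exploitation of both $S(J_{T^{2}})\equiv 0$ and the Hamiltonian normalization condition on the torus factor. This is where all the geometric content of the statement lives; once Step~3 is in hand, Steps~1 and 2 reassemble mechanically via Lemma~\ref{average scalar curvature} to produce the clean formula $-bc\cdot n\cdot\vol(\Nm,\omega_{\Nm})(A(\Nm,\omega_{\Nm})-A(\Mm,\omega))$.
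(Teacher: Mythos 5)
You should first know that the paper contains no proof of this proposition: it is imported verbatim from \cite[Proposition~4.6]{KKMMMReznikov}, with only the remark that item~(2) is proved similarly to item~(1). So your attempt can only be judged on its own merits. Your Steps~1 and 2 are correct, and their skeleton is clearly the right one: the shape of the target formula, namely $\cal_{P_{\ee}\times N}(g)\cdot\left(A(N,\omega_N)-A(M,\omega)\right)$ together with $A(T^2\times N,\omega_{T^2\times N})=A(N,\omega_N)$, is precisely the fingerprint of applying Shelukhin's embedding functoriality twice through the intermediate closed manifold $T^2\times N$ and eliminating the unknown $\mushf^{P_{\ee}\times N}(g)$; the Rousseau/Fubini computation of the Calabi term is routine (modulo the sign bookkeeping you defer to \cite{KKMM2}). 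Even the first half of your Step~3 is sound and can be made rigorous: the product slice $\{J'\times J_N\}$ is totally geodesic in $\mathcal{J}(T^2\times N)$, $S(\phi_\ast J)=S(J)\circ\phi^{-1}$ kills the torus part of the moment term since $S(J_{T^2})\equiv 0$, and normalization kills the $N$-part, so one gets $\mushf^{T^2\times N}(\tilde{h}\times\mathrm{id}_N)=n\left(\int_N\omega_N^{n-1}\right)\mushf^{T^2}(\tilde{h})$.

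The genuine gap is the final vanishing $\mushf^{T^2}\left(j_\ast[\sigma_{\qd},\tau_{\qd}]\right)=0$, which you assert rather than prove — and this is \emph{all} of the geometric content: it is equivalent to the $n=1$ instance of the proposition itself (take $N$ a point, $M=T^2$), so at this point your argument is circular. None of the tools you invoke can close it. Proposition~\ref{prop=ShelukhinHam} controls $\mushf$ only on $\pi_1(\Hamc(T^2,\omega_0))$, whereas $j_\ast[\sigma_{\qd},\tau_{\qd}]$ is a non-central, nontrivial element of $\tHamc(T^2,\omega_0)$ (its Calabi invariant localized in $P_{\ee}$ is $-bc\neq 0$ in general); $A(T^2,\omega_0)=0$ does not make $\mushf^{T^2}$ vanish, since $\mushf^{T^2}$ is a nontrivial quasimorphism; and $\Sympc$-invariance combined with the commuting lifted translations $\widetilde{T}_1,\widetilde{T}_2$ having the same fluxes only expresses $[j_\ast\sigma_{\qd},j_\ast\tau_{\qd}]$ as a conjugate of a product of four conjugates of Hamiltonian elements, which bounds $\left|\mushf^{T^2}(j_\ast[\sigma_{\qd},\tau_{\qd}])\right|$ by defects (equivalently, it proves the vanishing of the associated \emph{bilinear form} on the torus) but cannot pin down the exact value of a quasimorphism on a single commutator — exactness is the whole point of the proposition. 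Some genuinely new input is required here, for instance Shelukhin's local formula $\mushf\circ\iota_{B\ast}=\frac{1}{2}\tau_B-A(M,\omega)\cal_B$ on embedded balls together with an explicit Barge--Ghys/Maslov computation for the commutator; note also that you cannot simply assume the support of $[\sigma_{\qd},\tau_{\qd}]$ lies in a disk, since $b$ is unconstrained and the $\sigma_{\qd}$-flow may wrap the vertical circle many times. Until this torus-level vanishing is supplied, the proof is incomplete (and the same gap propagates to your treatment of item~(2)).
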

  We note that in \cite{KKMMMReznikov} only (1) of Proposition \ref{prop:flux} is shown, but (2) can be proved similarly.

Now we are ready to prove Theorem~\ref{thm=XQ}.
Fix a point $y \in N$ and take an embedding $P_\ee \to P_\ee \times N; x \mapsto (x,y)$. Let $\hat\ioP_j \colon P_\ee \to M$ be the composition of the embedding $P_\ee \to P_\ee \times N$ and $\iPN_j \colon P_\ee \times N \to M$. 
Set $\alpha_j = \hat\ioP_j \circ \alpha$ and $\beta_j = \hat\ioP_j \circ \beta$ for $j=1,\ldots, \nn$.
For $m \in \NN$, we set $v=\gamma_1$ and $w=\gamma_2/m$ and represent them as
    \begin{equation*}\label{eq:vw}
      v = \sum_{j=1}^{\nn}(a_j  [\alpha_j]^{\ast}+b_j [\beta_j]^{\ast}) + \delta_v
     \quad \text{and} \quad w = \sum_{j=1}^{\nn}(c_j [\alpha_j]^{\ast}+d_j [\beta_j]^{\ast}) + \delta_w,
    \end{equation*}
    where $\delta_v , \delta_w \in \HHH^1_c(Q;\RR)$. 
    We take sufficiently large $m$ such that $|c_j|< \epsilon_j$ and $|d_j|< \epsilon_j$ for every $j =1 ,\dots, l$.
    Set $\qd_j = (a_j,b_j,c_j,d_j)$. 

    Set $\iota^P_{j \ast}=\iota^{P_{\ee_j}, \Mm}_{j \ast}$. For $k \in \NN$, we can define $\fkp, \gkp, \hkp \in \tSympc(\XX,\omega_\XX)$ by
  \begin{equation*}\label{eq:fg}
    \fkp = \prod_{j=1}^l \iota^P_{j \ast} (\sigma_{\qd_j}^k \sigma_{\qd_j}^{\prime k}), \quad
   \gkp = \prod_{j=1}^l \iota^P_{j \ast} (\tau_{\qd_j}),  \quad \text{and} \quad
   \hkp = \prod_{j=1}^l \iota^P_{j \ast} (\tau'_{\qd_j})
  \end{equation*}
    since the groups $G_i$ $(i=1,\dots,\nn)$ are pairwise commutative.
    Let $s \colon \HHH^1_c(Q;\RR) \to \tSympc(Q,\omega_Q)$ be a section homomorphism of the flux homomorphism $\tflux_{\omega_Q}$.
    Define $\fk, \gk, \hk \in \tSympc(M,\omega)$ by
\begin{equation*}\label{eq:fg^prime}
  \fk= \eta\left(\fkp, s(k \cdot \delta_v)\right), \quad
   \gk = \eta\left(\gkp, s\left(\frac12 \cdot \delta_w\right)\right),  \quad  \text{and} \quad
   \hk = \eta\left(\hkp, s\left(\frac12 \cdot \delta_w\right)\right),
     \end{equation*}
where $\eta \colon \tSympc(\XX,\omega_\XX) \times \tSympc(Q,\omega_Q) \to \tSympc(M,\omega)$
is the natural homomorphism. 
The following is a counterpart of \cite[Lemma 4.5]{KKMM2}.

\begin{lem} \label{lem:inG}
The following hold.
\begin{enumerate}[label=\textup{(\arabic*)}]
 \item For every $k \in \NN$, $\tflux_{\omega}(\fk)=k \gamma_1$.
 \item $\tflux_{\omega}(\gk \hk) = \gamma_2/m$.
\end{enumerate}
\end{lem}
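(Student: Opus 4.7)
\smallskip

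The plan is to exploit three facts in combination: flux is a homomorphism, the flux on a product splits along the two factors, and the pushforward $\iota^P_{j\ast}$ is natural with respect to flux. More precisely, since $(\Mm,\omega)$ is the product $(\Xm,\omega_\Xm)\times(\Qm,\omega_\Qm)$, we have the Künneth decomposition $\HHH^1_c(\Mm;\RR)\cong \HHH^1_c(\Xm;\RR)\oplus\HHH^1_c(\Qm;\RR)$, and the natural map $\eta\colon\tSympc(\Xm,\omega_\Xm)\times\tSympc(\Qm,\omega_\Qm)\to\tSympc(\Mm,\omega)$ satisfies $\tflux_\omega\circ\eta=\tflux_{\omega_\Xm}\oplus\tflux_{\omega_\Qm}$. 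This reduces the computation to separate flux computations on the $\Xm$-factor and on the $\Qm$-factor.

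For the $\Qm$-factor, the fluxes of $s(k\cdot\delta_v)$ and $s(\tfrac12\delta_w)$ are immediate from the definition of a section: they equal $k\delta_v$ and $\tfrac12\delta_w$, respectively. For the $\Xm$-factor, I will verify the following naturality: for each $j$ and for $\psi\in\tSympc(P_{\ee_j},\omega_0)$ generated by a path with vector field $X_t$,
\[
\tflux_{\omega_\Xm}\bigl(\ioP_{j\ast}^{P_{\ee_j},\Xm}(\psi)\bigr)=(\hat{\ioP}_j)_\ast\bigl(\tflux_{\omega_0}(\psi)\bigr),
\]
where the right-hand side denotes the image in $\HHH^1_c(\Xm;\RR)$ under the pushforward coming from $\hat\ioP_j\colon P_{\ee_j}\to\Xm$. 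This is a direct unwinding of the definitions: the generating vector field of $\ioP_{j\ast}^{P_{\ee_j},\Xm}(\psi)$ is $d\iPN_j(X_t\oplus 0)$, so $\iota_{d\iPN_j(X_t\oplus 0)}\omega_\Xm$ pulls back along $\iPN_j$ to $\iota_{X_t}\omega_0$ on the $P_{\ee_j}$-factor and vanishes in the $\Nm_j$-direction. Consequently, by Lemma~\ref{lem:local_flux_calculation},
\[
\tflux_{\omega_\Xm}\bigl(\ioP_{j\ast}^{P_{\ee_j},\Xm}(\sigma_{\qd_j}^k\sigma'^{\,k}_{\qd_j})\bigr)=k\bigl(a_j[\alpha_j]^\ast+b_j[\beta_j]^\ast\bigr),
\]
and similarly $\tflux_{\omega_\Xm}(\ioP_{j\ast}^{P_{\ee_j},\Xm}(\tau_{\qd_j}))=c_j[\alpha_j]^\ast$ and $\tflux_{\omega_\Xm}(\ioP_{j\ast}^{P_{\ee_j},\Xm}(\tau'_{\qd_j}))=d_j[\beta_j]^\ast$, since by the definition of $\hat\ioP_j$ the loops in $\Xm$ representing the pushforwards of $[\alpha]^\ast$ and $[\beta]^\ast$ are exactly $\alpha_j$ and $\beta_j$.

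Assembling these ingredients, for (1) we compute
\[
\tflux_\omega(\fk)=\tflux_{\omega_\Xm}(\fkp)+\tflux_{\omega_\Qm}\bigl(s(k\cdot\delta_v)\bigr)=\sum_{j=1}^{\nn}k\bigl(a_j[\alpha_j]^\ast+b_j[\beta_j]^\ast\bigr)+k\delta_v=kv=k\gamma_1,
\]
where the products over $j$ pose no issue because flux is a homomorphism. For (2), the same decomposition together with additivity of flux gives
\[
\tflux_\omega(\gk\hk)=\tflux_{\omega_\Xm}(\gkp\hkp)+\tflux_{\omega_\Qm}\bigl(s(\tfrac12\delta_w)\cdot s(\tfrac12\delta_w)\bigr)=\sum_{j=1}^{\nn}\bigl(c_j[\alpha_j]^\ast+d_j[\beta_j]^\ast\bigr)+\delta_w=w=\gamma_2/m.
\]
The only mildly subtle point is the naturality statement for $\ioP_{j\ast}^{P_{\ee_j},\Xm}$ at the level of flux, but as sketched above this is a routine consequence of the product structure of the symplectic embedding $\iPN_j\colon P_{\ee_j}\times\Nm_j\to\Xm$; the argument requires no new ideas beyond those already used to formulate the $(\Xm,\Qm)$-condition, and in particular does not invoke Proposition~\ref{prop:flux}.
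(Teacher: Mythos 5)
Your proof is correct and follows essentially the same route as the paper's: split the flux along $\eta$ into the $\Xm$- and $\Qm$-factors, use the homomorphism and section properties, and reduce to Lemma~\ref{lem:local_flux_calculation} via naturality of the flux under the embedding pushforwards $\ioP^P_{j\ast}$. The only difference is that you spell out this naturality step explicitly, which the paper treats as immediate.
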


\begin{proof}

For every $k\in \NN$,
\begin{align*}
\tflux_{\omega}(\fk)&=\tflux_{\omega} \left( \eta \left(\prod_{j=1}^l  \iota^P_{j \ast} (\sigma_{\qd_j}^k \sigma_{\qd_j}^{\prime k}) , s( k \cdot \delta_v) \right) \right)\\
&=\sum_{j= 1 }^l\tflux_{\omega} \left(\iota^P_{j \ast} (\sigma_{\qd_j}^k \sigma_{\qd_j}^{\prime k}) \right) + \tflux_{\omega_Q}( s( k \cdot \delta_v))  \\
&=k\left( \sum_{j= 1 }^l\tflux_{\omega} \left( \iota^P_{j \ast} (\sigma_{\qd_j}) \right)+\sum_{j= 1 }^l\tflux_{\omega}\left( \iota^P_{j \ast} (\sigma'_{\qd_j}) \right) + \delta_v \right).
\end{align*}
By Lemma~\ref{lem:local_flux_calculation}, we have
\[
\tflux_{\omega} \left( \iota^P_{j \ast} (\sigma_{\qd_j}) \right)=b_j[\beta_j]^{\ast},\quad \tflux_{\omega} \left( \iota^P_{j \ast}(\sigma'_{\qd_j}) \right)=a_j[\alpha_j]^{\ast}
\]
for every $1\leq j\leq l$. 
Hence, we conclude that
\[
\tflux_{\omega}(\fk)=kv =k \gamma_1,
\]
which proves (1). In a manner similar to one above, we have
\[
\tflux_{\omega}(\gk \hk) = w =\frac{\gamma_2}{m};
\]
hence we have shown (2).
\end{proof}

\begin{prop}\label{prop:intersection}
The following holds true for every $k \in \NN$:
\[\mushf([\fk,\gk][\fk,(\hk)^{-1}]^{-1})\sim_{D(\mushf)}k \cdot \bb_{I,Q} (v,w).\]
\end{prop}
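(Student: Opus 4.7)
The plan is to expand $[\fk,\gk]\cdot[\fk,(\hk)^{-1}]^{-1}$ via a sequence of algebraic reductions, apply Proposition~\ref{prop:flux} factor by factor, and match the resulting sum with $k\bb_{I,\Qm}(v,w)$. First I reduce to the $\Xm$-component: because $\eta$ is a homomorphism and the image of the section $s\colon\HHH^1_c(\Qm;\RR)\to\tSympc(\Qm,\omega_\Qm)$ is abelian, every commutator among $\fk,\gk,\hk^{\pm 1}$ has trivial $\Qm$-component, so $[\fk,\gk]=\eta([\fkp,\gkp],1_\Qm)$ and $[\fk,(\hk)^{-1}]=\eta([\fkp,(\hkp)^{-1}],1_\Qm)$. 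Under the natural identification $\eta(\iota_{j\ast}^{P_{\ee_j},\Xm}(\cdot),1_\Qm)=\iota_{j\ast}^{P_{\ee_j},\Mm}(\cdot)$, Proposition~\ref{prop:flux} then applies directly in $\tSympc(\Mm)$ with the role of ``$N$'' played by $\Nm_j\times\Qm$.

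Next, since the subgroups $\Gg_1,\dots,\Gg_\nn$ pairwise commute, the identity $[a_1\cdots a_\nn,b_1\cdots b_\nn]=\prod_j[a_j,b_j]$ (for $a_j,b_j\in\Gg_j$) splits each commutator across the $\nn$ factors, and because these factors themselves lie in pairwise commuting subgroups, Lemma~\ref{lem:qm}~(2) yields the exact additivity
\[
\mushf([\fk,\gk])=\sum_{j=1}^{\nn}\mushf\bigl(\iota_{j\ast}^{P_{\ee_j},\Mm}[\sigma_{\qd_j}^k\sigma_{\qd_j}^{\prime k},\tau_{\qd_j}]\bigr),
\]
and similarly for $[\fk,(\hk)^{-1}]$. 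Within each $\Gg_j$, using $[ab,c]=a[b,c]a^{-1}[a,c]$ together with the local commutation relations $[\sigma'_{\qd_j},\tau_{\qd_j}]=1$ and $[\sigma_{\qd_j},\tau'_{\qd_j}]=1$ (both holding by the disjointness of the supports of the defining vector fields in $P_{\ee_j}$), the inner commutators collapse to $[\sigma_{\qd_j}^k,\tau_{\qd_j}]$ and $[\sigma_{\qd_j}^{\prime k},(\tau'_{\qd_j})^{-1}]$ respectively. Since $\sigma_\qd$ and $\sigma'_\qd$ are the time-$1$ maps of vector fields that are linear in the parameters $b$ and $a$, one has $\sigma_{\qd_j}^k=\sigma_{(a_j,kb_j,c_j,d_j)}$ and $\sigma_{\qd_j}^{\prime k}=\sigma'_{(ka_j,b_j,c_j,d_j)}$, while $\tau_\qd$ and $\tau'_\qd$ depend only on $c$ and $d$, so Proposition~\ref{prop:flux} gives
\[
\mushf\bigl(\iota_{j\ast}^{P_{\ee_j},\Mm}[\sigma_{\qd_j}^k,\tau_{\qd_j}]\bigr)=kb_jc_j\,n\vol(\Nm_j\times\Qm)\bigl(A(\Mm)-A(\Nm_j\times\Qm)\bigr),
\]
and analogously for the $(\sigma',\tau'^{-1})$ commutator with $a_jd_j$ in place of $b_jc_j$.

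Finally, combining via the quasimorphism estimate $\mushf(xy^{-1})\sim_{\DD(\mushf)}\mushf(x)-\mushf(y)$ and evaluating $\bb_{I,\Qm}(v,w)$ on the $[\alpha_j]^\ast,[\beta_j]^\ast$ components (where each intersection pairing $\bb_j$ on $\HHH^1_c(P_{\ee_j})$ contributes $\pm(a_jd_j-b_jc_j)$ according to the paper's sign convention, and the $\HHH^1_c(\Qm)$ summands contribute zero), one obtains the asserted estimate with defect uniformly bounded by $\DD(\mushf)$ in $k$. The main obstacle I expect is the combined bookkeeping of (i) the rescaling $\sigma_\qd^k=\sigma_{(a,kb,c,d)}$, which requires the explicit linear structure of the defining vector fields from \cite{KKMM2}, (ii) the commutation relations $[\sigma',\tau]=[\sigma,\tau']=1$, which must be read off from the explicit support conditions visible in Figure~\ref{vec_field}, and (iii) tracking the sign conventions so that the final combination matches $\bb_{I,\Qm}(v,w)$ rather than its negative. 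Crucially, the defect remains bounded by $\DD(\mushf)$ uniformly in $k$ because the splittings over the commuting $\Gg_j$'s and the rescalings $\sigma_\qd^k$ are algebraic identities rather than asymptotic estimates, so the only defect incurred is the single $\DD(\mushf)$ from the outermost product $[\fk,\gk]\cdot[\fk,(\hk)^{-1}]^{-1}$.
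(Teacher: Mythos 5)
Your proof is correct and follows the same skeleton as the paper's: reduce to the $\Xm$-factor via $\eta$ (the $\Qm$-components of all commutators die because the section $s$ has abelian image), split the commutators over the pairwise commuting subgroups $\Gg_j$, evaluate each factor by Proposition~\ref{prop:flux}, sum exactly via Lemma~\ref{lem:qm}~(2), and incur the single defect $\DD(\mushf)$ only at the outermost product $[\fk,\gk]\cdot[\fk,(\hk)^{-1}]^{-1}$. The one genuine divergence is how the factor $k$ is extracted. The paper quotes the identity $[\fkp,\gkp]=\prod_{j}\iota^{P}_{j\ast}\bigl([\sigma_{\qd_j},\tau_{\qd_j}]^k\bigr)$ (and its $(\sigma',\tau'^{-1})$ analogue) from the proof of \cite[Proposition~4.7]{KKMM2} and then pulls $k$ out by homogeneity of $\mushf$; you instead collapse $[\sigma_{\qd_j}^k\sigma_{\qd_j}^{\prime k},\tau_{\qd_j}]$ to $[\sigma_{\qd_j}^k,\tau_{\qd_j}]$ using $[\sigma'_{\qd},\tau_{\qd}]=1$ and then invoke the rescaling $\sigma_{\qd}^k=\sigma_{(a,kb,c,d)}$ so that Proposition~\ref{prop:flux} applies with $b$ replaced by $kb$. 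The logical cost is comparable: both variants rest on facts about the explicit construction in \cite{KKMM2} that are not proved in the present paper (your items (i)--(ii) versus the quoted power identity), and since Proposition~\ref{prop:flux} is linear in $bc$ (resp.\ $ad$), the two bookkeepings yield the same value. Two small points to tighten. First, in the $(\sigma',\tau'^{-1})$ case your collapse actually produces the conjugate $\sigma_{\qd_j}^{k}[\sigma_{\qd_j}^{\prime k},\tau_{\qd_j}^{\prime -1}]\sigma_{\qd_j}^{-k}$ rather than the commutator itself; this is harmless because homogeneous quasimorphisms are conjugation invariant (Lemma~\ref{lem:qm}~(1)) and the conjugate still lies in $\Gg_j$, so the pairwise commutativity needed for exact additivity survives, but it should be said. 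Second, note that the hypothesis $|c|\leq\ee$, $|d|\leq\ee$ of Proposition~\ref{prop:flux} is unaffected by your rescaling, which only changes $a,b$; this is what makes the substitution legitimate for all $k$.
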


\begin{proof}

By the proof of \cite[Proposition 4.7]{KKMM2}, we have
\[ [\fkp,\gkp] = \prod_{j=1}^{\nn} \iota^P_{j \ast} ( [\sigma_{\qd_j} , \tau_{\qd_j} ]^k )
\quad \text{and} \quad [\fkp,(\hkp)^{-1}]=\prod_{j=1}^l \iota^P_{j \ast} ([\sigma'_{\qd_j}, \tau^{\prime-1}_{\qd_j}]^k ). \]
Let $\kappa_j \colon P_{\epsilon_j} \times N_j \times Q  \to M(=X\times Q) $ denote the symplectic embedding defined by 
$\kappa_j (x,y,z)=( \iota_j (x,y), z )$. Set $\kappa^P_{j \ast} := \kappa^{P_{\ee_j}, M}_{i \ast} $.
Then, 
\begin{align*}
  [\fk,\gk] & = \eta ([\fkp,\gkp], [ s(k \cdot \delta_v), s(\delta_w) ]  ) \\
&  =\eta \left( \prod_{j=1}^l \iota^P_{j \ast} ( [\sigma_{\qd_j}, \tau_{\qd_j}]^k ) , \mathrm{id}_Q  \right) 
 =\prod_{j=1}^l \kappa^P_{j \ast} ([\sigma_{\qd_j}, \tau_{\qd_j}]^k ).
\end{align*}
Hence, by Lemma \ref{lem:qm}~(2), and by applying Proposition \ref{prop:flux}~(1) to $\kappa_i$,
\begin{equation} \label{eq:bc}
  \begin{aligned}
    \mushf ([ \fk , \gk ])
    & = \sum_{i=1}^{\nn} \mushf \Bigl(\kappa^P_{i \ast} ([\sigma_{\qd},\tau_{\qd}]^k ) \Bigr)
    = k \sum_{i=1}^{\nn} \mushf \Bigl(\kappa^P_{i \ast}([\sigma_{\qd},\tau_{\qd}] ) \Bigr) \\
    & = -k n \sum_{i=1}^{\nn} b_jc_j \cdot \vol(N_i \times Q, \omega_{N_i \times Q}) \cdot ( A(N_i \times Q ,\omega_{N_i \times Q}) - A(M,\omega)) .     
  \end{aligned}
\end{equation}
Similarly, by Proposition \ref{prop:flux}~(2), we can prove that 
\begin{equation} \label{eq:ad}
  \mushf([{\fk},(\hk)^{-1}])=-k n \sum_{i=1}^{\nn}  a_i d_i \cdot \vol(N_i \times Q, \omega_{N_i \times Q}) \cdot (A(N_i \times Q ,\omega_{N_i \times Q}) - A(M,\omega)).
\end{equation}
Therefore, by \eqref{eq:bc} and \eqref{eq:ad},
\begin{align*}
 & \mushf([\fk,\gk][\fk,\hk]^{-1}) \\
  \sim_{D(\mushf)}{} & \mushf( [ \fk , \gk ] ) + \mushf( [ \fk , (\hk)^{-1} ]^{-1} ) \\
 ={} & \mushf( [ \fk , \gk ] ) - \mushf( [ \fk, (\hk)^{-1} ] ) \\
 ={} & k n \sum_{i=1}^{\nn} (a_id_i-b_ic_i) \cdot \vol(N_i \times Q, \omega_{N_i \times Q}) \cdot ( A(N_i \times Q,\omega_{N_i \times Q}) - A(M,\omega)) \\
 ={} & k \cdot \bb_{Q,I} (v,w). \qedhere
  \end{align*}
\end{proof}

  \begin{proof}[Proof of Proposition~\textup{\ref{prop=XQ}}]
  Items (1) and (2) follow from Lemma \ref{lem:inG}, and (3) follows from Proposition \ref{prop:intersection}.
\end{proof}

Now we are in a position to prove Theorem~\ref{thm=XQ}.

\begin{proof}[Proof of Theorem~\textup{\ref{thm=XQ}}]
It follows from the combination of Proposition~\ref{prop=XQ} and the refined limit formula \eqref{eq=refined_limit_formula_left}. More precisely, our proof goes along a line similar to the proof of Theorem~\ref{thm=Py} by Proposition~\ref{prop=KKMM} and \eqref{eq=refined_limit_formula_left}.
\end{proof}

\section{Results on the explicit expression of $\bb_{\mushf}$}\label{sec=kari2}
In this section, we prove Theorem~\ref{mthm=explicitShelukhin}. We recall the statement  by splitting it into three theorems, as follows.

\begin{thm}\label{surface product with vanishing H1_triple}
Let $S$ be a closed orientable surface whose genus $l$ is at least two and $N$ a closed manifold.
Let $\omega_S$, $\omega_N$ be symplectic forms on $S$, $N$, respectively. Set $(M,\omega)=(S,\omega_S)\times (N,\omega_N)$.
Let $2n$ be the dimension of $M$. Then we have
\[\bb_{\mushf}|_{\HHH^1_c(S;\RR)\times \HHH^1_c(S;\RR)} = n(2-2l)\frac{\vol(N,\omega_N)}{\mathrm{Area}(S,\omega_S)} \cdot \bb_{\omega_S} = \vol(M,\omega) \frac{(2-2l)}{\mathrm{Area}(S,\omega_S)^2} \cdot \bb_{\omega_S}.\]
 Here, we regard $\HHH^1_c(S;\RR)$ as a subspace of $\HHH^1_c(M;\RR) \cong \HHH^1_c(S;\RR) \oplus \HHH^1_c(N;\RR)$. 
\end{thm}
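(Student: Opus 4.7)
The strategy is to apply the axiomatized theorem (Theorem~\ref{thm=XQ}) with $X = S$ and $Q = N$, taking each $N_i$ to be a single point (the only closed $0$-dimensional symplectic manifold). To arrange that $V_I$ coincides with the full space $\HHH^1_c(S;\RR)$, I would fix a standard symplectic handle decomposition of $(S,\omega_S)$ and, inside each of the $l$ handles, choose a symplectic embedding $\iota_i \colon P_{\epsilon_i} \hookrightarrow S$ whose image is a small Darboux region rescaled so that the pullback of $\omega_S$ equals $\omega_0 = dx\wedge dy$. Choosing the $\epsilon_i$ small enough, one ensures that the closures $\overline{\iota_i(P_{\epsilon_i})}$ are pairwise disjoint in $S$. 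This immediately yields conditions~(1)--(3) of the $(X,Q)$-condition: the groups $G_i = \mathrm{Im}(\iota_{i*}^{P_{\epsilon_i},S})$ elementwise commute thanks to disjointness of supports. Condition~(4) of Definition~\ref{definition:MQ} (a section of $\tflux_{\omega_N}$) is used in the proof of Proposition~\ref{prop=XQ} only to realize $\HHH^1_c(N;\RR)$-components of the target fluxes; since we seek only the restriction of $\bb_{\mushf}$ to $\HHH^1_c(S;\RR)\times \HHH^1_c(S;\RR) \subset V_I \times V_I$, those components vanish ($\delta_v = \delta_w = 0$), and the section appears only through $s(0) = \id$. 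Thus the conclusion of Theorem~\ref{thm=XQ} applies to our pair $(v,w)$.

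By construction, $\{\iota_{i*}[\alpha_i]^\ast, \iota_{i*}[\beta_i]^\ast\}_{i=1,\ldots,l}$ is a symplectic basis of $\HHH^1_c(S;\RR)$, so $V_I = \HHH^1_c(S;\RR)$. Moreover, for $v, w \in V_I$ decomposed as $v = \sum_{i=1}^l \iota_{i*} v_i$ and $w = \sum_{i=1}^l \iota_{i*} w_i$, disjointness of supports kills the cross terms $\iota_{i*} v_i \wedge \iota_{j*} w_j$ with $i\neq j$, whence
\[ \bb_{\omega_S}(v,w) = \sum_{i=1}^l \int_{\iota_i(P_{\epsilon_i})} \iota_{i*}(v_i \wedge w_i) = \sum_{i=1}^l \bb_i(v_i,w_i) = \sum_{i=1}^l \bigl(\iota_{i*}^{P_{\epsilon_i},S} \bb_i\bigr)(v,w). \]

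Next I would compute the scalar coefficient appearing in $\bb_{I,Q}$. With each $N_i$ a point, $N_i \times Q = N$, so $\vol(N_i \times Q, \omega_{N_i \times Q}) = \vol(N,\omega_N)$; Lemma~\ref{average scalar curvature} then gives
\[ A(M,\omega) - A(N_i \times Q, \omega_{N_i \times Q}) = A(S,\omega_S). \]
Since $S$ is $2$-dimensional, Definition~\ref{defn=aHsc} yields
\[ A(S,\omega_S) = \frac{\int_S c_1(S)}{\int_S \omega_S} = \frac{2-2l}{\mathrm{Area}(S,\omega_S)}. \]
Plugging these into the formula of Theorem~\ref{thm=XQ} gives, on $\HHH^1_c(S;\RR)$,
\[ \bb_{\mushf} = n(2-2l)\frac{\vol(N,\omega_N)}{\mathrm{Area}(S,\omega_S)}\cdot \bb_{\omega_S}. \]
Finally, since $\omega_S \wedge \omega_S = 0$ as a form on $M$, the expansion $\omega^n = n\,\omega_S \wedge \omega_N^{n-1}$ gives $\vol(M,\omega) = n\cdot \mathrm{Area}(S,\omega_S)\cdot \vol(N,\omega_N)$, which rewrites the coefficient as $\vol(M,\omega)(2-2l)/\mathrm{Area}(S,\omega_S)^2$.

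The hardest step is the first: producing symplectic embeddings $\iota_i$ with pairwise disjoint closures such that $V_I$ exhausts $\HHH^1_c(S;\RR)$. Topologically this is standard---decompose $S$ into $l$ one-holed tori joined by cylinders---but one must calibrate Darboux charts so that the canonical symplectic form on $P_{\epsilon_i}$ agrees with $\omega_S$ on the image, which typically forces choosing different $\epsilon_i$ for different handles. A secondary technical point is the careful justification that, once both arguments lie in $V_I$, the proof of Theorem~\ref{thm=XQ} proceeds without invoking the section hypothesis~(4) of the $(X,Q)$-condition.
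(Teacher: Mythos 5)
Your proposal is correct in substance, but it assigns the roles in the axiomatized theorem differently from the paper, and this difference matters for rigor. The paper (Proposition~\ref{surface product with vanishing H1_MQ} plus the proof of Theorem~\ref{surface product with vanishing H1_triple}) takes $X = S\times N$, $Q = \ast$, and $N_i = N$, with embeddings $\iota_i\colon P_{\epsilon}\times N \to S\times N$ induced by the same handle-curve embeddings $\iota_i^S\colon P_\epsilon\to S$ you describe; you instead take $X=S$, $Q=N$, and $N_i=\ast$. Both choices produce literally the same composite embeddings $P_{\epsilon}\times N \to M$ and hence the same application of Proposition~\ref{prop:flux}, the same coefficient $n\,\vol(N,\omega_N)\,A(S,\omega_S) = n(2-2l)\vol(N,\omega_N)/\mathrm{Area}(S,\omega_S)$ via Lemmas~\ref{average scalar curvature} and \ref{lem=Asurface}, and the same volume identity $\vol(M,\omega)=n\,\mathrm{Area}(S,\omega_S)\vol(N,\omega_N)$. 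The drawback of your assignment is exactly the point you flag: clause (4) of Definition~\ref{definition:MQ} demands a section homomorphism of $\tflux_{\omega_N}$, which is not available for a general closed $(N,\omega_N)$ (the paper only ever takes $Q$ to be a point or a torus, precisely because a section is known there). Your patch --- observing that in the proof of Proposition~\ref{prop=XQ} the section is evaluated only at $k\delta_v$ and $\delta_w/2$, which vanish when both arguments lie in $V_I$, so one may put the identity of $\tSympc(N,\omega_N)$ in its place --- is valid, but it means you are re-proving a restricted variant of Theorem~\ref{thm=XQ} rather than citing it, which undercuts the purpose of the axiomatization. The paper's choice $Q=\ast$ makes clause (4) trivially true, so Theorem~\ref{thm=XQ} applies verbatim; if you want a black-box application, transpose your roles to match. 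Two minor points: a single $\epsilon$ suffices for all $l$ embeddings (no need for handle-dependent $\epsilon_i$), and the images of the $\iota_i$ are one-holed tori carrying the handle curves $\alpha_i,\beta_i$ (needed so that $V_I=\HHH^1_c(S;\RR)$), not ``Darboux regions'' in the usual contractible sense --- your handle decomposition shows you intend this, but the phrasing is misleading.
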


\begin{thm}\label{surface product_triple}
Let $S_i$ be a closed orientable surface whose genus $l_i$ is at least  one and $\omega_i$ is a symplectic form on $S_i$ \textup{(}$i=1,\ldots,n$\textup{)}.
Set $(M,\omega)=(S_1,\omega_1)\times\cdots\times(S_n,\omega_n)$.
Then, we have
\[\bb_{\mushf}(v,w) =\vol(M,\omega) \sum_{i=1}^n \frac{2-2l_i}{{\rm Area}(S_i,\omega_i)^2} \bb_{\omega_{S_i}}(v_i,w_i) \]
for every
$v=(v_1,\ldots,v_n)$ and $w=(w_1,\ldots,w_n)$ in $\HHH^1_c(M;\RR) \cong \HHH^1_c(S_1;\RR) \oplus \cdots \oplus \HHH^1_c(S_n;\RR)$.
Here, for every $i = 1,\ldots, n$, we regard $v_i$ and $w_i$ as elements in $\HHH^1_c(S_i;\RR)$. 
\end{thm}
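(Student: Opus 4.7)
My strategy is to reduce Theorem~\ref{surface product_triple} to a single application of the axiomatized Theorem~\ref{thm=XQ}, with $X := (M,\omega)$ itself and $Q$ taken to be the one-point symplectic manifold. With $Q$ a point, condition~(4) of Definition~\ref{definition:MQ} holds trivially, and the conclusion of Theorem~\ref{thm=XQ} becomes a statement about $\bb_{\mushf}$ restricted to $V_I \subseteq \HHH^1_c(M;\RR)$. The two remaining tasks are then (a) producing a system of commuting symplectic embeddings whose associated $V_I$ exhausts all of $\HHH^1_c(M;\RR)$, and (b) simplifying the resulting formula.

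For (a), I would choose, for each $i \in \{1,\ldots,n\}$, a collection of $l_i$ open symplectic embeddings $e_{i,k}\colon P_{\epsilon_{i,k}} \to S_i$ ($k = 1,\ldots,l_i$), supported on small neighborhoods of the $l_i$ handles of $S_i$, so that the embedded images are pairwise disjoint in $S_i$ and the pushforwards $e_{i,k\ast}[\alpha]^\ast, e_{i,k\ast}[\beta]^\ast$ together span $\HHH^1_c(S_i;\RR)$. Setting $N_{i,k} := \prod_{j\neq i}S_j$, I define $\iota_{i,k} := e_{i,k}\times \mathrm{id}_{N_{i,k}}\colon P_{\epsilon_{i,k}}\times N_{i,k} \to M$. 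Conditions~(1) and (2) of Definition~\ref{definition:MQ} are immediate. For condition~(3), two subgroups $G_{(i,k)}$ and $G_{(i',k')}$ commute: when $i\neq i'$, their nontrivial actions affect different coordinate factors of $M$; when $i = i'$ but $k\neq k'$, their supports in $S_i$ are disjoint. By construction $V_I = \bigoplus_{i,k}\iota_{(i,k)\ast}\HHH^1_c(P_{\epsilon_{i,k}};\RR) = \bigoplus_i \HHH^1_c(S_i;\RR) = \HHH^1_c(M;\RR)$.

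For (b), three arithmetic facts finish the calculation. First, iterating Lemma~\ref{average scalar curvature} yields $A(M,\omega) - A(N_{i,k},\omega_{N_{i,k}}) = A(S_i,\omega_i) = \chi(S_i)/\mathrm{Area}(S_i,\omega_i) = (2-2l_i)/\mathrm{Area}(S_i,\omega_i)$. Second, with the convention $\vol(N,\omega_N) = \int_N \omega_N^m$ for a $2m$-dimensional $N$, the product structure gives $n\cdot \vol(N_{i,k},\omega_{N_{i,k}}) = \vol(M,\omega)/\mathrm{Area}(S_i,\omega_i)$. Third, for fixed $i$ the pushforwards $\iota_{(i,k)\ast}\bb_{(i,k)}$ are mutually orthogonal across $k$ (because the supports of the $e_{i,k}$ in $S_i$ are disjoint), and their sum equals the intersection form $\bb_{\omega_{S_i}}$ on $\HHH^1_c(S_i;\RR)$, extended by zero elsewhere in $\HHH^1_c(M;\RR)$. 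Substituting these three facts into the formula provided by Theorem~\ref{thm=XQ} collapses the double sum into the stated single sum, so that cross-surface pairs $(v_i,w_j)$ with $i\neq j$ automatically contribute zero.

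The main obstacle is step~(a): constructing, for each $S_i$, a configuration of $l_i$ pairwise disjoint symplectic embeddings of (possibly rescaled) $P_{\epsilon_{i,k}}$'s into $S_i$ whose pushforwards genuinely span $\HHH^1_c(S_i;\RR)$. For the genus-one case this is transparent (a single small $P_\epsilon$ embedded in $T^2$ so that its two $H^1$-generators map to the standard generators of $H^1(T^2;\RR)$), and for higher genus one uses the connected-sum decomposition $S_i = \#_{k=1}^{l_i} T^2$ to place $P_{\epsilon_{i,k}}$ in the interior of the $k$-th summand; coordinating the disjointness, symplecticity (after rescaling the symplectic form on each $P_{\epsilon_{i,k}}$), and the spanning property simultaneously is the key technical point of the proof.
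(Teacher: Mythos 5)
Your proposal is correct, and its skeleton---verify the $(X,Q)$-condition of Definition~\ref{definition:MQ}, invoke the axiomatized Theorem~\ref{thm=XQ}, then simplify using Lemma~\ref{average scalar curvature}, Lemma~\ref{lem=Asurface}, the identity $n\cdot\vol(N_{i,k},\omega_{N_{i,k}})=\vol(M,\omega)/\mathrm{Area}(S_i,\omega_i)$, and the fact that the pushed-forward forms assemble into $\bb_{\omega_{S_i}}$---is exactly the paper's. The genuine difference is the choice of decomposition: you take $X=M$ and $Q=\ast$ and treat \emph{every} factor, including the genus-one ones, by embedding copies of $P_\epsilon$; the paper instead reorders the factors so that $l_i>1$ for $i\le n_0$ and $l_i=1$ for $i>n_0$, sets $X=S_1\times\cdots\times S_{n_0}$ and $Q=S_{n_0+1}\times\cdots\times S_n$, and places embeddings only in the higher-genus factors (Proposition~\ref{surface product_MQ}). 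What the paper's split buys is that condition~(4) of Definition~\ref{definition:MQ} holds for free---the flux homomorphism of the torus $Q$ admits a section homomorphism by translations---so no symplectic embedding of $P_\epsilon$ into a genus-one factor is ever needed; the torus factors then contribute zero to $\bb_{I,Q}$ via the summand $0_Q$, matching $2-2l_i=0$. What your route buys is uniformity (no case split, no section homomorphism), but it needs, for each genus-one factor, a symplectic embedding $(P_{\epsilon},\omega_0)\hookrightarrow(S_i,\omega_i)$ whose pushforward spans $\HHH^1_c(S_i;\RR)$; this is not supplied by the construction at the start of Subsection~\ref{subsec:XQ} (stated only for genus at least two), although it is of the same standard nature as the embeddings $\kappa_i\colon P_\epsilon\to T^2(r_i)$ invoked without detailed proof in Proposition~\ref{torus blow up_MQ}, so your proof is on the same footing as the paper's in this respect. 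One caveat: your parenthetical ``after rescaling the symplectic form on each $P_{\epsilon_{i,k}}$'' is not permitted---Definition~\ref{definition:MQ} requires the embeddings to be symplectic for the \emph{standard} form $\omega_0$ on $P_{\epsilon_{i,k}}$, and the computations feeding Theorem~\ref{thm=XQ} (Lemma~\ref{lem:local_flux_calculation}, Proposition~\ref{prop:flux}) are carried out for that form. The correct fix is to shrink the parameter: since $\mathrm{Area}(P_\epsilon,\omega_0)=8\epsilon-16\epsilon^2\to 0$ as $\epsilon\to 0$, the standard $P_\epsilon$ fits disjointly into the handles of each factor for $\epsilon$ small, which is all your construction requires.
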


\begin{thm}\label{torus blow up_triple}
Let $n\geq2$ and $r_1,\ldots,r_n$ be real numbers with $0<r_1<r_2<\cdots<r_n$.
For real numbers $\rho,r$ with $0<\rho<r < r_1$,
let $(\hat{M}_r,\omega_\rho)$ be the blow-up of $(T^{2n}(r_1,\ldots,r_n),\omega_0)$ with respect to $\iota(r)$, $J_0$ and $\rho$.
Then, 
\[\bb_{\mushf}(v,w) = \vol(T^{2n}(r_1,\ldots,r_n),\omega_0) A(\hat{M}_r,\omega_{\rho}) \sum_{i=1}^n \frac{1}{{\rm Area}(T^2(r_i),\omega_0)} \bb_{\omega_0}(v_i,w_i) \]
for every $v=(v_1,\ldots,v_n)$ and $w=(w_1,\ldots,w_n)$ in 
\begin{align*} 
 &\HHH^1_c(\hat{M}_r;\RR)\cong \HHH^1_c(T^{2n}(r_1,\ldots,r_n);\RR) \\ 
\cong{}  &\HHH^1_c(T^2(r_1);\RR) \oplus \HHH^1_c(T^2(r_2);\RR) \oplus \cdots \oplus \HHH^1_c(T^2(r_n);\RR).
\end{align*}
where we regard $v_i$ and $w_i$ as elements in $\HHH^1_c(T^2(r_i);\RR)$ for every $i = 1,\ldots, n$. 
\end{thm}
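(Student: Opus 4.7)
The plan is to derive Theorem~\ref{torus blow up_triple} as a direct application of the axiomatized Theorem~\ref{thm=XQ}, taking $X = \hat{M}_r$ and $Q$ a single point, so that $M = X \times Q = \hat{M}_r$ and condition~(4) of Definition~\ref{definition:MQ} holds trivially.

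I will first verify the remaining items of the $(X,Q)$-condition. For each $i \in \{1, \ldots, n\}$, set $N_i = \prod_{j \neq i} T^2(r_j)$, a closed symplectic manifold of dimension $2n-2$. I will fix a small $\epsilon_i > 0$ together with a symplectic (area-preserving) embedding $\alpha_i \colon P_{\epsilon_i} \to T^2(r_i)$ whose image lies away from the projection to $T^2(r_i)$ of the blow-up ball $\iota(r)(B^{2n}(r))$; this is possible because $P_{\epsilon_i}$ is diffeomorphic to a torus with a small open disk removed. Setting $\iota_i = \alpha_i \times \mathrm{id}_{N_i}$ then gives a symplectic embedding of $P_{\epsilon_i} \times N_i$ into $T^{2n} \setminus \iota(r)(B^{2n}(r))$, which lifts canonically to $\hat{M}_r$. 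The resulting $\Gg_i = \mathrm{Im}(\iota_{i*}^{P_{\epsilon_i},\hat{M}_r})$ consists of symplectomorphisms acting only on the $i$-th torus coordinate, so $\Gg_i$ commutes with $\Gg_j$ for $i \neq j$.

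Next I will compute the right-hand side of Theorem~\ref{thm=XQ}. Iterated application of Lemma~\ref{average scalar curvature}, together with the vanishing $c_1(T^2) = 0$, yields $A(N_i, \omega_{N_i}) = 0$, so that $A(\hat{M}_r, \omega_\rho) - A(N_i, \omega_{N_i}) = A(\hat{M}_r, \omega_\rho)$. A direct computation of volumes gives
\[ \vol(N_i) = (n-1)! \prod_{j \neq i} \mathrm{Area}(T^2(r_j),\omega_0) = \frac{\vol(T^{2n}(r_1, \ldots, r_n), \omega_0)}{n \cdot \mathrm{Area}(T^2(r_i), \omega_0)}. \]
Under the push-forward $\iota_{i*} \colon \HHH^1_c(P_{\epsilon_i};\RR) \to \HHH^1_c(\hat{M}_r;\RR)$, which is an isomorphism onto the Künneth summand $\HHH^1_c(T^2(r_i);\RR) \subset \HHH^1_c(T^{2n};\RR) \cong \HHH^1_c(\hat{M}_r;\RR)$, the intersection form on $P_{\epsilon_i}$ corresponds to $\bb_{\omega_0}$ on $T^2(r_i)$. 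Substituting into the formula of Theorem~\ref{thm=XQ}, the leading factor of $n$ cancels against the denominator of $\vol(N_i)$, producing the asserted expression.

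The main technical point will be the construction of the embeddings $\alpha_i$ with compatible positioning: one needs the images to miss the blow-up region while their push-forwards in compactly supported first cohomology jointly span all of $\HHH^1_c(\hat{M}_r;\RR)$, so that Theorem~\ref{thm=XQ} delivers the full bilinear form $\bb_{\mushf}$ rather than merely a restriction. The product form of each $\iota_i$ guarantees both the pairwise commutativity of the $\Gg_i$ and the identification $\iota_{i*}\bb_i = \bb_{\omega_0}|_{\HHH^1_c(T^2(r_i);\RR)}$, while the fact that $P_{\epsilon_i}$ retracts onto a torus with a disk removed ensures the required cohomological isomorphism.
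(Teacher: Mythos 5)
Your proposal is correct and takes essentially the same route as the paper: the paper likewise verifies the $(\hat{M}_r,\ast)$-condition (Proposition~\ref{torus blow up_MQ}) by choosing area-preserving embeddings $\kappa_i\colon P_{\epsilon}\to T^2(r_i)$ whose images avoid the projection $p_i(B^2(r))$ of the blow-up ball, taking products with the identity on the remaining factors, lifting to $\hat{M}_r$, and then applies Theorem~\ref{thm=XQ} using $A(N_i,\omega_i)=0$ and the volume identity $\vol(T^{2n}(r_1,\ldots,r_n),\omega_0)=n\vol(N_i,\omega_i)\mathrm{Area}(T^2(r_i),\omega_0)$. The only differences are cosmetic: the paper uses a single $\epsilon$ rather than separate $\epsilon_i$, and you spell out slightly more explicitly the identification of $\iota_{i\ast}\bb_i$ with $\bb_{\omega_0}$ on the K\"unneth summand, which the paper leaves implicit.
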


We show these three theorems by checking the $(X,Q)$-condition (Definition~\ref{definition:MQ}) for $(\Mm,\omega)$ and then by applying the axiomatized theorem (Theorem~\ref{thm=XQ}); the first step is treated in Subsection~\ref{subsec:XQ}, and the second step is discussed in Subsection~\ref{subsec=explicit}.

\subsection{Symplectic manifolds satisfying the $(X,Q)$-condition}\label{subsec:XQ}

Recall that $S$ is a closed orientable surface whose genus $l$ is at least two. We take curves $\alpha_1$, $\beta_1$, $\alpha_2$, $\beta_2$, $\ldots$, $\alpha_\nn$, $\beta_\nn \colon [0,1] \to S$ on $S$ as depicted in Figure~\ref{curves}.

\begin{figure}[htbp]
\centering
\includegraphics[width=7truecm]{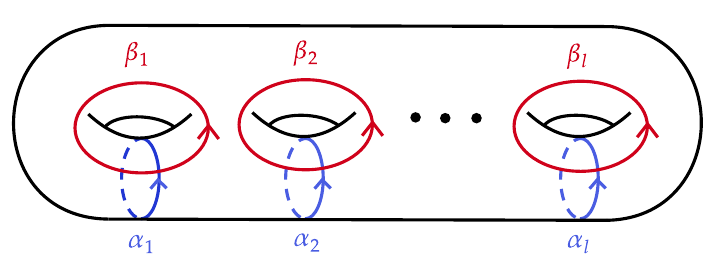}
\caption{Curves $\alpha_1,\beta_1, \alpha_2, \beta_2, \ldots,\alpha_\nn,\beta_\nn$ on $S$}
\label{curves}
\end{figure}

For a sufficiently small $\epsilon>0$, we take symplectic embeddings $\iota^{S}_1,\ldots,\iota^{S}_{\nn} \colon (P_\epsilon,\omega_0)\to (S,\omega)$ satisfying the following two conditions:
\begin{enumerate}[label=\textup{(\arabic*)}]
  \item $\iota^{S}_j \circ \alpha = \alpha_{j}$, $\iota^{S}_j \circ \beta = \beta_{ j }$ for $j = 1,\ldots,\nn$.
  \item $\iota^{S}_j (P_\epsilon)\cap \iota^{S}_j (P_\epsilon)=\emptyset$ if $i\neq j$.
\end{enumerate}

Let $\ast$ denote the one-point space. 

\begin{prop}\label{surface product with vanishing H1_MQ}
Let $S$ be a closed orientable surface whose genus $l$ is at least two and $N$ a closed manifold.
Let $\omega_S$, $\omega_N$ be symplectic forms on $S$, $N$, respectively and set $(M,\omega)=(S,\omega_S)\times (N,\omega_N)$.
 Then $M$ satisfies the $(S\times N,\ast)$-condition. 
\end{prop}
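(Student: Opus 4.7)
The plan is to verify directly each of the four clauses of Definition~\ref{definition:MQ} for $(\Xm,\Qm)=(S\times N,\ast)$ by making the following explicit choices: take $\nn=l$ (the genus of $S$), take $\ee_j=\ee$ for every $j$ (with $\ee$ the same sufficiently small number used in choosing the $\iota^S_1,\dots,\iota^S_l$), take $\Nm_j=\Nm$ for every $j$, and take the symplectic embeddings
\[
\iPN_j\colon (P_\ee\times \Nm,\omega_{P_\ee\times \Nm}) \longrightarrow (S\times \Nm,\omega_{S\times \Nm}),\qquad \iPN_j = \iota^S_j\times \id_{\Nm}.
\]
Clause (1) of Definition~\ref{definition:MQ} holds tautologically because $\Mm=S\times \Nm=\Xm\times\ast$. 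Clause (2) holds because each $\iPN_j$ is a symplectic embedding into $\Xm=S\times \Nm$ (being the product of a symplectic embedding and the identity), and $\dim \Nm=\dim\Xm-2$. Clause (4) is trivial since $\HHH^1_c(\ast;\RR)=0$, so $\widetilde{\flux}_{\omega_{\ast}}$ is the zero map and admits the zero homomorphism as a section.

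The only substantive point is clause (3), the pairwise commutativity of the subgroups $\Gg_j=\mathrm{Im}(\iPN_{j\ast}^{P_\ee,\Xm})\leqslant \widetilde{\Sympc_0}(\Xm,\omega_{\Xm})$. Here I would use the disjointness condition $\iota^S_j(P_\ee)\cap\iota^S_k(P_\ee)=\emptyset$ for $j\ne k$ imposed in the construction preceding this proposition, which implies that $\iPN_j(P_\ee\times \Nm)\cap \iPN_k(P_\ee\times \Nm)=\emptyset$ for $j\ne k$. Given $\tg\in\Gg_j$ and $\tih\in\Gg_k$, I would pick representing smooth isotopies $\{\iPN_{j\ast}(h^t\times\id_{\Nm})\}_{t\in[0,1]}$ and $\{\iPN_{k\ast}(f^t\times\id_{\Nm})\}_{t\in[0,1]}$ from identity in $\Sympc_0(\Xm,\omega_{\Xm})$; their values at each time $t$ have pairwise disjoint supports, hence commute pointwise in $\Sympc_0(\Xm,\omega_{\Xm})$.

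The key lifting step is then to observe that this time-wise commutativity promotes to commutativity in the universal cover $\widetilde{\Sympc_0}(\Xm,\omega_{\Xm})$. Concretely, the two paths
\[
t\mapsto \iPN_{j\ast}(h^t\times\id_{\Nm})\cdot \iPN_{k\ast}(f^t\times\id_{\Nm})
\qquad\text{and}\qquad
t\mapsto \iPN_{k\ast}(f^t\times\id_{\Nm})\cdot \iPN_{j\ast}(h^t\times\id_{\Nm})
\]
coincide in $\Sympc_0(\Xm,\omega_{\Xm})$ for every $t$, hence represent the same element of the universal cover, which gives $\tg\tih=\tih\tg$ in $\widetilde{\Sympc_0}(\Xm,\omega_{\Xm})$. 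I expect this to be the main (though entirely routine) obstacle: one has to make sure that the natural homomorphism $\iPN_{i\ast}^{P_\ee,\Xm}$ (as defined in Subsection~\ref{subsec=notation}~(\ref{item:emb})) really sends commuting lifts to commuting lifts, and the argument above shows it does because the supports remain disjoint throughout the isotopy. Having verified (1)--(4), the proposition follows.
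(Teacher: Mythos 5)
Your proposal is correct and matches the paper's own proof: the paper likewise takes the system of embeddings $\iota_j=\iota^S_j\times\id_N$ induced by the disjointly embedded $\iota^S_j\colon P_\ee\to S$ and deduces clause (3) from that disjointness, with clauses (1), (2), (4) left implicit. Your additional spelling-out of the lifting step (that pointwise products of the disjointly supported isotopies represent the products in $\widetilde{\Sympc_0}(S\times N,\omega)$, so time-wise commutativity promotes to the universal cover) is exactly the routine verification the paper suppresses.
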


\begin{proof}
Let $I=\{ \iota_i \colon P_{\epsilon}\times N \to S\times N= M \}_{i=1,\dots,\nn}$ be the system of embeddings induced by $\{\iota^S_i \colon  P_{\epsilon} \to S\}_{i=1,\dots,\nn}$. 
Then, by condition (2) above, $G_i$ and $G_j$ commute if $i\neq j$. 
\end{proof}

\begin{prop}\label{surface product_MQ}
Let $S_i$ be a closed orientable surface whose genus $l_i$ is at least one and $\omega_i$ is a symplectic form on $S_i$ \textup{(}$i=1,\ldots,n$\textup{)}.
Assume that $l_i > 1$ if $i\leq n_0$ and $l_{n_0+1}=\cdots=l_n=1$ for some $n_0$.
Set $(M,\omega)=(S_1,\omega_1)\times\cdots\times(S_n,\omega_n)$, $X=S_1 \times \cdots \times S_{n_0}$ and $Q= S_{n_0+1} \times \cdots \times S_{n}$.
Then $M$ satisfies the $(X,Q)$-condition.
\end{prop}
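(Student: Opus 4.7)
The plan is to verify the four conditions of the $(X,Q)$-condition (Definition~\ref{definition:MQ}) directly. Condition~(1) holds by the very definition of $(X,\omega_X)$ and $(Q,\omega_Q)$ from the given decomposition of $(M,\omega)$.

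For conditions~(2) and (3), I will construct the embeddings by reusing, factor by factor, the embeddings from Proposition~\ref{surface product with vanishing H1_MQ}. Concretely, for each $i\in\{1,\ldots,n_0\}$ the surface $S_i$ has genus $l_i\geq 2$, so we have $l_i$ symplectic embeddings $\iota^{S_i}_1,\ldots,\iota^{S_i}_{l_i}\colon (P_{\epsilon},\omega_0)\to(S_i,\omega_i)$ with pairwise disjoint images. For each pair $(i,k)$ with $1\le i\le n_0$ and $1\le k\le l_i$, set $N_{(i,k)}=S_1\times\cdots\widehat{S_i}\cdots\times S_{n_0}$ (omitting the $i$-th factor) equipped with the product symplectic form; this is a closed symplectic manifold of dimension $2n_0-2=\dim X-2$. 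Using the canonical isomorphism $X\cong S_i\times N_{(i,k)}$ obtained by permuting factors, define
\[
\iota_{(i,k)}\colon (P_{\epsilon}\times N_{(i,k)},\omega_{P_{\epsilon}\times N_{(i,k)}})\to (X,\omega_X),\qquad \iota_{(i,k)}=\iota^{S_i}_k\times\id_{N_{(i,k)}}.
\]
This provides the required system with total size $\ell=\sum_{i=1}^{n_0}l_i$, verifying (2). For (3), two cases arise: if $(i,k)$ and $(i,k')$ share the same first index, the images of $\iota^{S_i}_k$ and $\iota^{S_i}_{k'}$ in $S_i$ are disjoint, so the images of the corresponding embeddings in $X$ are disjoint and the induced groups commute; if $(i,k)$ and $(i',k')$ have $i\neq i'$, then the elements of $G_{(i,k)}$ act nontrivially only on the $S_i$-factor while those of $G_{(i',k')}$ act nontrivially only on the $S_{i'}$-factor, so they commute because they act on distinct factors of the product $S_1\times\cdots\times S_{n_0}$.

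For condition~(4), note that $Q=S_{n_0+1}\times\cdots\times S_n$ is a product of symplectic $2$-tori, hence diffeomorphic to a standard torus $T^{2m}$ with $m=n-n_0$, equipped with a translation-invariant symplectic form. The group of translations lifts to a continuous homomorphism $c\colon\mathbb{R}^{2m}\to\widetilde{\Sympc}(Q,\omega_Q)$, and the composition $\widetilde{\flux}_{\omega_Q}\circ c$ sends a constant vector field $v$ to $[\iota_v\omega_Q]\in\HHH^1_c(Q;\RR)$. Since the symplectic form on $Q$ is non-degenerate and translation-invariant, this composition is an $\RR$-linear isomorphism; inverting it yields the desired section homomorphism of $\widetilde{\flux}_{\omega_Q}$.

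None of the steps presents a serious obstacle; the only mild subtlety is condition~(4), which relies on the standard fact that on a symplectic torus the constant vector fields give a section of the flux homomorphism on the universal cover. (In the degenerate case $n_0=0$, where $X$ is a point, the system of embeddings is empty and only condition~(4) needs to be checked, with $Q=M$ a symplectic torus; the same translation argument applies.)
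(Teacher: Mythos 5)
Your proof is correct and follows essentially the same route as the paper: you reuse the disjoint embeddings $P_{\epsilon}\to S_i$ on the genus-$\geq 2$ factors to build product embeddings $P_{\epsilon}\times N_{(i,k)}\to X$, obtain commutativity of the groups $G_{(i,k)}$ from disjoint supports (same surface factor) or from acting on distinct factors (different surface factors), and produce a section of $\tflux_{\omega_Q}$ from translations on the torus $Q$. The paper's proof is the same construction, only terser--it compresses your two-case commutativity argument into a reference to the disjointness of the images and simply asserts the existence of the torus flux section--so your write-up merely supplies details the paper omits.
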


\begin{proof}
For each surface $S_i$, we take sufficiently small $\epsilon_i$ and symplectic embeddings $\iota^{S}_{i,1},\ldots,\iota^{S}_{i,\nn_i} \colon (P_{\epsilon_i},\omega_0)\to (S_i,\omega_i)$ as above. Let 
\[I=\{ \iota_{i,j} \colon S_1 \times \cdots \times S_{i-1} \times P_{\epsilon_i} \times S_{i+1} \times \cdots \times S_{n_0} \to X \}_{1\leq i \leq n_0, 1\leq j \leq l_i}  \]
be the system of embeddings induced by $\iota^{S}_{i,j} \colon P_{\epsilon_i} \to S_i$. 
Then, by condition (2) above again, each $G_{i,j} := \mathrm{Im} \left( \iota^{P_{\epsilon_i}, M}_{i,j \ast} \right)$ commutes with others. Since $Q$ is a $2(n-n_0)$-dimensional torus, $\tflux_Q$ admits a section homomorphism.
\end{proof}


\begin{prop}\label{torus blow up_MQ}
Let $n\geq2$ and $r_1,\ldots,r_n$ be real numbers with $0<r_1<r_2<\cdots<r_n$.
For real numbers $\rho,r$ with $0<\rho<r < r_1$,
let $(\hat{M}_r,\omega_\rho)$ be the blow-up of $(T^{2n}(r_1,\ldots,r_n),\omega_0)$ with respect to $\iota(r)$, $J_0$ and $\rho$.
Then $\hat{M}_r$ satisfies the $(\hat{M}_r,\ast)$-condition.
\end{prop}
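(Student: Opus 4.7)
The strategy is to verify the four conditions of Definition \ref{definition:MQ} with $X = \hat{M}_r$ and $Q = \{\ast\}$ the one-point symplectic manifold. Condition (1) is immediate since $M = X \times Q$ canonically, and condition (4) holds vacuously because $\tflux_{\omega_Q}$ is trivial. The substance lies in verifying conditions (2) and (3).

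For each $i \in \{1, \ldots, n\}$, set
\[
N_i := T^2(r_1) \times \cdots \times \widehat{T^2(r_i)} \times \cdots \times T^2(r_n),
\]
a closed symplectic manifold of dimension $2(n-1) = \dim X - 2$. I would construct the required embedding $\iota_i \colon P_{\epsilon_i} \times N_i \to \hat{M}_r$ in two stages. First, applying the construction of curves and symplectic embeddings given immediately before Proposition \ref{surface product with vanishing H1_MQ} in the genus-one case (implicitly used in the proof of Proposition \ref{surface product_MQ} for the torus factors of $Q$), and choosing $\epsilon_i \in (0,1/4)$ sufficiently small, produce a symplectic embedding $\iota_i^T \colon P_{\epsilon_i} \to T^2(r_i) \setminus \overline{B^2(r)}$ realizing an isomorphism on $H^1$. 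Second, take the product with $\id_{N_i}$ and compose with the natural symplectic inclusion
\[
T^{2n}(r_1,\ldots,r_n) \setminus \iota(r)(B^{2n}(r)) \hookrightarrow \hat{M}_r
\]
coming from Proposition \ref{blow_up_def}(1), which asserts that $\pi^\ast \omega_0 = \omega_\rho$ on this open set.

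For condition (3), whenever $i \neq j$ the group $G_i$ modifies only the $i$-th factor of $T^{2n}(r_1, \ldots, r_n) \setminus \iota(r)(B^{2n}(r)) \hookrightarrow \hat{M}_r$ through $\iota_i^T$, while $G_j$ modifies only the $j$-th factor; hence $G_i$ and $G_j$ commute.

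The main obstacle is the first stage above, namely constructing $\iota_i^T$ so that it avoids the closed ball $\overline{B^2(r)}$ and simultaneously realizes both generators of $H_1(T^2(r_i))$. The hypothesis $0 < r < r_1 \leq r_i$ is what makes this possible: the complement $T^2(r_i) \setminus \overline{B^2(r)}$ has positive area $4r_i^2 - \pi r^2$ and contains disjoint annular neighborhoods of two independent simple closed curves generating $H_1(T^2(r_i))$. Since the area of $P_{\epsilon_i}$ tends to zero as $\epsilon_i \to 0$, for sufficiently small $\epsilon_i$ one can realize $P_{\epsilon_i}$ as a symplectic tubular neighborhood of a figure-eight representing those generators, exactly as in the constructions invoked for Propositions \ref{surface product with vanishing H1_MQ} and \ref{surface product_MQ}.
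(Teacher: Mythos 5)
Your proof is correct and takes essentially the same approach as the paper: choose, for $\epsilon$ small enough (area argument), symplectic embeddings of $P_\epsilon$ into each $T^2(r_i)$ whose images avoid the projection of the ball of radius $r$, take the product with the identity on the remaining torus factors so that the image misses $\iota(r)(B^{2n}(r))$ and therefore lifts to $\hat{M}_r$ via Proposition \ref{blow_up_def}~(1), and deduce condition (3) from the fact that $G_i$ and $G_j$ act on distinct coordinate factors. The only cosmetic differences are that you verify conditions (1) and (4) explicitly and spell out the figure-eight/annulus construction of the embedding $P_{\epsilon_i}\to T^2(r_i)\setminus \overline{B^2(r)}$, both of which the paper leaves implicit.
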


\begin{proof}  
For $\epsilon >0$ and a system of symplectic embeddings $\{\kappa_i \colon P_{\epsilon} \to T^2(r_i)\}_{i=1,\dots,n}$, let 
  \[\hat\kappa_i \colon T^{2(i-1)}(r_1,\dots, r_{i-1})\times P_{\epsilon} \times T^{2(n-i)}(r_{i+1},\dots, r_{n}) \to T^{2n}(r_1,\dots, r_n) \]
denote the symplectic embedding defined by 
\[\hat\kappa_i (z_1,\dots,z_i,\dots,z_n)=(z_1,\dots,\kappa(z_i),\dots,z_n).\]
Let $p_i\colon \RR^2\to T^{2}(r_i)$, $p \colon \RR^{2n}\to T^{2n}(r_1,\dots, r_n)$ be the standard projections.
Take $\epsilon>0$ such that the area of $P_\epsilon$ is smaller than the area of $T^2(r_i) \setminus p(B^2(r))$ for each $i$.
Then, we can take a system of symplectic embeddings $\{ \kappa_i \colon P_{\epsilon} \to T^2(r_i) \}_{i=1,\dots,n}$ such that the image of $\kappa_i$ and $p_i(B^2(r))$ are disjoint.
Thus, the image of $\hat\kappa_i$ and $p(B^{2n}(r))$ are disjoint.
Hence, the symplectic embeddings $\hat\kappa_1, \dots, \hat\kappa_n$ induce the system of symplectic embeddings
\[I=\{ \iota_i \colon T^{2(i-1)}(r_1,\dots, r_{i-1})\times P_{\epsilon} \times  T^{2(n-i)}(r_{i+1},\dots, r_{n}) \to \hat{M}_r\}_{i=1,\dots,n}.\] 
By the construction of $I$, we conclude that $G_i$ and $G_j$ commute if $i\neq j$. 
\end{proof}


\subsection{Explicit expression of the bilinear form} \label{subsec=explicit}

In this subsection, we prove Theorem~\ref{mthm=explicitShelukhin}, which determines fully or partially the explicit expression of the bilinear form $\bb_{\mushf}$ for a certain $(\Mm,\omega)$. First, we compute the average Hermitian scalar curvature (Definition~\ref{defn=aHsc}) of closed symplectic surfaces.

\begin{lem}\label{lem=Asurface}
Let $(S,\omega)$ be a closed surface of genus $l$  equipped with a symplectic form. Then, we have
\[
A(S,\omega)=\frac{\chi(S)}{\mathrm{Area}(S,\omega)}=\frac{2-2l}{\mathrm{Area}(S,\omega)}.
\]
Here $\chi$ is the Euler characteristic.
\end{lem}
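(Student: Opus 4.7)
The plan is to evaluate the definition of the average Hermitian scalar curvature directly in the surface case. Since $S$ is two-dimensional, we have $n=1$, so the second formula in \eqref{eq:aHsc} becomes
\[
A(S,\omega) = \frac{\int_S c_1(S)}{\int_S \omega}.
\]
The symplectic form $\omega$ orients $S$, and with respect to this orientation $\int_S \omega = \mathrm{Area}(S,\omega)$, which handles the denominator.

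For the numerator, I would fix an $\omega$-compatible almost complex structure $J$ on $S$, so that $TS$ becomes a complex line bundle and $c_1(S) = c_1(TS, J) \in H^2(S;\mathbb{R})$. For a complex line bundle over an oriented surface, the first Chern class coincides with the Euler class of the underlying oriented real rank-$2$ bundle. Hence $\int_S c_1(S) = \int_S e(TS) = \chi(S) = 2 - 2l$ by the standard Gauss--Bonnet / Poincar\'e--Hopf identification of the Euler class with the Euler characteristic. Combining the two computations yields the claimed equality
\[
A(S,\omega) = \frac{2-2l}{\mathrm{Area}(S,\omega)}.
\]

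There is essentially no obstacle beyond keeping conventions straight: the orientation used to integrate $\omega$ and to integrate $c_1(S)$ must agree, which is automatic once one uses an $\omega$-compatible $J$ (the paper's convention in Subsection~\ref{subsec:sh_qm}). I would therefore write the proof in two short lines, invoking \eqref{eq:aHsc} with $n=1$ and Gauss--Bonnet, without further computation.
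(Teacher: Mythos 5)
Your proof is correct and follows essentially the same route as the paper's: both apply \eqref{eq:aHsc} with $n=1$ and then identify $\int_S c_1(S)$ with $\chi(S)=2-2l$ and $\int_S\omega$ with $\mathrm{Area}(S,\omega)$. The extra detail you give (first Chern class of $(TS,J)$ equals the Euler class, orientation conventions) is just an expanded justification of the step the paper states without comment.
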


\begin{proof}
By applying \eqref{eq:aHsc} with $n=1$, we have
\[
A(S,\omega)= \left. \int_{S} c_1(S)  \middle/ \int_{S} \omega \right. =\frac{\chi(S)}{\mathrm{Area}(S,\omega)}. \qedhere
\]
\end{proof}

We also recall that $\bb_{\omega}$ denotes the symplectic pairing for a symplectic manifold $(\Mm,\omega)$, which is defined by \eqref{eq=symplecticpairing}.

\begin{proof}[Proof of Theorem~\textup{\ref{surface product with vanishing H1_triple}}]
By Proposition~\ref{surface product with vanishing H1_MQ}, $M$ satisfies the $(S \times N,\ast)$-condition. 
Take $I$ as in the proof of Proposition~\ref{surface product with vanishing H1_MQ} and set  $Q=\ast$.
Then, $V_I \oplus \HHH^1_c(Q;\RR) = \HHH^1_c(S;\RR) \oplus 0 = \HHH^1_c(S;\RR)$.
By Lemmas \ref{average scalar curvature}  and  \ref{lem=Asurface},  
\begin{align*}
  \bb_{I,Q} &=n \sum_{i=1}^l  \vol(N, \omega_N) ( A(M,\omega) -A(N,\omega_N) )  \iota^{P_\ee, M}_{i \ast} \bb_i = n  \sum_{i=1}^l  \vol(N, \omega_N)  A(S,\omega_S) \iota^{P_\ee, M}_{i \ast}\bb_i  \\ 
&  =   n (2-2l) \frac{ \vol(N, \omega_N) }{\mathrm{Area}(S,\omega_S)} \cdot\bb_{\omega_S} = \vol(M, \omega)\frac{ 2-2 l }{\mathrm{Area}(S,\omega_S)^2} \bb_{\omega_S}
\end{align*}
since $\vol(M,\omega)=n  \vol(N ,\omega_N)  \vol(S, \omega_S)$. 
Therefore, the assertion follows from Theorem \ref{thm=XQ}.
\end{proof}


\begin{proof}[Proof of Theorem~\textup{\ref{surface product_triple}}]
We can assume that $l_i > 1$ if $i\leq n_0$ and $l_{n_0+1}=\cdots=l_n=1$ for some $n_0$ without loss of generality.
Set $X=S_{1} \times \cdots \times S_{n_0}$ and $Q= S_{n_0+1} \times \cdots \times S_{n}$. 
By Proposition~\ref{surface product_MQ}, $M$ satisfies the $(X,Q)$-condition. 
Take $I$ as in the proof of Proposition~\ref{surface product_MQ}. 
Then, $V_I \oplus \HHH^1_c(Q;\RR) =\HHH^1_c(X;\RR) \oplus \HHH^1_c(Q;\RR)=\HHH^1_c(M;\RR)$.
Let $(N_i, \omega_{N_i})$ denote $(S_1 \times \cdots \times S_{i-1} \times S_{i+1} \times \cdots \times S_{n_0} \times Q, \omega_0)$.
Note that $\vol(M,\omega)=n \vol(N_i, \omega_{N_i}) {\rm Area}( S_i, \omega_{i})$.
By Lemmas \ref{average scalar curvature} and \ref{lem=Asurface}, 
\begin{multline*}
  \bb_{I,Q}  = n \Biggl(  \sum_{\substack{1\leq i\leq n_0 \\ 1\leq j \leq l_i}}
 \vol(N_i,\omega_{N_i}) 
 ( A(M,\omega) -A(N_i, \omega_{N_i})  ) \iota^{P_\ee, \Xm}_{i,j \ast}  \bb_i \Biggr) \oplus 0_Q \\
 = \Biggl( \sum_{\substack{1\leq i\leq n_0 \\ 1\leq j \leq l_i}}  \frac{\vol(\Mm ,\omega)}{{\rm Area}(S_i,\omega_i)}   A(S_i, \omega_i) \iota^{P_\ee, \Xm}_{i,j \ast} \bb_i \Biggr) \oplus 0_Q 
 = \vol(\Mm ,\omega) \Biggl(  \sum_{\substack{1\leq i\leq n_0 \\ 1\leq j \leq l_i}}  \frac{2-2l_i}{{\rm Area}(S_i,\omega_i)^2}  \iota^{P_\ee, \Xm}_{i,j \ast} \bb_i \Biggr) \oplus 0_Q.
\end{multline*}
  Thus, we have
 \begin{align*}
\bb_{I,Q}(v,w) =  \vol(M,\omega)  \sum_{i=1}^{n_0} \frac{2-2l_i}{{\rm Area}(S_i,\omega_i)^2}  \bb_{\omega_{S_i}}(v_i,w_i)=  \vol(M,\omega)  \sum_{i=1}^{n} \frac{2-2l_i}{{\rm Area}(S_i,\omega_i)^2}  \bb_{\omega_{S_i}}(v_i,w_i).
 \end{align*}
 Therefore, the assertion follows from Theorem \ref{thm=XQ}.
\end{proof}


\begin{proof}[Proof of Theorem~\textup{\ref{torus blow up_triple}}]
By Proposition~\ref{torus blow up_MQ}, $M$ satisfies the $(\hat{M}_r,\ast)$-condition. 
Take $I$ as in the proof of Proposition~\ref{torus blow up_MQ} and $Q=\ast$.  
Then $V_I=\HHH_c^1(\hat{M}_r;\RR)$. 
For every $i=1,\ldots,n$, let $(N_i,\omega_i)$ denote $\left(T^{2(i-1)}(r_1,\dots, r_{i-1})\times T^{2(n-i)}(r_{i+1},\dots, r_{n}) ,\omega_0\right)$. 
Note that $\vol(M,\omega)=n \vol(N_i, \omega_{i}) {\rm Area}( T^2(r_i), \omega_{0})$.
Since $A(N_i,\omega_i)=0$ for each $i=1,\dots, n$,
\begin{align*}
\bb_{I,Q} &= n \sum_{i=1}^{n}  \vol(N_i,\omega_i)  \left( A(\hat{M}_r,\omega_{\rho}) -A(N_i,\omega_i)  \right) \iota^{P_\ee, \hat{M}_r}_{i \ast} \bb_i \\
& =  \sum_{i=1}^{n} \frac{\vol(T^{2n}(r_1,\ldots,r_n),\omega_0) }{{\rm Area}(T^2(r_i),\omega_0)}   A(\hat{M}_r,\omega_{\rho}) \iota^{P_\ee, \hat{M}_r}_{i \ast}  \bb_i. 
\end{align*}
Therefore, the assertion follows from Theorem \ref{thm=XQ}.
\end{proof}

\begin{proof}[Proof of Theorem~\textup{\ref{mthm=explicitShelukhin}}]
Combine Theorems~\ref{surface product with vanishing H1_triple}, \ref{surface product_triple} and \ref{torus blow up_triple}.
\end{proof}

\subsection{Applications}

The following result corresponds to \cite[Theorem~A]{KKMMMReznikov} via Theorem~\ref{mthm=Shelukhin_extendable}.

\begin{cor}[non-vanishing results of $\bb_{\mushf}$]\label{cor=non-vanishing_Shelukhin}
Assume that $(M,\omega)$ is one of the following:
\begin{enumerate}[label=\textup{(\arabic*)}]
 \item The direct product $(S,\omega_S)\times (N,\omega_N)$ of a closed surface $(S,\omega_S)$ whose genus is at least two equipped with the symplectic form $\omega_S$, and a closed symplectic manifold $(N,\omega_N)$.
 \item The blow-up $(\hat{M}_r,\omega_\rho)$ of the torus $(T^{2n}(r_1,\ldots,r_n),\omega_0)$ with respect to $\iota(r)$, $J_0$ and $\rho$.
\end{enumerate}
Then, $\bb_{\mushf}$ is \emph{not} equal to the zero-form.
\end{cor}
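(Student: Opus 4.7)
The plan is to derive both non-vanishing statements as direct consequences of the explicit formulae for $\bb_{\mushf}$ established in Theorem~\ref{mthm=explicitShelukhin}; no further machinery is required.

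For case (1), I would invoke Theorem~\ref{mthm=explicitShelukhin}(1): on the subspace $\HHH^1_c(S;\RR)\subset \HHH^1_c(M;\RR)$, the form $\bb_{\mushf}$ equals the scalar
\[
\vol(M,\omega)\frac{2-2l}{\mathrm{Area}(S,\omega_S)^2}
\]
times the symplectic pairing $\bb_{\omega_S}$. Since $l\geq 2$ forces $2-2l<0$, the scalar is non-zero. Moreover, $\bb_{\omega_S}$ on a closed surface of genus $l\geq 1$ coincides with the intersection form by Remark~\ref{rem=intersection}, which is non-degenerate and therefore not the zero form. Hence $\bb_{\mushf}|_{\HHH^1_c(S;\RR)\times \HHH^1_c(S;\RR)}\not\equiv 0$, and in particular $\bb_{\mushf}$ itself is not the zero-form on $\HHH^1_c(M;\RR)\times \HHH^1_c(M;\RR)$.

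For case (2), I would apply Theorem~\ref{mthm=explicitShelukhin}(3). The decisive input is that the average Hermitian scalar curvature $A(\hat{M}_r,\omega_\rho)\neq 0$ by Lemma~\ref{torus blow up lemma}; the factors $\vol(T^{2n}(r_1,\ldots,r_n),\omega_0)$ and $1/\mathrm{Area}(T^2(r_i),\omega_0)$ are trivially positive. Each pairing $\bb_{\omega_0}$ on $\HHH^1_c(T^2(r_i);\RR)$ is the non-degenerate intersection form of a $2$-torus, so choosing $v,w$ supported in, say, the $i=1$ summand with $\bb_{\omega_0}(v_1,w_1)\neq 0$ yields a non-zero value of $\bb_{\mushf}(v,w)$.

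The proof is essentially bookkeeping: the only genuine inequalities used are $2-2l<0$ in case~(1) and the non-vanishing of $A(\hat{M}_r,\omega_\rho)$ in case~(2), both supplied by inputs already proved. I expect no substantive obstacle beyond correctly identifying the relevant subspaces of $\HHH^1_c(M;\RR)$ to which the explicit formulae apply and invoking non-degeneracy of the intersection form on a closed orientable surface of positive genus.
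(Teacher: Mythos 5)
Your proposal is correct and follows essentially the same route as the paper: both cases are read off from the explicit formulae (Theorem~\ref{mthm=explicitShelukhin}, i.e.\ Theorems~\ref{surface product with vanishing H1_triple} and \ref{torus blow up_triple}), using $2-2l\neq 0$ for genus $l\geq 2$ in case~(1) and Lemma~\ref{torus blow up lemma} in case~(2). Your explicit appeal to the non-degeneracy of the intersection form on the surface (resp.\ torus) factors merely spells out a detail the paper's proof leaves implicit.
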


\begin{proof}
Observe that $2-2l\ne 0$ for every integer $l$ at least two. Now, (1)  immediately follows from Theorem~\ref{surface product with vanishing H1_triple}. For (2), combine Theorem~\ref{torus blow up_triple} with Lemma~\ref{torus blow up lemma}.
\end{proof}

Corollary~\ref{cor=non-vanishing_Shelukhin} yields the following non-zero bound from below of the real dimension of the $\RR$-linear space appearing in Theorem~\ref{thm=findimCVSI_strong}.

\begin{prop}\label{prop=dimatleast1}
Let $(\Mm, \omega)$ be a closed  symplectic manifold satisfying the assumption of Corollary~\textup{\ref{cor=non-vanishing_Shelukhin}}. Let $\Gg=\tSympc(\Mm,\omega)$ and $\Ng=\tHamc(\Mm,\omega)$. Let $\Qv^{\mathrm{ch}}_{(\Mm,\omega)}$ be the $\RR$-linear subspace of $\QQQ(\Ng)^{\Gg}$ defined in Theorem~\textup{\ref{thm=findimCVSI_strong}}.
Then the image of $\Qv^{\mathrm{ch}}_{(\Mm,\omega)}$ under the projection $\QQQ(\Ng)^{\Gg}\twoheadrightarrow \VV(\Gg, \Ng)$ has real dimension  at least $1$.
\end{prop}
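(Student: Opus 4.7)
The plan is straightforward: exhibit one explicit element of $\Qv^{\mathrm{ch}}_{(\Mm,\omega)}$ whose image in $\VV(\Gg,\Ng)$ is nonzero, namely Shelukhin's quasimorphism $\mushf$ itself. That $\mushf$ lies in $\Qv^{\mathrm{ch}}_{(\Mm,\omega)}$ is exactly the content of Proposition~\ref{prop=Shelukhin_ch}: we have $(\nuf_J)_{\mathrm{h}} = \mushf$, and Proposition~\ref{prop=Shelukhin_conti} guarantees that $\nuf_J$ is continuous via smooth isotopy.

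It then remains to verify that the class $[\mushf] \in \VV(\Gg,\Ng) = \QQQ(\Ng)^{\Gg}/i^{\ast}\QQQ(\Gg)$ is nonzero, i.e., that $\mushf$ is not extendable from $\Ng = \tHamc(\Mm,\omega)$ to $\Gg = \tSympc(\Mm,\omega)$. For this, I would apply Theorem~\ref{mthm=Shelukhin_extendable} with $\Pg = \Gg$: the extendability of $\mushf$ to $\Gg$ is equivalent to the vanishing of $\bb_{\mushf}$ on $\tflux_{\omega}(\Gg) \times \tflux_{\omega}(\Gg)$. Since $\tflux_{\omega}\colon \Gg \twoheadrightarrow \HHH^1_c(\Mm;\RR)$ is surjective by Proposition~\ref{survey on flux}~(2), this amounts to $\bb_{\mushf} \equiv 0$ on all of $\HHH^1_c(\Mm;\RR) \times \HHH^1_c(\Mm;\RR)$.

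The final step is to invoke the hypothesis: $(\Mm,\omega)$ satisfies the assumption of Corollary~\ref{cor=non-vanishing_Shelukhin}, so $\bb_{\mushf}$ is not the zero-form. Therefore $\mushf$ is not extendable to $\Gg$, hence $[\mushf] \neq 0$ in $\VV(\Gg,\Ng)$, and consequently the image of $\Qv^{\mathrm{ch}}_{(\Mm,\omega)}$ in $\VV(\Gg,\Ng)$ contains the nonzero class $[\mushf]$ and therefore has real dimension at least $1$. There is no real obstacle here: the proof is essentially a chain of citations, with the genuine work having already been done in Propositions~\ref{prop=Shelukhin_ch} and \ref{prop=Shelukhin_conti}, Theorem~\ref{mthm=Shelukhin_extendable}, and Corollary~\ref{cor=non-vanishing_Shelukhin}.
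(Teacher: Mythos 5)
Your proposal is correct and is essentially the paper's own argument: both exhibit $[\mushf]$ as a nonzero class in $\VV(\Gg,\Ng)$ via Proposition~\ref{prop=Shelukhin_ch} and Corollary~\ref{cor=non-vanishing_Shelukhin}. The only cosmetic difference is that you deduce $[\mushf]\ne 0$ from Theorem~\ref{mthm=Shelukhin_extendable} (with $\Pg=\Gg$, plus surjectivity of $\tflux_\omega$), whereas the paper cites the injectivity of $\bBbG$ from Definition~\ref{defn=bilinear_form_map}; these are two formulations of the same fact that the kernel of the bilinear-form map is exactly $i^{\ast}\QQQ(\Gg)$.
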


\begin{proof}
Let $\Gamg=\Gg/\Ng=\HHH^1_c(\Mm;\RR)$. Let $\bBbG\colon \VV(\Gg,\Ng)\hookrightarrow \AAA(\Gamg)$ be the map defined in Definition~\ref{defn=bilinear_form_map}. Then, by Corollary~\ref{cor=non-vanishing_Shelukhin} we have
\[
\bBbG([\mushf])=\bb_{\mushf}\ne 0 \in \AAA(\Gamg).
\]
Now, recall from Proposition~\ref{prop=Shelukhin_ch} that $\mushf\in \Qv^{\mathrm{ch}}_{(\Mm,\omega)}$. Therefore, we obtain the desired dimension estimate from below.
\end{proof}

For a closed symplectic surface $(\Mm,\omega)$,  we have the following result. Here recall from Remark~\ref{rem=intersection} that $\bb_{\omega}$ coincides with the intersection form $\bb_I$ for a closed symplectic surface $(\Mm,\omega)$.

\begin{cor}\label{cor=bsurface}
Let $(\Mm,\omega)$ be a closed surface  equipped with a symplectic form whose genus $l$ is at least two. Then we have
\[
\bb_{\mushf}=\frac{2-2l}{\mathrm{Area}(\Mm)}\cdot \bb_{\omega}=\frac{2-2l}{\mathrm{Area}(\Mm)}\cdot \bb_{I}.
\]
\end{cor}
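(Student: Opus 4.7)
The plan is to deduce this corollary as a direct specialization of Theorem~\ref{mthm=explicitShelukhin}, which has been proved in Section~\ref{sec=kari2}. More precisely, I would apply Theorem~\ref{surface product_triple} (equivalently, item~(2) of Theorem~\ref{mthm=explicitShelukhin}) in the one-factor case $n=1$, taking $S_1=\Mm$ and $l_1=l\geq 2$. The single-term sum then reads
\[
\bb_{\mushf}(v,w)=\vol(\Mm,\omega)\cdot\frac{2-2l}{\mathrm{Area}(\Mm,\omega)^2}\cdot \bb_{\omega}(v,w),
\]
and since $\Mm$ is two-dimensional one has $\vol(\Mm,\omega)=\mathrm{Area}(\Mm,\omega)$, so one factor cancels and the first claimed equality $\bb_{\mushf}=\tfrac{2-2l}{\mathrm{Area}(\Mm)}\bb_{\omega}$ drops out.

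For the second equality $\bb_{\omega}=\bb_I$, I would simply invoke Remark~\ref{rem=intersection}: on a closed symplectic surface the exponent $n-1$ appearing in the definition \eqref{eq=symplecticpairing} of the symplectic pairing is zero, hence $[\omega]^{n-1}=1$ and $\bb_{\omega}(v,w)=(v\smile w)[\Mm]$, which is exactly the intersection form.

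The only point requiring a moment's verification is whether the $n=1$ case of Theorem~\ref{surface product_triple} is genuinely covered by its proof, since the argument in Subsection~\ref{subsec:XQ} partitions the factors into those with $l_i>1$ and those with $l_i=1$. However, with $n_0=1$ in Proposition~\ref{surface product_MQ} (the situation of the corollary) the factor $Q$ is an empty product, hence a point, for which both the $(X,Q)$-condition and the existence of a section of $\tflux_{\omega_Q}$ hold trivially. Consequently there is no substantial obstacle: the corollary is immediate from the already-established explicit expression, with the only slightly nontrivial observation being the dimensional degeneracy $\omega^{n-1}=1$ that identifies the symplectic pairing with the intersection form in dimension two.
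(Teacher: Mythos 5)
Your proposal is correct and is essentially the paper's own proof: the paper deduces the corollary in one line by applying Theorem~\ref{surface product_triple} with $n=1$ (or, alternatively, Theorem~\ref{surface product with vanishing H1_triple} with $N=\ast$), using $\vol(\Mm,\omega)=\mathrm{Area}(\Mm,\omega)$ and the identification $\bb_{\omega}=\bb_{I}$ from Remark~\ref{rem=intersection}, exactly as you do. Your additional check that the case $n_0=n=1$ (so that $Q$ is a point) is covered by Proposition~\ref{surface product_MQ} is a sound, if implicit, part of the same argument.
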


\begin{proof}
Apply Theorem~\ref{surface product with vanishing H1_triple} with $N=\ast$, or apply Theorem~\ref{surface product_triple} with $n=1$.
\end{proof}


Now we are in a position to prove Theorem~\ref{thm=ShelukhinPy}.

\begin{proof}[Proof of Theorem~\textup{\ref{thm=ShelukhinPy}}]
Set
\begin{equation}\label{eq=ShelukhinPy}
\muf_{\mathrm{SP}}=\mushf-A(\Mm,\omega)\mupyf=\mushf-\frac{2-2l}{\mathrm{Area}(\Mm,\omega)}\mupyf.
\end{equation}
Let $\bBb=\bBbSymp\colon \VV(\tSympc(\Mm,\omega),\tHamc(\Mm,\omega))\to$$\AAA(\HHH^1_c(\Mm;\RR))$ be the map defined in Definition~\ref{defn=bilinear_form_map}. 
Then, by Theorem~\ref{thm=Py} and Corollary~\ref{cor=bsurface}, we have
\[
\bBb([\muf_{\mathrm{SP}}])=\frac{2-2l}{\mathrm{Area}(\Mm,\omega)}\bb_{\omega}-\frac{2-2l}{\mathrm{Area}(\Mm,\omega)}\bb_{\omega}=0.
\]
Since $\bBb$ is injective, we conclude that $[\muf_{\mathrm{SP}}]=0$ in $\VV(\tSympc(\Mm,\omega),\tHamc(\Mm,\omega))$; in other words, $\muf_{\mathrm{SP}}\in i^{\ast}\QQQ(\tSympc(\Mm,\omega))$.
\end{proof}

\begin{rem}\label{rem=local_Calabi}
Let $(\Mm,\omega)$ a symplectic manifold of finite volume and
$\iota \colon B=B^{2n}(r) \to \Mm$ be an open symplectic embedding. 
Then, Shelukhin's quasimorohism $\mushf$ has the following local form by \cite[Theorem 3]{Shelukhin}:
\begin{equation}\label{eq=Shelukhin_local}
  \mushf \circ  \iota_\ast = \frac12 \tau_B - A(\Mm,\omega) \cdot \cal_{B}, 
\end{equation}
where $\tau_B \colon \tHamc (B, \omega_0) \to \RR$ denotes the average Maslov quasimorphism of Barge--Ghys (we refer the reader to \cite{Bra} or \cite[Definition 1.7.3]{Shelukhin} for more details) and $\iota_\ast \colon \tHamc(B,\omega_0) \to \tHamc(\Mm,\omega)$ is the natural homomorphism induced by $\iota$. 

Now, we stick to the setting of Theorem~\ref{thm=ShelukhinPy}, and set $\muf_{\mathrm{SP}}$ by \eqref{eq=ShelukhinPy}. Then, by \eqref{eq=Shelukhin_local} and the Calabi property of $\mupyf$, this $\muf_{\mathrm{SP}}$ has the following local form for an embedded ball $B$ in $(\Mm,\omega)$:
\begin{equation}\label{eq=mu_local}
\muf_{\mathrm{SP}} \circ  \iota_\ast = \frac{1}{2} \tau_B - 2A(\Mm,\omega)\cdot \cal_B.
\end{equation}
In particular, \eqref{eq=mu_local} implies that the element $\muf_{\mathrm{SP}}$  in $i^{\ast}\QQQ(\tSympc(\Mm,\omega))$ is \emph{not} the zero map.
\end{rem}

By taking the contraposition of Theorem~\ref{mthm=Reznikov}, we in particular have the following result: if condition (2) of Theorem~\ref{mthm=Reznikov} fails, then the Reznikov class $R|_{\overline{\Pg}}\in \HHH^2(\overline{\Pg})$ is \emph{non-trivial}. The special case of this statement for $\Pg=\tSympc(\Mm,\omega)$ was obtained in \cite{KKMMMReznikov}. In the last part of this subsection, we exhibit the following corollary to Theorem~\ref{mthm=Reznikov}, which completely determines the triviality of the Reznikov class. We note that the following two sepcial cases of Corollary~\ref{cor=surfacetrivial} correspond to \cite[Corollary~5.6]{KKMMMReznikov}. By setting $V=\{0\}$, we have the triviality of $R|_{\Hamc(\Mm,\omega)}$; by setting $V=\HHH^1_c(\Mm;\RR)$, we obtain the non-triviality of $R|_{\Sympc_0(\Mm,\omega)}$.

\begin{cor}\label{cor=surfacetrivial}
Let $(\Mm,\omega)$ be a closed surface of genus at least two  equipped with a symplectic form. Let $\bb_I\colon \HHH^1_c(\Mm;\RR)\times \HHH^1_c(\Mm;\RR)\to \RR$ be the intersection form on $\HHH^1_c(\Mm;\RR)$. Let $V$ be an $\RR$-linear subspace of $\HHH^1_c(\Mm;\RR)$, and set $\Gg_{V}=\flux_{\omega}^{-1}(V)$. Then the Reznikov class $R$ is trivial on $\Gg_V$ if and only if $V$ is an isotropic subspace of the symplectic vector space $(\HHH^1_c(\Mm;\RR),\bb_I)$.
\end{cor}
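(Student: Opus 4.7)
The plan is to reduce this corollary directly to Theorem~\ref{mthm=Reznikov} combined with the explicit expression of $\bb_{\mushf}$ on a genus-$\geq 2$ closed symplectic surface obtained in Corollary~\ref{cor=bsurface}. First I would set $\Pg = \Gg_V$ and verify the hypotheses of Theorem~\ref{mthm=Reznikov}. By Proposition~\ref{survey on flux}~(6), for a closed symplectic surface of genus at least two we have $\tSympc(\Mm,\omega) = \Sympc_0(\Mm,\omega)$ and $\tHamc(\Mm,\omega) = \Hamc(\Mm,\omega)$, so the universal covering map is the identity and $\overline{\Pg} = \Pg = \Gg_V$. Moreover, since $\tHamc(\Mm,\omega) = \Ker(\tflux_{\omega})$, we have $\Gg_V \supseteq \tHamc(\Mm,\omega)$ for every $\RR$-linear subspace $V$, so $\Pg \geqslant \tHamc(\Mm,\omega)$ as required.

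Next, I would check condition~(1) of Theorem~\ref{mthm=Reznikov}, namely that $I_{c_1}\colon \pi_1(\Hamc(\Mm,\omega)) \to \RR$ is the zero map. But by Proposition~\ref{survey on flux}~(6), $\pi_1(\Hamc(\Mm,\omega))$ is trivial, so this condition is automatic. Therefore Theorem~\ref{mthm=Reznikov} reduces the triviality of $R|_{\Gg_V}$ to the single condition that the restriction of $\bb_{\mushf}$ to $\tflux_{\omega}(\Gg_V) \times \tflux_{\omega}(\Gg_V)$ equals the zero form. Since $\tflux_{\omega}$ is surjective onto $\HHH^1_c(\Mm;\RR)$ (Proposition~\ref{survey on flux}~(2)) and $\Gg_V = \tflux_{\omega}^{-1}(V)$, we have $\tflux_{\omega}(\Gg_V) = V$. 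Hence $R|_{\Gg_V}=0$ if and only if $\bb_{\mushf}|_{V\times V}\equiv 0$.

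Finally, Corollary~\ref{cor=bsurface} gives
\[
\bb_{\mushf} = \frac{2-2l}{\mathrm{Area}(\Mm,\omega)}\cdot \bb_I,
\]
and because $l \geq 2$, the coefficient $\dfrac{2-2l}{\mathrm{Area}(\Mm,\omega)}$ is a non-zero real number. Therefore $\bb_{\mushf}|_{V\times V} \equiv 0$ if and only if $\bb_I|_{V\times V}\equiv 0$, which by definition is the statement that $V$ is an isotropic subspace of the symplectic vector space $(\HHH^1_c(\Mm;\RR),\bb_I)$. Combining these equivalences yields the corollary.

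There is no serious obstacle here: the argument is essentially a direct chaining of Theorem~\ref{mthm=Reznikov} and Corollary~\ref{cor=bsurface}, made possible by the fact that in the genus $\geq 2$ case the fundamental groups appearing in the theory collapse. The only point that requires minor care is tracking that $\tflux_{\omega}(\Gg_V) = V$ exactly (not merely containing $V$), which follows from the surjectivity of $\tflux_{\omega}$.
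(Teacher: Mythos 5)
Your proposal is correct and takes essentially the same route as the paper's proof: both reduce the statement to Theorem~\ref{mthm=Reznikov} by observing that $\pi_1(\Hamc(\Mm,\omega))$ is trivial (Proposition~\ref{survey on flux}~(6)), so $I_{c_1}\equiv 0$, and then invoke the explicit formula $\bb_{\mushf}=\frac{2-2l}{\mathrm{Area}(\Mm,\omega)}\cdot\bb_{I}$ of Corollary~\ref{cor=bsurface} together with $l\geq 2$ to translate the vanishing of $\bb_{\mushf}$ on $V\times V$ into isotropy of $V$. Your extra bookkeeping (that $\overline{\Pg}=\Gg_V$ because the covering maps collapse, and that $\tflux_{\omega}(\Gg_V)=V$ exactly by surjectivity of the flux) only spells out details the paper leaves implicit.
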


\begin{proof}
In this case, $\pi_1(\Hamc(\Mm,\omega))$ is trivial (Proposition~\ref{survey on flux}~(6)). In particular, $I_{c_1}\equiv 0$. Let $l$ be the genus of $\Mm$. Then, Corollary~\ref{cor=bsurface} shows that
\[
\bb_{\mushf}=\frac{2-2l}{{\rm Area}(\Mm,\omega)} \cdot\bb_{I}.
\]
Since $l\geq 2$, Theorem~\ref{mthm=Reznikov} implies the desired equivalence.
\end{proof}


\section*{Acknowledgment}
The fifth-named author wishes to express his deepest gratitude to Yash Lodha for the kind invitation to the summer school `Topological and Analytic methods in group theory' at the University of Hawai`i at M\={a}noa in July, 2024; there, he came up with one of the key ideas of the present work. The first-named author, the second-named author, the fourth-named author and the fifth-named author are partially supported by JSPS KAKENHI Grant Number JP21K13790, JP24K16921, JP23K12975 and JP21K03241, respectively.
The third-named author is partially supported by JSPS KAKENHI Grant Number JP23KJ1938 and JP23K12971.

\bibliography{reference}
\bibliographystyle{abbrv}
\end{document}